\numberwithin{equation}{section}
\newtheorem{Theorem}{Theorem}[section]
\newtheorem*{Theorem*}{Theorem}
\newtheorem{Corollary}[Theorem]{Corollary}
\newtheorem{Lemma}[Theorem]{Lemma}
\newtheorem{Proposition}[Theorem]{Proposition}
 { \theoremstyle{definition}
\newtheorem{Definition}[Theorem]{Definition}

\newtheorem{Remark}[Theorem]{Remark} }
\newtheorem{notation}[Theorem]{Notation}
\DeclareMathOperator{\NC}{NC}
\DeclareMathOperator{\PF}{PF}
\DeclareMathOperator{\Cat}{Cat}
\DeclareMathOperator{\nil}{nil}
\DeclareMathOperator{\supp}{supp}
\DeclareMathOperator{\Fix}{Fix}
\newcommand{\op}[1]{\operatorname{#1}}
\newcommand{\lldot}{\ll\hspace{-2.5mm}\cdot\hspace{2.5mm}}
\newcommand{\sqsubsetdot}{\sqsubset\hspace{-3.4mm}\cdot\hspace{3.4mm} }
\newcommand{\ind}{\mathbf{ind}}
\newcommand{\sign}{\boldsymbol{\epsilon}}
\newcommand{\park}{\mathbf{park}}
\newcommand{\fpark}{\mathbf{park\mathrlap{'}}}
\newcommand{\abspark}{\mathbf{park}^{\op{abs}}}
\newcommand{\absfpark}{\mathbf{park'}^{\op{abs}}}
\DeclareMathOperator{\bQ}{\bf Q}
\DeclareMathOperator{\bR}{\bf R}
\DeclareMathOperator{\bN}{\bf N}
\DeclareMathOperator{\bD}{\bf D}
\DeclareMathOperator{\bX}{\bf X}
\DeclareMathOperator{\bI}{\bf I}
\DeclareMathOperator{\bU}{\bf U}
\DeclareMathOperator{\bPsi}{\boldsymbol{\Psi}}
\DeclareMathOperator{\bPhi}{\boldsymbol{\Phi}}
\begin{document}
\allowdisplaybreaks

\newcommand{\arXivNumber}{2209.12540}

\renewcommand{\PaperNumber}{069}

\FirstPageHeading

\ShortArticleName{The Generalized Cluster Complex: Refined Enumeration of Faces}

\ArticleName{The Generalized Cluster Complex: Refined \\ Enumeration of Faces and Related Parking Spaces}

\Author{Theo DOUVROPOULOS~$^{\rm a}$ and Matthieu JOSUAT-VERG\`ES~$^{\rm b}$}

\AuthorNameForHeading{T.~Douvropoulos and M.~Josuat-Verg\`es}

\Address{$^{\rm a)}$~University of Massachusetts at Amherst, USA}
\EmailD{\href{mailto:douvropoulos@math.umass.edu}{douvropoulos@math.umass.edu}}
\URLaddressD{\url{https://people.math.umass.edu/~douvropoulos/}}

\Address{$^{\rm b)}$~IRIF, CNRS, Universit\'e Paris-Cit\'e, France}
\EmailD{\href{mailto:matthieu.josuat-verges@irif.fr}{matthieu.josuat-verges@irif.fr}}
\URLaddressD{\url{https://www.irif.fr/~josuat/}}

\ArticleDates{Received September 27, 2022, in final form September 12, 2023; Published online September 26, 2023}

\Abstract{The generalized cluster complex was introduced by Fomin and Reading, as a~natural extension of the Fomin--Zelevinsky cluster complex coming from finite type cluster algebras. In this work, to each face of this complex we associate a parabolic conjugacy class of the underlying finite Coxeter group. We show that the refined enumeration of faces (respectively, positive faces) according to this data gives an explicit formula in terms of the corresponding characteristic polynomial (equivalently, in terms of Orlik--Solomon exponents). This characteristic polynomial originally comes from the theory of hyperplane arrangements, but it is conveniently defined via the parabolic Burnside ring. This makes a connection with the theory of parking spaces: our results eventually rely on some enumeration of chains of noncrossing partitions that were obtained in this context. The precise relations between the formulas counting faces and the one counting chains of noncrossing partitions are combinatorial reciprocities, generalizing the one between Narayana and Kirkman numbers.}

\Keywords{cluster complex; parking functions; noncrossing partitions; Fu{\ss}--Catalan numbers; finite Coxeter groups}

\Classification{05A15; 05E10; 20F55}

\section{Introduction}

The {\it cluster complex} of a finite type cluster algebra was introduced by Fomin and Zelevinsky~\cite{fominzelevinsky}. It is a simplicial complex, which can be built using {\it almost positive roots} as vertices. It can be viewed as the dual of a corresponding {\it associahedron}. A natural extension is the {\it generalized cluster complex}, defined by Fomin and Reading~\cite{fominreading} via {\it colored almost-positive roots}. Although there is no related cluster algebra nor associahedron in this case, the generalized cluster complex can be given a representation theoretic interpretation via quiver representations~\cite{thomas,zhu}. Most importantly, this is a simplicial complex with nice enumerative and topological properties~\cite{athanasiadistzanaki2,athanasiadistzanaki1,fominzelevinsky,stumpthomaswilliams,tzanaki}. In particular, its number of facets is the {\it Fu{\ss}--Catalan number}
\[
 \Cat^{(m)}(W) := \frac{1}{|W|} \prod_{i=1}^n (m h + e_i + 1),
\]
and its number of positive facets is the {\it positive Fu{\ss}--Catalan number}
\[
 \Cat^{(m)}_+(W) := \frac{1}{|W|} \prod_{i=1}^n (m h + e_i - 1).
\]
Here, $W$ is a finite and irreducible real reflection group, $h$ is its Coxeter number, and $e_1,\dots,e_n$ is the sequence of exponents. Moreover, the {\it Fu{\ss} parameter} $m$ is the number of different colors that a positive root can have. See Section~\ref{sec:defclus} for details.

Consider a {\it flat}, i.e., an element $X$ in the intersection lattice $L(W)$ generated by the reflecting hyperplanes of $W$. There is an associated hyperplane arrangement on $X$ called the {\it restricted arrangement} (a general reference on this subject is Orlik and Terao~\cite{orlikterao}). It turns out to be a {\it free arrangement}, so that its {\it characteristic polynomial} $p_X(t)$ is factorized in the form $\prod_{i=1}^{\dim(X)} \big(t-b_i^X\big)$ where the roots $b_i^X$ are positive integers called the {\it Orlik--Solomon exponents}. (Two flats that are congruent under the action of $W$ have the same characteristic polynomial and we consider that indices are orbits of flats.) It has been established that these characteristic polynomials can be used to refine the enumeration of some Catalan families. In particular, Sommers~\cite{sommers2} considers certain ideals in Lie theory (that correspond to the combinatorial notion of {\it nonnesting partitions}). He showed in~\cite[Theorem~5.7]{sommers2} that the number of such ideals (respectively, positive ideals) is the Catalan number $\Cat(W) := \Cat^{(1)}(W)$\big(respectively, the positive Catalan number \smash{$\Cat_+(W) := \Cat^{(1)}_+(W)$\big)}. Moreover, he showed in~\cite[Proposition~6.6]{sommers2} that the number of ideals (respectively, positive ideals) associated to the orbit of a flat $X$ under a certain natural map is
\[
 \frac{ p_X(h+1) }{[N(W_X):W_X]},
 \qquad \text{respectively,} \qquad
 \frac{ p_X(h-1) }{[N(W_X):W_X]}.
\]
(Here, the denominator is the index of the parabolic subgroup $W_X$ in its normalizer, see Section~\ref{sec:parking} for details.) These are known as {\it Kreweras numbers}. They respectively add up to $\Cat(W)$ and $\Cat_+(W)$ when we sum over all orbits of flats (or, to the {\it Narayana numbers} $\operatorname{Nar}(W,k)$ and $\operatorname{Nar}_+(W,k)$ if we sum over orbits of flats of dimension $k$). The Fu{\ss} generalization also exists: see Rhoades~\cite[Section~9]{rhoades} (it relies on Athanasiadis' notion of co-filtered chain of ideals~\cite{athanasiadis}).

Another Catalan family consists of {\it noncrossing partitions} (see~\cite{baumeisteretal} for a recent survey). They are particularly important here because of their close connection to the cluster complex. A refined enumeration of these objects akin to Sommers' exists: indeed Athanasiadis and Reiner~\cite[Theorem~6.3]{athanasiadisreiner} had previously shown that such refined enumerations of nonnesting and noncrossing partitions coincide (without giving the explicit formulas in terms of characteristic polynomials). However, this coincidence is proved via a case-by-case check using the finite type classification and finding a more conceptual explanation is still an open problem. The most promising attempt in this direction is {\it parking space theory}, introduced by Armstrong, Reiner and Rhoades~\cite{armstrongreinerrhoades}: among various other features, it gives a representation theoretic framework to prove this kind of refined enumeration. In particular, the characteristic polynomials $p_X(t)$ naturally appear there via an identity in the parabolic Burnside ring (see Orlik and Solomon~\cite{orliksolomon}, or Section~\ref{sec:parking}), rather than via hyperplane arrangements. This theory was also extended in the Fu{\ss}--Catalan setting by Rhoades~\cite{rhoades}. This will be reviewed in Section~\ref{sec:parking}. Let us just mention here that $m$-element chains (respectively, full support $m$-element chains) of noncrossing partitions are counted by $\Cat^{(m)}(W)$ \big(respectively, $\Cat_+^{(m)}(W)$\big) , and a natural refinement gives the numbers:
\begin{align} \label{eq:krewm}
 \frac{p_X(mh+1)}{[N(W_X):W_X]},
 \qquad\text{respectively,}\qquad
 \frac{p_X(mh-1)}{[N(W_X):W_X]}.
\end{align}
See Theorems~\ref{theo_park} and~\ref{theo_fpark} for the precise statements.

The entries of the $f$-vector of the cluster complex (i.e., the number of faces of a given dimension) are known as the {\it Kirkman numbers}. It is known that Kirkman numbers and Narayana numbers are related by a combinatorial reciprocity, and one of our goals is to refine this phenomenon: in Corollary~\ref{formulas_gammagammaplus}, we show that the number of faces (respectively, positive faces) associated to the orbit of a flat $X$ under a certain natural map is
\begin{align} \label{eq:introformulagamma}
 (-1)^{\dim(X)} \frac{p_X(-mh-1)}{[N(W_X):W_X]},
 \qquad\text{respectively,}\qquad
 (-1)^{\dim(X)} \frac{p_X(-mh+1)}{[N(W_X):W_X]}.
\end{align}
The meaning of ``combinatorial reciprocity'' is that these numbers are related to those in~\eqref{eq:krewm} by $m\leftrightarrow -m$. (The combinatorial reciprocity mentioned above between Narayana and Kirkman numbers is recovered by summing over orbit of flats of a given dimension $k$.) The formulas in~\eqref{eq:introformulagamma} gives our refined enumeration of the faces of the generalized cluster complex. The derivation is case-free, but eventually relies on Theorem~\ref{theo_park} mentioned above (for which the only known proof is via a case-by-case check).

There are a few preliminaries that are interesting on their own, as well as nice consequences. We outline both by giving the detailed organization of this article:
\begin{itemize}\itemsep=0pt
 \item Section~\ref{sec:background} contains some background material.
 \item {\it Parking space theory} is reviewed in Section~\ref{sec:parking}, following~\cite{armstrongreinerrhoades,rhoades}. The aim is to present Theorems~\ref{theo_park} and~\ref{theo_fpark}, on which our results rely. Sections~\ref{parkingproof1} and~\ref{parkingproof2} contain proofs of the latter theorem (more precisely, we show the equivalence between Theorems~\ref{theo_park} and~\ref{theo_fpark}).\looseness=1
 \item The {\it generalized cluster complex} is reviewed in Section~\ref{sec:defclus}, following~\cite{bradywatt,fominreading,fominzelevinsky,tzanaki}. This section also contains the definition of the natural map from faces to orbits of flats. The formulas in terms of characteristic polynomials are obtained in Section~\ref{sec:reciprocities}, via combinatorial reciprocities which make a link with chains of noncrossing partitions. It is also proved in Section~\ref{sec:bij} via a bijection, again making a link with certain chains of noncrossing partitions.
 In Section~\ref{sec:fhvectors}, we give some consequences concerning the $f$-vectors and $h$-vectors of the generalized cluster complex. In particular, an identity can be seen as a refinement of the relations between $f$- and $h$-vectors. Finally, Section~\ref{sec:recfaces} provides a recursion satisfied by the left-hand side of~\eqref{eq:introformulagamma}, proved via the combinatorics of the generalized cluster complex.\looseness=1
 \item {\it Minimal factorizations} of the Coxeter element are ubiquitous in the context of noncrossing partitions and cluster complexes. In Section~\ref{sec:factor}, we get a formula for a $q$-enumeration of certain minimal factorizations (where one factor is in a given parabolic conjugacy class and the others are reflections). This is related to the enumeration of faces of the generalized cluster complex, via a recursion which is equivalent to that in Section~\ref{sec:recfaces}.
 \item {\it Two order relations} on noncrossing partitions, denoted $\sqsubset$ and $\ll$, are used throughout. We introduced them in~\cite{bianejosuat}, and in some sense they refine the absolute order (used to define the lattice structure on noncrossing partitions). They are useful to prove an identity on parking spaces (Section~\ref{parkingproof2}), to give bijections between faces of the generalized cluster complex and certain chains of noncrossing partitions (Section~\ref{sec:bij}), and to define the $q$-statistic in our $q$-enumeration of minimal factorizations mentioned above (Section~\ref{sec:factor}).
\end{itemize}

\section{Preliminary definitions}\label{sec:background}

Through this work, $W$ is a finite real reflection group of rank $n$. We don't assume that it be irreducible, unless stated otherwise. Its geometric representation is an $n$-dimensional Euclidean space $V \simeq \mathbb{R}^n$. Let $T \subset W$ denote the set of reflections, and $S\subset T$ a set of simple reflections that we write $S = \{s_1, \dots , s_n\}$. The standard parabolic subgroup $W_I$ for $I\subset S$ is the subgroup of $W$ generated by $I$. Any subgroup conjugate to some $W_I$ for $I\subset S$ is called a parabolic subgroup. The {\it support} of $w\in W$ is
\[
 \supp(w) := \min\big\{ I\subset S\colon w\in W_I \big\},
\]
where the minimum is taken with respect to inclusion, and $w$ is said to have {\it full support} if $\supp(w)=S$.

\subsection{The intersection lattice}

\begin{Definition}
For each $w\in W$, we denote
\[
 \Fix(w) := \ker(w-I) = \big\{ v\in V\colon w(v)=v \big\}.
\]
The \emph{intersection lattice} of $W$ is defined as
\[
 L(W) := \big\{ \Fix(w) \colon w\in W \big\}.
\]
By convention, the order relation on $L(W)$ is reverse inclusion.
\end{Definition}

In type $A_{n-1}$, this is the lattice of set partitions of $\{1,\dots,n\}$ ordered by refinement.

The poset $L(W)$ is a lattice, and the join operation is given by intersection of subspaces. It is order-isomorphic to the lattice of parabolic subgroups of $W$ (where the order is inclusion) via
\begin{align} \label{def_WX}
 X \mapsto W_X := \{ w\in W \colon X\subset \Fix(w) \}.
\end{align}
There is a natural action of $W$ on $L(W)$ induced by the action of $W$ on $V$, and the corresponding action on parabolic subgroups is by conjugation. Let $\sim$ denote the equivalence class given by the orbit decomposition.

The apparent clash of notation between $W_I$ and $W_X$ is dealt with by identifying the powerset of $S$ with a subposet of $L(W)$, via
\[
 I \mapsto \Fix\Big( \prod_{s\in I} s \Big).
\]
(The order of the product is given by a fixed total order on $S$.) Implicitly, it is assumed that~$I$,~$J$,~$K$ are in this subposet, while~$X$,~$Y$,~$Z$ are general flats in $L(W)$. In particular, $I \sim J$ means that the standard parabolic subgroups $W_I$ and $W_J$ are conjugate. This convention will be used when some objects are indexed sometimes by $I$, $J$, $K$ and sometimes by $X$, $Y$, $Z$.

\subsection{The noncrossing partition lattice}

Noncrossing partitions are defined with respect to a {\it standard Coxeter element} $c$, which is the product of all the simple reflections in $S$. By reindexing the set $S=\{s_1,\dots,s_n\}$, we can assume $c=s_1 \cdots s_n$.

\begin{Definition}
 The {\it reflection length} of $w\in W$ is defined by
 \[
 \ell(w) := n-\dim(\Fix(w)).
 \]
 It is also the minimal integer $k$ such that $w$ is a product of $k$ reflections, and any such factorization $w=t_1 \cdots t_k$ is called {\it minimal}.
\end{Definition}

\begin{Definition}
 The {\it absolute order} of $W$ is defined by $w_1 \leq w_2 $ if $\ell(w_1) + \ell\big(w_1^{-1}w_2\big) = \ell(w_2)$. Alternatively, $w_1 \leq w_2 $ iff a minimal reflection factorization of $w_1$ is a subword of a minimal reflection factorization of $w_2$. The {\it noncrossing partition lattice} of $W$ (with respect to a standard Coxeter element $c$), denoted $\NC(W,c)$, is defined as the order ideal containing elements below~$c$ in the absolute order.
\end{Definition}

This is a widely studied object, and we refer to Baumeister et~al.~\cite{baumeisteretal} for a recent survey. Note that we have a map $w\mapsto \Fix(w)$ from $\NC(W,c)$ to $L(W)$. This map is injective, increasing and rank-preserving. Moreover, $w_1,w_2 \in \NC(W,c)$ are conjugate (in $W$) iff $\Fix(w_1)$ and $\Fix(w_2)$ are in the same orbit under the action of $W$.

For each integer $m\geq 1$, we can define the {\it Fu{\ss}--Catalan numbers} and their {\it positive} counterpart:
\begin{gather*}
 \Cat^{(m)}(W)
:=
 \#
 \big\{
 (\pi_i)_{1\leq i \leq m} \in \NC(W,c)^m
 \colon \pi_1 \leq \dots \leq \pi_m
 \big\},\\
 \Cat_+^{(m)}(W)
:=
 \#
 \big\{
 (\pi_i)_{1\leq i \leq m} \in \NC(W,c)^m
 \colon \pi_1 \leq \dots \leq \pi_m,\; \supp(w_m)=S
 \big\}.
\end{gather*}
In the case where $W$ is irreducible, the formulas given in the introduction in terms of exponents and the Coxeter number hold. Otherwise, we have
\begin{gather*}
 \Cat^{(m)}\Bigg( \prod_{i=1}^j W_i \Bigg)
 =
 \prod_{i=1}^j \Cat^{(m)}\big( W_i \big),\\
 \Cat_+^{(m)}\Bigg( \prod_{i=1}^j W_i \Bigg)
 =
 \prod_{i=1}^j \Cat_+^{(m)}\big( W_i \big).
\end{gather*}
The rank-refined enumeration of noncrossing partitions gives {\it Narayana numbers}. One can also define Fu{\ss}--Narayana numbers by (see~\cite{armstrong}):
\begin{gather} \label{def:narayana}
 \operatorname{Nar}^{(m)}(W,i)
 :=
 \#
 \big\{\!
 (\pi_i)_{1\leq i \leq m} \in \NC(W,c)^m
 \colon \pi_1 \leq\! \cdots \! \leq \pi_m,\, \ell(\pi_1)=i
 \big\},\\
 \label{def:narayanap}
 \operatorname{Nar}_+^{(m)}(W,i)
 :=
 \#
 \big\{\!
 (\pi_i)_{1\leq i \leq m} \in \NC(W,c)^m
 \colon \pi_1 \leq\! \cdots \! \leq \pi_m,\, \ell(\pi_1)=i,\, \supp(w_1)=S
 \big\}.\!\!\!\!\!\!
\end{gather}

\begin{Lemma} \label{lemm:bij_xi}
 The following two sets are in bijection:
 \begin{itemize}\itemsep=0pt
 \item \emph{parabolic conjugacy classes}, i.e., conjugacy classes $\mathcal{X}\subset W$ such that $\mathcal{X}\cap \NC(W,c) \neq \varnothing$,
 \item $L(W)/W$, i.e., orbits of flats under the action of $W$.
 \end{itemize}
\end{Lemma}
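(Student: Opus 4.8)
The plan is to show that the map $w\mapsto\Fix(w)$ from $\NC(W,c)$ to $L(W)$ descends to the claimed bijection. Define $\Phi$ on a parabolic conjugacy class $\mathcal X$ by $\Phi(\mathcal X):=W\cdot\Fix(w)$, where $w$ is any element of the set $\mathcal X\cap\NC(W,c)$, which is nonempty by the very definition of ``parabolic conjugacy class''. The fact recalled just before the statement — that $w_1,w_2\in\NC(W,c)$ are $W$-conjugate if and only if $\Fix(w_1)$ and $\Fix(w_2)$ lie in the same $W$-orbit — gives two things at once. First, $\Phi$ is well defined, since any two choices of $w\in\mathcal X\cap\NC(W,c)$ are conjugate, hence have $\Fix$-images in the same orbit. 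Second, $\Phi$ is injective: if $\Phi(\mathcal X)=\Phi(\mathcal X')$, pick $w\in\mathcal X\cap\NC(W,c)$ and $w'\in\mathcal X'\cap\NC(W,c)$; then $\Fix(w)$ and $\Fix(w')$ lie in the same orbit, which forces $w$ and $w'$ conjugate, i.e.\ $\mathcal X=\mathcal X'$. So only surjectivity remains.

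For surjectivity I must show that every $W$-orbit of flats is of the form $W\cdot\Fix(w)$ with $w\in\NC(W,c)$. Using the $W$-equivariant isomorphism $X\mapsto W_X$ of~\eqref{def_WX} between $L(W)$ and the poset of parabolic subgroups, and the fact that every parabolic subgroup is conjugate to a standard one $W_I$, it suffices to produce, for each $I\subseteq S$, an element $w\in\NC(W,c)$ with $W_{\Fix(w)}=W_I$: indeed, then the flat $X$ with $W_X=W_I$ equals $\Fix(w)$, every orbit of flats equals $W\cdot X$ for one of these $X$, and the conjugacy class of $w$ (which is parabolic, as it meets $\NC(W,c)$) maps to that orbit under $\Phi$. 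The candidate is the \emph{parabolic Coxeter element} $c_I$, i.e.\ the product of the elements of $I$ taken in the order inherited from the fixed factorization $c=s_1\cdots s_n$. I claim $c_I\in\NC(W,c)$ and $\Fix(c_I)=V^{W_I}$, the common fixed space $\bigcap_{s\in I}\Fix(s)$ of $W_I$; this is exactly the flat attached to $W_I$, so it finishes the proof.

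It remains to justify these two assertions about $c_I$, and the membership $c_I\le c$ is the only nontrivial point. One way is to induct on $|S\setminus I|$, the case $I=S$ being $c_S=c$. For the step, fix $s_a\notin I$ and use the telescoping identity $s_{a-1}\cdots s_1 s_1\cdots s_{a-1}=e$ to compute $(s_1\cdots\widehat{s_a}\cdots s_n)^{-1}c = s_n\cdots s_{a+1}s_a s_{a+1}\cdots s_n =: t$, which is a conjugate of $s_a$, hence a reflection; since $s_1\cdots\widehat{s_a}\cdots s_n$ is a Coxeter element of the rank-$(n-1)$ parabolic $W_{S\setminus\{s_a\}}$ it has reflection length $n-1$, so $c=(s_1\cdots\widehat{s_a}\cdots s_n)\,t$ shows $s_1\cdots\widehat{s_a}\cdots s_n\le c$. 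Because $I\subseteq S\setminus\{s_a\}$, and because reflection length (hence the restriction of the absolute order) is unchanged on passing to a parabolic subgroup, the inductive hypothesis applied inside $W_{S\setminus\{s_a\}}$, with Coxeter element $s_1\cdots\widehat{s_a}\cdots s_n$, yields $c_I\le s_1\cdots\widehat{s_a}\cdots s_n\le c$. Finally, $c_I$ being a Coxeter element of $W_I$ has $\dim\Fix(c_I)=n-|I|=\dim V^{W_I}$, and $V^{W_I}\subseteq\Fix(c_I)$ trivially, so $\Fix(c_I)=V^{W_I}$; and $W_{V^{W_I}}=W_I$ by Steinberg's fixed-point theorem, so $V^{W_I}$ is the flat attached to $W_I$. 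Alternatively, the statements $c_I\le c$ and $\Fix(c_I)=V^{W_I}$ are standard in the theory of noncrossing partitions and parabolic Coxeter elements and may simply be cited. The main obstacle is thus confined to this last paragraph — checking that the ``parabolic subwords'' $c_I$ of $c$ are themselves noncrossing — while the first two paragraphs are formal bookkeeping once the conjugacy criterion recalled in the text is in hand.
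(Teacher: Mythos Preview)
Your proof is correct and follows essentially the same approach as the paper's: both send a parabolic conjugacy class to the orbit of $\Fix(w)$, and both identify the inverse via the parabolic Coxeter elements $c_I=\prod_{s\in I}s$. The paper simply cites Orlik--Solomon for the details, whereas you supply them explicitly (notably the inductive argument that $c_I\le c$), so your write-up is a fleshed-out version of the same argument rather than a different route.
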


\begin{proof}
We refer to \cite{orliksolomon} (see Lemma~(3.4), Lemma~(3.5), and the lines thereafter). Let us briefly describe the explicit bijections.

To each parabolic conjugacy class $\mathcal{X}$, we associate the orbit of $\Fix(w)$ for some arbitrary $w\in\mathcal{X}$. In the other direction, consider a flat $X$ defined up to the action of $W$. There is $I\subset S$ such that $W_X$ is conjugate to the standard parabolic subgroup $W_I$, and to the orbit of $X$ we associate the conjugacy class of $\prod_{s\in I}s$ (the order of the product is irrelevant).
\end{proof}

For $X \in L(W)$, the parabolic conjugacy class corresponding to its orbit in $L(W)/W$ (via the previous bijection) is denoted $\mathcal{X}$. We sometimes use the bijection implicitly and the notation makes clear what are the objects. For example, if $w\in\NC(W,c)$ then the condition $w\in\mathcal{X}$ is equivalent to $\Fix(w)\sim X$.

In type $A_{n-1}$, every conjugacy class contains a noncrossing partition. The sets in the previous lemma identify with the set of integer partitions of $n$.

\begin{Lemma} %\label{label:lemmWfix}
 For each $w\in \NC(W,c)$, the parabolic subgroup $W_{\Fix(w)}$ is the minimal parabolic subgroup of $W$ containing $w$.
\end{Lemma}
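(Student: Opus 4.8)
The plan is to deduce the statement directly from the order-isomorphism between $L(W)$ and the lattice of parabolic subgroups recorded in \eqref{def_WX}; in fact I would point out that the hypothesis $w\in\NC(W,c)$ plays no role and prove the claim for an arbitrary $w\in W$.

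First I would record that $W_{\Fix(w)}$ is itself a parabolic subgroup containing $w$: it is parabolic because it lies in the image of the isomorphism $X\mapsto W_X$, and it contains $w$ because $\Fix(w)\subseteq\Fix(w)$ trivially, so the defining property \eqref{def_WX} of $W_X$ applies with $X=\Fix(w)$. Next, given an arbitrary parabolic subgroup $P$ with $w\in P$, I would use the surjectivity of $X\mapsto W_X$ onto parabolic subgroups to write $P=W_Y$ for some flat $Y\in L(W)$; then $w\in W_Y$ forces $Y\subseteq\Fix(w)$ as subspaces, i.e.\ $\Fix(w)\leq Y$ in the reverse-inclusion order on $L(W)$, and applying the order-isomorphism once more gives $W_{\Fix(w)}\subseteq W_Y=P$. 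Combining the two halves, $W_{\Fix(w)}$ is a parabolic subgroup containing $w$ that is contained in every parabolic subgroup containing $w$, hence it is the (unique) minimal such subgroup.

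The only point deserving care — the closest thing to an obstacle — is the input that every parabolic subgroup of $W$ is of the form $W_Y$ for a flat $Y$, equivalently that the pointwise stabilizer of a subspace is generated by the reflections it contains. This is exactly the content of the order-isomorphism already invoked in the text following \eqref{def_WX} (ultimately Steinberg's theorem), so I would simply cite it. If one preferred an argument staying within the noncrossing world, an alternative is to take a minimal factorization $w=t_1\cdots t_k$, note that minimality gives $\Fix(w)=\bigcap_i\Fix(t_i)$ by a dimension count (since $\dim\Fix(w)=n-k$ while an intersection of $k$ hyperplanes has dimension $\geq n-k$), deduce $\langle t_1,\dots,t_k\rangle\subseteq W_{\Fix(w)}$, and identify this subgroup with $W_{\Fix(w)}$ using Brady--Watt~\cite{bradywatt}; but the first route is shorter and self-contained given the preliminaries already in place.
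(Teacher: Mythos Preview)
Your argument is correct, and indeed the hypothesis $w\in\NC(W,c)$ is superfluous as you observe. The paper itself states this lemma without proof, treating it as a standard consequence of the order-isomorphism $X\mapsto W_X$ between $L(W)$ and the lattice of parabolic subgroups (equation~\eqref{def_WX}), which is exactly the input you invoke. Your write-up supplies the missing details cleanly; the alternative route via minimal factorizations and Brady--Watt that you sketch at the end is also valid but, as you note, unnecessary given the preliminaries already in place.
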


\begin{Proposition} \label{prop:wcox}
 A noncrossing partition $w\in\NC(W,c)$ is a standard Coxeter element of $W_{\Fix(w)}$. This means there is a factorization $w = t_1 \cdots t_k$ (called the \emph{canonical factorization}, unique up to commutations among the factors) where the elements $t_1, \dots , t_k$ are the simple generators of $W_{\Fix(w)}$.
\end{Proposition}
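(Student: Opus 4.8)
The plan is to realize $w$ as a standard Coxeter element of $W_{\Fix(w)}$ by extending a minimal factorization of $w$ to one of the Coxeter element $c$, and to handle the uniqueness statement separately. Since $w\le c$ in the absolute order we have $\ell(w)+\ell\big(w^{-1}c\big)=\ell(c)=n$, so concatenating a minimal factorization $w=t_1\cdots t_k$ with a minimal factorization $w^{-1}c=t_{k+1}\cdots t_n$ produces a minimal factorization $c=t_1\cdots t_n$ in which $w=t_1\cdots t_k$ occurs as a prefix. In any minimal factorization one has $\Fix(w)=\Fix(t_1)\cap\dots\cap\Fix(t_k)$, since the right-hand side is contained in $\Fix(w)$ and, being an intersection of $k$ hyperplanes, has dimension at least $n-k=\dim\Fix(w)$; hence the reflection subgroup $W_k:=\langle t_1,\dots,t_k\rangle$ satisfies $\Fix(W_k)=\Fix(w)$.

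The crux is the structural fact that, for a Coxeter element $c$, in any minimal factorization $c=t_1\cdots t_n$ each subgroup $\langle t_1,\dots,t_j\rangle$ is a \emph{parabolic} subgroup of $W$ in which $t_1\cdots t_j$ is a standard Coxeter element (see \cite{bradywatt} and the survey \cite{baumeisteretal}). This is precisely where one uses that $c$ is a Coxeter element rather than merely an elliptic element of full reflection length: for a general element the conclusion fails — for instance the longest element $-I$ of the rank-two group $B_2$ is a product of two reflections with orthogonal roots but is not a Coxeter element of $B_2$ (and, consistently, $-I\not\le c$). Applying the fact with $j=k$, the subgroup $W_k$ is parabolic and $w=t_1\cdots t_k$ is a standard Coxeter element of it; since $W_k$ is parabolic with $\Fix(W_k)=\Fix(w)$ and the map $X\mapsto W_X$ of~\eqref{def_WX} is a bijection between flats and parabolic subgroups, $W_k=W_{\Fix(w)}$. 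Thus $w$ is a standard Coxeter element of $W_{\Fix(w)}$, with $\{t_1,\dots,t_k\}$ serving as a set of simple generators.

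For the uniqueness up to commutations, I would work in the Coxeter system $\big(W_{\Fix(w)},\{t_1,\dots,t_k\}\big)$. The word $t_1\cdots t_k$ is reduced there: a shorter word for $w$ would, being reduced, involve only a proper subset $J\subsetneq\{t_1,\dots,t_k\}$ of the generators, forcing $w\in W_J$, whereas $k=\ell(w)\le\ell_{W_J}(w)\le|J|<k$ (reflection length relative to a subgroup is at least the ambient one and at most the rank of the subgroup). In particular $\supp(w)=\{t_1,\dots,t_k\}$, so every reduced word for $w$ has length $k$ and uses each $t_i$ exactly once; a minimal-length expression of $w$ as an ordered product of the $t_i$'s is therefore of the form $w=t_{\sigma(1)}\cdots t_{\sigma(k)}$ for a permutation $\sigma$. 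By the Matsumoto--Tits theorem any two reduced words for $w$ are linked by a chain of braid moves passing through reduced words, and since none of these words has a repeated letter the only applicable moves are the commutations $t_it_j=t_jt_i$; hence the canonical factorization is unique up to commutations of adjacent commuting factors. The main obstacle is the structural fact invoked in the second paragraph: a self-contained proof of it runs either through the transitivity of the Hurwitz action on minimal factorizations of $c$ or through an induction on $n$ using the recursive structure of $\NC(W,c)$, and in either case is no shorter than the remainder of the argument.
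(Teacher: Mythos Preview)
The paper does not actually prove this proposition: it simply states that the result ``has been observed by several authors'' and refers to \cite[Proposition~3.1]{bianejosuat} for a discussion. Your argument is correct and is essentially the standard one in the literature: extend a minimal factorization of $w$ to one of $c$, invoke the Brady--Watt/Bessis structural fact that every prefix of a minimal reflection factorization of a Coxeter element generates a parabolic subgroup for which the prefix product is a standard Coxeter element, and identify that parabolic with $W_{\Fix(w)}$ via the fixed-space correspondence. This is exactly what the cited references do, so you are not taking a different route so much as filling in what the paper leaves to citation.

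Two small remarks on the uniqueness paragraph. First, once you know $w$ is a standard Coxeter element of the rank-$k$ group $W_{\Fix(w)}$ with simple system $\{t_1,\dots,t_k\}$, the word $t_1\cdots t_k$ is automatically reduced: any product of pairwise distinct simple reflections is reduced, since the support of an element (the set of simple generators appearing in any reduced word) is well defined and here must be all of $\{t_1,\dots,t_k\}$. Your detour through reflection-length inequalities is correct but unnecessary. Second, your Matsumoto--Tits argument is fine: a braid relation of length $\geq 3$ would force a repeated letter, so only commutations are available. You might also note that the set $\{t_1,\dots,t_k\}$ itself is uniquely determined (it is the set of simple reflections of $W_{\Fix(w)}$ compatible with the Coxeter element $w$, or equivalently the set of atoms below $w$ for the order $\sqsubset$), which is implicit in calling the factorization ``canonical''.
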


This has been observed by several authors. We refer to \cite[Proposition~3.1]{bianejosuat} for a discussion.

\subsection{A tale of two orders}

The lattice structure of $\NC(W,c)$ (the absolute order) can be refined: we introduced in~\cite{bianejosuat} two partial order relations $\sqsubset$ and $\ll$ with many combinatorial properties. In particular, they are useful to deal with the combinatorics of the cluster complex.

\begin{Definition}[\cite{bianejosuat}]
Let $w\in \NC(W,c)$, and write its canonical factorization $w = t_1 \cdots t_k$. We define $\sqsubset$ and $\ll$ on $\NC(W,c)$ by:
\begin{itemize}\itemsep=0pt
 \item $v\sqsubset w$ if $v$ can be written as a subword of $t_1 \cdots t_k$ (so that $v\leq w$, in particular),
 \item $v\ll w$ if $v\leq w$ and $v$ has full support in $W_{\Fix(w)}$, i.e., each $t_i$ for $1\leq i \leq k$ appears at least once in any factorization $v = t_{i_1} t_{i_2} \cdots$.
\end{itemize}
\end{Definition}

Note that these partial orders are such that $v\sqsubset w \Rightarrow v\leq w$ and $v\ll w \Rightarrow v\leq w$.

Another characterization of these partial orders makes a close connection with the {\it Bruhat order}, denoted $\leq_B$. This will be used in Section~\ref{sec:factor}. It states that the cover relations for~$\sqsubset$ and~$\ll$ are such that
\[
 u \sqsubsetdot v \;\Leftrightarrow\; u\lessdot v \quad\text{and}\quad u\leq_B v, \qquad
 u \lldot v \;\Leftrightarrow\; u\lessdot v \quad\text{and}\quad u\geq_B v.
\]

Let us give some other properties, mostly taken from~\cite{bianejosuat}.

\begin{Proposition} \label{prop:order_uvw}
 For each $u,w \in\NC(W,c)$ such that $u \leq w$, there exists a unique $v \in\NC(W,c)$ such that $u \ll v \sqsubset w$.
\end{Proposition}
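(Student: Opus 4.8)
The plan is to establish existence and uniqueness separately, working with the canonical factorization $w = t_1 \cdots t_k$ of $w$ as a standard Coxeter element of $W_{\Fix(w)}$ (Proposition~\ref{prop:wcox}). The point is that passing to $W_{\Fix(w)}$ reduces everything to the case where $w$ itself has full support, so I may as well assume $W = W_{\Fix(w)}$ and $w = c$ is the standard Coxeter element. Then $u \leq w$ just means $u \in \NC(W,c)$, the condition $v \sqsubset w$ means $v$ is obtained as a subword of $t_1 \cdots t_k$, hence $v$ is a standard Coxeter element of the standard parabolic $W_{\supp(v)}$, and $u \ll v$ means $u \leq v$ together with $\supp_{W_{\Fix(v)}}(u) = S \cap (\text{generators of } W_{\Fix(v)})$, i.e., $u$ has full support inside $W_{\Fix(v)}$.

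For existence, the natural candidate for $v$ is built from $u$ itself: take $J := \supp(u) \subseteq S$ (the support computed inside $W = W_{\Fix(w)}$), and let $v := \prod_{s \in J} s$ be the standard Coxeter element of $W_J$, with the product taken in the order inherited from $t_1 \cdots t_k$. Then $v \sqsubset w$ by construction since $v$ is literally the subword of $t_1\cdots t_k$ supported on $J$. And $u \leq v$: this is where I would invoke the characterization of support — since $u \in W_J$ and $W_J$ is a standard parabolic, $u \leq c_J$ for the standard Coxeter element $c_J$ of $W_J$ by the fact that $\NC(W_J, c_J)$ consists precisely of the elements of $W_J$ below $c_J$, combined with the compatibility of Coxeter elements (one must check $v$ as defined is a standard Coxeter element of $W_J$, which holds because it's a subword of the canonical factorization, so its factors generate $W_J = W_{\Fix(v)}$). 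Finally $u \ll v$ because $\supp(u) = J$ is exactly the full generating set of $W_J = W_{\Fix(v)}$.

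For uniqueness, suppose $u \ll v' \sqsubset w$ with $v'$ possibly different from the $v$ above. From $v' \sqsubset w$, the factors of $v'$ are among $t_1, \dots, t_k$, so $W_{\Fix(v')}$ is the standard parabolic generated by those factors, say $W_{J'}$ with $J' \subseteq S$; and $v'$ is a standard Coxeter element of $W_{J'}$. From $u \ll v'$, the element $u$ has full support in $W_{\Fix(v')} = W_{J'}$, which forces $\supp(u) = J'$. But $\supp(u) = J$ was intrinsic to $u$, so $J' = J$, hence $W_{\Fix(v')} = W_{\Fix(v)}$, and both $v'$ and $v$ are standard Coxeter elements of this same standard parabolic obtained as subwords of $t_1 \cdots t_k$ — but there is only one such subword (the one supported on $J$), so $v' = v$.

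The main obstacle I anticipate is the bookkeeping around "standard Coxeter element of a parabolic subgroup": I need that a subword of the canonical factorization of $w$ is automatically the canonical factorization of the noncrossing partition it represents (so that $\sqsubset$-predecessors of $w$ are in natural bijection with subsets of $\{t_1,\dots,t_k\}$ that one picks, at least up to the element they produce), and I need the precise statement that for $u \in W_J$ with $W_J$ parabolic, the condition $u \leq c_J$ for an appropriate Coxeter element $c_J$ is automatic — equivalently that every element of a parabolic subgroup is a noncrossing partition with respect to some compatible Coxeter element, and that the resulting $\Fix$/support data is what one expects. These are exactly the kinds of facts established in \cite{bianejosuat}; modulo citing them, the argument is the short existence-plus-uniqueness dichotomy above.
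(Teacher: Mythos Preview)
Your proposal is correct and follows essentially the same approach as the paper: reduce to the case $w=c$ by passing to $W_{\Fix(w)}$, then take $v$ to be the subword of $c$ supported on $J=\supp(u)$. The paper's proof is terser (``it is easily checked''), while you spell out the existence and uniqueness arguments; the one nontrivial step you flag --- that $u\in W_J$ with $u\leq c$ forces $u\leq c_J$ --- follows from the Brady--Watt characterization $w_1\leq w_2 \iff \Fix(w_1)\supseteq \Fix(w_2)$ on $\NC(W,c)$, since $u\in W_J$ means exactly $\Fix(c_J)\subseteq \Fix(u)$.
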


(See also~Lemma~\ref{lemm:intervalsfaces} for a related result.)

\begin{proof}
 Let us first consider the case where $w$ is maximal, i.e., it is the Coxeter element $c$. In this case, let $I \subset S$ be the support of $u$ and $v$ be the unique $v\sqsubset c$ which is the product of $s_i$ for~$i\in I$. It is easily checked that it satisfies $u \ll v \sqsubset w$, and it is unique. The general case follows by doing the same procedure in the parabolic subgroup $W_{\Fix(w)}$.
\end{proof}

\begin{Proposition}[{\cite[Corollary~4.10]{bianejosuat}}] \label{lemm:bool}
 Let $w\in\NC(W,c)$. We have
 \begin{itemize}\itemsep=0pt
 \item The two orders $\leq$ and $\sqsubset$ agree on the set $\{ v\in\NC(W,c)\colon v\sqsubset w \}$. The resulting poset is a boolean lattice of order $2^{\ell(w)}$, containing all elements that can be written as subwords of the canonical factorization of $w$.
 \item The two orders $\leq$ and $\ll$ agree on the set $\{ v\in\NC(W,c)\colon v\gg w \}$. The resulting poset is a boolean lattice of order $2^{\#\supp(w) - \ell(w)}$, and its maximal element is the unique $w'$ such that $w\ll w' \sqsubset c$ (given by the previous proposition).
 \end{itemize}
\end{Proposition}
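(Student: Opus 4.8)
My plan is to handle the two items separately: the first describes the structure of $\NC(W,c)$ below $w$ and is essentially self-contained, while the second concerns the structure above $w$ and is where the sharper combinatorics of the two refined orders is needed. For the first item I would begin from the canonical factorization $w=t_1\cdots t_k$ of Proposition~\ref{prop:wcox}, where $k=\ell(w)$ and $t_1,\dots,t_k$ are the simple generators of the rank-$k$ parabolic subgroup $W_{\Fix(w)}$; let $\alpha_1,\dots,\alpha_k$ be their roots, which are linearly independent, being simple roots of $W_{\Fix(w)}$. By definition $\{v\colon v\sqsubset w\}=\{t_A\colon A\subseteq\{1,\dots,k\}\}$, with $t_A$ the product of the $t_i$, $i\in A$, in increasing order of the index. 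The first step is to note, using the standard fact that a product of reflections with linearly independent roots has reflection length equal to its number of factors and fixed space the intersection of the corresponding hyperplanes, that $\Fix(t_A)=\bigcap_{i\in A}\ker\alpha_i$; since the $\alpha_i$ are independent, distinct subsets $A$ yield distinct flats, hence distinct elements of $W$ (the map $v\mapsto\Fix(v)$ being injective on $\NC(W,c)$), and so $\#\{v\colon v\sqsubset w\}=2^{\ell(w)}$. The second step is to check that $t_A\leq t_B\Leftrightarrow A\subseteq B\Leftrightarrow t_A\sqsubset t_B$: if $t_A\leq t_B$ then $\Fix(t_A)\supseteq\Fix(t_B)$ (the map $v\mapsto\Fix(v)$ is increasing), i.e.\ $\operatorname{span}\{\alpha_i\colon i\in A\}\subseteq\operatorname{span}\{\alpha_j\colon j\in B\}$, so $A\subseteq B$ by independence; conversely, if $A\subseteq B$ then $t_B$ is a standard Coxeter element of $W_{\Fix(t_B)}=\langle t_i\colon i\in B\rangle$, so by Proposition~\ref{prop:wcox} its canonical factorization is $(t_i)_{i\in B}$, of which $t_A$ is a subword, giving $t_A\sqsubset t_B$; and $\sqsubset$ implies $\leq$ by definition. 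These two steps identify $\{v\colon v\sqsubset w\}$ with $(2^{\{1,\dots,k\}},\subseteq)$ for both orders, and the final assertion of the item is just the definition of $\sqsubset$.

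For the second item I would first reduce to the case $\supp(w)=S$. Put $K=\supp(w)$ and $w'=\prod_{s\in K}s$ (in the fixed order on $S$), so that $w'\sqsubset c$, $\ell(w')=\#K$, and, by the proof of Proposition~\ref{prop:order_uvw}, $w'$ is exactly the unique element with $w\ll w'\sqsubset c$. I claim $w\ll v$ forces $v\leq w'$: every reflection in a minimal factorization of $w$ lies below $w$, hence in $W_{\supp(w)}=W_K$, so, applied to the canonical factors of $v$ occurring in a reduced $W_{\Fix(v)}$-word for $w$, the standard parabolic of $W_{\Fix(v)}$ that they generate --- namely the one recording $\supp_{W_{\Fix(v)}}(w)$ --- lies in $W_K$; it equals $W_{\Fix(v)}$ because $w\ll v$, so $W_{\Fix(v)}\subseteq W_K$, whence $v\in W_K$ and then $v\leq w'$ since $\NC(W,c)\cap W_K=[\hat{0},w']$ (noncrossing partitions of a standard parabolic form an interval). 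Thus $\{v\colon v\gg w\}\subseteq[\hat{0},w']=\NC(W_K,w')$; replacing $(W,c)$ by $(W_K,w')$ I may assume $\supp(w)=S$, in which case the same argument shows $c$ is the unique maximal element of $\{v\colon v\gg w\}$.

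It remains, in the reduced case, to prove that $(\{v\colon v\gg w\},\leq)$ is a boolean lattice of order $2^{n-\ell(w)}$ on which $\leq$ agrees with $\ll$. The agreement of the two orders is quick: if $w\ll v_1$, $w\ll v_2$ and $v_1\leq v_2$, then in $\NC(W_{\Fix(v_2)},v_2)$ one has $w\leq v_1$, hence $\supp_{W_{\Fix(v_2)}}(w)\subseteq\supp_{W_{\Fix(v_2)}}(v_1)$, and the left side being the full generating set (as $w\ll v_2$) forces the right side to be, i.e.\ $v_1\ll v_2$. The boolean structure is the hard part. My intended route is to transport $\{v\colon v\gg w\}$ along the poset isomorphism $v\mapsto w^{-1}v$ from $[w,c]$ onto $[\hat{0},w^{-1}c]=\NC(W_{\Fix(w^{-1}c)},w^{-1}c)$ (valid for any interval of $\NC(W,c)$), identify its image as an explicit family of noncrossing partitions of $W_{\Fix(w^{-1}c)}$, and verify directly that this family is boolean of the claimed size; an alternative is to induct on $n-\ell(w)$, building the lattice from its $\ll$-covers, which by the Bruhat characterization of cover relations recalled before the statement are exactly the absolute-order covers $u\lessdot v$ with $u\geq_B v$. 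The main obstacle is precisely this step: keeping track of how the canonical factorizations of the various $v\gg w$ relate to that of $w$ is delicate, and it is this analysis --- within the general structural theory of the orders $\sqsubset$ and $\ll$ --- that is carried out in \cite[Corollary~4.10]{bianejosuat}.
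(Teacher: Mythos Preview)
The paper does not give its own proof of this proposition: it is stated with a citation to \cite[Corollary~4.10]{bianejosuat} and no argument. So there is no ``paper's proof'' to compare against, and your proposal already goes well beyond what the paper does.

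Your treatment of the first item is correct and self-contained. The key steps (Carter's lemma giving $\Fix(t_A)=\bigcap_{i\in A}\ker\alpha_i$, injectivity of $A\mapsto t_A$ via linear independence of the simple roots of $W_{\Fix(w)}$, and the identification of the canonical factorization of $t_B$ with $(t_i)_{i\in B}$) are all valid.

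For the second item, your proof that $\leq$ and $\ll$ agree on $\{v\colon v\gg w\}$ is fine. However, the reduction step contains a genuine gap. You write that ``every reflection in a minimal factorization of $w$ lies below $w$, hence in $W_K$'', and then apply this to ``the canonical factors of $v$ occurring in a reduced $W_{\Fix(v)}$-word for $w$''. But a reduced word in the simple generators of $W_{\Fix(v)}$ is a Coxeter-reduced expression, not a minimal reflection factorization; the simple generators appearing there need not lie below $w$ in absolute order, so your inference that they lie in $W_K$ is unjustified. The conclusion $v\in W_K$ is nevertheless correct, and there is a clean fix using material already in the paper: by the Bruhat characterization of cover relations stated just before the proposition, any saturated $\ll$-chain $w=v_0\lldot v_1\lldot\cdots\lldot v_r=v$ satisfies $v_{i}\geq_B v_{i+1}$ at each step, whence $v\leq_B w$ and therefore $\supp(v)\subseteq\supp(w)=K$; combined with $\supp(w)\subseteq\supp(v)$ (from $w\leq v$ in absolute order, as you argue), this gives $\supp(v)=K$ and $v\in W_K$.

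Finally, you explicitly defer the boolean structure of $\{v\colon v\gg w\}$ to \cite{bianejosuat}, which is exactly what the paper does for the entire statement. So after patching the reduction as above, your proposal is a strictly more detailed treatment than the paper's, with a complete argument for the first item and for part of the second, both relying on the same external reference for the remaining core.
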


\section{Parking spaces and their characters}
\label{sec:parking}

The goal of this section is to introduce some background on parking space theory, as well as a~``prime'' analog. The main result about the prime parking space is Theorem~\ref{theo_fpark}.

\begin{Remark}
 The terminology is not well-established, but there does exist a notion of ``prime parking function''. See~\cite{duarte} for a proof that there are $(n-1)^{n-1}$ prime parking functions of length~$n$. Our prime parking space is the natural analog in the Coxeter setting. The adjective ``positive'' as in the positive Fu{\ss}--Catalan number $\Cat^{(m)}_+(W)$ is natural in cluster theory (see Section~\ref{sec:defclus}). The term {\it ``Fu{\ss}--Dogolon''} has also been coined as a prime/positive analog of ``Fu{\ss}--Catalan''.
\end{Remark}

\subsection{The parabolic Burnside ring}

We first need some preliminaries about characters of $W$, due to Orlik and Solomon~\cite{orliksolomon}. See also Geck and Pfeifer~\cite[Chapter~2.4]{geckpfeiffer}.

Generically, we will use bold symbols to denote characters. In particular, ${\bf 1}$ denotes the trivial character (of a group which is clear from the context).

\begin{Definition}
 For $I\subset S$, let $ \bPhi_I := \ind_{W_I}^W(\bf 1)$ (the trivial character of $W_I$ induced to $W$). The {\it parabolic Burnside ring} $R(W)$ of $W$ is the ring linearly generated by $(\bPhi_I)_{I\subset S}$ (as a subring of the character ring of $W$).
\end{Definition}

It is not obvious that the linear span of $(\bPhi_I)_{I\subset S}$ is indeed a ring. We refer to~\cite[Section~2.4.3]{geckpfeiffer}. It also follows from {\it loc.~cit.}~that a basis of $R(W)$ is $(\bPhi_I)_{I\in \Theta}$, where $\Theta$ is a set of representatives of subsets of $S$ modulo the equivalence relation $\sim$ as in Lemma~\ref{lemm:bij_xi}. Using implicitly one of the bijections from Lemma~\ref{lemm:bij_xi}, we identify $\Theta$ with a set of representatives for the quotient $L(W)/W$. We thus write $\bPhi_X$ in place of $\bPhi_I$ for $X\in L(W)$ such that $W_X$ and $W_I$ are conjugate.

Note that $\bPhi_{S}$ is the trivial character of $W$, and the unit of $R(W)$. Also, it can be seen that~$\bPhi_I$ is the character of the representation $\mathbb{C}^{ W / W_I }$ (the linearization of the group action on~$W/ W_I$ where $W$ acts by left multiplication on the cosets).

In type $A_n$, $R(W)$ is the space of degree $n+1$ symmetric functions under the Kronecker product.

\begin{Remark}
 Let us mention that $R(W)$ is a subring of the {\it Burnside ring} of $W$ (which is linearly generated by characters of the representations $\mathbb{C}^{ W / W' }$ where $W'$ is any subgroup of $W$). The terminology comes from the fact that here we only consider parabolic subgroups.
\end{Remark}

\begin{Remark} \label{carac_OS}
 The algebra $\mathbb{Q}\otimes R(W)$ is the space of functions $\chi \colon W \to \mathbb{Q}$ such that the value~$\chi(w)$ only depends on the orbit of $\Fix(w)$ in $L(W)/W$. This characterization is essentially due to Orlik and Solomon~\cite{orliksolomon}.
\end{Remark}

The {\it sign character} of $W$, denoted $\sign$, is defined by
\[
 \sign(w) := (-1)^{\ell(w)} = (-1)^{n-\dim\Fix(w)}.
\]
This character is usually defined with the Coxeter length rather than the reflection length, but they have the same parity as each reflection has odd Coxeter length. The following lemma shows how it acts on $R(W)$ by multiplication.

\begin{Lemma}[Solomon~\cite{solomon}] \label{lemm_solomon}
For any $J\subset S$, we have
\begin{equation}\label{epstensor}
 \sign \otimes \bPhi_J = \sum_{I\subset J} (-1)^{\#I} \bPhi_I.
\end{equation}
\end{Lemma}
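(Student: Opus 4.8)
The plan is to prove the identity $\sign \otimes \bPhi_J = \sum_{I\subset J} (-1)^{\#I} \bPhi_I$ by evaluating the characters on an arbitrary element $w\in W$ and checking equality of class functions. Since $\bPhi_J = \ind_{W_J}^W(\mathbf 1)$ is the permutation character of the action of $W$ on the cosets $W/W_J$, its value at $w$ is $\bPhi_J(w) = \#\{gW_J : wgW_J = gW_J\} = \#\{gW_J : g^{-1}wg\in W_J\}$, i.e. the number of fixed cosets; similarly $(\sign\otimes\bPhi_J)(w) = \sign(w)\cdot\bPhi_J(w)$. Tensoring with $\sign$ is compatible with induction in the sense that $\sign\otimes\ind_{W_J}^W(\mathbf 1) = \ind_{W_J}^W(\sign|_{W_J})$, so it suffices to work inside each parabolic subgroup $W_J$ and prove the base case $J = S$, namely
\begin{equation*}
 \sign_W = \sum_{I\subset S} (-1)^{\#I}\, \bPhi_I,
\end{equation*}
where $\sign_W$ is the sign character of $W$; the general case then follows by applying this identity in $W_J$ (whose simple system is $J$) and inducing up to $W$, using that induction is linear and $\ind_{W_J}^W\bPhi_I^{W_J} = \bPhi_I^W$ for $I\subset J$.

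For the base case I would argue representation-theoretically via the Solomon/Orlik–Solomon setup, or more elementarily via the Coxeter complex. The cleanest route: recall that the alternating sum of the permutation modules $\mathbb C[W/W_I]$ over subsets $I\subset S$, graded by $\#I$, computes (up to the relevant sign convention) the reduced homology of the Coxeter complex $\Sigma(W)$, which as a $W$-representation is the sign character in the top degree and zero elsewhere — this is exactly Solomon's theorem. Concretely, the Coxeter complex is an $(n-1)$-sphere, its chain complex has the $i$-th term $\bigoplus_{\#I = n-1-i}\mathbb C[W/W_I]$, it is acyclic except in top degree where the homology is $\sign_W$, so taking the Euler characteristic in the representation ring gives $\sum_{I\subset S}(-1)^{n-\#I}\bPhi_I = (-1)^{n-1}\cdot(-\sign_W)$ up to bookkeeping, which rearranges to the claimed formula. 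Alternatively, one can avoid topology entirely: evaluate both sides at $w$ and use that $\sum_{I\subset S}(-1)^{\#I}\bPhi_I(w) = \sum_{I\subset S}(-1)^{\#I}\#\{gW_I : g^{-1}wg\in W_I\}$, then group the cosets by the smallest parabolic containing the relevant conjugate of $w$ and apply an inclusion–exclusion / Möbius computation on the Boolean lattice of subsets of $S$; a standard sign-reversing pairing collapses the sum to $(-1)^{\ell(w)} = \sign(w)$.

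The main obstacle is getting the sign conventions and the passage between the two forms of the statement exactly right: one must be careful that $\sign\otimes\bPhi_I = \ind_{W_I}^W(\sign|_{W_I})$ and that $\sign|_{W_I}$ decomposes in $W_I$ according to the same formula with $S$ replaced by $I$, so that the global identity is genuinely reduced to the single ``full'' identity $\sign_W = \sum_{I\subset S}(-1)^{\#I}\bPhi_I$ rather than needing a separate argument for each $J$. Once that reduction is in place, the remaining content is precisely Solomon's classical computation of the alternating sum of coset representations (equivalently, the $W$-equivariant homology of the Coxeter complex), which is the result we are allowed to cite; so the real work is organizational rather than computational.
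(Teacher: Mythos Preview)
The paper does not supply a proof of this lemma; it is stated with attribution to Solomon~\cite{solomon} and used as a black box. Your proposal therefore goes beyond what the paper does, and your outline is essentially Solomon's original argument: reduce to the case $J=S$ via the projection formula $\sign\otimes\ind_{W_J}^W(\mathbf 1)=\ind_{W_J}^W(\sign_{W_J})$ together with transitivity of induction, and then identify $\sum_{I\subset S}(-1)^{\#I}\bPhi_I$ with $\sign$ either by reading off the equivariant Euler characteristic of the Coxeter complex (a sphere, so only top homology survives and carries $\sign$) or by a direct inclusion--exclusion on fixed cosets. Both routes are standard and correct; the reduction step is clean and your caution about sign bookkeeping is well placed, though in practice the only delicate point is matching the grading of the chain complex to the exponent $\#I$.
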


Note that by taking $\bPhi_J$ to be the trivial character (i.e., $J=S$), the previous equation gives~$\sign$ as an alternating sum of $\bPhi_I$, showing in particular that $\sign\in R(W)$.

Now, define a class function $\bPsi_t\colon W \to \mathbb{Z}$ for each integer $t$ by
\[
 \bPsi_t(w) = t^{\dim \Fix(w)}.
\]
In particular, $\bPsi_{-1} = (-1)^n \sign$.
Note that $\bPsi_t \in \mathbb{Q} \otimes R(W)$ for each $t\in \mathbb{Z}$. This is easily seen from the characterization of $\mathbb{Q} \otimes R(W)$ stated in Remark~\ref{carac_OS}.

For special values of $t$, in particular $t=mh+1$ with $m$ a non-negative integer and $t=mh-1$ with $m$ a positive integer (these are the ones relevant to our work), Gordon and Griffeth \cite{gordongriffeth} show that the class function $\bPsi_t$ is in fact a character of $W$ associated to the rational Cherednik algebra for $W$ with parameter $t/h$. This construction also goes through when $t$ is relatively prime to $h$, but there the Cherednik character and the class function $\bPsi_t$ differ by a Galois twist (which depends on $t$ but is trivial when $W$ is real and $t=mh\pm 1$). In some sense, even though it fails to be a permutation character (which is what we need in this work) the Cherednik approach has proven better for the study of \emph{rational parking functions} as in \cite{GLTW}.

\begin{Remark}
 In the more general context of complex reflection groups, Ito and Okada~\cite{itookada} gave a classification of positive integers $t$ such that $\bPsi_t$ is the character of a representation or a~permutation representation. For any finite complex reflection group with Coxeter number $h$, it is still true that $\bPsi_{mh+1}$ ($m\geq 0$) is the genuine character of a permutation representation. But this does not hold for $\bPsi_{mh-1}$ ($m>0$) beyond the real case.
\end{Remark}

In the next statement, we treat $t$ as a formal variable rather than an integer. Also, recall from the introduction that $N(W_X)$ is the normalizer of $W_X$ in $W$.

\begin{Proposition}[Orlik and Solomon~\cite{orliksolomon}] \label{prop:psik_expansion}
In $\mathbb{Q}[t] \otimes R(W)$, there is an expansion
\begin{equation} \label{psik_expansion}
 \bPsi_t = \sum_{ X \in \Theta} \frac{p_X(t)}{[N(W_X):W_X]}
 \bPhi_X,
\end{equation}
where $p_X(t)$ is a polynomial in $t$ called the \emph{characteristic polynomial} of $X$. It can be defined using the Möbius function $\mu$ of $L(W)$ by
\begin{equation} \label{eq:def_chi}
 p_X(t)
 =
 \sum_{Y\in L(W),\; Y\geq X} \mu(X,Y) t^{\dim (Y)}
\end{equation}
and can be factorized in the form
\[
 p_X(t) = \prod_{i=1}^{\dim(X)}\big(t-b_i^X \big),
\]
where the roots $b_i^X$ are positive integers called the \emph{Orlik--Solomon exponents} of $X$.
\end{Proposition}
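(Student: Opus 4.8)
The plan is to separate the statement into a purely formal part --- the existence of the expansion~\eqref{psik_expansion} together with the M\"obius formula~\eqref{eq:def_chi} for the coefficients --- and a substantive part --- the factorization of $p_X(t)$ into linear factors with positive integer roots. For the formal part I would work pointwise on $W$: by Remark~\ref{carac_OS}, $\bPsi_t\in\mathbb{Q}[t]\otimes R(W)$ for each $t$, so it has a (unique) expansion in the basis $(\bPhi_X)_{X\in\Theta}$, and, since the $\bPhi_X$ are $\mathbb{Q}$-linearly independent as class functions while $\bPsi_t(w)=t^{\dim\Fix(w)}$ is polynomial in $t$ for each $w$, the coefficients are polynomials in $t$. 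It therefore suffices to exhibit a candidate and verify equality, and the natural candidate is $p_X(t)/[N(W_X):W_X]$ with $p_X$ defined by~\eqref{eq:def_chi}; uniqueness of the basis expansion then forces these to be the coefficients. (One also checks along the way that $\Fix$, $\mu$, and $\dim$ are $W$-equivariant, so congruent flats share the same $p_X$, which is what lets indices run over $\Theta$.)

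The first real step is to evaluate the permutation character $\bPhi_X=\ind_{W_X}^{W}(\mathbf{1})$ at an arbitrary $w\in W$. It counts the cosets $gW_X$ fixed by $w$, i.e.\ those with $g^{-1}wg\in W_X$; by the definition~\eqref{def_WX} of $W_X$ this says $X\subseteq\Fix(g^{-1}wg)=g^{-1}\Fix(w)$, equivalently $gX\subseteq\Fix(w)$. As $gW_X$ ranges over $W/W_X$ the flat $gX$ ranges over the $W$-orbit of $X$, and each value is attained exactly $[N(W_X):W_X]$ times, since the setwise stabilizer of the flat $X$ is $N(W_X)$ (using that $X\mapsto W_X$ is a bijection). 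Hence
\[
 \bPhi_X(w)=[N(W_X):W_X]\cdot\#\{\,Y\in L(W):Y\sim X,\ Y\subseteq\Fix(w)\,\},
\]
where, in the reverse-inclusion convention on $L(W)$, the condition $Y\subseteq\Fix(w)$ means $Y\ge\Fix(w)$.

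Substituting this into $\sum_{X\in\Theta}\frac{p_X(t)}{[N(W_X):W_X]}\bPhi_X(w)$, the normalizer indices cancel and, summing $X$ over orbit representatives and then over each orbit, the expression collapses to $\sum_{Y\ge\Fix(w)}p_Y(t)$. Now expand $p_Y(t)=\sum_{Z\ge Y}\mu(Y,Z)t^{\dim Z}$ via~\eqref{eq:def_chi}, swap the two sums, and use the defining relation of the M\"obius function $\sum_{\Fix(w)\le Y\le Z}\mu(Y,Z)=\delta_{\Fix(w),Z}$ to obtain $t^{\dim\Fix(w)}=\bPsi_t(w)$. This proves the expansion~\eqref{psik_expansion} with $p_X$ as in~\eqref{eq:def_chi}; I would also remark that~\eqref{eq:def_chi} is exactly the characteristic polynomial of the restricted arrangement on $X$, whose intersection lattice is the upper interval above $X$ in $L(W)$.

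The remaining point --- the factorization $p_X(t)=\prod_{i=1}^{\dim(X)}\bigl(t-b_i^X\bigr)$ with positive integers $b_i^X$ --- is the main obstacle, and it is not a formal manipulation: it is the assertion that each restricted arrangement of a Coxeter arrangement is free, so that Terao's factorization theorem yields a factorization over $\mathbb{Z}$ with the exponents of the free arrangement as the roots. Here I would simply invoke the relevant results of Orlik--Solomon and Orlik--Terao (freeness of reflection arrangements and of all their restrictions, hence Terao factorization), which is where the genuine input lies; the other place requiring care is the bookkeeping around the reverse-inclusion order on $L(W)$ and the normalizer index, which the computation above is designed to handle.
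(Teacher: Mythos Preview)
The paper does not actually give a proof of this proposition: it is stated with an attribution to Orlik and Solomon~\cite{orliksolomon} and treated as a known result from the literature, with the subsequent paragraphs only explaining how to compute the quantities involved. So there is no ``paper's own proof'' to compare against.

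That said, your proposal is correct and is essentially the standard argument. The evaluation of $\bPhi_X(w)$ via cosets and the orbit--stabilizer count is exactly the computation the paper itself carries out later in the proof of Lemma~\ref{lemm:evalchar} (for a slightly different purpose), including the identification of $N(W_X)$ with the setwise stabilizer of $X$. Your M\"obius inversion is clean and the conventions match: with the reverse-inclusion order on $L(W)$, the identity $\sum_{\Fix(w)\le Y\le Z}\mu(Y,Z)=\delta_{\Fix(w),Z}$ collapses the double sum to $t^{\dim\Fix(w)}$ as you claim. For the factorization, you rightly identify that this is the nontrivial input --- freeness of the restricted arrangements of a Coxeter arrangement, combined with Terao's factorization theorem --- and that invoking the Orlik--Solomon/Orlik--Terao results is the appropriate move, which is precisely what the paper does by citation.
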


See~\cite{orlikterao} for tables containing the Orlik--Solomon exponents for all irreducible $W$ in the finite type classification. When $X$ is the minimal element of $L(W)$ (i.e., the full-dimensional subspace of the geometric representation $V$ of $W$), the associated Orlik--Solomon exponents are the exponents $e_1,\dots,e_n$ of $W$ (classically defined by considering eigenvalues of the Coxeter element, see~\cite{humphreys}).

From the previous proposition, we see that there are at least two ways to compute the characteristic polynomials (or the Orlik--Solomon exponents, by taking their roots):
\begin{itemize}\itemsep=0pt
 \item we can use the intersection lattice and its Möbius function via equation~\eqref{eq:def_chi},
 \item we can use equation~\eqref{psik_expansion} and character calculations.
\end{itemize}
Let us make the second point more explicit. The values of the characters $\bPhi_I$ on parabolic conjugacy classes can be organized in a square matrix (with rows and columns indexed by $\Theta$) called the {\it parabolic table of marks}. An algorithm to compute it is given by Geck and Pfeiffer~\cite[Section~2.4]{geckpfeiffer}. Since the values of the character $\bPsi_t$ is explicit, we can get the coefficients $\big([N(W_I):W_I]^{-1} p_I(t)\big)$ by inverting the parabolic table of marks.

Another way to compute these characteristic polynomials is given by Sommers in~\cite[Propositions~4.7 and~5.1]{sommers1}. We will give another method below, by giving a recursion satisfied by the numbers $\gamma(W,\mathcal{X},m)$ (see Section~\ref{sec:recfaces}).

The denominator $[ N(W_X) : W_X ]$ that appears above can be written differently. Orlik and Solomon~\cite{orliksolomon} showed that
\begin{equation} \label{factornu}
 \frac{1}{[ N(W_X) : W_X ]} = \frac{ \nu(X) }{ \prod_{i=1}^k \big(b_i^X + 1\big)},
\end{equation}
where $\nu(X)$ is the number of $J\subset S$ such that $W_J \sim W_X$. Indeed, this follows by plugging $t=-1$ in the previous proposition, and using Lemma~\ref{lemm_solomon}. As a check of what happens in the case of the symmetric group (type A), let $\lambda$ be a partition of $n$ and $\mathfrak{S}_\lambda$ the corresponding Young subgroup of $\mathfrak{S}_n$. Let $\mu_i$ be the multiplicity of $i$ in $\lambda$. The normalizer $N(\mathfrak{S}_\lambda)$ is a semidirect product $\mathfrak{S}_\lambda \rtimes (\prod_i \mathfrak{S}_{\mu_i} )$, and it follows $[N(\mathfrak{S}_\lambda):\mathfrak{S}_\lambda] = \prod_i \mu_i!$.

Finally, let us mention the following statement (see also \cite[Theorem~7.4.2]{haiman}).

\begin{Proposition}
If $t>0$ is such that $\bPsi_t$ is the character of a representation, the multiplicity of the trivial character in $\bPsi_t$ is the \emph{rational Catalan number}
\[
 \frac{1}{|W|} \prod_{i=1}^n(t+e_i).
\]
\end{Proposition}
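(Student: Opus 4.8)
The multiplicity of the trivial character $\mathbf{1}$ in a class function $\chi$ on $W$ is the inner product $\langle \chi, \mathbf{1}\rangle = \frac{1}{|W|}\sum_{w\in W}\chi(w)$ (complex conjugation being irrelevant since $\mathbf{1}$ is real; the hypothesis that $\bPsi_t$ be a genuine character is what makes ``multiplicity'' literally meaningful, i.e.\ a nonnegative integer, but the averaging formula itself holds for the class function $\bPsi_t$ regardless). Applying this to $\chi = \bPsi_t$ and inserting the defining formula $\bPsi_t(w) = t^{\dim\Fix(w)}$, the quantity to compute is
\[
 \langle \bPsi_t, \mathbf{1}\rangle = \frac{1}{|W|}\sum_{w\in W} t^{\dim\Fix(w)}.
\]
Thus the proposition is equivalent to the polynomial identity $\sum_{w\in W} t^{\dim\Fix(w)} = \prod_{i=1}^n (t+e_i)$.

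\textbf{Finishing.} This last identity is the classical theorem of Shephard--Todd and Solomon describing the distribution of $\dim\Fix$ over $W$ (for a real group the exponents coincide with the coexponents, which is the shape the statement takes for general complex reflection groups). I would simply cite it, e.g.\ from \cite{solomon} or \cite{orlikterao}. If one prefers to keep the argument internal to the paper's setup, one can instead pair \eqref{psik_expansion} with $\mathbf{1}$: since $\bPhi_X = \ind_{W_X}^W \mathbf{1}$, Frobenius reciprocity gives $\langle \bPhi_X,\mathbf{1}\rangle = 1$ for every $X\in\Theta$, so $\langle\bPsi_t,\mathbf{1}\rangle = \sum_{X\in\Theta} p_X(t)/[N(W_X):W_X]$; this reduces the claim to the Orlik--Solomon form of the same fact, namely that this sum of characteristic polynomials equals $\frac{1}{|W|}\prod_{i=1}^n(t+e_i)$, which in turn follows from the freeness of the reflection arrangement and the factorization $p_V(t) = \prod_{i=1}^n(t-e_i)$ recorded in Proposition~\ref{prop:psik_expansion} for the minimal flat $V$.

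\textbf{Main obstacle.} There is essentially none beyond invoking (or re-deriving) the Shephard--Todd--Solomon count: the statement is a repackaging of that identity through the character-theoretic inner product, and the only genuine decision is whether to cite the identity or to include the short reduction via \eqref{psik_expansion} sketched above. As a consistency check, at $t = mh+1$ the formula reads $\frac{1}{|W|}\prod_i(mh+1+e_i) = \Cat^{(m)}(W)$, recovering the fact that the trivial isotypic part of the $(mh+1)$-parking space has dimension the Fu\ss--Catalan number.
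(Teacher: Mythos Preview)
Your main argument is correct and essentially identical to the paper's: compute the multiplicity as the inner product $\langle \bPsi_t,\mathbf{1}\rangle = \frac{1}{|W|}\sum_{w\in W} t^{\dim\Fix(w)}$, then invoke the Shephard--Todd formula $\sum_{w\in W} t^{\dim\Fix(w)} = \prod_{i=1}^n(t+e_i)$.

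One small remark on your ``internal'' alternative: pairing \eqref{psik_expansion} with $\mathbf{1}$ via Frobenius reciprocity does give $\langle\bPsi_t,\mathbf{1}\rangle = \sum_{X\in\Theta} p_X(t)/[N(W_X):W_X]$, but this is just another expression for the same average $\frac{1}{|W|}\sum_w t^{\dim\Fix(w)}$. Identifying it with $\frac{1}{|W|}\prod_i(t+e_i)$ is still the Shephard--Todd identity; it does not follow from freeness and the factorization of $p_V$ alone. So the alternative route is not really self-contained --- it repackages the same citation rather than avoiding it.
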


\begin{proof}
The trivial character is orthogonal to the other irreducible characters, and it follows that this multiplicity is
\[
 \frac{1}{|W|} \sum_{w\in W} \bPsi_t (w).
\]
The result then follows from the Shephard--Todd formula~\cite{shephardtodd}
\[
 \sum_{w\in W} t^{\dim \Fix(w)} = \prod_{i=1}^n (t+e_i).
\tag*{\qed}
\]
\renewcommand{\qed}{}
\end{proof}

In the two cases which are relevant to this work, we get that the multiplicity of the trivial character in $\Psi_{mh+1}$ and $\Psi_{mh-1}$ are respectively $\Cat^{(m)}(W)$ and $\Cat_+^{(m)}(W)$.

\subsection{Parking spaces}

We review the theory of parking spaces, introduced by Armstrong, Reiner and Rhoades~\cite{armstrongreinerrhoades}. This is a very brief account, which mostly aims at some enumeration formulas for chains of noncrossing partitions. In particular, we simply consider characters of $W$ whereas the general theory deals with characters of the product of $W$ with a cyclic group. Also, we focus on the {\it noncrossing parking space} but other kinds of parking spaces exist.

Assume that $W$ is irreducible, and let $h$ be its Coxeter number. A {\it parking space} is a representation of $W$ having $\bPsi_{h+1}$ as its character. The name comes from the fact that in type $A_n$, the symmetric group $\mathfrak{S}_{n+1}$ acting on parking functions of length $n+1$ is such a parking space. More generally, a $t$-parking space (for an integer $t\geq 1$) is a representation of $W$ having $\bPsi_{t}$ as its character. In the case where $W$ is a Weyl group, there is a root lattice $Q$ acted on by $W$. It follows from Sommers~\cite[Proposition~3.9]{sommers1} that the quotient $Q/tQ$ is a $t$-parking space for the natural action of $W$, upon some conditions on $t$ (being relatively prime to $h$ is sufficient). See also Haiman~\cite[Section~7.3]{haiman}. For the symmetric group $\mathfrak{S}_n$, other $t$-parking spaces are given by the action on {\it rational parking functions} (see~\cite{armstrongloehrwarrington} for details). See also~\cite{GLTW} for rational parking functions in general type.

Two $t$-parking spaces are isomorphic as representations of $W$, since by definition they have the same character. However, it might be difficult to find an explicit isomorphism and it is therefore interesting to consider various kinds of parking spaces. The {\it noncrossing parking space} from~\cite{armstrongreinerrhoades} is defined in terms of noncrossing partitions. Rhoades' generalization~\cite{rhoades} in the Fu{\ss}--Catalan setting is
\begin{equation} \label{parkbigoplus}
 \bigoplus_{\substack{ w_1,\dots, w_m \in\NC(W,c) \\ w_1\leq \dots \leq w_m }}
 \mathbb{C}^{ W / W_{\Fix(w_1)} }.
\end{equation}
It can be viewed as the linearization of the $W$-set of $m$-{\it parking functions}
\begin{align*} %\label{def:PF}
 \PF(W,m) := \biguplus_{\substack{ w_1,\dots, w_m \in\NC(W,c) \\ w_1\leq \dots \leq w_m }}
 \{(w_1,\dots,w_m ) \} \times W / W_{\Fix(w_1)}
\end{align*}
(where $W$ acts on the second factor). Rather than the representation itself, we mostly consider its character
\begin{align}
 \label{parkdef}
 \park_{W,m} := \sum_{X\in \Theta} \kappa(W,X,m) \bPhi_X \in R(W),
\end{align}
where
\begin{align} \label{def:kappa}
 \kappa(W, X, m) := \#\big\{ (w_1,\dots,w_m) \in \NC(W,c)^m \colon w_1\leq \dots \leq w_{m} \text{ and } \Fix(w_1) \sim X \big\}.
\end{align}

\begin{Remark}
 The integers $\kappa(W, X, m)$ are known as {\it Kreweras numbers}. Note that they refine Narayana numbers, as we have (as a consequence of the combinatorial definition):
 \begin{align} \label{eq:nar_krew}
 \operatorname{Nar}^{(m)}(W,k)
 =
 \sum_{\substack{ X\in \Theta, \\ \dim(X)=n-k}} \kappa(W,X,m).
 \end{align}
\end{Remark}

\begin{Theorem}[\cite{rhoades}, except for $E_7$, $E_8$] \label{theo_park}
Assume that $W$ is irreducible, with Coxeter number~$h$. Then, the representation in~\eqref{parkbigoplus} is an $(mh+1)$-parking space, i.e., $\park_{W,m} = \bPsi_{mh+1}$. Equivalently,
\begin{equation} \label{eq:formula_kappa}
 \forall X \in \Theta,
 \quad
 \kappa(W,X,m) = \frac{p_X(mh+1)}{[N(W_X):W_X]}.
\end{equation}
\end{Theorem}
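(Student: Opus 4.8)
The plan is to first note that the two formulations of the theorem are equivalent, and then to attack the coefficient form~\eqref{eq:formula_kappa}. Since $(\bPhi_X)_{X\in\Theta}$ is a basis of $R(W)$, comparing the defining expansion~\eqref{parkdef} of $\park_{W,m}$ with the Orlik--Solomon expansion~\eqref{psik_expansion} of $\bPsi_{mh+1}$ from Proposition~\ref{prop:psik_expansion} gives this equivalence immediately, so it is enough to prove the class-function identity $\park_{W,m}=\bPsi_{mh+1}$. Both sides lie in $\mathbb{Q}\otimes R(W)$, so by Remark~\ref{carac_OS} one only has to check $\park_{W,m}(w)=(mh+1)^{\dim\Fix(w)}$ for one element $w$ per orbit of flats, and by Lemma~\ref{lemm:bij_xi} we may take $w=\prod_{s\in I}s$, a standard Coxeter element of a standard parabolic $W_I$, so that $\dim\Fix(w)=n-|I|$. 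Here the left-hand side is a transparent fixed-point count: $\park_{W,m}(w)$ is the number of pairs $\big((w_1,\dots,w_m),gW_{\Fix(w_1)}\big)$ with $w_1\leq\dots\leq w_m$ in $\NC(W,c)$ and $g^{-1}wg\in W_{\Fix(w_1)}$, that is, the number of $m$-parking functions fixed by $w$.

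Equivalently, the real content is to identify the Kreweras number $\kappa(W,X,m)$ of~\eqref{def:kappa} --- a chain enumeration inside $\NC(W,c)$ --- with the intersection-lattice quantity $p_X(mh+1)/[N(W_X):W_X]$, where $p_X$ is built from the M\"obius function of $L(W)$ via~\eqref{eq:def_chi}. I would pursue two complementary routes. The first is a parabolic recursion: writing out the fixed-point count for $w=\prod_{s\in I}s$ and sorting the noncrossing partitions $w_1$ by how $W_{\Fix(w_1)}$ meets the conjugates of $W_I$ --- using the boolean-interval descriptions of $\sqsubset$ and $\ll$ from Proposition~\ref{lemm:bool} to control the subwords involved --- one hopes to express $\kappa(W,X,m)$ through Kreweras numbers of proper parabolic subgroups, anchored by the uniform Fu{\ss}--Catalan product $\Cat^{(m)}(W)=\frac{1}{|W|}\prod_{i=1}^{n}(mh+e_i+1)$ in the base case $I=\varnothing$, and then to verify that $p_X(mh+1)/[N(W_X):W_X]$ obeys the same recursion through~\eqref{eq:def_chi}. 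The second, more classical route factors the identity through (i) the coincidence of the refined enumerations (by parabolic type) of $m$-multichains of noncrossing partitions and of Athanasiadis' co-filtered chains of nonnesting partitions~\cite{athanasiadisreiner,athanasiadis}, and (ii) the evaluation of the nonnesting side as $p_X(mh+1)/[N(W_X):W_X]$, which for Weyl groups follows from Sommers'~\cite{sommers2} analysis of the relevant root-poset ideals and the freeness of the restricted arrangements, with the non-crystallographic groups $I_2(p)$, $H_3$, $H_4$ treated directly.

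The main obstacle is step (i): the coincidence of the noncrossing and nonnesting refined enumerations is, to date, established only by a case-by-case verification using the finite-type classification --- a uniform explanation being precisely the open problem that parking space theory is meant to resolve --- and, unlike the purely formal reduction of the first paragraph, it cannot currently be made uniform. Accordingly the proof of the theorem as stated reads as follows: for every irreducible $W$ other than $E_7$ and $E_8$ it is the theorem of Rhoades~\cite{rhoades}; for $E_7$ and $E_8$ one verifies~\eqref{eq:formula_kappa} by direct computation, obtaining $\kappa(W,X,m)$ from the Hasse diagram of $\NC(W,c)$ and obtaining $p_X(mh+1)/[N(W_X):W_X]$ by inverting the parabolic table of marks as prescribed by~\eqref{psik_expansion}. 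It is that finite computation, together with the dependence on the still case-by-case coincidence (i), that I expect to be the crux.
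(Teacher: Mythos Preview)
Your proposal is correct and lands on essentially the same proof as the paper: the equivalence of the two formulations via Proposition~\ref{prop:psik_expansion}, Rhoades' result for all types except $E_7$ and $E_8$, and a finite computer verification for those two. The paper's only additional remark is a practical one for the computer check: rather than walking the Hasse diagram, it rewrites $\kappa(W,X,m)=\sum_{w\in\mathcal{X}}\Cat^{(m-1)}\big(W_{\Fix(w^{-1}c)}\big)$ and uses the Fu{\ss}--Catalan product formula term by term, which makes $E_7$ and $E_8$ tractable; your middle paragraph sketching a parabolic recursion or a nonnesting detour is exploratory and, as you yourself note, does not yield a uniform argument, so it plays no role in the actual proof.
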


\begin{proof}
Note that the equivalence between the two statements follows from Proposition~\ref{prop:psik_expansion}. Since the infinite families of the finite type classification have been treated (see~\cite[Table~3]{rhoades}), there remains only to deal with a finite number of exceptional groups. This can be done by computer.
\end{proof}

Let us give some comments about the computer verification mentioned in this proof. First note that the computation is possible because we have here a finite statement about polynomials in $m$ (whereas Rhoades' conjectures~\cite{rhoades} involves a cyclic group of order $mh$ and is therefore of a different nature). A naive computation of the set $\NC(W,c)$ might be lengthy, but an efficient way to get all noncrossing partitions is to use the characterization in~\cite[Section~4.5]{bianejosuat}. It says that there is a binary relation $\between$ on $T$ (that depends on $c$) such that each $w\in\NC(W,c)$ can be identified with a set of pairwise-related reflections (explicitly, this is the set of simple reflections of $W_{\Fix(w)}$). It gives a quick way to get all noncrossing partitions, and for each $w\in \NC(W,c)$ we also have the canonical factorization $w=t_1 \cdots t_k$ into simple reflections of the parabolic subgroup $W_{\Fix(w)}$. Now, the right-hand side of~\eqref{def:kappa} can be rewritten
\[
 \sum_{ w \in \mathcal{X} } \Cat^{(m-1)}\big(W_{\Fix(w^{-1}c)}\big).
\]
Using the formula for Fu{\ss}--Catalan numbers in terms of exponents \big(which we can use here since we have the Coxeter type of $W_{\Fix(w^{-1}c)}$ via the canonical factorization of $w^{-1}c$\big), we can compute this sum in reasonable time even for $E_7$ and $E_8$.

Now, let $\NC'(W,c) \subset \NC(W,c)$ denote the subset of elements with full support. The ``prime'' analog of the noncrossing parking space is now defined as
\begin{equation} \label{fparkbigoplus}
 \bigoplus_{\substack{ w_1,\dots, w_{m-1} \in\NC(W,c), \; w_m\in \NC'(W,c) \\ w_1\leq \dots \leq w_m }}
 \mathbb{C}^{ W / W_{\Fix(w_1)} }.
\end{equation}
Again, we mostly consider its character
\begin{equation}
 \label{fparkdef}
 \fpark_{W,m} := \sum_{X\in \Theta} \kappa^+(W,X,m) \bPhi_X,
\end{equation}
where
\begin{gather*}
 \kappa^+(W, X, m) := \#\big\{ (w_1,\dots,w_m) \in \NC(W,c)^{m-1} \times \NC'(W,c) \colon \\
 \hphantom{\kappa^+(W, X, m) := \#\big\{}{}\ w_1\leq \dots \leq w_{m} \ \text{and} \ \Fix(w_1) \sim X \big\}.
\end{gather*}

\begin{Remark}
In complete analogy with~\eqref{eq:nar_krew}, we have
 \begin{align*} %\label{eq:nar_krew2}
 \operatorname{Nar}_+^{(m)}(W,k)
 =
 \sum_{X\in \Theta, \; \dim(X)=n-k} \kappa^+(W,X,m).
 \end{align*}
\end{Remark}

In complete analogy with Theorem~\ref{theo_park}, we have Theorem~\ref{theo_fpark} below. Even though it hasn't explicitly appeared in the literature, it is not particularly surprising: some known enumeration formulas clearly suggest that the parameter $mh-1$ should appear in the present situation (see~\cite{athanasiadistzanaki2}, concerning the positive part of the generalized cluster complex, and~\cite{sommers2}, concerning strictly positive ideals in the root poset).

\begin{Theorem} \label{theo_fpark}
Assume that $W$ is irreducible, with Coxeter number $h$. The representation in~\eqref{fparkbigoplus} is an $(mh-1)$-parking space, i.e., $\fpark_{W,m} = \bPsi_{mh-1}$. Equivalently,
\begin{equation} \label{eq:formula_kappaplus}
 \forall X \in \Theta,
 \quad
 \kappa^+(W,X,m) = \frac{p_X(mh-1)}{[N(W_X):W_X]}.
\end{equation}
\end{Theorem}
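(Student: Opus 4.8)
The plan is to deduce Theorem~\ref{theo_fpark} from Theorem~\ref{theo_park} by producing an identity in the parabolic Burnside ring that relates the prime parking character of $W$ to the ordinary parking characters of its standard parabolic subgroups, and then inducting on the rank.

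\emph{Step 1: support stratification.} Partition the multichains $w_1\le\dots\le w_m$ of $\NC(W,c)$ according to $J:=\supp(w_m)$. Using that $\{v\in\NC(W,c)\colon v\le w_m\}=\NC(W_{\Fix(w_m)},w_m)$ (Proposition~\ref{prop:wcox}) and that, for a suitable Coxeter element $c_J$ of the standard parabolic $W_J$, one has $\{v\in\NC(W,c)\colon v\in W_J\}=\NC(W_J,c_J)$, one checks that the multichains with $\supp(w_m)=J$ are exactly the full-support multichains of $W_J$, and that the minimal parabolic of $W$ containing $w_1$ coincides with the minimal parabolic of $W_J$ containing $w_1$. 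Hence the summand $\mathbb{C}^{W/W_{\Fix(w_1)}}$ of the parking space of $W$ is $\ind_{W_J}^W$ of the corresponding summand of the prime parking space of $W_J$, and assembling the strata yields, in $R(W)$,
\begin{equation*}
 \park_{W,m}=\sum_{J\subseteq S}\ind_{W_J}^W\big(\fpark_{W_J,m}\big),
\end{equation*}
where $\fpark_{W_J,m}=\bigotimes_k\fpark_{W_{J,k},m}$ when $W_J=\prod_k W_{J,k}$ is reducible. The orders $\sqsubset$ and $\ll$ give a convenient description of the strata here: by Proposition~\ref{prop:order_uvw} every $w\in\NC(W,c)$ has a unique $\sqsubset$-completion to $c$, and the fibre of $w\mapsto\supp(w)$ over $J$ is precisely $\NCF(W_J,c_J)$.

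\emph{Step 2: reduction to a class-function identity.} Argue by induction on $n=\op{rank}(W)$. For $W$ irreducible, isolate the $J=S$ term:
\begin{equation*}
 \fpark_{W,m}=\park_{W,m}-\sum_{J\subsetneq S}\ind_{W_J}^W\big(\fpark_{W_J,m}\big).
\end{equation*}
By Theorem~\ref{theo_park} the first term is $\bPsi_{mh+1}$, and by the inductive hypothesis applied to the irreducible factors of each proper $W_J$ we have $\fpark_{W_J,m}=\bigotimes_k\bPsi^{W_{J,k}}_{mh_{J,k}-1}$, with $h_{J,k}$ the Coxeter number of $W_{J,k}$. Thus Theorem~\ref{theo_fpark} for $W$ becomes equivalent to the purely character-theoretic identity in $R(W)$
\begin{equation*}
 \bPsi_{mh+1}=\sum_{J\subseteq S}\ind_{W_J}^W\bigg(\bigotimes_k\bPsi^{W_{J,k}}_{mh_{J,k}-1}\bigg),
\end{equation*}
whose $J=S$ contribution is the sought term $\bPsi_{mh-1}$. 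Granting this identity, substitution into the previous display gives $\fpark_{W,m}=\bPsi_{mh-1}$ and closes the induction; the equivalent statement $\kappa^+(W,X,m)=p_X(mh-1)/[N(W_X):W_X]$ then follows from Proposition~\ref{prop:psik_expansion}. Run in reverse, the same argument recovers Theorem~\ref{theo_park} from Theorem~\ref{theo_fpark}, which is the promised equivalence.

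\emph{Step 3: proving the identity — the main obstacle.} What remains, and what I expect to be the hard part, is to establish the last displayed identity without circularity, i.e.\ independently of any parking interpretation. The genuine difficulty is that the Coxeter numbers $h_{J,k}$ of the parabolic factors have no uniform relation to $h$, so one cannot merely multiply characteristic polynomials, and the identity is a Coxeter-theoretic analogue of Abel's binomial identity rather than a formal consequence of Proposition~\ref{prop:psik_expansion}. Two routes seem possible. One may compare the coefficient of each $\bPhi_X$ on both sides, using Proposition~\ref{prop:psik_expansion} and $\ind_{W_J}^W\bPhi^{W_J}_Y=\bPhi_Y$, reducing to an identity among evaluations of the characteristic polynomials $p_X$; this is case-free but delicate. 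Alternatively — and this is presumably the role of the two orders in Section~\ref{parkingproof2} — one constructs an explicit bijection on multichains of noncrossing partitions: factoring a multichain through the decomposition of Proposition~\ref{prop:order_uvw} and using the boolean structure of Proposition~\ref{lemm:bool}, one matches a prime multichain over $W$ with a product of unconstrained multichains running over a deletion governed by $W_{\Fix(w_1)}$, and checks the identity term by term. I expect carrying this bijection through to be the principal obstacle; Steps~1 and~2 are essentially formal.
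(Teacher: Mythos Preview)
Your Steps~1 and~2 are essentially the paper's Section~\ref{parkingproof1}: the support stratification is exactly Proposition~\ref{lemm:inclexcl}, and the reduction to a character identity among the $\bPsi$'s is precisely what the paper sets out to do (see Theorem~4.7 and the definition of $\abspark$, $\absfpark$). So the architecture is right.

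The genuine gap is Step~3: you correctly flag it as the obstacle, but neither of your two suggested routes is carried out, and neither is how the paper actually proceeds. The identity
\[
\bPsi_{mh+1}=\sum_{J\subseteq S}\ind_{W_J}^W\Big(\bigotimes_k\bPsi^{W_{J,k}}_{mh_{J,k}-1}\Big)
\]
is proved in the paper by evaluating both sides at an arbitrary $w\in W$ with $Z=\Fix(w)$ and reducing (via Lemma~\ref{lemm:evalchar}) to a numerical identity relating the Coxeter number $h$ to the Coxeter numbers of all parabolic subgroups containing $w$. That identity is \emph{not} formal: it is the \emph{restricted Laplacian recursion} (Proposition~\ref{prop:laplacian}), a result about determinants of weighted Laplacians of restricted arrangements developed in~\cite{chapuydouvropoulos,douvr_recursions}. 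This is the missing idea in your outline; without it, your first route (comparing coefficients of $\bPhi_X$) leads to exactly the polynomial identity the Laplacian proves, and your second route (a bijection on multichains) has no candidate in the paper.

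Your second route also mislocates Section~\ref{parkingproof2}: the paper's alternative proof does not build a bijection decomposing prime multichains into ordinary ones. Instead it proves the quite different relation $\fpark_{W,m}=(-1)^n\sign\otimes\park_{W,-m}$ directly, via matrix identities among $\bQ$, $\bR$, $\bN$, $\bD$ (Lemmas~\ref{lemm:ninverse}--\ref{lemm:Qinverse}) encoding the orders $\sqsubset$ and $\ll$. The corresponding identity on the $\bPsi$ side is trivial ($\bPsi_{-1}\otimes\bPsi_{-mh+1}=\bPsi_{mh-1}$), so all the work sits on the combinatorial side --- the opposite balance from Section~\ref{parkingproof1}.
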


Theorem~\ref{theo_fpark} can be approached along the same path as Theorem~\ref{theo_park}. Rather than doing that, we will prove that Theorems~\ref{theo_park} and~\ref{theo_fpark} are equivalent. More precisely, we give two proofs of this equivalence, that follow a similar pattern:
\begin{itemize}\itemsep=0pt
 \item In Section~\ref{parkingproof1}, we show that the characters $\bPsi_{mh+1}$ and $\bPsi_{mh-1}$ satisfy a kind of inclusion-exclusion (which is straightforward for the characters $\park_{W,m}$ and $\fpark_{W,m}$).
 \item In Section~\ref{parkingproof2}, we show that the characters $\park_{W,m}$ and $\fpark_{W,m}$ satisfy $\fpark_{W,m} = (-1)^n \boldsymbol{\epsilon} \otimes \fpark_{W,-m} $ (the corresponding identity for $\bPsi_{mh+1}$ and $\bPsi_{mh-1}$ being clear).
\end{itemize}
In each case, the equivalence of the two theorems follows. These two proofs are case-free (they do not rely on the finite type classification), so it might happen that both have a role to play in a fully combinatorial and case-free approach to parking space theory.

\section[Proof of Theorem~\ref{theo_fpark} via the W-Laplacian]{Proof of Theorem~\ref{theo_fpark} via the $\boldsymbol{W}$-Laplacian}
\label{parkingproof1}

We begin this section by stating a property of the characters $\park_{W,m}$ and $\fpark_{W,m}$.

\begin{Proposition} \label{lemm:inclexcl}
We have the ``inclusion-exclusion'' formulas
\begin{align*}
 &\park_{W,m}= \sum_{I\subset S} \ind_{W_I}^W (\fpark_{W_I,m}),\\
& \fpark_{W,m}= \sum_{I\subset S} (-1)^{\#S - \#I}\ind_{W_I}^W (\park_{W_I,m}).
\end{align*}
\end{Proposition}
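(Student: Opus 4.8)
The plan is to prove both identities directly from the combinatorial definitions of $\park$ and $\fpark$ as characters of permutation representations, by partitioning the indexing set of chains according to the support of the top element. Recall that $\park_{W,m}$ is the character of the $W$-set $\PF(W,m)$ built from weakly increasing chains $w_1\leq\dots\leq w_m$ in $\NC(W,c)$, with $W$ acting on $W/W_{\Fix(w_1)}$, while $\fpark_{W,m}$ is built the same way but with the extra condition $\supp(w_m)=S$. The first identity should follow by observing that for any chain $w_1\leq\dots\leq w_m$ in $\NC(W,c)$, if we set $I:=\supp(w_m)$, then the whole chain lies in $W_I$ (since $w_m\in W_I$ and $w_i\leq w_m$), and $w_m$ has full support \emph{in} $W_I$. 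This gives a bijection between the chains indexing $\park_{W,m}$ and the disjoint union over $I\subset S$ of the chains indexing $\fpark_{W_I,m}$ (the latter taken with respect to the Coxeter element of $W_I$, which is where Proposition~\ref{prop:wcox} is needed to make sense of $\NC(W_I,\cdot)$ intrinsically). The point count $\#(W/W_{\Fix(w_1)})$ is the same whether one views $W_{\Fix(w_1)}$ inside $W$ or inside $W_I$, and the induction of the permutation character of $W_I$ on $W_I/W_{\Fix(w_1)}$ is exactly the permutation character of $W$ on $W/W_{\Fix(w_1)}$ by transitivity of induction; summing over $I$ gives the first formula at the level of $W$-sets, hence of characters.

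For the second identity, the natural approach is Möbius inversion over the boolean lattice of subsets of $S$. Define, for each $I\subset S$, the class function $f(I):=\ind_{W_I}^W(\fpark_{W_I,m})$ and $g(I):=\ind_{W_I}^W(\park_{W_I,m})$. The first identity applied not just to $W$ but to every standard parabolic $W_J$ (and then induced up to $W$, using transitivity of induction $\ind_{W_J}^W\circ\ind_{W_I}^{W_J}=\ind_{W_I}^W$) gives $g(J)=\sum_{I\subset J} f(I)$ for all $J\subset S$. Here one must be a little careful that ``$I\subset J$'' ranges over genuine subsets and that the parabolic of $W_J$ generated by $I$ is $W_I$; this is exactly the setup in which the powerset of $S$ is identified with a subposet of $L(W)$. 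Möbius inversion on the boolean lattice then yields $f(S)=\sum_{I\subset S}(-1)^{\#S-\#I} g(I)$, which is precisely the second claimed formula since $f(S)=\fpark_{W,m}$ and $g(I)=\ind_{W_I}^W(\park_{W_I,m})$.

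The main obstacle, I expect, is purely bookkeeping rather than conceptual: one must be scrupulous about the distinction between $\NC(W,c)$ and $\NC(W_I,c_I)$ for a Coxeter element $c_I$ of $W_I$, and about the fact that the chains appearing in $\fpark_{W_I,m}$ are chains \emph{in the parabolic subgroup}. The key fact making this work is that if $w_1\leq\dots\leq w_m$ in $\NC(W,c)$ with $\supp(w_m)=I$, then $w_m$ is a Coxeter element of $W_I=W_{\Fix(w_m)}$ (Proposition~\ref{prop:wcox}), so $[e,w_m]$ in the absolute order of $W$ coincides with $\NC(W_I,w_m)$, and the full chain sits inside this interval. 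Conversely a chain in $\NC(W_I,c_I)$ for any chosen Coxeter element $c_I$ of $W_I$ can be conjugated (within $W$) to one lying below $c$; since we are working with characters (or $W$-sets up to isomorphism) and summing over \emph{all} $I\subset S$, the choice of Coxeter element and the conjugation ambiguity wash out. One should also note that the subgroup $W_{\Fix(w_1)}$ is the same object regardless of whether it is regarded as a parabolic of $W$ or of $W_I$ — it is the pointwise stabilizer of $\Fix(w_1)$ — so the point-stabilizer data defining the permutation representation is unambiguous. With these identifications in place, the first identity is an honest bijection of $W$-sets and the second is a one-line Möbius inversion.
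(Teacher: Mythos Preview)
Your approach is essentially the same as the paper's: partition chains by $I=\supp(w_m)$ to prove the first identity at the level of $W$-sets, then obtain the second by M\"obius inversion over the boolean lattice (using transitivity of induction). The paper does exactly this, choosing for each $I$ the specific Coxeter element $c_I\sqsubset c$ so that $\NC(W_I,c_I)$ sits literally inside $\NC(W,c)$, which makes the identification of indexing sets immediate.

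One correction to your bookkeeping paragraph: the assertion ``$w_m$ is a Coxeter element of $W_I=W_{\Fix(w_m)}$'' is false in general. Having $\supp(w_m)=I$ means $W_I$ is the minimal \emph{standard} parabolic containing $w_m$, whereas $W_{\Fix(w_m)}$ is the minimal parabolic (not necessarily standard); these can differ, e.g., for the longest reflection $s_1s_2s_1$ in type $A_2$, where $\supp=S$ but $W_{\Fix(w_m)}$ has rank~$1$. Proposition~\ref{prop:wcox} only gives that $w_m$ is a Coxeter element of $W_{\Fix(w_m)}$, not of $W_I$. This does not break your argument, since what you actually need is that a chain in $\NC(W,c)$ with $\supp(w_m)=I$ lies in $\NC(W_I,c_I)$ with $w_m$ of full support there, and conversely; the cleanest way to secure this (and avoid your conjugation workaround) is the paper's choice $c_I\sqsubset c$, for which $\NC(W,c)\cap W_I=\NC(W_I,c_I)$ directly.
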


\begin{proof}
 First note that if $I\subset J \subset S$, we have $\ind_{W_J}^W \big( \ind_{W_I}^{W_J} ({\bf 1}) \big) = \ind_{W_I}^W({\bf 1})$. This transitivity property follows from the general theory of group characters. With this at hand, the equivalence between these identities follow from the same lines as the classical inclusion-exclusion principle.

 It remains only to prove the first identity. For $I\subset S$, let $c_I$ denote $\prod_{i\in I} s_i $ where the order is such that $c_I \sqsubset c$. We have
 \begin{align*}
 \bigoplus_{\substack{ w_1,\dots, w_m \in\NC(W,c) \\ w_1\leq \dots \leq w_m,\; \supp(w_m)=I }}
 \mathbb{C}^{ W / W_{\Fix(w_1)} }
 &=
 \bigoplus_{\substack{ w_1,\dots, w_{m-1} \in\NC(W_I,c_I), w_m \in \NC'(W_I,c_I) \\ w_1\leq \dots \leq w_m }}
 \mathbb{C}^{ W / W_{\Fix(w_1)} },
 \end{align*}
 as we can identify the indexing sets using the natural inclusion $\NC(W_I,c_I) \subset \NC(W,c)$. It is straightforward to see that $\mathbb{C}^{ W / W_{\Fix(w_1)} }$ is the induction of $\mathbb{C}^{ W_I / W_{\Fix(w_1)} }$ from $W_I$ to $W$, so that the sum in the right-hand side of the previous equation is the prime parking space of $W_I$ induced from $W_I$ to $W$. By summing the previous equation over $I$ and taking the character of the representations, we get the first identity in the proposition.
\end{proof}

Our goal is to show that the characters $\bPsi_{mh+1}$ and $\bPsi_{mh-1}$ satisfy the same relations. First note that these two characters are only defined in the irreducible case, and we need to extend them in the natural multiplicative way. For these we refer to the {\it abstract parking spaces}, as opposed to other kinds of parking spaces which are defined as characters of some explicit representation of $W$.

\begin{Definition}
If $W$ is irreducible with Coxeter number $h$, we define $\abspark_{W,m} := \bPsi_{mh+1}$ and $\absfpark_{W,m} := \bPsi_{mh-1}$. Otherwise, consider the decomposition $W = \prod_{i=1}^r W_i$ into irreducible factors. Note that there is an isomorphism
\[
 R(W) \simeq \bigotimes_{i=1}^r R(W_i)
\]
and define
\begin{align*}
 &\abspark_{W,m}:=\abspark_{W_1,m} \otimes \cdots \otimes \abspark_{W_r,m}, \\
 &\absfpark_{W,m}:= \absfpark_{W_1,m} \otimes \cdots \otimes \absfpark_{W_r,m}.
\end{align*}
More explicitly, there is a natural identification $L(W) \simeq \prod_{i=1}^r L(W_i)$. So $w=(w_1,\dots, w_r) \in W$ determines flats $Z_i \in L(W_i)$ by
\[
 \Fix(w) = \prod_{i=1}^r Z_i.
\]
We then have
\[
 \abspark_{W,m}(w) = \prod_{i=1}^r(mh_i+1)^{\dim(Z_i)},\qquad \absfpark_{W,m}(w) = \prod_{i=1}^r(mh_i-1)^{\dim(Z_i)},
\]
where $h_i$ is the Coxeter number of $W_i$.
\end{Definition}

Since the values of these characters at $w\in W$ only depend on $\Fix(w)$, we can describe their parabolic inductions in terms of the geometry of hyperplane arrangements.

\begin{Lemma} \label{lemm:evalchar}
For $X \in L(W)$, $w\in W$ and $Z=\Fix(w)$, we have
\[
 \big(\ind_{W_X}^W\absfpark_{W_X,m}\big) (w)
 =
 [N(X):W_X]\cdot\sum_{\substack{ Y \in L(W), \\ Y \sim X \text{ and } Y \subset Z }}
 \absfpark_{W_{Y},m} (w).
\]
\end{Lemma}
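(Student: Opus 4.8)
The plan is to expand the left-hand side using the standard formula for an induced class function, observe that the surviving terms are indexed by the flats $Y$ in the $W$-orbit of $X$ that lie below $Z=\Fix(w)$, and then account for multiplicities via orbit--stabilizer.

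Concretely, I would start from
\[
 \big(\ind_{W_X}^W\chi\big)(w) = \frac{1}{|W_X|}\sum_{\substack{x\in W\\ x^{-1}wx\in W_X}}\chi\big(x^{-1}wx\big),
\]
applied to $\chi = \absfpark_{W_X,m}$. Since $W_X=\{u\in W:X\subseteq\Fix(u)\}$ and $\Fix(x^{-1}wx)=x^{-1}(\Fix(w))=x^{-1}Z$, the summation condition $x^{-1}wx\in W_X$ is equivalent to $x(X)\subseteq Z$. Thus every contributing $x$ determines a flat $Y:=x(X)$ with $Y\sim X$ and $Y\subseteq Z$; conversely $Y\subseteq Z$ forces $w\in W_Y$, so each term $\absfpark_{W_Y,m}(w)$ occurring on the right-hand side is well defined.

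The key step --- and, I expect, the only one with real content --- will be the conjugation-equivariance
\[
 \absfpark_{W_X,m}(u)=\absfpark_{W_{x(X)},m}\big(xux^{-1}\big)\qquad(x\in W,\ u\in W_X).
\]
This should follow from the fact that conjugation by $x$ is an isomorphism $W_X\to W_{x(X)}$ of reflection groups: it sends reflections to reflections and $\Fix(xtx^{-1})=x\Fix(t)$, so it matches the decompositions into irreducible factors and preserves each factor's Coxeter number; since $\absfpark$ is built multiplicatively over the irreducible factors by the explicit formula recalled just before the lemma --- a formula depending only on the dimensions of the factors of the fixed space --- the two values agree. Specializing $u=x^{-1}wx$ (so that $xux^{-1}=w$) rewrites the induced value as $\frac{1}{|W_X|}\sum_{x:\,x(X)\subseteq Z}\absfpark_{W_{x(X)},m}(w)$.

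Finally I would group this sum by the value $Y=x(X)$. The fibers of the map $x\mapsto x(X)$ are the left cosets of the setwise stabilizer $N(X)=\{x\in W:x(X)=X\}$ (which is the same as $N(W_X)$, because $X\mapsto W_X$ is injective), so each fiber has exactly $|N(X)|$ elements. Hence
\[
 \big(\ind_{W_X}^W\absfpark_{W_X,m}\big)(w)=\frac{|N(X)|}{|W_X|}\sum_{\substack{Y\in L(W)\\ Y\sim X,\ Y\subseteq Z}}\absfpark_{W_Y,m}(w)=[N(X):W_X]\!\!\sum_{\substack{Y\in L(W)\\ Y\sim X,\ Y\subseteq Z}}\!\!\absfpark_{W_Y,m}(w),
\]
which is the assertion. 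The identical computation with $\absfpark$ replaced by $\abspark$ throughout yields the parallel identity for the abstract parking character, should it be needed elsewhere.
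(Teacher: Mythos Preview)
Your argument is correct and follows essentially the same route as the paper: apply the induction formula, translate the membership condition $x^{-1}wx\in W_X$ into $x\cdot X\subseteq Z$, use conjugation-equivariance of $\absfpark$ to rewrite each summand as $\absfpark_{W_{x\cdot X},m}(w)$, and then collapse the sum by the value $Y=x\cdot X$ via orbit--stabilizer. The only cosmetic difference is that the paper sums over cosets $y\in W/W_X$ (so each fiber $y\cdot X=Y$ has size $[N(W_X):W_X]$), whereas you sum over all of $W$ with a $1/|W_X|$ prefactor and get the same index as $|N(X)|/|W_X|$.
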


\begin{proof}
 By the general formula for induction of characters, we have
 \[
 \big(\ind_{W_X}^W\absfpark_{W_X,m}\big)(w)
 =
 \sum_{y\in W/W_X, \; y^{-1} w y\in W_X}
 \absfpark_{W_X,m}\big(y^{-1} w y\big).
 \]
 Note that we have
 \[
 \absfpark_{W_{(y\cdot X)},m}( w )
 =
 \absfpark_{W_X,m}\big(y^{-1} w y\big),
 \]
 as can be seen by using the inner automorphism $w\mapsto y^{-1} w y$ (and its extension to $L(W)$ which the action of $y$).
 Moreover, we have
 \[
 y^{-1} w y\in W_X
 \;\Leftrightarrow\;
 X \subset \Fix(y^{-1} w y)
 \;\Leftrightarrow\;
 X \subset y^{-1} \cdot \Fix(w)
 \;\Leftrightarrow\;
 y\cdot X \subset Z.
 \]
 We can thus rewrite the sum and get
 \[
 \big(\ind_{W_X}^W\absfpark_{W_X,m}\big)(w)
 =
 \sum_{y\in W/W_X, \; y\cdot X \subset Z }
 \absfpark_{W_{(y\cdot X)},m}( w ).
 \]
 By letting $ Y = y \cdot X$, this can be rewritten
 \begin{align*}
 \big(\ind_{W_X}^W\absfpark_{W_X,m}\big)(w)
 =
 [N(W_X):W_{X}] \cdot \sum_{\substack{ Y\in L(W) \\ Y\sim X \text{ and } Y \subset Z } } \absfpark_{W_{Y},m}(w).
 \end{align*}
 To get the equality, it suffices to check that $[N(W_{X}):W_{X}]$ is the number of $y\in W / W_X $ such that $y\cdot X = Y$, for each $Y \sim X$. This is the orbit-stabilizer theorem, as the subgroup
 $N(W_X)$ is
 \[
 N(W_X)
 =
 \{
 w\in W \colon w\cdot X = X
 \}.
 \tag*{\qed}
 \]
 \renewcommand{\qed}{}
\end{proof}

\begin{Remark} %\label{evalcharrema}
The previous lemma holds with $\abspark$ in place of $\absfpark$, with a completely similar proof.
\end{Remark}

Every parabolic subgroup $W_X$ of $W$ is a possibly reducible reflection group and we would like a simpler notation of the values of the parking characters $\abspark_{W_X,m}$ and $\absfpark_{W_X,m}$. For this reason, we introduce the following notation.

\begin{notation}[multisets of Coxeter numbers]
Let $W$ be an irreducible reflection group, and $X, Z \in L(W)$ such that $X\subset Z$. Assume that $W_X=W_1 \times \cdots \times W_r$ is the decomposition into irreducible factors. Write $Z = \prod_{i=1}^r Z_i$, using as above $L(W_X) \simeq \prod_{i=1}^r L(W_i)$. Note that this isomorphism is such that
\[
 \dim(Z) - \dim(X) = \sum_{i=1}^r \dim( Z_i),
\]
which can be seen by comparing the rank functions on each side. Then we write the \emph{multiset of Coxeter numbers} associated to $X$ and $Z$ as
\[
 (h_i(X,Z))_{1 \leq i \leq \dim(Z)-\dim(X)}
 :=
 ( \underbrace{h_1,\dots,h_1}_{\dim (Z_1)\text{ times}},\dots,\underbrace{h_r,\dots,h_r}_{\dim (Z_r)\text{ times}} ),
\]
where $h_i$ is the Coxeter number of $W_i$. With this notation, we can write the parking space characters for parabolic subgroups: for each $w\in W_X$ with $\Fix(w)=Z$, we have
\begin{align*}
 &\abspark_{W_X,m}(w)
=
\prod_{i=1}^{\dim (Z)-\dim (X)}(mh_i(X,Z)+1), \\
&\absfpark_{W_X,m}(w)
=
 \prod_{i=1}^{\dim(Z)-\dim(X)}(mh_i(X,Z)-1).
\end{align*}
\end{notation}

What we need is a relation involving the Coxeter numbers of $W$ and those of its parabolic subgroups that generalizes the one which appeared in~\cite[Theorem~8.8]{chapuydouvropoulos}. Recall that this was given as follows: if $W$ is irreducible with Coxeter number $h$,
\begin{equation*}%\label{EQ: chapuy-douvr}
 (h+t)^n
 =
 \sum_{X\in L(W) } \Bigg(\prod_{i=1}^{n-\dim(X)} h_i(X) \Bigg) \cdot t^{\dim (X)},
\end{equation*}
where $h_i(X)$ for $1\leq i \leq n$ is the special case of $(h_i(X,Z))_{1\leq i \leq \dim(Z)-\dim(X)}$ when $Z$ is the minimal element of $L(W)$. Such a generalization (see \cite[Section~4]{douvr_recursions} for details) is the following:

\begin{Proposition}[Corollary of the restricted Laplacian recursion] \label{prop:laplacian}
For any $X,Z\in L(W)$ such that $X\subset Z$, we have the following relation between Coxeter numbers:
\begin{equation} \label{eq:corollaryoflaplacianrec}
 \prod_{i=1}^{\dim(Z)-\dim(X)} \big(h_i(X,Z)+t\big)
 =
 \sum_{X\subset Y \subset Z}
 \Bigg(\prod_{i=1}^{\dim(Z)-\dim(Y)} h_i(Y,Z) \Bigg) t^{\dim(Y)-\dim(X)}.
\end{equation}
\end{Proposition}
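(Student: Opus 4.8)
The plan is to deduce this multiplicative identity from the single-factor case (the displayed formula from \cite[Theorem~8.8]{chapuydouvropoulos}) by exploiting the product structure of parabolic subgroups. First I would reduce to the case where $X$ is the minimal element of $L(W_X)$: replacing $W$ by $W_X$ changes nothing on either side of \eqref{eq:corollaryoflaplacianrec} once we substitute $Z$ by $Z$ viewed in $L(W_X)$, since the multiset of Coxeter numbers $(h_i(X,Z))$ depends only on the pair $(X,Z)$ and the flats $Y$ with $X \subset Y \subset Z$ are exactly the flats of $L(W_X)$ between the bottom element and $Z$. So without loss of generality $W = W_X$, with irreducible decomposition $W = W_1 \times \cdots \times W_r$, and $X$ is the bottom flat.

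Next I would factor everything through the isomorphism $L(W) \simeq \prod_{i=1}^r L(W_i)$. Writing $Z = \prod_i Z_i$, any $Y$ with $X \subset Y \subset Z$ decomposes uniquely as $Y = \prod_i Y_i$ with $Y_i \subset Z_i$ in $L(W_i)$, and the rank function is additive: $\dim(Y) - \dim(X) = \sum_i \dim(Y_i)$ and similarly $\dim(Z)-\dim(Y) = \sum_i (\dim(Z_i)-\dim(Y_i))$. By definition of the multiset of Coxeter numbers, $\prod_i \big( \prod_{j} (h_j(Y_i,Z_i)+t) \big) = \prod_{i=1}^{\dim(Z)-\dim(X)} (h_i(X,Z)+t)$, and likewise the ``$h_i(Y,Z)$'' product on the right-hand side of \eqref{eq:corollaryoflaplacianrec} splits as $\prod_i \prod_j h_j(Y_i,Z_i)$. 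Hence the right-hand side of \eqref{eq:corollaryoflaplacianrec} becomes
\[
 \sum_{\substack{Y_i \subset Z_i \\ \text{in } L(W_i),\ 1\le i\le r}}
 \prod_{i=1}^r \Bigg( \prod_{j=1}^{\dim(Z_i)-\dim(Y_i)} h_j(Y_i,Z_i) \cdot t^{\dim(Y_i)} \Bigg)
 =
 \prod_{i=1}^r \Bigg( \sum_{Y_i \subset Z_i} \prod_{j=1}^{\dim(Z_i)-\dim(Y_i)} h_j(Y_i,Z_i) \cdot t^{\dim(Y_i)} \Bigg),
\]
where the factorization of the sum of products into a product of sums is the standard distributive-law manipulation.

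Finally I would apply the irreducible case to each factor. For each $i$, the group $W_i$ is irreducible with Coxeter number $h_i$; applying \cite[Theorem~8.8]{chapuydouvropoulos} inside $W_i$ to the flat $Z_i$ (again after the same reduction to the bottom element, so that the cited identity is used with $Z_i$ as the ambient minimal flat of a parabolic subgroup of $W_i$) identifies the $i$-th inner sum above with $\prod_{j=1}^{\dim(Z_i)-?}(h_j(\cdot)+t)$ — more precisely, it gives that this inner sum equals $\prod_{j=1}^{\dim(Z_i)} (h_j+t)$ when $Z_i$ is the bottom flat, and the general $Z_i$ follows by the same reduction-to-a-parabolic argument as in the first paragraph. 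Multiplying over $i$ and comparing with the left-hand side of \eqref{eq:corollaryoflaplacianrec} via the multiset identity above closes the proof. The only real subtlety — and hence the main obstacle to state carefully rather than to execute — is making sure the reduction ``replace $(W,X,Z)$ by $(W_X, \text{bottom}, Z)$'' is legitimate in both directions: one must check that $\cite[Theorem~8.8]{chapuydouvropoulos}$, though stated for $W$ irreducible and $Z$ the minimal flat, transfers to an arbitrary irreducible $W_i$ and arbitrary $Z_i$ by treating the interval $[X,Z_i]$ in $L(W_i)$ as the intersection lattice of the (generally reducible) parabolic $W_{Z_i}$ acting on the restricted arrangement; this is exactly the indexing convention already set up in the Notation paragraph, so it is bookkeeping rather than new mathematics.
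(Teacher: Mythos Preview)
Your reduction to $W=W_X$ (so that $X$ becomes the zero flat) and the subsequent factorization over the irreducible components of $W_X$ are correct. After these steps, what remains is to prove, inside each irreducible factor $W_i$, the identity
\[
 (h_i+t)^{\dim(Z_i)} \;=\; \sum_{Y_i\subset Z_i}\Bigg(\prod_{j} h_j(Y_i,Z_i)\Bigg)\,t^{\dim(Y_i)}
\]
for an \emph{arbitrary} flat $Z_i\in L(W_i)$. The cited result \cite[Theorem~8.8]{chapuydouvropoulos} is only the special case $Z_i=V_i$ (the full ambient space). Your proposed way to close this gap --- viewing the interval of flats $Y_i\subset Z_i$ as the intersection lattice of the parabolic $W_{Z_i}$ --- goes in the wrong direction: $W_{Z_i}$ fixes $Z_i$ pointwise, so $L(W_{Z_i})$ consists of flats \emph{containing} $Z_i$, not contained in it. The flats contained in $Z_i$ form instead the intersection lattice of the \emph{restricted} arrangement $\mathcal{A}^{Z_i}$, which is in general not a reflection arrangement, so Theorem~8.8 cannot be reapplied there. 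This is not bookkeeping; it is exactly the new content of the proposition over the cited theorem.

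The paper handles this by using a Laplacian attached to the restricted arrangement $\mathcal{A}^{X,Z}$ with hyperplane multiplicities $h_1(K,Z)$: its characteristic polynomial is the left-hand side, and the Laplacian recursion of \cite[Proposition~8.3]{chapuydouvropoulos} (extended to restricted arrangements, see \cite{douvr_recursions}) expresses it as the right-hand side. That recursion is precisely the tool that lets one pass from $Z=V$ to arbitrary $Z$; your argument would need an equivalent input at that point.
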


\begin{proof}
For $X,Z\in L(W)$ such that $X\subset Z$, consider the set
\[
 \mathcal{A}^{X,Z}
 :=
 \{
 Y/X \colon X\subset Y\subset Z,\; \dim(Y) = \dim(Z)-1
 \},
\]
which is a hyperplane arrangement in the quotient $Z/X$ \big(the essentialization of the restricted localization $\mathcal{A}_X^Z$\big). When $X$ is the $0$-dimensional subspace, we just denote $\mathcal{A}^{Z} = \mathcal{A}^{X,Z}$.

Generalizing~\cite{chapuydouvropoulos}, to this hyperplane arrangement there is an associated Laplacian $\mathcal{L}_{X,Z}$ for which each hyperplane $K\in\mathcal{A}^{X,Z}$ is taken with multiplicity equal to $h_1(K,Z)$. Its characteristic polynomial is the left-hand side of~\eqref{eq:corollaryoflaplacianrec}. This follows analogously to~\cite[Proposition~3.13]{chapuydouvropoulos}.

Now, the Laplacian recursion~\cite[Proposition~8.3]{chapuydouvropoulos} relates the characteristic polynomial of~$\mathcal{L}_{X,Z}$ to the determinants of~$\mathcal{L}_{Y,Z}$ for $X\subset Y\subset Z$, and gives precisely the above identity.
\end{proof}

The hyperplane arrangement $\mathcal{A}^{Z}$ introduced above is called a {\it restricted arrangement}, see \mbox{\cite{orliksolomon,orlikterao}}. The number \smash{$r\big(\mathcal{A}^{Z}\big)$} of regions in the complement of this arrangement is given by
\begin{equation*}
 r\big(\mathcal{A}^{Z}\big)
 =
 (-1)^{\dim(Z)} p_Z(-1)
 =
 \nu(Z) \cdot [ N(W_Z) : W_Z ].
\end{equation*}
What we need is the latter equality, see~\cite[equations~(4.1) and~(4.2)]{orliksolomon}. It can also be obtained by plugging $t=-1$ in~\eqref{psik_expansion}, using~\eqref{factornu} and~\eqref{epstensor} in the case $J=S$.

\begin{Proposition}
Assume that $W$ is irreducible, with Coxeter number $h$. For any $Z\in L(W)$, we have the following relation between Coxeter numbers:
\[
 (mh+1)^{\dim(Z)}
 =
 \sum_{X\subset Z}r\big(\mathcal{A}^X\big) \cdot \prod_{i=1}^{\dim(Z)-\dim(X)} (m h_i(X,Z) - 1).
\]
\end{Proposition}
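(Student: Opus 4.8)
The identity to prove is
\[
 (mh+1)^{\dim(Z)}
 =
 \sum_{X\subset Z}r\big(\mathcal{A}^X\big) \cdot \prod_{i=1}^{\dim(Z)-\dim(X)} (m h_i(X,Z) - 1),
\]
and the natural strategy is to combine the two ingredients that have just been assembled: the Laplacian recursion (Proposition~\ref{prop:laplacian}) expressing a product of shifted Coxeter numbers over an interval $[X,Z]$ as a sum of products over that interval, and the geometric identity $r(\mathcal{A}^X) = \sum_{Y \subset X} \big(\prod_{i} h_i(Y,X)\big)$ (which is exactly the $t=1$ specialization of the restricted-Laplacian recursion, giving the number of regions of $\mathcal{A}^X$, combined with $r(\mathcal{A}^X)=(-1)^{\dim(X)}p_X(-1)$ and the interpretation of the characteristic polynomial). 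The idea is to substitute the second into the right-hand side and then re-sum over the doubly-nested poset $Y \subset X \subset Z$, collapsing the inner sum via Proposition~\ref{prop:laplacian}.

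\textbf{Key steps.}
First, I would apply Proposition~\ref{prop:laplacian} with $t = 1$ to the interval $[0, X]$ (i.e.\ $X$ in place of $Z$ and the minimal flat in place of $X$) to record
\[
 r\big(\mathcal{A}^X\big) = \sum_{Y \subset X} \prod_{i=1}^{\dim(X)-\dim(Y)} h_i(Y,X),
\]
matching the stated formula $r(\mathcal{A}^X) = (-1)^{\dim(X)}p_X(-1)$. Second, substitute this into the right-hand side of the target identity to get a double sum over pairs $Y \subset X \subset Z$, with summand $\big(\prod_i h_i(Y,X)\big)\big(\prod_i (mh_i(X,Z)-1)\big)$. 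Third, exchange the order of summation so that $Y \subset Z$ is the outer index and $X$ ranges over $[Y,Z]$; for fixed $Y$, the inner sum is
\[
 \sum_{Y \subset X \subset Z} \Bigg(\prod_{i=1}^{\dim(X)-\dim(Y)} h_i(Y,X)\Bigg)\Bigg(\prod_{i=1}^{\dim(Z)-\dim(X)} (mh_i(X,Z)-1)\Bigg).
\]
Fourth, recognize this inner sum as an instance of a product-to-sum identity over the interval $[Y,Z]$: it is what one obtains by expanding $\prod_{i=1}^{\dim(Z)-\dim(Y)}\big(mh_i(Y,Z) - 1 + (1+mh)\text{-type correction}\big)$ — more precisely, I expect to need the refined Laplacian identity over $[Y,Z]$ evaluated so that the two factors combine. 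The cleanest route is probably to apply Proposition~\ref{prop:laplacian} over $[Y,Z]$ with the substitution $t \mapsto mh+1$ and then match terms, using the multiset compatibility $\{h_i(Y,Z)\} = \{h_i(Y,X)\} \uplus \{h_i(X,Z)\}$ relative to a chain — but this multiset decomposition is not literally additive across a single $X$, so the honest mechanism must be the full recursion, not a one-line factorization. Finally, if the inner sum collapses to $(mh+1)^{\dim(Z)-\dim(Y)} \cdot [\,Y = \text{min}\,]$ or, more likely, telescopes so that only $Y$ equal to the minimal flat survives after a further resummation, we recover $(mh+1)^{\dim(Z)}$.

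\textbf{The main obstacle.}
The delicate point is that the two products in the summand — $\prod h_i(Y,X)$ (unshifted Coxeter numbers of the lower interval) and $\prod(mh_i(X,Z)-1)$ (shifted Coxeter numbers of the upper interval) — do not combine by a naive multiplicativity of multisets, because the multiset $(h_i(Y,Z))$ is not the disjoint union of $(h_i(Y,X))$ and $(h_i(X,Z))$ for a single intermediate $X$; that disjoint-union statement only holds along a maximal chain. So the resummation cannot be done term-by-term and must instead be driven by two nested applications of the Laplacian recursion of Proposition~\ref{prop:laplacian}: one to interpret $r(\mathcal{A}^X)$, and one over $[Y,Z]$ (or over $[0,Z]$ with the value $t = mh+1$, directly giving the left-hand side $(mh+1)^{\dim(Z)}$ as $\prod(h_i(0,Z)+t)\big|_{t=mh+1}$ is \emph{not} what we want — we want $(mh+1)$ not $(h+mh+1)$, so the correct reading is that $(mh+1)^{\dim Z}$ is the "$\prod(h_i + t) \to t^{\dim}$" leading behavior and the rest of the expansion must cancel). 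Getting the bookkeeping of these two recursions to mesh — in particular verifying that all the cross terms cancel so that the surviving contribution is exactly the pure power $(mh+1)^{\dim(Z)}$ — is where the real work lies; I would expect to do this by an induction on $\dim(Z)$, peeling off the top coatom and invoking the recursion, rather than by a direct double-sum manipulation.
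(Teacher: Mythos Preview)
Your first step contains a genuine error that breaks the whole approach. Applying Proposition~\ref{prop:laplacian} with $t=1$ on the interval from the origin to $X$ gives
\[
\sum_{Y\subset X}\prod_{i=1}^{\dim(X)-\dim(Y)} h_i(Y,X)
=\prod_{i=1}^{\dim(X)}\big(h_i(0,X)+1\big)=(h+1)^{\dim(X)},
\]
since $W$ is irreducible and hence every $h_i(0,X)$ equals $h$. This is \emph{not} $r(\mathcal{A}^X)=(-1)^{\dim(X)}p_X(-1)=\prod_i(b_i^X+1)$, where the $b_i^X$ are Orlik--Solomon exponents, not copies of $h$. (For instance in type $A_2$ with $X=V$: $(h+1)^{\dim X}=16$ but $r(\mathcal{A}^V)=|W|=6$.) The Laplacian recursion does not produce the region count of the restricted arrangement; it produces a power of $h+1$. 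So the substitution you propose in step~2 is invalid, and the rest of the plan --- which you yourself flag as uncertain --- never gets off the ground.

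The missing idea is that $r(\mathcal{A}^X)$ enters through the characteristic polynomial, not through another Laplacian recursion. The paper first applies the (rescaled) Laplacian recursion on $[0,Z]$ to write $(mh+t)^{\dim Z}=\sum_{Y\subset Z}\big(\prod_i mh_i(Y,Z)\big)\,t^{\dim Y}$, then replaces $t^{\dim Y}$ by $\sum_{X\subset Y}p_X(t)$ (the inverse of the M\"obius definition~\eqref{eq:def_chi}). After exchanging sums, a second application of the Laplacian recursion on $[X,Z]$ collapses the inner sum to $\prod_i(mh_i(X,Z)+1)$, yielding $(mh+t)^{\dim Z}=\sum_{X\subset Z}\prod_i(mh_i(X,Z)+1)\cdot p_X(t)$. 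Only now does one substitute $(m,t)\mapsto(-m,-1)$, turning $p_X(-1)$ into $(-1)^{\dim X}r(\mathcal{A}^X)$ and the shifted product into $\prod_i(mh_i(X,Z)-1)$ after clearing signs. The characteristic-polynomial inversion is the step your plan lacks.
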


\begin{proof}
 Consider equation~\eqref{eq:corollaryoflaplacianrec} in the case where $X$ is the $0$-dimensional subspace, so that $h_i(X,Z)=h$. By replacing $t$ with $t/m$ and multiplying both sides by $m^{\dim (Z)}$, we get
 \[
 (mh+t)^{\dim(Z)}=\sum_{Y\subset Z}\Bigg(\prod_{i=1}^{\dim(Z)-\dim(Y)} mh_i(Y,Z)\Bigg) \cdot t^{\dim(Y)}.
 \]
 By plugging $t^{\dim(Y)}=\sum_{X\subset Y}\chi\big(\mathcal{A}^X,t\big)$ (which is the inverse of~\eqref{eq:def_chi}), we obtain
\begin{align*}
 (mh+t)^{\dim(Z)}
 &=
 \sum_{X\subset Y \subset Z} \Bigg(\prod_{i=1}^{\dim (Z)-\dim (Y)}mh_i(Y,Z)\Bigg) \cdot p_{X}(t) \\
 &=
 \sum_{X \subset Z} \Bigg( \sum_{X \subset Y \subset Z } \prod_{i=1}^{\dim(Z)-\dim(Y)} mh_i(Y,Z) \Bigg) \cdot p_X(t).
\end{align*}
Again using~\eqref{eq:corollaryoflaplacianrec} (with $t=1$), this gives
\[
 (mh+t)^{\dim (Z)}=\sum_{X\subset Z}\Bigg(\prod_{i=1}^{\dim (Z)-\dim (X)}(mh_i(X,Z)+1)\Bigg)\cdot p_X(t).
\]
Now, replacing $(m,t)$ with $(-m,-1)$ and getting rid of the signs, this becomes
\[
 (mh+1)^{\dim (Z)}
 =
 \sum_{X\subset Z} \Bigg( \prod_{i=1}^{\dim (Z)-\dim (X)} (mh_i(X,Z)-1) \Bigg) \cdot
 (-1)^{\dim(X)} p_X(-1).
\]
This is precisely what we needed to prove.
\end{proof}

\begin{Theorem}
We have
\begin{equation} \label{theo_parkabsind}
 \abspark_{W,m}
 =
 \sum_{I\subset S} \ind_{W_I}^W \big( \absfpark_{W_I,m} \big).
\end{equation}
\end{Theorem}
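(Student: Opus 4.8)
The plan is to prove~\eqref{theo_parkabsind} valuewise: since a class function on $W$ is determined by its values, it suffices to fix an arbitrary $w\in W$, put $Z=\Fix(w)$, and check that the two sides agree at $w$. First I would reduce to the case where $W$ is irreducible: if $W=\prod_{j=1}^r W_j$ with $S=\bigsqcup_j S_j$, then subsets $I\subset S$ are precisely tuples $(I_1,\dots,I_r)$ with $I_j\subset S_j$, one has $W_I=\prod_j W_{I_j}$, induction from a direct product of subgroups is the tensor product of the inductions, and $\absfpark_{W_I,m}=\bigotimes_j\absfpark_{W_{I_j},m}$ by definition; hence the right-hand side of~\eqref{theo_parkabsind} factors as $\bigotimes_{j=1}^r\bigl(\sum_{I_j\subset S_j}\ind_{W_{I_j}}^{W_j}\absfpark_{W_{I_j},m}\bigr)$, while the left-hand side is $\bigotimes_{j=1}^r\abspark_{W_j,m}$, so the irreducible case implies the general one. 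Assuming henceforth $W$ irreducible with Coxeter number $h$, the left-hand side evaluated at $w$ is $(mh+1)^{\dim(Z)}$.

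For the right-hand side I would first replace the sum over all $I\subset S$ by a sum over orbit representatives. Conjugate parabolic subgroups yield equal induced characters here --- conjugation by $g\in W$ is an inner automorphism carrying the flat of $W_I$ to that of $gW_Ig^{-1}$, and hence $\absfpark_{W_I,m}$ to $\absfpark_{gW_Ig^{-1},m}$, exactly as in the proof of Lemma~\ref{lemm:evalchar} --- and there are exactly $\nu(X)$ subsets $I\subset S$ with $W_I\sim W_X$; thus $\sum_{I\subset S}\ind_{W_I}^W(\absfpark_{W_I,m})=\sum_{X\in\Theta}\nu(X)\,\ind_{W_X}^W(\absfpark_{W_X,m})$. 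Evaluating at $w$ and applying Lemma~\ref{lemm:evalchar} termwise gives $\sum_{X\in\Theta}\nu(X)\,[N(W_X):W_X]\sum_{Y\sim X,\, Y\subset Z}\absfpark_{W_Y,m}(w)$. I would then undo the grouping by orbits: each flat $Y\subset Z$ contributes for exactly one $X\in\Theta$, and since $\nu$ and the normalizer index are orbit-invariant and satisfy $\nu(Y)\,[N(W_Y):W_Y]=r\bigl(\mathcal{A}^Y\bigr)$ (the number of regions of the restricted arrangement), the right-hand side becomes $\sum_{Y\in L(W),\, Y\subset Z}r\bigl(\mathcal{A}^Y\bigr)\,\absfpark_{W_Y,m}(w)$.

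To conclude, note that $Y\subset Z=\Fix(w)$ forces $w\in W_Y$, so by the multiset-of-Coxeter-numbers notation $\absfpark_{W_Y,m}(w)=\prod_{i=1}^{\dim(Z)-\dim(Y)}\bigl(mh_i(Y,Z)-1\bigr)$; hence the right-hand side equals $\sum_{Y\subset Z}r\bigl(\mathcal{A}^Y\bigr)\prod_{i=1}^{\dim(Z)-\dim(Y)}\bigl(mh_i(Y,Z)-1\bigr)$, which is exactly $(mh+1)^{\dim(Z)}$ by the Proposition immediately preceding the theorem. Since this matches the value of the left-hand side, the proof is complete.

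The substantive content is already packaged in that preceding Proposition (the corollary of the restricted Laplacian recursion); what remains here is bookkeeping, and I expect the one delicate point to be the passage from the induction-of-characters formula to the clean sum $\sum_{Y\subset Z}r(\mathcal{A}^Y)\absfpark_{W_Y,m}(w)$: one must verify that summing over all $I\subset S$ (not merely over orbit representatives) contributes precisely the factor $\nu(Y)[N(W_Y):W_Y]$ to each sub-flat $Y$ of $Z$, which follows by combining the identity $r(\mathcal{A}^Y)=\nu(Y)[N(W_Y):W_Y]$ with the orbit-stabilizer count already used in Lemma~\ref{lemm:evalchar}. The degenerate summands (for instance $I=\varnothing$, or terms with $\dim(Y)=\dim(Z)$) involve empty products and the trivial character, and one should check that they fit the same pattern.
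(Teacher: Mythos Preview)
Your argument is correct and follows essentially the same route as the paper's own proof: reduce to the irreducible case, rewrite the sum over $I\subset S$ as $\sum_{X\in\Theta}\nu(X)\,\ind_{W_X}^W(\absfpark_{W_X,m})$, apply Lemma~\ref{lemm:evalchar}, collapse to a single sum $\sum_{Y\subset Z}r(\mathcal{A}^Y)\,\absfpark_{W_Y,m}(w)$ using $r(\mathcal{A}^Y)=\nu(Y)\,[N(W_Y):W_Y]$, and finish with the preceding Proposition. The only difference is that you spell out the reducible reduction and the bookkeeping for the factor $\nu(Y)\,[N(W_Y):W_Y]$ in slightly more detail than the paper does.
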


\begin{proof}
 We assume that $W$ is irreducible, and let $h$ be its Coxeter number. We let the reader check that the reducible case follows.

 Let $\boldsymbol{\chi}$ denote the character in the right-hand side of~\eqref{theo_parkabsind}. Using the notation $\nu(X)$ (see equation~\eqref{factornu}), we have
 \[
 \boldsymbol{\chi}
 =
 \sum_{ X \in \Theta}
 \nu(X) \cdot \ind_{W_X}^W \big(\absfpark_{W_X,m}\big).
 \]
 Let $w\in W$ and $Z=\Fix(w)$. Lemma~\ref{lemm:evalchar} tells us that the evaluation of the character is
 \begin{align*}
 \boldsymbol{\chi}(w)
 =
 \sum_{X \in \Theta} \nu(X) \cdot [N(X):W_X] \cdot \sum_{\substack{ X'\in L(W), \\ X'\sim X\text{ and } X'\subset Z}}
 \absfpark_{W_{X'},m}(w).
\end{align*}
This can be rewritten as a single sum over $X'$, and we get
\begin{align*}
 \boldsymbol{\chi}(w)
 &=
 \sum_{\substack{ X\in L(W), \\ X\subset Z }} \nu(X) \cdot [N(X):W_X]\cdot
 \absfpark_{W_X,m}(w) \\
 &=
 \sum_{X\subset Z}r\big(\mathcal{A}^X\big) \cdot
 \absfpark_{W_X,m}(w).
 \end{align*}
 Indeed, recall $\nu(X) \cdot [N(X):W_X] = (-1)^{\dim(X)} p_X(-1) = r\big(\mathcal{A}^X\big)$. From Proposition~\ref{prop:laplacian}, this becomes $\boldsymbol{\chi}(w) = (mh+1)^{\dim(Z)}$, so that $\boldsymbol{\chi} = \abspark_{W,m}$.
\end{proof}

\begin{Theorem} %\label{theo:abs}
We have
\[
 \absfpark_{W,m}
 =
 \sum_{I\subset S} (-1)^{\#S-\#I} \ind_{W_I}^W \big( \abspark_{W_I,m} \big).
\]
\end{Theorem}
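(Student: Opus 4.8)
The plan is to deduce this identity from the previous theorem by a Möbius inversion over the Boolean lattice of subsets of $S$, exactly as in the proof of the equivalence in Proposition~\ref{lemm:inclexcl}. First I would record the transitivity of parabolic induction: for $I\subset J\subset S$ one has $\ind_{W_J}^W\circ\ind_{W_I}^{W_J} = \ind_{W_I}^W$, which follows from the general theory of induced characters and also applies to the abstract parking characters since these are, by definition, multiplicative over irreducible factors and hence compatible with the block decompositions of $W_I\subset W_J$. Granting this, the two identities
\[
 \abspark_{W,m}=\sum_{I\subset S}\ind_{W_I}^W\big(\absfpark_{W_I,m}\big),
 \qquad
 \absfpark_{W,m}=\sum_{I\subset S}(-1)^{\#S-\#I}\ind_{W_I}^W\big(\abspark_{W_I,m}\big)
\]
are formally equivalent: applying each to all standard parabolic subgroups $W_J$ in place of $W$, one gets a pair of triangular systems related by the standard inclusion--exclusion inversion on the Boolean lattice $2^{S}$, and one implication follows from the other by substituting and collapsing the double sum $\sum_{K\subset I\subset J}$ using $\sum_{K\subset I\subset J}(-1)^{\#J-\#I}=[K=J]$.

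Concretely, I would argue as follows. Fix $J\subset S$ and apply the already-proved identity~\eqref{theo_parkabsind} with $W$ replaced by $W_J$: this gives $\abspark_{W_J,m}=\sum_{I\subset J}\ind_{W_I}^{W_J}\big(\absfpark_{W_I,m}\big)$, where I use that the standard parabolic subgroups of $W_J$ are exactly the $W_I$ with $I\subset J$. Now form $\sum_{J\subset S}(-1)^{\#S-\#J}\ind_{W_J}^W\big(\abspark_{W_J,m}\big)$, substitute, and use transitivity of induction to rewrite $\ind_{W_J}^W\circ\ind_{W_I}^{W_J}=\ind_{W_I}^W$:
\[
 \sum_{J\subset S}(-1)^{\#S-\#J}\ind_{W_J}^W\big(\abspark_{W_J,m}\big)
 =
 \sum_{I\subset S}\Bigg(\sum_{I\subset J\subset S}(-1)^{\#S-\#J}\Bigg)\ind_{W_I}^W\big(\absfpark_{W_I,m}\big).
\]
The inner sum over $J$ with $I\subset J\subset S$ equals $0$ unless $I=S$, in which case it equals $1$; hence the whole expression collapses to $\ind_{W_S}^W\big(\absfpark_{W_S,m}\big)=\absfpark_{W,m}$, which is the claim.

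There is essentially no analytic content here; the identity is pure Möbius inversion once~\eqref{theo_parkabsind} is in hand. The only point requiring a little care — and the place I would expect a referee to want a line of justification — is the compatibility of $\abspark$ and $\absfpark$ with parabolic induction in the \emph{reducible} setting: $W_J$ need not be irreducible, so one must check that restricting the formal definition (the tensor product over irreducible blocks, equivalently the multiset-of-Coxeter-numbers formula for the character values) behaves correctly under $\ind_{W_I}^{W_J}$ for $I\subset J$, i.e. that the block decomposition of $W_I$ refines that of $W_J$ and the Coxeter numbers match up. This is exactly the kind of bookkeeping already carried out implicitly in Lemma~\ref{lemm:evalchar} and the multiset-of-Coxeter-numbers notation, so I would simply invoke it; everything else is routine, and as the authors already do for~\eqref{theo_parkabsind}, I would leave the purely multiplicative reducible case to the reader.
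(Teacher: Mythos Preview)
Your proof is correct. The M\"obius-inversion argument works exactly as you describe: applying~\eqref{theo_parkabsind} to each $W_J$, inducing up, and collapsing the double sum via $\sum_{I\subset J\subset S}(-1)^{\#S-\#J}=[I=S]$ gives the result, with transitivity of induction being the only structural input.

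The paper takes a different route. Rather than inverting over the Boolean lattice, it exploits the explicit character values: it substitutes $m\mapsto -m$ in~\eqref{theo_parkabsind} and tensors with $\sign$, using the identity $\sign\otimes\ind_{W_I}^W(\boldsymbol{\chi})=\ind_{W_I}^W(\sign_{W_I}\otimes\boldsymbol{\chi})$ together with $\sign_{W_I}\otimes\absfpark_{W_I,-m}=(-1)^{\#I}\abspark_{W_I,m}$. Your approach is more elementary and purely formal---it never touches the actual values of $\abspark$ or $\absfpark$, only the already-established relation between them---whereas the paper's approach foregrounds the $m\leftrightarrow -m$ duality that is a recurring theme of the article (cf.\ the combinatorial reciprocities in Section~\ref{sec:reciprocities}). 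Both are short; yours requires knowing~\eqref{theo_parkabsind} for all standard parabolic subgroups $W_J$ (which is fine, since the reducible case was handled there), while the paper's requires only the single instance for $W$ itself plus the sign-twist manipulation.
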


\begin{proof}
It is straightforward to adapt the proof of the previous theorem. We can also see it as a consequence: substitute $m$ to $-m$ in~\eqref{theo_parkabsind} and tensor with $\sign$, this gives (where $\sign_{W_I}$ denotes the sign character of $W_I$ to distinguish it from $\sign$)
\begin{align*}
 \absfpark_{W,m}
 &=
 (-1)^n \sum_{I\subset S} \sign \otimes \ind_{W_I}^W \big( \absfpark_{W_I,-m} \big) \\
 &=
 (-1)^n \sum_{I\subset S} \ind_{W_I}^W \big( \sign_{W_I} \otimes \absfpark_{W_I,-m} \big) \\
 &=
 (-1)^n \sum_{I\subset S} \ind_{W_I}^W \big( (-1)^{\#I} \abspark_{W_I,m} \big).
\end{align*}
We used the identity
\begin{align} \label{eq:tensor_eps}
 \sign \otimes \ind_{W_I}^W (\boldsymbol{\chi})
 =
 \ind_{W_I}^W (\sign_{W_I} \otimes \boldsymbol{\chi}),
\end{align}
where $\boldsymbol{\chi}$ is any character of $W_I$. It easily follows from the general formula for induction, and the fact that $\sign_{W_I}$ is the restriction of $\sign$ to $W_I$. Indeed, we have for any $w\in W$
\begin{align*}
 \ind_{W_I}^W (\sign_{W_I} \otimes \boldsymbol{\chi})(w)
 &=
 \frac{1}{|W_I|}
 \sum_{x\in W, \; x^{-1} w x \in W_I}
 (\sign_{W_I} \otimes \boldsymbol{\chi}) \big(x^{-1} w x\big) \\
 &=
 \frac{ 1 }{|W_I|}
 \sum_{x\in W, \; x^{-1} w x \in W_I}
 \sign_{W_I}\big(x^{-1} w x\big) \cdot \boldsymbol{\chi} \big(x^{-1} w x\big)
\end{align*}
We have $\sign_{W_I}\big(x^{-1} w x\big) = \sign\big(x^{-1} w x\big) = \sign(w)$, since $\sign_{W_I}$ is the restriction of $\sign$ and $\sign$ is a class function. It follows
\begin{align*}
 \ind_{W_I}^W (\sign_{W_I} \otimes \boldsymbol{\chi})(w)
 &=
 \frac{ \sign(w) }{|W_I|}
 \sum_{x\in W, \; x^{-1} w x \in W_I}
 \boldsymbol{\chi} \big(x^{-1} w x\big)
 = \sign(w) \cdot \ind_{W_I}^W (\boldsymbol{\chi})(w). \qedhere
\tag*{\qed}
\end{align*}
\renewcommand{\qed}{}
\end{proof}

We have thus proved that $\abspark_{W,m}$ and $\absfpark_{W,m}$ satisfy the same identities as $\park_{W,m}$ and $\fpark_{W,m}$ in Proposition~\ref{lemm:inclexcl}.

\begin{proof}[Proof of Theorem~\ref{theo_fpark}]
We have $\park_{W_I,m} = \abspark_{W_I,m}$ for all $I\subset S$ by Theorem~\ref{theo_park}. From the second identity in Proposition~\ref{prop:inclexcl} and the previous theorem, we immediately get $\absfpark_{W,m} = \fpark_{W,m}$.
\end{proof}

\section{Proof of Theorem~\ref{theo_fpark} via combinatorial matrices}
\label{parkingproof2}

The goal of this section is to prove identity~\eqref{eq:relationchar} below, as the proof of Theorem~\ref{theo_fpark} will immediately follow.

\begin{Proposition} \label{prop:relationchar}
We have
\begin{equation} \label{eq:relationchar}
 \fpark_{W,m} = \bPsi_{-1} \otimes \; \park_{W,-m}.
\end{equation}
\end{Proposition}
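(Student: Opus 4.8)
The plan is to prove the character identity~\eqref{eq:relationchar} directly; Theorem~\ref{theo_fpark} then drops out in one line. Since $\kappa(W,X,m)$ counts multichains it is a polynomial in $m$ (a zeta polynomial), so both sides of~\eqref{eq:relationchar} lie in $R(W)[m]$, and Theorem~\ref{theo_park} gives $\park_{W,m}=\bPsi_{mh+1}$ as an identity of polynomials in $m$ (for irreducible $W$); substituting $-m$ for $m$ and using $\bPsi_{-1}\otimes\bPsi_t=\bPsi_{-t}$ (both class functions send $w\mapsto(-t)^{\dim\Fix(w)}$) yields $\fpark_{W,m}=\bPsi_{-1}\otimes\bPsi_{1-mh}=\bPsi_{mh-1}$. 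To establish~\eqref{eq:relationchar} it therefore suffices, working in the basis $(\bPhi_X)_{X\in\Theta}$ of $R(W)$, to identify the coefficient of $\bPhi_X$ in $\bPsi_{-1}\otimes\park_{W,-m}$ with $\kappa^+(W,X,m)$.

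First I would encode the two characters by the zeta matrix $\mathbf{Z}$ of the absolute order on $\NC(W,c)$: the number of multichains $w_1\leq\cdots\leq w_m$ with $w_1$ fixed and $w_m$ ranging over a set $A$ is $(\mathbf{Z}^{m-1}\mathbf{1}_A)_{w_1}$. Hence $\kappa(W,X,m)=\sum_{w_1\in\mathcal{X}}(\mathbf{Z}^{m-1}\mathbf{1})_{w_1}$ with $\mathbf{1}$ the all-ones vector, while $\kappa^+(W,X,m)=\sum_{w_1\in\mathcal{X}}(\mathbf{Z}^{m-1}\mathbf{e}_{\mathrm{full}})_{w_1}$ with $\mathbf{e}_{\mathrm{full}}$ the indicator of $\NCF(W,c)$. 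Inclusion--exclusion on the support gives $\mathbf{e}_{\mathrm{full}}=\sum_{I\subseteq S}(-1)^{n-\#I}\mathbf{1}_{W_I}$, where $\mathbf{1}_{W_I}$ is the indicator of $\{w\in\NC(W,c):w\in W_I\}$; this alternating sum is exactly the combinatorial avatar of multiplication by $\bPsi_{-1}=(-1)^n\sign$ on $R(W)$ described by Solomon's identity (Lemma~\ref{lemm_solomon}). So the real content is to show that the substitution $m\mapsto-m$ inside $\park_{W,m}$ produces precisely this signed recombination.

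This is where the two orders $\sqsubset$ and $\ll$ enter. By Proposition~\ref{prop:order_uvw}, $\mathbf{Z}=\mathbf{L}\,\mathbf{S}$ with $\mathbf{L}=\big([v\ll u]\big)$ and $\mathbf{S}=\big([u\sqsubset w]\big)$; both factors are unitriangular for the reflection-length filtration, hence invertible, and by Proposition~\ref{lemm:bool} the $\sqsubset$-downsets and $\ll$-upsets are boolean lattices, so $\mathbf{S}^{-1}$ and $\mathbf{L}^{-1}$ have $\pm1$ entries given by boolean Möbius functions. A multichain for $\leq$ is then the same datum as an alternating $\ll/\sqsubset$ chain, and passing to the polynomial in $m$ and evaluating at $-m$ replaces positive powers of $\mathbf{Z}$ by negative ones, bringing in exactly these signed boolean inversions. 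Collapsing the resulting boolean sums (again via the binomial identity underlying $\mathbf{e}_{\mathrm{full}}$) and translating back to the orbit basis --- controlling the normalizer indices through the orbit-stabilizer identity $[N(W_X):W_X]=\#\{yW_X:y\cdot X=Y\}$ exactly as in the proof of Lemma~\ref{lemm:evalchar} --- one reads off that the $\bPhi_X$-coefficient of $\bPsi_{-1}\otimes\park_{W,-m}$ is $\kappa^+(W,X,m)$, which is~\eqref{eq:relationchar}.

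I expect the main obstacle to be bookkeeping rather than any single computation: keeping the $\NC(W,c)$-level combinatorics of the two orders and their boolean blocks compatible with the orbit-indexed basis $(\bPhi_X)$ of $R(W)$, i.e.\ proving that the signs coming from the boolean Möbius functions of $\sqsubset$ and $\ll$ assemble into precisely the alternating sum of Solomon's identity, and that the full-support condition on $w_m$ survives this assembly as the surviving ``top-element'' term. Packaging all of this as a clean relation between the two combinatorial matrices --- namely that the sign twist together with $m\mapsto-m$ conjugates one into the other --- is the part that will require care.
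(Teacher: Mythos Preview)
Your outline is on the right track and uses the same ingredients as the paper's proof: the factorization of the absolute order via the two orders $\ll$ and $\sqsubset$, the boolean structure of their intervals, and Solomon's identity for tensoring with $\sign$. The paper, however, packages the argument much more cleanly by working from the outset at the orbit level rather than on $\NC(W,c)$: it defines $\Theta$-indexed matrices $\bQ$, $\bR$, $\bN$ (the orbit versions of your $\mathbf{Z}$, $\mathbf{L}$, $\mathbf{S}$) together with the diagonal sign matrix $\bD$, so that $\park_{W,m}=\bQ^m\bU$ and $\fpark_{W,m}=\bQ^{m-1}\bR\bU$. Solomon's identity becomes $\sign\otimes\bX=\bD\bN\bX$; your factorization is $\bQ=\bR\bN$; and the entire computation reduces to the single identity $\bQ^{-1}=(\bD\bN)\bQ(\bD\bN)$, equivalently $(\bR\bD)^2=\bI$, which is exactly the ``boolean collapse'' you gesture at (summing $(-1)^{\ell(v)}$ over a boolean $\ll$-interval). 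With these lemmas the proof is a three-line matrix manipulation. Your route would arrive at the same place, but working on $\NC(W,c)$ and then descending to orbits adds the nontrivial check that the relevant counts depend only on the orbit (the paper proves this separately for $\bR$ and $\bN$), and your detour through normalizer indices and Lemma~\ref{lemm:evalchar} is a red herring --- that lemma belongs to the $W$-Laplacian proof in Section~\ref{parkingproof1} and plays no role here.
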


\begin{proof}[Proof of Theorem~\ref{theo_fpark}]
 If $W$ is irreducible, we have $\park_{W,m} = \bPsi_{mh+1}$ by Theorem~\ref{theo_park}. The right-hand side of~\eqref{eq:relationchar} is then $\bPsi_{-1} \otimes \bPsi_{-mh+1} = \bPsi_{mh-1}$. We thus get $\fpark_{W,m} = \bPsi_{mh-1}$. The reducible case follows immediately.
\end{proof}

\begin{Remark}
 We have obtained Theorem~\ref{theo_fpark} as a consequence of Theorem~\ref{theo_park}. It is worth pointing out that Proposition~\ref{prop:relationchar} actually proves the equivalence of these two theorems.
\end{Remark}

Below, we identify elements in $R(W)$ with column vectors indexed by $\Theta$. We also use matrices with rows and columns indexed by $\Theta$. In both cases, we use bold fonts as we do for elements of $R(W)$. Note that $\otimes$ still denotes the product of $R(W)$, but we use no sign for the matrix products.

\begin{Definition}
Let $w$ be a fixed element of $\NC(W,c)$ such that $\Fix(w)\sim Y$. We define three matrices $\bQ$, $\bR$, and $\bN$ by
\begin{gather*}
 \bQ_{X,Y} = \# \big\{ v \in\NC(W,c) \colon \Fix(v) \sim X \text{ and } v\leq w \big\}, \\
 \bR_{X,Y} = \# \big\{ v \in\NC(W,c) \colon \Fix(v) \sim X \text{ and } v\ll w \big\}, \\
 \bN_{X,Y} = \# \big\{ v \in\NC(W,c) \colon \Fix(v) \sim X \text{ and } v\sqsubset w \big\}.
\end{gather*}
 Also we define $\bD$ as the diagonal matrix such that, for all $X\in \Theta$,
\[
 \bD_{X,X} = (-1)^{n - \dim X}.
\]
Moreover, $\bI$ denotes the identity matrix.
\end{Definition}

These matrices don't depend on the chosen element $w$. In the case of $\bQ_{X,Y}$, this is a consequence of the fact that the absolute order is invariant under conjugation. As for the other matrices, it's worth writing some details about the proof.

\begin{Lemma}
 The numbers $\bR_{X,Y}$ and $\bN_{X,Y}$ don't depend on the chosen element $w$.
\end{Lemma}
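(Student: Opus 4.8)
The plan is to reduce both $\bN_{X,Y}$ and $\bR_{X,Y}$ to a single invariant of $w$ — the multiset of $W$-conjugacy classes of standard parabolic subgroups of $W_{\Fix(w)}$ — and then to check that this multiset is unchanged when $w$ is replaced by another noncrossing partition in the same $W$-orbit of fixed spaces. Fix $w\in\NC(W,c)$ with $\Fix(w)\sim Y$, write its canonical factorization $w=t_1\cdots t_k$ as in Proposition~\ref{prop:wcox}, and set $D_w:=\{t_1,\dots,t_k\}$, the simple system of the (possibly reducible) parabolic subgroup $W_{\Fix(w)}$. By Proposition~\ref{lemm:bool}, $\{v\in\NC(W,c):v\sqsubset w\}$ is precisely the set of subwords $w_I:=\prod_{t_i\in I}t_i$ (in the induced order) for $I\subseteq D_w$, the map $I\mapsto w_I$ being a bijection, and each $w_I$ is a Coxeter element of the standard parabolic $(W_{\Fix(w)})_I$, so $W_{\Fix(w_I)}=(W_{\Fix(w)})_I$. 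Via the order isomorphism~\eqref{def_WX}, $\Fix(w_I)\sim X$ is equivalent to $(W_{\Fix(w)})_I$ being $W$-conjugate to $W_X$. Writing $\mathcal M(w)$ for the multiset $\big\{\,W\text{-conjugacy class of }(W_{\Fix(w)})_I\ :\ I\subseteq D_w\,\big\}$, we conclude that $\bN_{X,Y}$ equals the multiplicity in $\mathcal M(w)$ of the class corresponding to $X$.

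For $\bR$ we use that the matrix $\bQ$ is already unambiguous: since the absolute order and the map $v\mapsto\Fix(v)$ are $W$-equivariant, $\bQ_{X,Y}=\#\{v\in\NC(W,c):\Fix(v)\sim X,\ v\le w\}$ does not depend on $w$. Now $v\ll w$ means $v\le w$ together with full support in $W_{\Fix(w)}$, while for $I\subseteq D_w$ the set of $v\le w$ with support contained in $I$ is exactly the interval $[e,w_I]$ (using the standard description $[e,w]=\NC(W_{\Fix(w)},w)$ and the fact that its elements lying in a standard parabolic of $W_{\Fix(w)}$ form the corresponding subinterval). Hence Möbius inversion over the Boolean lattice of subsets of $D_w$ gives
\[
 \bR_{X,Y}=\sum_{I\subseteq D_w}(-1)^{\#D_w-\#I}\,\bQ_{X,\,[\Fix(w_I)]}
 =\sum_{Z\in\mathcal M(w)}(-1)^{\dim Z-\dim Y}\,\bQ_{X,Z},
\]
where in the last expression the sum runs over $\mathcal M(w)$ with multiplicities and we used $\#D_w=n-\dim Y$ and $\#I=n-\dim\Fix(w_I)$. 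Thus, like $\bN_{X,Y}$, the number $\bR_{X,Y}$ is determined by $\mathcal M(w)$ together with the already well-defined matrix $\bQ$.

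It remains to show $\mathcal M(w)$ depends only on $Y$. Let $w'\in\NC(W,c)$ also satisfy $\Fix(w')\sim Y$; then $\Fix(w')=g\cdot\Fix(w)$ for some $g\in W$, so $W_{\Fix(w')}=gW_{\Fix(w)}g^{-1}$. The sets $D_w$ and $g^{-1}D_{w'}g$ are two simple systems of the reflection group $W_{\Fix(w)}$, hence conjugate by some $p\in W_{\Fix(w)}$; therefore $I\mapsto pIp^{-1}$ is a cardinality-preserving bijection from subsets of $D_w$ to subsets of $g^{-1}D_{w'}g$ sending $(W_{\Fix(w)})_I$ to a $W$-conjugate subgroup, and conjugating further by $g$ identifies the targets with the standard parabolics of $W_{\Fix(w')}$ relative to $D_{w'}$. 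Following conjugacy classes through this composite yields $\mathcal M(w)=\mathcal M(w')$, which finishes the proof. The one genuinely substantive point is this last step — that the labeling of standard parabolics by $W$-conjugacy class is insensitive to the choice of simple system of $W_{\Fix(w)}$, because any two such simple systems differ by an inner automorphism; everything preceding it is bookkeeping built on Propositions~\ref{prop:wcox} and~\ref{lemm:bool} and the standard description of principal order ideals in $\NC(W,c)$.
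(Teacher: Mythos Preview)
Your argument is correct. The treatment of $\bR$ is essentially identical to the paper's: both establish the inclusion--exclusion identity $\bR_{X,Y}=\sum_{Z}(-1)^{\dim Z-\dim Y}\bN_{Z,Y}\bQ_{X,Z}$ (the paper phrases it via Proposition~\ref{prop:order_uvw}, you via M\"obius inversion on supports, but these are the same computation). The genuine difference is in the handling of $\bN$. You give a direct, elementary group-theoretic argument: the multiset $\mathcal M(w)$ of $W$-conjugacy classes of standard parabolics of $W_{\Fix(w)}$ is invariant because any two simple systems of a finite reflection group are conjugate by an inner automorphism. The paper instead invokes Solomon's formula for the sign character in $R(W_{\Fix(w)})$, induces to $W$, and obtains the expansion $\sign\otimes\bPhi_Y=\sum_X(-1)^{n-\dim X}\bN_{X,Y}\bPhi_X$; well-definedness then follows because the $\bPhi_X$ form a basis. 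Your route is more self-contained and avoids character theory; the paper's route has the advantage of simultaneously producing the identity~\eqref{eq:tensorsignDN}, which is reused as Lemma~\ref{lemm:solomon2} in the matrix manipulations that follow.
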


\begin{proof}
 Consider the sign character $\sign_{W_{\Fix(w)}} \in R\big(W_{\Fix(w)}\big)$. Via Lemma~\ref{lemm_solomon} applied to $W_{\Fix(w)}$, we have
 \[
 \sign_{W_{\Fix(w)}}
 =
 \sum_{v\sqsubset w} (-1)^{\ell(v)}\bPhi_{\Fix(v)},
 \]
 where it is understood that $\bPhi_{\Fix(v)} \in R( W_{\Fix(w)} )$. This is because the elements $v\sqsubset w$ naturally correspond to subsets of the factors in the canonical factorization of $w$, i.e., subsets of the simple system of $W_{\Fix(w)}$. By inducing on $W$ and using~\eqref{eq:tensor_eps}, we have
 \begin{align} \label{eq:tensorsignDN}
 \sign \otimes \Phi_Y = \sum_{X} (-1)^{n-\dim(X)} \bN_{X,Y} \Phi_X.
 \end{align}
 Since the left-hand side doesn't depend on the chosen $w$, the same is true for the numbers $\bN_{X,Y}$.

 The result for $\bR_{X,Y}$ follows because this number can be computed in terms of the coefficients of $\bQ$ and $\bN$:
 \begin{align} \label{eq:RQNDN}
 \bR_{X,Z} = \sum_{Y} \bQ_{X,Y} \bN_{Y,Z} (-1)^{\dim(Z)-\dim(Y)}.
 \end{align}
 This equality follows from an inclusion-exclusion procedure to compute $\bR_{X,Z}$, using Proposition~\ref{prop:order_uvw}. We omit the details.
\end{proof}

Note that the matrices $\bQ$, $\bR$, and $\bN$ are lower unitriangular (upon ordering $\Theta$ so that dimension is increasing: for example $\dim(Y)=0$ and $w=c$ gives nonzero values of $\bQ_{X,Y}$ for any $X$).

\begin{Lemma} \label{lemm:solomon2}
 For any $\bX \in R(W)$, we have $\sign \otimes \bX = \bD \bN \bX$.
\end{Lemma}

\begin{proof}
This is a rewriting of Lemma~\ref{lemm_solomon}. More explicitly, this follows from~\eqref{eq:tensorsignDN}.
\end{proof}

\begin{Lemma} \label{lemm:ninverse}
 We have $ (\bD \bN)^2 = \bI$.
\end{Lemma}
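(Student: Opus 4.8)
The statement $(\bD\bN)^2 = \bI$ says that the operator $\bX \mapsto \sign\otimes\bX$ on $R(W)$ is an involution, which is obvious at the level of characters since $\sign\otimes\sign = \mathbf{1}$. The point is to extract this as a matrix identity. By Lemma~\ref{lemm:solomon2}, for every $\bX\in R(W)$ we have $\sign\otimes\bX = \bD\bN\bX$; applying the same lemma twice gives $\sign\otimes(\sign\otimes\bX) = \bD\bN(\bD\bN\bX) = (\bD\bN)^2\bX$. On the other hand, $\sign\otimes(\sign\otimes\bX) = (\sign\otimes\sign)\otimes\bX = \mathbf{1}\otimes\bX = \bX$, because tensoring characters is associative and $\sign$ squares to the trivial character (equivalently, $(-1)^{\ell(w)}(-1)^{\ell(w)} = 1$ for all $w$). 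Hence $(\bD\bN)^2\bX = \bX$ for every $\bX$ in $R(W)$.

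To upgrade ``$(\bD\bN)^2\bX = \bX$ for all $\bX\in R(W)$'' to the matrix equation $(\bD\bN)^2 = \bI$, I would use that the standard basis vectors of the column-vector model of $R(W)$ are exactly the coordinate vectors corresponding to the basis $(\bPhi_X)_{X\in\Theta}$; running $\bX$ over this basis shows every column of $(\bD\bN)^2$ agrees with the corresponding column of $\bI$. (Alternatively: $\bD\bN$ is a square matrix over $\mathbb{Z}$ whose square acts as the identity on all of $\mathbb{Q}\otimes R(W)$, which is the full space the matrix acts on, so the square is the identity matrix.)

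The only thing to be a little careful about is that Lemma~\ref{lemm:solomon2} is stated for $\bX\in R(W)$ rather than for $\mathbb{Q}\otimes R(W)$, but the identity $\sign\otimes\bPhi_Y = \bD\bN\bPhi_Y$ holds for each basis element $\bPhi_Y$ — this is precisely equation~\eqref{eq:tensorsignDN} — and both sides are $\mathbb{Q}$-linear in $\bX$, so it extends to all of $\mathbb{Q}\otimes R(W)$, which is what licenses the conclusion about the matrix. I do not expect any genuine obstacle here; the argument is a two-line formal manipulation once Lemma~\ref{lemm:solomon2} is in hand, and the ``hard part'', such as it is, is merely the bookkeeping remark that an identity of linear operators on the whole ambient space is an identity of matrices.

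\begin{proof}
By Lemma~\ref{lemm:solomon2}, for every $\bX\in R(W)$ we have $\sign\otimes\bX = \bD\bN\bX$. Applying this twice,
\[
 (\bD\bN)^2\bX = \bD\bN(\bD\bN\bX) = \sign\otimes(\sign\otimes\bX) = (\sign\otimes\sign)\otimes\bX = \bX,
\]
where we used that $\sign\otimes\sign=\mathbf{1}$ (equivalently $(-1)^{\ell(w)}(-1)^{\ell(w)}=1$ for all $w\in W$) and that $\mathbf{1}=\bPhi_S$ is the unit of $R(W)$. Thus $(\bD\bN)^2$ fixes every element of $R(W)$; taking $\bX$ to range over the basis $(\bPhi_X)_{X\in\Theta}$ shows that each column of $(\bD\bN)^2$ equals the corresponding column of $\bI$, so $(\bD\bN)^2=\bI$.
\end{proof}
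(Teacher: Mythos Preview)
Your proof is correct and follows exactly the paper's approach: apply Lemma~\ref{lemm:solomon2} twice and use $\sign\otimes\sign=\mathbf{1}$. The paper's own proof is a one-liner to this effect (with an aside that $\bN^{-1}=\bD\bN\bD$ can also be read as inclusion-exclusion on subsets of $S$); your additional bookkeeping about passing from an operator identity to a matrix identity is harmless and correct.
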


\begin{proof}
This follows from the previous proposition, since $\sign \otimes \sign = {\bf 1}$. Alternatively, one can see the relation $\bN^{-1} = \bD \bN \bD$ as a reformulation of the inclusion-exclusion principle on subsets of~$S$.
\end{proof}

\begin{Lemma} \label{lemm:RNQ}
 We have $\bR \bN = \bQ$.
\end{Lemma}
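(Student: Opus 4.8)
The claim $\bR\bN=\bQ$ is a matrix identity whose $(X,Z)$ entry reads
\[
 \sum_{Y}\bR_{X,Y}\bN_{Y,Z}=\bQ_{X,Z}.
\]
The natural approach is to fix a representative $w\in\NC(W,c)$ with $\Fix(w)\sim Z$ and interpret both sides as counts of pairs/elements $v\in\NC(W,c)$ with $\Fix(v)\sim X$ and $v\le w$. The right-hand side $\bQ_{X,Z}$ is exactly the number of such $v$. For the left-hand side, I would run the summation index $Y$ over the conjugacy type of an intermediate element: given $v\le w$, Proposition~\ref{prop:order_uvw} supplies a \emph{unique} $u\in\NC(W,c)$ with $v\ll u\sqsubset w$. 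So the plan is to stratify $\{v:\ v\le w,\ \Fix(v)\sim X\}$ according to the orbit $Y$ of $\Fix(u)$, where $u$ is this unique element. For a fixed orbit $Y$, the elements $u\sqsubset w$ with $\Fix(u)\sim Y$ are counted by $\bN_{Y,Z}$ (by definition, taking the reference element to be $w$), and — crucially — because $\leq$ and $\sqsubset$ agree below $w$ (Proposition~\ref{lemm:bool}, first bullet), each such $u$ is itself a noncrossing partition lying in $\NC(W,c)$ whose associated parabolic subgroup $W_{\Fix(u)}$ has $u$ as a standard Coxeter element (Proposition~\ref{prop:wcox}).

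Next I would count, for a fixed such $u$, the number of $v$ with $v\ll u$ and $\Fix(v)\sim X$. By definition this is $\bR_{X,\Fix(u)}$, computed with reference element $u$; and by the preceding Lemma this number does not depend on the choice of $u$ within its orbit, so it equals $\bR_{X,Y}$. Summing over the two levels gives
\[
 \bQ_{X,Z}=\sum_{Y}\#\{u\sqsubset w:\ \Fix(u)\sim Y\}\cdot\bR_{X,Y}=\sum_{Y}\bN_{Y,Z}\,\bR_{X,Y},
\]
which is the $(X,Z)$ entry of $\bR\bN$. The only thing to verify carefully is that the decomposition $v\mapsto(u,v)$ is a genuine bijection between $\{v\le w:\Fix(v)\sim X\}$ and $\bigsqcup_{u\sqsubset w}\{v\ll u\}$ — injectivity and surjectivity both follow from the uniqueness clause in Proposition~\ref{prop:order_uvw}, since $v\ll u\sqsubset w$ forces $v\le w$, and conversely every $v\le w$ produces exactly one such $u$.

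\textbf{Main obstacle.} The genuinely delicate point is the independence of $\bR_{X,Y}$ from the chosen representative $u$ \emph{when $u$ ranges over elements of the form $u\sqsubset w$ for varying $w$}, i.e., reconciling ``$\bR_{X,Y}$ defined via some $w'$ with $\Fix(w')\sim Y$'' with ``$\bR_{X,\Fix(u)}$ where $u$ is a specific noncrossing partition sitting below $w$''. This is exactly the content of the previous Lemma (via the identity $\bR_{X,Z}=\sum_Y\bQ_{X,Y}\bN_{Y,Z}(-1)^{\dim Z-\dim Y}$), so I would simply invoke it; but one should double-check that $\{v:v\ll u\}$ computed inside $\NC(W,c)$ coincides with the same set computed inside the parabolic $\NC(W_{\Fix(u)},u)$, which is where Proposition~\ref{prop:wcox} (that $u$ is a standard Coxeter element of $W_{\Fix(u)}$) and the second bullet of Proposition~\ref{lemm:bool} enter. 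Once that compatibility is in place, the identity $\bR\bN=\bQ$ is just the two-step counting above.
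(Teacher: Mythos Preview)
Your argument is correct and is essentially the same as the paper's: both expand the $(X,Z)$ entry of $\bR\bN$ as a count of pairs $(v,u)$ with $v\ll u\sqsubset w$ and $\Fix(v)\sim X$, then invoke the uniqueness in Proposition~\ref{prop:order_uvw} to identify this with $\bQ_{X,Z}$. The paper dispatches it in one sentence, while you spell out the well-definedness of $\bR_{X,Y}$ more carefully (correctly citing the preceding lemma), but the underlying idea is identical.
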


\begin{proof}
 A combinatorial expansion of the product $\bR \bN$ shows that the $X,Y$-coefficient is the number of $u$, $v$ such that $u\ll v \sqsubset w$ and $\Fix(u) \sim X$, where $w$ is a fixed element such that $\Fix(W) \sim Y$. So this is an immediate consequence of Proposition~\ref{prop:order_uvw}. (Also, equation~\eqref{eq:RQNDN} above can be rewritten $\bR = \bQ \bD \bN \bD $.)
\end{proof}

\begin{Lemma} \label{lemm:Qinverse}
 We have $\bQ^{-1} = (\bD \bN) \bQ (\bD \bN)$.
\end{Lemma}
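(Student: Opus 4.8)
The plan is to combine the three structural identities already established: $\bR\bN = \bQ$ (Lemma~\ref{lemm:RNQ}), the relation $\bR = \bQ\bD\bN\bD$ obtained from \eqref{eq:RQNDN} (also recorded in the proof of Lemma~\ref{lemm:RNQ}), and the involution $(\bD\bN)^2 = \bI$ (Lemma~\ref{lemm:ninverse}). The first two give $\bQ = \bR\bN = (\bQ\bD\bN\bD)\bN$, hence
\[
 \bQ = \bQ\,\bD\bN\bD\bN.
\]
Since $\bQ$ is lower unitriangular (as noted right after the proof that $\bR_{X,Y}$ and $\bN_{X,Y}$ are well-defined), it is invertible, so we may cancel it on the left to get $\bI = \bD\bN\bD\bN = (\bD\bN)(\bD\bN)$, which is just Lemma~\ref{lemm:ninverse} again — consistent but not yet the claim.

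So instead I would run the computation the other way. From $\bR = \bQ\bD\bN\bD$ and $\bR\bN = \bQ$ we get $\bQ\bD\bN\bD\bN = \bQ$, and now I want an expression for $\bQ^{-1}$. Rewrite $\bR\bN = \bQ$ as $\bQ^{-1}\bR = \bN^{-1}$, i.e. $\bQ^{-1}\bR = \bD\bN\bD$ using Lemma~\ref{lemm:ninverse} (which gives $\bN^{-1} = \bD\bN\bD$). Substituting $\bR = \bQ\bD\bN\bD$ yields $\bQ^{-1}\bQ\bD\bN\bD = \bD\bN\bD$, again a tautology. The genuine input must therefore be a second, independent relation; the natural candidate is the transpose/symmetry relation coming from tensoring with $\sign$, namely Lemma~\ref{lemm:solomon2} together with the fact that $\bQ$ is the matrix of the map $\bPhi_Y \mapsto \park$-type sums, but more directly: apply $\bD\bN\,(\cdot)\,\bD\bN$ to $\bR\bN = \bQ$. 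By Lemma~\ref{lemm:ninverse}, conjugation by $\bD\bN$ is an involution of the matrix algebra, and one checks that it sends $\bR$ to $\bQ\bD\bN\bD\cdot(\bD\bN) = \dots$; the point is that $\bR = \bQ\bD\bN\bD$ says precisely that $\bR(\bD\bN) = \bQ\bD = \bD\bQ^{t}$-flavoured — I would verify that conjugating $\bR\bN=\bQ$ by $\bD\bN$ gives $(\bD\bN)\bR(\bD\bN)\cdot(\bD\bN)\bN(\bD\bN) = (\bD\bN)\bQ(\bD\bN)$, and then identify $(\bD\bN)\bN(\bD\bN) = \bD(\bN\bD\bN\bD)\bN = \bD\bN^{-1}\bN = \bD$ wait — this needs care. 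Cleanly: $(\bD\bN)\,\bN\,(\bD\bN) = \bD\,(\bN\bD\bN)\,\bD\,\cdot\bD^{-1}(\bD\bN)$… I will instead just compute $(\bD\bN)\bQ(\bD\bN)$ directly and show it inverts $\bQ$.

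Here is the clean version I would actually write. Using $\bR = \bQ\,\bD\bN\bD$ and $\bR\bN = \bQ$:
\[
 \bQ = \bR\bN = \bQ\,\bD\bN\bD\bN,
\]
so multiplying on the left by $\bQ^{-1}$ and on the right by $(\bD\bN)$ and using $(\bD\bN)^2 = \bI$ (Lemma~\ref{lemm:ninverse}):
\[
 (\bD\bN) = \bD\bN\bD\bN(\bD\bN) = \bD\bN\bD,
\]
hmm that is false in general. The resolution is that one of these matrix identities must be read with a transpose; the statement $\bQ^{-1} = (\bD\bN)\bQ(\bD\bN)$ is the assertion that $\bD\bN$ \emph{conjugates} $\bQ$ to $\bQ^{-1}$, equivalently that $\bD\bN$ and $\bQ\,\bD\bN$ are both involutions — and indeed $\bQ\,\bD\bN = \bR\bN\bD\bN\cdot$? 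The honest plan: show $\bQ\,\bD\bN$ squares to $\bI$. Compute $(\bQ\bD\bN)(\bQ\bD\bN) = \bQ\,(\bD\bN\bQ\bD\bN)$; by Lemma~\ref{lemm:RNQ} in the form $\bN\bQ^{-1}$ or by the relation $\bR = \bQ\bD\bN\bD$ rearranged to $\bD\bN\bD = \bQ^{-1}\bR$, we get $\bD\bN\bQ\bD\bN = \bD\,(\bN\bQ)\,\bD\bN$, and $\bN\bQ = \bN\bR\bN$ (from $\bQ=\bR\bN$)… The main obstacle, and where I would concentrate the actual writing, is correctly tracking which of $\bR\bN=\bQ$, $\bR=\bQ\bD\bN\bD$ involves a hidden transpose symmetry (they are \emph{not} independent unless the combinatorics of $\sqsubset$ versus $\ll$ — "up-sets are boolean, down-sets are boolean," Proposition~\ref{lemm:bool} — supplies the extra self-duality). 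Concretely I expect the cleanest route is: (i) record $\bN^{-1} = \bD\bN\bD$ from Lemma~\ref{lemm:ninverse}; (ii) from $\bR\bN = \bQ$ deduce $\bR = \bQ\bN^{-1} = \bQ\bD\bN\bD$, reconciling with \eqref{eq:RQNDN}; (iii) invoke the \emph{dual} of Lemma~\ref{lemm:RNQ} — that reading $v \mapsto$ the element $v'$ with $v \ll v' \sqsubset w$ (Proposition~\ref{prop:order_uvw}, via the boolean-lattice structure of Proposition~\ref{lemm:bool}) gives a second factorization, e.g. $\bN\bR' = \bQ$ for an analogous matrix, forcing $\bN = \bD\bN\bD\bN\bD$-type identities; then (iv) assemble. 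If instead the paper intends a one-line deduction, it is: $\bQ^{-1} = \bN^{-1}\bR^{-1} = (\bD\bN\bD)\bR^{-1}$ and separately $\bR^{-1}$ unwinds via $\bR = \bQ\bD\bN\bD$ to $\bR^{-1} = \bD\bN\bD\bQ^{-1}$, giving $\bQ^{-1} = \bD\bN\bD\bD\bN\bD\bQ^{-1} = \bN\bD\bN\bD\bQ^{-1}$… which again collapses. I therefore flag this as the crux: the proof hinges on one genuinely new combinatorial identity among $\bQ,\bR,\bN$ (beyond Lemmas~\ref{lemm:RNQ}, \ref{lemm:ninverse}), namely the "transpose" relation $\bQ\bD = \bD\,\bN^{t}\bR^{t}$ or equivalently that $\sign\otimes(-)$ intertwines the $\bQ$-action with its adjoint; establishing that, the identity $\bQ^{-1} = (\bD\bN)\bQ(\bD\bN)$ follows by the short manipulation above, and this is exactly the fact that powers Proposition~\ref{prop:relationchar} and hence Theorem~\ref{theo_fpark}.
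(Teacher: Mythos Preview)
Your diagnosis is correct: the three relations you keep cycling through---$\bR\bN=\bQ$, $\bR=\bQ\bD\bN\bD$, and $(\bD\bN)^2=\bI$---are not independent (the second follows from the other two via $\bN^{-1}=\bD\bN\bD$), so no amount of algebraic juggling with them alone will produce $\bQ^{-1}=(\bD\bN)\bQ(\bD\bN)$. A genuinely new combinatorial input is needed, and you even brush against both the right target and the right source without connecting them.

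The missing step is this. You proposed showing $(\bQ\bD\bN)^2=\bI$ and then gave up on it. But $\bQ\bD\bN=(\bR\bN)(\bD\bN)=\bR(\bN\bD\bN)=\bR\bD$, using $\bN\bD\bN=\bD$ from Lemma~\ref{lemm:ninverse}. So the task is exactly to prove $(\bR\bD)^2=\bI$, and \emph{this} is where Proposition~\ref{lemm:bool} enters, not as a vague ``self-duality'' or a transpose relation. Expanding the $(X,Z)$-entry of $(\bR\bD)^2$ combinatorially gives, for a fixed $w$ with $\Fix(w)\sim Z$,
\[
 \sum_{\substack{u,v\in\NC(W,c)\\ u\ll v\ll w,\;\Fix(u)\sim X}} (-1)^{\ell(v)+\ell(w)}.
\]
For each fixed $u$ with $u\ll w$, the set $\{v:u\ll v\ll w\}$ is a Boolean interval of rank $\ell(w)-\ell(u)$ by the second bullet of Proposition~\ref{lemm:bool} (applied inside $W_{\Fix(w)}$, where $u$ has full support). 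The alternating sum over this Boolean lattice therefore vanishes unless $u=w$, giving $(\bR\bD)^2=\bI$. From there, $\bR=\bQ\bN^{-1}$ yields $(\bQ\bN^{-1}\bD)^2=\bI$, hence $\bQ^{-1}=\bN^{-1}\bD\,\bQ\,\bN^{-1}\bD=(\bD\bN)\bQ(\bD\bN)$.

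Your speculation about a transpose identity $\bQ\bD=\bD\bN^{t}\bR^{t}$ or an adjoint intertwining is off the mark; nothing of that sort is used.
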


\begin{proof}
 A combinatorial expansion of the product $ (\bR \bD)^2$ shows that the coefficient of $X,Y$ is
 \[
 \sum_{\substack{ u,v \in \NC(W,c) \\ u \ll v \ll w, \; \Fix(u) \sim X} }
 (-1)^{\ell(v) + \ell(w)},
 \]
 where $w \in \NC(W,c)$ is such that $\Fix(w) \sim Y$. By Lemma~\ref{lemm:bool}, we get an alternating sum over a Boolean lattice. So this is $0$ if $X \neq Y$, and $1$ if $X=Y$ (the two signs cancel each other out). We thus have $ (\bR \bD)^2 = \bI $.

 Since $\bR = \bQ \bN^{-1}$ by Lemma~\ref{lemm:RNQ}, we get $ \bigl(\bQ \bN^{-1} \bD\bigr)^2 = \bI $, so that
 \[
 \bQ^{-1} = \bN^{-1} \bD \bQ \bN^{-1} \bD.
 \]
 We have $ \bN^{-1} \bD = \bD \bN$ from Lemma~\ref{lemm:ninverse}, and the result follows.
\end{proof}

\begin{Definition}
 Let $\bU$ denote the column vector that corresponds to the trivial character ${\bf 1} \in R(W)$ under our convention. Explicitly, it is given by
 \[
 \bU_X = \begin{cases}
 1 &\text{if $X$ is the maximal element of $L(W)$}, \\
 0 &\text{otherwise}.
 \end{cases}
 \]
\end{Definition}

Indeed, by~\eqref{def_WX} we have $W_X = W$ if $X$ is the $0$-dimensional subspace. It follows that $\bPhi_X = \ind_{W_X}^W({\bf 1}) = {\bf 1}$.

\begin{Proposition}
We have
\begin{align}
 & \park_{W,m}= \bQ^m \bU, \qquad
 \fpark_{W,m}= \bQ^{m-1} \bR \bU. \label{park_matrices}
\end{align}
\end{Proposition}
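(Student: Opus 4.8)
The plan is to establish the two identities in \eqref{park_matrices} by interpreting the matrix products combinatorially and matching them against the definitions \eqref{parkdef} and \eqref{fparkdef} of $\park_{W,m}$ and $\fpark_{W,m}$. The key point is that the column vector $\bU$ picks out the maximal flat of $L(W)$, i.e., the conjugacy class of the identity element, and that $\bQ_{X,Y}$ counts noncrossing partitions $v$ with $\Fix(v)\sim X$ lying below a fixed element $w$ with $\Fix(w)\sim Y$ (this count being independent of $w$ as already noted). So I would first compute $(\bQ^m\bU)_X$ by expanding the matrix product: it is a sum over chains $Y_0\geq Y_1\geq\cdots$ (in the $\Theta$-indexing) of products of $\bQ$-entries, where the last index is forced to be the maximal flat. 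Reading this chain of inequalities back through the definition of $\bQ$, and using that $\bQ_{X,Y}$ is realized by \emph{any} representative $w$ of the class $Y$, one sees that $(\bQ^m\bU)_X$ counts exactly $m$-tuples $w_1\leq w_2\leq\cdots\leq w_m$ in $\NC(W,c)$ with $w_m$ arbitrary (the last step with $\bU$ just says $w_m\le c$, which is automatic) and $\Fix(w_1)\sim X$. That is precisely $\kappa(W,X,m)$, so $\bQ^m\bU=\park_{W,m}$.

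For the second identity, the only change is that the final application of $\bR$ in place of $\bQ$ (before the remaining $m-1$ copies of $\bQ$) replaces the condition ``$w_m\leq c$'' by the condition ``$w_m\ll c$,'' and by the definition of $\ll$ (full support in $W_{\Fix(c)}=W$) this is exactly the condition $\supp(w_m)=S$, i.e., $w_m\in\NC'(W,c)$. Concretely, I would write $\bQ^{m-1}\bR\bU$ as a sum over tuples $w_1\leq\cdots\leq w_{m-1}$ (from the $\bQ^{m-1}$ block) followed by a final factor $\bR_{\,\cdot\,,\text{(class of }w_{m-1})}$ against $\bU$; the latter, by the definition of $\bR$, counts $w_m$ with $w_{m-1}\le w_m$ (coming from the chain being a chain below a common element — here one uses that $\bR$ is defined via $v\ll w$ for $w$ a representative, and that $\bR_{X,Y}$ is well defined) and $w_m\ll c$. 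Collecting, $(\bQ^{m-1}\bR\bU)_X=\kappa^+(W,X,m)$, hence $\bQ^{m-1}\bR\bU=\fpark_{W,m}$.

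A cleaner route to the second identity, which I would probably prefer to present, is to deduce it from the first together with Lemma~\ref{lemm:RNQ}: since $\bR=\bQ\bN^{-1}$ is \emph{not} what we want directly, instead observe that $\fpark_{W,m}$ is, by Proposition~\ref{lemm:inclexcl} applied with the $\bN$-matrices encoding the inclusion--exclusion over $\supp(w_m)$, obtained from $\park_{W,m}$ by a signed sum over parabolics; but the matrix-level content of that inclusion--exclusion is exactly equation~\eqref{eq:RQNDN}, i.e. $\bR=\bQ\,\bD\bN\,\bD$, combined with $\bN^{-1}=\bD\bN\bD$. So from $\bQ^m\bU=\park_{W,m}$ one gets $\bQ^{m-1}\bR\bU=\bQ^{m-1}\bQ\bN^{-1}\bU=\bQ^m\bN^{-1}\bU$, and one checks $\bN^{-1}\bU=\bU$ (since $\bU$ corresponds to the trivial character and $\bN$ is lower unitriangular with the maximal-flat coordinate fixed). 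Either way the argument is essentially a bookkeeping of chains.

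\textbf{Main obstacle.} The one genuinely careful point — the rest being routine expansion of matrix products — is justifying the telescoping of the chain conditions: when one expands $\bQ^m\bU$, the intermediate indices $Y_1,\dots,Y_{m-1}$ range over $\Theta$ and one must argue that summing $\bQ_{Y_{i-1},Y_i}$ over all representatives correctly reconstructs the single global condition ``$w_1\le w_2\le\cdots\le w_m$'' on honest elements of $\NC(W,c)$ rather than on orbit representatives. This uses in an essential way the conjugation-invariance of the absolute order (so that $\bQ_{X,Y}$ may be computed with an \emph{arbitrary} representative of the class $Y$, in particular one lying above the previously chosen element), together with the fact that in $\NC(W,c)$ the interval below a fixed $w$ is isomorphic to $\NC(W_{\Fix(w)},w)$ so that the count of $v\leq w$ in a given class genuinely depends only on the class of $w$. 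I would spell this reduction out once, carefully, and then the remaining manipulations are immediate.
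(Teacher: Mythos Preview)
Your primary approach --- expand the matrix products combinatorially and match against the definitions of $\kappa$ and $\kappa^+$ --- is exactly the paper's proof, and the ``main obstacle'' you flag (that one may use any representative of the intermediate class $Y$, by conjugation-invariance of $\leq$ and by the lemma showing $\bR_{X,Y}$, $\bN_{X,Y}$ are well defined) is the one subtlety worth mentioning.

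Two corrections. First, in your concrete description of $\bQ^{m-1}\bR\bU$, the indices are swapped: the factor coming from $\bR$ is $\bR_{Y,\text{max}}$ with second index the class of $c$ (not the class of $w_{m-1}$), so $\bR$ produces the \emph{top} element $w_m\ll c$ with $\Fix(w_m)\sim Y$, and then $(\bQ^{m-1})_{X,Y}$, computed using $w_m$ as representative, supplies the chain $w_1\le\cdots\le w_{m-1}\le w_m$ below it. Your stated conclusion is right but the bookkeeping as written does not parse.

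Second, and more importantly, your ``cleaner route'' is wrong. The claim $\bN^{-1}\bU=\bU$ fails: $\bU$ sits at the class of the $0$-dimensional flat, which in the ordering making $\bN$ lower unitriangular is the \emph{first} index, so $\bN^{-1}\bU$ is the full first column of $\bN^{-1}$, not $\bU$. In character terms, by Lemma~\ref{lemm:solomon2} one has $\bD\bN\bU=\sign$, hence $\bN^{-1}\bU=\bD\bN\bD\bU=(-1)^n\bD\bN\bU$ corresponds to $(-1)^n\sign\neq\mathbf{1}$. So $\bQ^{m-1}\bR\bU=\bQ^m\bN^{-1}\bU$ is not $\bQ^m\bU=\park_{W,m}$; it genuinely differs, which is the whole point. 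Drop this alternative and keep the direct argument.
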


\begin{proof}
 By a combinatorial expansion of the product $\bQ^m \bU$, we find that the coefficient of $\bPhi_X$ is the number of chains $w_1 \leq w_2 \leq \dots \leq w_m \leq c$ in $\NC(W,c)$ with $\Fix(w_1) \sim X$. Doing the same in the product $\bQ^{m-1} \bR \bU$, we find that the coefficient of $\bPhi_X$ is the number of chains $w_1 \leq w_2 \leq \dots \leq w_m \ll c$ in $\NC(W,c)$ with $\Fix(w_1) \sim X$. We thus recover the combinatorial definitions of $\park_{W,m}$ and $\fpark_{W,m}$ in~\eqref{parkdef} and~\eqref{fparkdef}.
\end{proof}

\begin{proof}[Proof of Proposition~\ref{prop:relationchar}]
By~\eqref{park_matrices} and Lemma~\ref{lemm:solomon2}, we have
\[
 \sign \otimes \park_{-m}^W
 =
 \bD \bN \bQ^{-m} \bU.
\]
We have $\bQ^{-1} = (\bD \bN) \bQ (\bD \bN)^{-1}$ by Lemma~\ref{lemm:Qinverse}, so we get
\[
 \sign \otimes \park_{-m}^W
 =
 (\bD\bN)^2 \bQ^{m} (\bD\bN)^{-1} \bU.
\]
We have $(\bD\bN)^2 = \bI$ by Lemma~\ref{lemm:ninverse}, so we get
\[
 \sign \otimes \park_{-m}^W
 =
 \bQ^m \bN^{-1} \bD \bU.
\]
We have $\bQ \bN^{-1} = \bR$ by Lemma~\ref{lemm:RNQ}, and $\bD \bU = (-1)^n \bU$ is easily checked. We thus get $ (-1)^n \bQ^{m-1} \bR \bU$ on the right-hand side of the previous equation. By the previous proposition, this is $ (-1)^n \fpark_{W,m}$. This completes the proof since $(-1)^n \sign = \bPsi_{-1}$.
\end{proof}

\section{The generalized cluster complex}
\label{sec:defclus}

We review Fomin and Reading's generalized cluster complex, introduced in~\cite{fominreading}, and Tzanaki's characterization from~\cite{tzanaki}. Afterwards, we introduce in Definition~\ref{def_facetoTheta} the natural way to associate a parabolic conjugacy class to each face of this complex. This will be the basis of the refined enumeration of faces.

Through this section, we assume that $c$ is a \emph{bipartite Coxeter element}: we can write $S= S_+ \uplus S_-$ where $S_\pm$ contains pairwise commuting reflections, and $c = c_+ c_- $ where $c_\pm$ is the product of elements in $S_\pm$.

Let $\Phi \subset V$ be a \emph{root system} for $W$ (in the sense of Coxeter groups), $\Phi_+\subset \Phi$ a set of positive roots, and $\Delta \subset \Phi_+$ the corresponding set of simple roots. For $\alpha\in \Phi$, we denote by $t_\alpha \in T$ the unique reflection such that $t(\alpha)=-\alpha$. Finally, let $\Delta_+$ and $\Delta_-$ be the sets of simple roots such that $t_\alpha \in S_\pm $ iff $\alpha \in \Delta_\pm$.

\subsection{Colored almost-positive roots}

Let $[m]=\{1,\dots,m\}$. We think of it as a set of ``colors'' that we use to define {\it colored roots}.

\begin{Definition}[{Fomin and Reading~\cite{fominreading}}]
 An {\it $m$-colored root} is a pair $(\alpha,i)$, denoted $\alpha^i$ for short, where $\alpha\in\Phi$ and $i\in [m]$. It is {\it positive} if $\alpha\in\Phi_+$. It is {\it almost-positive} if either $\alpha\in\Phi_+$, or $\alpha\in -\Delta$ and $i=1$. We denote by $\Phi^{(m)}_{\geq -1}$ the set of almost-positive $m$-colored roots, and $\Phi^{(m)}_{+}$ the set of positive $m$-colored roots.
\end{Definition}

When $m=1$, we can ignore colors and $\Phi^1_{\geq -1}$ is thus identified with $\Phi_{\geq -1} := \Phi_+ \cup (-\Delta)$. This is the set of {\it almost-positive roots} introduced by Fomin and Zelevinsky to define the cluster complex~\cite{fominzelevinsky}. In general, we also identify $\rho \in -\Delta$ with the colored root $\rho^{1}$ (as no confusion can arise).

Fomin and Zelevinsky~\cite{fominzelevinsky} defined a rotation $\mathcal{R}$ on $\Phi_{\geq -1}$ by $\mathcal{R} = \mathcal{R}_+ \circ \mathcal{R}_-$, where
\[
 \mathcal{R}_+ (\rho) =
 \begin{cases}
 \rho & \text{if } \rho \in -\Delta_-, \\
 c_+(\rho) & \text{otherwise},
 \end{cases}
 \qquad
 \mathcal{R}_- (\rho) =
 \begin{cases}
 \rho & \text{if } \rho \in -\Delta_+, \\
 c_-(\rho) & \text{otherwise}.
 \end{cases}
\]
Explicitly, we can check that
\[
 \mathcal{R}(\rho)
 =
 \begin{cases}
 -\rho &\text{if } \rho \in (-\Delta_+) \cup \Delta_-, \\
 c(\rho) &\text{otherwise}.
 \end{cases}
\]

Following Fomin and Reading~\cite{fominreading},
the higher order rotation $\mathcal{R}_m$ acting on $\Phi_{\geq -1}^{(m)}$ is defined as follows:
\begin{equation*} %\label{def:Rm1}
 \mathcal{R}_m\big(\alpha^i\big)
 =
 \begin{cases}
 \alpha^{i+1} & \text{if } \alpha\in\Phi_+ \text{ and } i<m, \\
 \mathcal{R}(\alpha)^1 & \text{otherwise.}
 \end{cases}
\end{equation*}
It will be useful to make the second case more explicit:
\begin{gather*} %\label{def:Rm_2}
 \mathcal{R}_m(\alpha^m)
 =
 \begin{cases}
 (-\alpha)^1 & \text{if } \alpha\in \Delta_-, \\
 c(\alpha)^1 & \text{if } \alpha\in \Phi_+ \backslash \Delta_-,
 \end{cases}
 \qquad
 \mathcal{R}_m\big(\alpha^1\big)
 =
 \begin{cases}
 (-\alpha)^1 & \text{if } \alpha\in -\Delta_+, \\
 c(\alpha)^1 & \text{if } \alpha\in -\Delta_-.
 \end{cases}
\end{gather*}

\begin{Proposition}[\cite{fominreading}] \label{prop:mrotationorbits}
 Assume $W$ is irreducible, with Coxeter number $h$. Then each orbit $\omega$ for the action of $\mathcal{R}_m$ on $\Phi^{(m)}_{\geq -1}$ satisfies
 \begin{itemize}\itemsep=0pt\itemsep=0pt
 \item either $\# \omega = \frac{mh+2}2$ and $\# (-\Delta) \cap \omega = 1$,
 \item or $\# \omega = mh+2$ and $\# (-\Delta) \cap \omega = 2$.
 \end{itemize}
 Moreover, in the latter case the two elements $-\delta_i$ and $-\delta_j$ of $(-\Delta) \cap \omega$ are related by $w_\circ(\delta_i)=\delta_j$, where $w_\circ$ is the longest element of $W$.
\end{Proposition}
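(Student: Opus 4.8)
The statement concerns the orbit structure of the higher rotation $\mathcal{R}_m$ on $\Phi^{(m)}_{\geq-1}$, and the natural strategy is to reduce everything to the known structure of the Fomin--Zelevinsky rotation $\mathcal{R}$ on $\Phi_{\geq-1}$ in the $m=1$ case. First I would recall (or prove) the $m=1$ facts: the orbits of $\mathcal{R}$ on $\Phi_{\geq-1}$ come in two flavours, those of size $\tfrac{h+2}{2}$ containing exactly one negative simple root (these occur when $-\rho$ and $\rho$ lie in the same $\mathcal{R}$-orbit, equivalently when $w_\circ$ acts on the corresponding simple root by $-1$ up to the diagram automorphism fixing it), and those of size $h+2$ containing exactly two negative simple roots $-\delta_i,-\delta_j$ with $w_\circ(\delta_i)=\delta_j$. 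This dichotomy is classical (Fomin--Zelevinsky), and a clean way to see it is via the bijection between $\Phi_{\geq-1}$ and the vertices of the cluster complex / the action of the cyclic sieving element, or directly: $\mathcal{R}^{(h+2)/2}$ coincides with $-w_\circ$ on the set of almost-positive roots.

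\textbf{Key steps.} The second step is to understand how $\mathcal{R}_m$ interleaves the $m$ color-copies. By definition $\mathcal{R}_m$ cycles through colors $1,2,\dots,m$ on a positive root before applying $\mathcal{R}$ (sending the result back to color $1$), while on the ``boundary'' elements (those of the form $\alpha^1$ with $\alpha\in-\Delta$, or $\alpha^m$) it essentially applies $\mathcal{R}$ and resets the color to $1$. So if one records, for a given $\mathcal{R}_m$-orbit $\omega$, the sequence of underlying (uncolored) roots together with how many ``ticks'' of $\mathcal{R}_m$ are spent at each, one sees that passing once around the $\mathcal{R}$-orbit of a positive root costs $m$ steps of $\mathcal{R}_m$ per positive root visited, whereas the negative simple roots are visited only at color $1$ and contribute a single step each (since $\mathcal{R}_m(\rho^1)=\mathcal{R}(\rho)^1$ for $\rho\in-\Delta$, no extra colors are spent there). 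Concretely, in an $\mathcal{R}$-orbit of size $k$ containing $j\in\{1,2\}$ negative simple roots and hence $k-j$ positive roots, the corresponding $\mathcal{R}_m$-orbit has size $m(k-j)+j$. Now substitute: if $j=1$ and $k=\tfrac{h+2}{2}$ we get $m\big(\tfrac{h+2}{2}-1\big)+1 = \tfrac{mh+2}{2}$; if $j=2$ and $k=h+2$ we get $m(h+2-2)+2 = mh+2$. The count of almost-positive colored roots in the orbit lying in $-\Delta$ is unchanged, namely $j$, because the negative simple roots only ever appear with color $1$ and with multiplicity one in the orbit. Finally the relation $w_\circ(\delta_i)=\delta_j$ in the size-$(mh+2)$ case is inherited verbatim from the $m=1$ statement, since the underlying $\mathcal{R}$-orbit is exactly the one containing $-\delta_i,-\delta_j$.

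\textbf{Main obstacle.} The bookkeeping in the second step is the only real content: one must check carefully that the ``otherwise'' branch of $\mathcal{R}_m$ is entered exactly when the underlying root would be sent by $\mathcal{R}$ out of $\Phi_+$ or when the color has maxed out at $m$, and that in the boundary cases (a negative simple root at color $1$, or the transition $-\alpha$ with $\alpha\in\Delta_-$) no colors are ``wasted''. The explicit formulas for $\mathcal{R}_m(\alpha^m)$ and $\mathcal{R}_m(\alpha^1)$ recalled just before the proposition handle precisely these cases, so the argument is a finite case check with no genuine difficulty; the delicate point is simply to confirm that each negative simple root is visited once per $\mathcal{R}_m$-orbit and with exactly one tick, so that the additive contribution $j$ (rather than $mj$ or something else) is correct. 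Once that is pinned down, plugging in $k$ and $j$ from the $m=1$ classification yields both orbit sizes and both intersection counts, completing the proof.
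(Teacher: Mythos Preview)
The paper does not give its own proof of this proposition: it is stated with attribution to Fomin and Reading~\cite{fominreading} and used as a black box. So there is no ``paper's proof'' to compare against.

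Your reduction to the $m=1$ case is correct and is essentially the argument one finds in the original reference. The key observation---that applying $\mathcal{R}_m$ to a positive colored root cycles through all $m$ colors before applying $\mathcal{R}$, while a negative simple root (only present at color~$1$) is sent directly to $\mathcal{R}(\rho)^1$ in a single step---gives exactly the count $m(k-j)+j$ for an underlying $\mathcal{R}$-orbit of size $k$ containing $j$ negative simple roots. The arithmetic checks out in both cases, and the $w_\circ$-relation is indeed inherited from $m=1$ since the forgetful map $\alpha^i\mapsto\alpha$ induces a bijection between $\mathcal{R}_m$-orbits and $\mathcal{R}$-orbits. One small point you might make explicit: this forgetful map really is a bijection on orbits (surjectivity is clear; injectivity follows because starting from any negative simple root in a given $\mathcal{R}$-orbit, the $\mathcal{R}_m$-orbit of its color-$1$ copy visits every positive root of the $\mathcal{R}$-orbit in every color), so that ``the corresponding $\mathcal{R}_m$-orbit'' is well defined.
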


Note that the previous proposition says that negative simple roots have the same proportion in each orbit.
This is similar to a classical result by Steinberg~\cite{steinberg} for the action of the bipartite Coxeter element on $T$ by conjugation. It states that each orbit is
\begin{itemize}\itemsep=0pt
 \item either an ($h/2$)-element set containing one simple reflection,
 \item or an $h$-element set containing two simple reflections.
\end{itemize}

\subsection{The compatibility relation}

The generalized cluster complex can be defined as the flag simplicial complex associated to a~binary relation on $\Phi^{(m)}_{\geq-1}$, called {\it compatibility}.

\begin{Definition}[\cite{fominreading}] \label{def:defcompat}
 The {\it compatibility relation} $\mathrel{\|}$ is the unique symmetric and irreflexive binary relation on $\Phi^{(m)}_{\geq -1}$, characterized by the conditions:
 \begin{itemize}\itemsep=0pt
 \item for all $\alpha,\beta\in \Phi^{(m)}_{\geq -1}$, we have $\alpha \mathrel{\|} \beta$ iff $\mathcal{R}_m(\alpha) \mathrel{\|} \mathcal{R}_m(\beta)$,
 \item if $\alpha\in -\Delta$ and $\beta\in \Phi^{(m)}_{\geq -1}$, we have $\alpha \mathrel{\|} \beta$ iff $\alpha$ does not appear in the expansion of $\beta$ (forgetting its color) as a linear combination of simple roots.
 \end{itemize}
\end{Definition}

In particular, the elements of $-\Delta$ are mutually compatible by the second condition. Note that these two conditions suffice to decide if $\alpha \mathrel{\|} \beta$ holds, for any $\alpha,\beta \in \Phi^{(m)}_{\geq -1}$. Indeed, we can proceed as follows:
\begin{itemize}\itemsep=0pt
 \item by Proposition~\ref{prop:mrotationorbits}, there exists $i$ such that $\mathcal{R}_m^i(\alpha)$ is in $-\Delta$,
 \item by the first condition above, $\alpha \mathrel{\|} \beta$ holds iff $\mathcal{R}_m^i(\alpha) \mathrel{\|} \mathcal{R}_m^i(\beta)$ holds,
 \item by the second condition above, we can decide if $\mathcal{R}_m^i(\alpha) \mathrel{\|} \mathcal{R}_m^i(\beta)$ by expanding $\mathcal{R}_m^i(\beta)$ in terms of simple roots.
\end{itemize}
In particular, it follows that there exists at most one such relation $\mathrel{\|}$. Fomin and Reading~\cite{fominreading} proved that such a relation does exist.

\begin{Definition}[\cite{fominreading}]
 The {\it generalized cluster complex} $\Upsilon(W,m)$ is the flag simplicial complex generated by the compatibility relation $\|$ of Definition~\ref{def:defcompat}: its vertex set is $\Phi^{(m)}_{\geq -1}$, and its faces are sets of pairwise compatible vertices. The {\it positive part} of $\Upsilon(W,m)$, denoted $\Upsilon^+(W,m)$, is its full subcomplex having $\Phi^{(m)}_{+}$ as vertex set. In the case $m=1$, we write $\Upsilon(W) := \Upsilon(W,1)$ and $\Upsilon^+(W) := \Upsilon^+(W,1)$.
\end{Definition}

Explicit combinatorial descriptions have been given in classical types~\cite[Section~5]{fominreading}. In type~$A_{n-1}$, we can identify $\Phi^{(m)}_{\geq -1}$ with the proper diagonals of a convex $(mn+2)$-gon, and facets of $\Upsilon(W,m)$ with $(m+2)$-angulations of the same polygon. The other faces are {\it dissections} of the polygon where every inner polygon is a $(mk+2)$-gon for some $k$. The negative face (containing all negative simple roots) is given by a ``snake'' $(m+2)$-angulation, and the explicit description of $\Upsilon^+(W,m)$ can be deduced. See Fomin and Reading~\cite{fominreading} for details.

Before stating more properties of $\Upsilon(W,m)$ and $\Upsilon^+(W,m)$, let us give some reformulations of the definition.

\subsection{Reflection ordering}

An alternative characterization of the complex $\Upsilon(W,m)$ can be given via a reflection ordering. It is due to Brady and Watt~\cite{bradywatt} in the case $m=1$, and Tzanaki~\cite{tzanaki} in the general case.

\begin{Proposition}[\cite{steinberg}]
\label{prop:reforder}
There exists an indexing $\Phi_{\geq -1} = ( \alpha_i )_{1\leq i \leq nh/2+n}$ with the following properties:
\begin{itemize}\itemsep=0pt
 \item $c(\alpha_i) = \alpha_{i+n}$ for $1\leq i \leq nh/2$.
 \item $-\Delta_- = \{ \alpha_1 , \dots , \alpha_r \}$ and $\Delta_+ = \{ \alpha_{r+1} , \dots , \alpha_n \}$ $($where $r$ is the cardinality of $\Delta_-)$,
 \item $\Delta_- = \{ \alpha_{nh/2+1} , \dots , \alpha_{nh/2+r} \}$ and $-\Delta_+ = \{ \alpha_{nh/2+r+1} , \dots , \alpha_{nh/2+n} \}$.
\end{itemize}
\end{Proposition}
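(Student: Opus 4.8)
The plan is to realize the desired enumeration of $\Phi_{\geq -1}$ as a system of interleaved ``$c$-chains'' and then to read the chains off in the order dictated by the three bullets. First record the count: since $|\Phi_+| = nh/2$ and $|{-\Delta}| = n$, we have $|\Phi_{\geq -1}| = nh/2 + n$, which is exactly the length of the index set. Set $B := (-\Delta_-) \cup \Delta_+$ and $B^{c} := (-\Delta_+) \cup \Delta_-$; these are disjoint subsets of $\Phi_{\geq -1}$, each of size $n$, and one checks $-B = B^{c}$. I would take $\alpha_1,\dots,\alpha_r$ to be an ordering of $-\Delta_-$ and $\alpha_{r+1},\dots,\alpha_n$ an ordering of $\Delta_+$ (to be pinned down below), and then set $\alpha_{i+n} := c(\alpha_i)$ for all $1 \le i \le nh/2$. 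With this, the second bullet holds by construction, and the first bullet holds as soon as the resulting list is well defined, i.e.\ the $\alpha_i$ are pairwise distinct and exhaust $\Phi_{\geq -1}$.

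The key structural input is the orbit description of the rotation $\mathcal{R} = \mathcal{R}_1$ on $\Phi_{\geq -1}$ (Proposition~\ref{prop:mrotationorbits} with $m=1$) together with Steinberg's classical analysis of the $c$-orbits in $\Phi$, quoted just after that proposition. Recall that $\mathcal{R}(\rho) = c(\rho)$ unless $\rho \in B^{c}$, in which case $\mathcal{R}(\rho) = -\rho \in B$. Hence, inside each $\mathcal{R}$-orbit, the iteration $\rho \mapsto c(\rho)$ runs forward until it first hits an element of $B^{c}$, where the next $\mathcal{R}$-step instead flips the sign and jumps into $B$; this decomposes $\Phi_{\geq -1}$ into $c$-chains, each starting at an element of $B$ and ending at an element of $B^{c}$. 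Since $-B = B^{c}$, the unique $\mathcal{R}$-preimage of any $\rho \in B$ is the sign-flip $-\rho \in B^{c}$, so $B$ is exactly the set of chain-starts and $B^{c}$ exactly the set of chain-ends; in particular there are precisely $n$ chains. Steinberg's orbit data controls the chain lengths: they differ by at most one (and all equal $(h+2)/2$ when $h$ is even), with total $|\Phi_{\geq -1}| = n(h+2)/2$. Ordering the chain-starts so that longer chains get smaller indices and interleaving via $\alpha_{i+n} = c(\alpha_i)$, one gets that every $\alpha_i$ with $i \le nh/2$ satisfies $c(\alpha_i) \in \Phi_{\geq -1}$, that the $\alpha_i$ run once through each chain and hence through all of $\Phi_{\geq -1}$, and that $\alpha_{nh/2+1},\dots,\alpha_{nh/2+n}$ are precisely the $n$ chain-ends, i.e.\ the set $B^{c}$.

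It remains to choose the orders on $B$ and on $B^{c}$ so that the $-\Delta_-$/$\Delta_+$ split of the first block matches the $\Delta_-$/$-\Delta_+$ split of the last block as in bullets two and three. For this I would use that the chain starting at a root of $-\Delta_-$ terminates at a root of $-\Delta_+$, while the chain starting at a root of $\Delta_+$ terminates at a root of $\Delta_-$ (so the start/end splits are complementary exactly in the bookkeeping the statement records), and that the $w_\circ$-symmetry in Proposition~\ref{prop:mrotationorbits} pairs up the two chains sharing an $\mathcal{R}$-orbit of the larger size, so that within such an orbit the two chain lengths agree up to one. Ordering $-\Delta_-$ and $\Delta_+$ compatibly with this pairing (and with the weakly-decreasing-length requirement) then forces $\alpha_{nh/2+1},\dots,\alpha_{nh/2+r}$ to be $\Delta_-$ and $\alpha_{nh/2+r+1},\dots,\alpha_{nh/2+n}$ to be $-\Delta_+$, which is the third bullet.

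The step I expect to be the main obstacle is this last bookkeeping: converting Steinberg's description of the $c$-orbits into the precise statements that the $n$ $c$-chains have pairwise comparable lengths, that chains beginning in $-\Delta_-$ (resp.\ $\Delta_+$) are exactly those ending in $-\Delta_+$ (resp.\ $\Delta_-$), and that one can simultaneously order $B$ and $B^{c}$ to respect both this pairing and the length order. Everything else --- the cardinality count, the $c$-chain decomposition, the count of $n$ chains, and the first bullet --- is formal once Proposition~\ref{prop:mrotationorbits} and Steinberg's theorem are granted.
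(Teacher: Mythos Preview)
Your outline is reasonable in spirit, but the central bookkeeping claim is wrong in one of the two parity cases, and this is precisely the step you flagged as the ``main obstacle''.

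You assert that ``the chain starting at a root of $-\Delta_-$ terminates at a root of $-\Delta_+$, while the chain starting at a root of $\Delta_+$ terminates at a root of $\Delta_-$''. This is false when $h$ is even. Take $W=A_3$ with $S_-=\{s_1,s_3\}$, $S_+=\{s_2\}$, $c=s_2s_1s_3$: the $c$-chain through $-\delta_1$ is $-\delta_1 \to \delta_1+\delta_2 \to \delta_3$, ending in $\Delta_-$, not in $-\Delta_+$; likewise the chain through $\delta_2$ is $\delta_2 \to \delta_1+\delta_2+\delta_3 \to -\delta_2$, ending in $-\Delta_+$. In fact the correct dichotomy is parity-dependent: when $h$ is even the pairing is $-\Delta_-\to\Delta_-$ and $\Delta_+\to -\Delta_+$ (all chains of length $h/2+1$, so chain $j$ ends at slot $nh/2+j$ and the third bullet follows directly), while when $h$ is odd one has $r=n/2$, the $-\Delta_-$ chains have length $(h+3)/2$, the $\Delta_+$ chains have length $(h+1)/2$, and the pairing is the one you wrote. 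Your argument as stated therefore fails for even $h$; it can be repaired, but only by proving this parity-dependent start/end correspondence, which is essentially the content of Steinberg's analysis that you were hoping to bypass.

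By contrast, the paper does not attempt an independent argument: it simply notes that the first two bullets uniquely determine the sequence and that Steinberg's indexing $(\alpha_i)_{1\le i\le nh}$ of all of $\Phi$ (with $c(\alpha_i)=\alpha_{i+n}$ cyclically) restricts to the required indexing of $\Phi_{\ge -1}$, from which the third bullet is read off. Your route through Proposition~\ref{prop:mrotationorbits} ultimately rests on the same Steinberg input, so it does not buy independence; and since it introduces the erroneous pairing claim, it is both longer and less robust than the direct citation.
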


Note that the first two conditions above uniquely define the sequence. It remains to check the last condition, and the fact that each element in $\Phi_{\geq -1}$ appears exactly once. In fact, Steinberg's construction in~\cite{steinberg} gives an indexing of the whole root system $\Phi = ( \alpha_i )_{1\leq i \leq nh}$ from which we can extract the above indexing of $\Phi_{\geq -1}$.

\begin{Remark}
The map $\mathcal{R}$ is essentially a rotation through this indexing:
\[
 \mathcal{R}(\alpha_i) =
 \begin{cases}
 \alpha_{i+n} & \text{if } 1\leq i \leq nh/2, \\
 w_\circ (\alpha_{i-nh/2}) & \text{otherwise},
 \end{cases}
\]
where $w_\circ$ is the longest element in $W$.
\end{Remark}

The indexing of the previous proposition defines a total order $\prec$ on $\Phi_{\geq -1}$ by the condition $\alpha_i\prec\alpha_j \Longleftrightarrow i<j$. We refer to it as the {\it reflection ordering}.

\begin{Proposition}[{Brady and Watt~\cite[Section~8]{bradywatt}}]
Let $(\rho_i)_{1\leq i \leq n}$ be a tuple of $n$ distinct elements of $\Phi_{\geq -1}$, ordered so that $\rho_1 \succ \rho_2 \succ \dots \succ \rho_n$. Then it is a facet of $\Upsilon(W)$ iff $c = t_{\rho_1} t_{\rho_2} \cdots t_{\rho_n}$.
\end{Proposition}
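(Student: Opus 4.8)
The plan is to prove both implications at once, by induction on the rank $n$, peeling off one negative simple root at a time. For an $n$-element set $F=\{\rho_1,\dots,\rho_n\}\subseteq\Phi_{\geq-1}$ written so that $\rho_1\succ\dots\succ\rho_n$, set $\phi(F):=t_{\rho_1}\cdots t_{\rho_n}$; the assertion is that $F$ is a facet of $\Upsilon(W)$ if and only if $\phi(F)=c$. Two facts drive the induction: (i) \emph{the property $\phi(F)=c$ is preserved when $F$ is replaced by $\mathcal R(F)$}, for an arbitrary $n$-element subset $F$ (and $\mathcal R$ preserves facets, compatibility being $\mathcal R$-invariant); (ii) the link of a negative simple root $-\delta$ in $\Upsilon(W)$ is isomorphic, compatibly with the reflection orderings, to the cluster complex $\Upsilon(W_{S\setminus\{s_\delta\}})$ of the corresponding maximal standard parabolic subgroup.

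For (i): by the Remark after Proposition~\ref{prop:reforder}, $\mathcal R$ sends $\alpha_i\mapsto\alpha_{i+n}$ for $i\leq nh/2$ and sends the top $n$ roots $\Delta_-\cup(-\Delta_+)$ to their negatives, which are exactly the bottom $n$ roots $-\Delta_-\cup\Delta_+$. So if $\rho_1,\dots,\rho_j$ are the members of $F$ among the top $n$ (hence the $\prec$-largest ones, with $\rho_1,\dots,\rho_p\in-\Delta_+$ and $\rho_{p+1},\dots,\rho_j\in\Delta_-$) and $A:=t_{\rho_{j+1}}\cdots t_{\rho_n}$, then sorting $\mathcal R(F)$ decreasingly puts $c(\rho_{j+1})\succ\dots\succ c(\rho_n)$ first and the negated roots last. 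Since $t_{c(\rho)}=c\,t_\rho\,c^{-1}$ and $t_{-\rho}=t_\rho$, and since the top-$n$ reflections of $F$ split into a commuting $S_+$-block (the $\prec$-larger one) and a commuting $S_-$-block whose overall product is the same element $P_+P_-$ before and after negating, one obtains $\phi(\mathcal R(F))=cAc^{-1}P_+P_-$ while $\phi(F)=P_+P_-A$. The elementary identity $cAc^{-1}P_+P_-=c\iff P_+P_-A=c$ then gives (i); note that nothing here uses that $F$ is a facet.

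For the induction, the cases $n\leq1$ are immediate. Given $F$ (a facet, for the direct implication; an $n$-subset with $\phi(F)=c$, for the converse), choose any $v\in F$; by Proposition~\ref{prop:mrotationorbits} some power $\mathcal R^k(v)$ lies in $-\Delta$, so by (i) we may assume $-\delta\in F$. Since $-\Delta_+$ occupies the largest indices and $-\Delta_-$ the smallest, $-\delta$ is $\prec$-first in $F$ if $\delta\in\Delta_+$ and $\prec$-last if $\delta\in\Delta_-$, and correspondingly $c=s_\delta c'$ or $c=c's_\delta$, where $c'$ is the bipartite Coxeter element of $W':=W_{S\setminus\{s_\delta\}}$ for the induced bipartition. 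Hence $\phi(F)=c$ iff the $\prec$-sorted product over $F\setminus\{-\delta\}$ equals $c'$. In the direct implication, $F\setminus\{-\delta\}$ is a facet of $\operatorname{link}(-\delta)\cong\Upsilon(W')$; decomposing along the irreducible factors of $W'$ and applying the inductive hypothesis to each (legitimate by (ii)), and using that reflections from different factors commute so their interleaving in $\prec$ is harmless, this product is $c'$, so $\phi(F)=c$, and then $\phi$ of the original facet equals $c$ by (i). In the converse, $\phi(F)=c$ exhibits a reduced reflection factorization of $c'=s_\delta c$ (or $cs_\delta$), so each of its factors lies below $c'$ and hence inside $W_{\Fix(c')}=W'$; a short check shows the underlying roots are almost-positive roots of $W'$, so by induction $F\setminus\{-\delta\}$ is a facet of $\Upsilon(W')$, whence $F$ (and then the original subset, again by (i)) is a facet of $\Upsilon(W)$.

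The main obstacle is ingredient (ii), and within it the order-compatibility: one must verify that the ordering of $\Phi_{\geq-1}$ from Proposition~\ref{prop:reforder} restricts, on the almost-positive roots of $W_{S\setminus\{s_\delta\}}$, to that group's own Steinberg ordering relative to $c'=s_\delta c$ (resp. $cs_\delta$). The purely simplicial statement $\operatorname{link}_{\Upsilon(W)}(-\delta)\cong\Upsilon(W_{S\setminus\{s_\delta\}})$ is the $m=1$ instance of a standard feature of cluster complexes, but its order-aware refinement has to be tracked through Steinberg's construction. A minor point is to phrase the whole statement for possibly reducible $W$ — needed because $W_{S\setminus\{s_\delta\}}$ can be reducible — by concatenating the reflection orderings of the irreducible factors; all the relevant constructions are then multiplicative over components and the argument above applies verbatim.
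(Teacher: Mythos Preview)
The paper does not prove this proposition at all: it is stated with attribution to Brady and Watt~\cite[Section~8]{bradywatt} and no argument is supplied. So there is no ``paper's own proof'' to compare against.

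On the merits of your argument: ingredient~(i) is correct. Your computation that $\phi(\mathcal R(F))=cAc^{-1}P_+P_-$ while $\phi(F)=P_+P_-A$, together with the involution property $P_\pm^{-1}=P_\pm$, does give the equivalence $\phi(\mathcal R(F))=c\Longleftrightarrow\phi(F)=c$, and nothing in that step requires $F$ to be a facet. The reduction to a face containing a negative simple root, and the splitting $c=s_\delta c'$ or $c=c's_\delta$, are also fine.

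The genuine gap is exactly the one you name: the order-compatibility in~(ii). Your inductive hypothesis concerns the \emph{intrinsic} Steinberg ordering of $W'=W_{S\setminus\{s_\delta\}}$ relative to its own bipartite Coxeter element $c'$, but what you actually have on $F\setminus\{-\delta\}$ is the \emph{restriction} of the Steinberg ordering of $W$. You assert these agree but do not prove it, and this is not a routine check: the two orderings are generated by iterating two different Coxeter elements ($c$ versus $c'$) on two different ambient sets. The same issue recurs in the reducible case, where ``concatenating the reflection orderings of the irreducible factors'' requires choosing an order on the factors, and you would still need to show this matches the restriction of $\prec$ from $W$. Until this compatibility is established (or circumvented by a different transfer mechanism between $W$ and $W'$), the induction does not close and the proof is incomplete at precisely the point you flag.
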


The extension to general $m$ is as follows. Define an indexing $\Phi^{(m)}_{\geq -1} = \{ \beta_i \}_{1\leq i \leq mnh/2+n}$ (and accordingly, a total order $\prec$ on $\Phi^{(m)}_{\geq -1}$ where we omit the dependence in $m$ in the notation) as follows. As a sequence, it is obtained by the concatenation of the three following sequences:
\begin{itemize}\itemsep=0pt
 \item $\alpha^1_1,\dots,\alpha^1_r$ (elements of $-\Delta_-$ with color $1$),
 \item $ \alpha^m_{r+1}, \dots, \alpha^m_{nh/2+r},
 \alpha^{m-1}_{r+1}, \dots, \alpha^{m-1}_{nh/2+r},
 \dots, \alpha^{1}_{r+1}, \dots, \alpha^{1}_{nh/2+r}$ (colored positive roots),
 \item $\alpha^1_{nh/2+r+1},\dots,\alpha^1_{nh/2+n}$ (elements of $-\Delta_+$ with color $1$).
\end{itemize}

Note that the rotation $\mathcal{R}_m$ has {\it a priori} no simple description using this order, unlike in the case $m=1$.

\begin{Proposition}[Tzanaki~\cite{tzanaki}] \label{prop:tzanaki}
Let $(\rho_i)_{1\leq i \leq n}$ be a tuple of $n$ distinct elements of $\Phi^{(m)}_{\geq -1}$, ordered so that $\rho_1 \succ \rho_2 \succ \dots \succ \rho_n$. Then it is a facet of $\Upsilon(W,m)$ iff $c = t_{\rho_1} \cdots t_{\rho_n}$.
\end{Proposition}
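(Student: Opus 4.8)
The statement to prove is Tzanaki's characterization (Proposition~\ref{prop:tzanaki}): a tuple $(\rho_i)_{1\le i\le n}$ of distinct colored almost-positive roots, listed in decreasing order $\rho_1\succ\dots\succ\rho_n$ with respect to the total order $\prec$ on $\Phi^{(m)}_{\geq-1}$, is a facet of $\Upsilon(W,m)$ if and only if $c=t_{\rho_1}\cdots t_{\rho_n}$. My plan is to reduce to the case $m=1$, which is Brady and Watt's theorem (already available in the excerpt), by exploiting the structure of the order $\prec$ as a concatenation of three blocks together with the compatibility-preserving properties of the rotation $\mathcal{R}_m$.

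\textbf{Step 1: a reduction map to ordinary almost-positive roots.} First I would define the forgetful-type map $\Phi^{(m)}_{\geq-1}\to\Phi_{\geq-1}$ that sends $\alpha^i\mapsto\alpha$ (this is well-defined because a negative simple root only occurs with color $1$). The key observation is that the three-block description of the indexing $\{\beta_i\}$ is precisely engineered so that this map is order-preserving onto the analogous three-block indexing $\{\alpha_i\}$ of $\Phi_{\geq-1}$ from Proposition~\ref{prop:reforder}, when restricted to any set of roots with pairwise distinct underlying (uncolored) roots. A facet has $n$ elements and (by the dimension/rank count, or directly from the combinatorial model) its underlying roots are distinct; conversely I need to check that any $n$-element set of pairwise-compatible colored roots has distinct underlying roots --- this follows because $\alpha^i$ and $\alpha^j$ with $i\ne j$ are never compatible (two colored copies of the same root cannot both lie in a dissection). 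Given this, for a candidate facet $\{\rho_i\}$ the reflections $t_{\rho_i}=t_{\alpha_i}$ depend only on the underlying roots, so the product condition $c=t_{\rho_1}\cdots t_{\rho_n}$ is \emph{identical} to the condition for the underlying uncolored tuple, and Brady--Watt applies once I know the underlying tuple is a facet of $\Upsilon(W)$.

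\textbf{Step 2: relating $m$-compatibility to $1$-compatibility.} The remaining content is: a set of colored roots (with distinct underlying roots) is a face of $\Upsilon(W,m)$ if and only if the underlying set of roots is a face of $\Upsilon(W)$. One direction is essentially the definition via dissections: an $(m\cdot k+2)$-gon dissection ``coarsens'' to a polygon dissection, and conversely. Case-freely, I would instead argue from Definition~\ref{def:defcompat}: both $\mathrel{\|}^{(m)}$ and $\mathrel{\|}^{(1)}$ are uniquely characterized by $\mathcal{R}$-equivariance plus the rule ``$\alpha\in-\Delta$ is compatible with $\beta$ iff $\alpha$ does not appear in the simple-root expansion of $\beta$.'' Using Proposition~\ref{prop:mrotationorbits} I can rotate any colored root via $\mathcal{R}_m$ into $-\Delta$; the crucial compatibility ``$\mathcal{R}_m^i(\alpha)\mathrel{\|}\mathcal{R}_m^i(\beta)$ iff $\mathcal{R}^j(\text{underlying }\alpha)\mathrel{\|}\mathcal{R}^j(\text{underlying }\beta)$'' then follows because forgetting the color intertwines $\mathcal{R}_m$ with $\mathcal{R}$ in a controlled way (a burst of color-increments on a positive root projects to the identity, and the ``otherwise'' step projects to $\mathcal{R}$), and the second rule only looks at the underlying root. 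Feeding this into the ``at most one such relation'' uniqueness argument already spelled out after Definition~\ref{def:defcompat} gives the equivalence of the two compatibility relations on distinct-underlying pairs.

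\textbf{Main obstacle.} The genuinely delicate point is Step 2: making the intertwining between $\mathcal{R}_m$ and $\mathcal{R}$ precise when color bursts and rotations interleave, and in particular handling the two types of $\mathcal{R}_m$-orbits from Proposition~\ref{prop:mrotationorbits} (the short orbit containing one negative simple root versus the long orbit containing two, linked by $w_\circ$), since the number of color-increments before a rotation step depends on which orbit one is in. I expect the bookkeeping to be cleanest if, rather than tracking individual rotation steps, I use the orbit structure directly: pick for each $\rho_i$ the unique $k_i$ with $\mathcal{R}_m^{k_i}(\rho_i)\in-\Delta$, verify that applying $\mathcal{R}_m^{k_i}$ simultaneously to all $\rho_i$ (they need not share an orbit, so one applies the appropriate power to each and checks the joint rotation is realized by a single power when the roots are compatible --- this is where Brady--Watt's product condition, which is rotation-invariant since $\mathcal{R}$ conjugates $c$ suitably, re-enters) reduces to the base rule. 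Everything else --- distinctness of underlying roots, order-preservation of the block map, and invoking Brady--Watt --- is routine once this core equivalence is in hand.
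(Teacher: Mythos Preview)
The paper does not provide its own proof of this proposition; it is quoted from Tzanaki~\cite{tzanaki} as an established result, so there is no in-paper argument to compare against. That said, your proposed reduction has a genuine gap, not merely a bookkeeping difficulty.

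Your Step~2 asserts that a set of colored almost-positive roots with pairwise distinct underlying roots is a face of $\Upsilon(W,m)$ if and only if the set of underlying uncolored roots is a face of $\Upsilon(W)$. This is false already in type $A_2$ with $m=2$. There $\Cat^{(2)}(A_2)=12$, so $\Upsilon(A_2,2)$ has $12$ facets, whereas your claim would produce $13$: the five edges of the pentagon $\Upsilon(A_2)$ lift to $1+2+2+4+4=13$ colored pairs (one negative--negative edge with a unique coloring, two negative--positive edges each with two colorings, two positive--positive edges each with four colorings). Hence at least one coloring of a compatible uncolored pair is \emph{incompatible} in the $m=2$ complex, and the compatibility relation genuinely depends on the colors. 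The reason your intertwining heuristic fails is exactly the one you flagged but underestimated: a single application of $\mathcal{R}_m$ may move one root by a color increment (projecting to the identity on the underlying root) while moving another by an honest $\mathcal{R}$-step, so after rotating the pair until one lands in $-\Delta$, the projected pair is not related to the original uncolored pair by any common power of $\mathcal{R}$.

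Step~1 is also incorrect: the forgetful map is not order-preserving on pairs with distinct underlying roots. In the same $A_2$ example with $\Delta_+=\{\delta_1\}$ and $\Delta_-=\{\delta_2\}$, the colored order gives $\delta_2^2 \prec \delta_1^1$ (color $2$ precedes color $1$), while the uncolored order gives $\delta_1 \prec \delta_2$. So even granting Step~2, the product $t_{\rho_1}\cdots t_{\rho_n}$ in decreasing colored order need not agree with the product in decreasing uncolored order. The correct link between $\Upsilon(W,m)$ and $\Upsilon(W)$ is not by forgetting colors but by \emph{separating} them: as used later in the proof of Proposition~\ref{lemm:upsilonfusscat}, a positive face $f\in\Upsilon^+(W,m)$ splits into color slices $f_1,\dots,f_m\in\Upsilon^+(W)$, and the block structure of $\prec$ makes $\prod f = (\prod f_1)\cdots(\prod f_m)$ a length-additive factorization. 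Tzanaki's argument proceeds along these lines rather than via a global color-forgetting map.
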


In what follows, we generally assume that each face $f\in\Upsilon(W,m)$ is indexed in decreasing order, and write $f = \{ \rho_1 \succ \rho_2 \succ \cdots \succ \rho_k \}$.

This completes the definition and characterization of the generalized cluster complex. Note that our exposition is not exhaustive: another construction of $\Upsilon(W,m)$ is via {\it subword complexes}, see Stump, Thomas and Williams~\cite{stumpthomaswilliams}.

\subsection{Other properties}

Fomin and Reading proved various properties about their generalized cluster complex, some of them will be useful in the present work. First, $\Upsilon(W,m)$ and $\Upsilon^+(W,m)$ are purely $(n-1)$-dimensional. Their number of facets are respectively the Fu{\ss}--Catalan number $\Cat^{(m)}(W)$ and the positive Fu{\ss}--Catalan number $\Cat_+^{(m)}(W)$. Another property that naturally follows from the definition is:

\begin{Proposition}
 Let $\rho \in \Delta$, $s=t_\rho$, and denote the irreducible factors of $W_{(s)}$ as $W_1$, $W_2$, etc. Then the link of the vertex $-\rho$ in $\Upsilon(W,m)$ is the join $\Upsilon(W_1,m) \star \Upsilon(W_2,m) \star \cdots$.
\end{Proposition}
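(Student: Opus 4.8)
The plan is to unwind the definition of the generalized cluster complex as the flag complex of the compatibility relation, and to describe the faces containing the vertex $-\rho$ directly in terms of the combinatorics on $\Phi^{(m)}_{\geq -1}$. Recall that the link of $-\rho$ in $\Upsilon(W,m)$ is, by definition, the flag complex whose vertices are the colored roots $\beta$ with $-\rho \mathrel{\|} \beta$, and whose faces are the sets of pairwise-compatible such vertices. So the statement amounts to two claims: (i) a colored root $\beta$ is compatible with $-\rho$ precisely when $\beta$ ``lives in $W_{(s)}$'', i.e.\ when the underlying root $\alpha$ of $\beta$ lies in the root subsystem $\Phi_{(s)} := \Phi \cap \operatorname{span}(\Delta \setminus \{\rho\})$; and (ii) under this identification the compatibility relation on these colored roots agrees with the one defining $\Upsilon(W_{(s)},m)$. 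Since $W_{(s)}$ is a (possibly reducible) parabolic subgroup and $\Phi_{(s)}$ decomposes as a disjoint union of the irreducible root subsystems corresponding to $W_1, W_2, \dots$ with no compatibility edges between roots from different factors (their simple expansions have disjoint support, so the second clause of Definition~\ref{def:defcompat} makes any cross-factor pair compatible — wait, one must be careful here: compatibility ``$\alpha$ does not appear'' means \emph{edges}, and faces of a join are exactly the unions, so this is precisely the join structure), the flag complex on $\Phi^{(m)}_{(s),\geq -1}$ is exactly $\Upsilon(W_1,m) \star \Upsilon(W_2,m) \star \cdots$.

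The first step, claim (i), is the easy direction of the second clause of Definition~\ref{def:defcompat}: since $-\rho \in -\Delta$, we have $-\rho \mathrel{\|} \beta$ iff $\rho$ does not appear in the simple-root expansion of the underlying root $\alpha$ of $\beta$. For positive roots $\alpha \in \Phi_+$ (with any color) this says exactly $\alpha \in \Phi_{(s)} \cap \Phi_+$; for $\alpha \in -\Delta$ this says $\alpha \neq -\rho$, i.e.\ $\alpha \in -(\Delta \setminus \{\rho\})$, which is the negative simple system of $W_{(s)}$ (here one uses that $\Delta \setminus \{\rho\}$ is a simple system for $\Phi_{(s)}$, a standard fact about parabolic root subsystems). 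So the vertex set of the link is canonically $\Phi^{(m)}_{(s),\geq -1}$, the colored almost-positive roots of $W_{(s)}$ with respect to the root system $\Phi_{(s)}$ and simple system $\Delta \setminus \{\rho\}$.

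The second step, claim (ii), is to check that for $\beta, \beta' \in \Phi^{(m)}_{(s),\geq -1}$, compatibility in $\Upsilon(W,m)$ coincides with compatibility in $\Upsilon(W_{(s)},m)$. The cleanest way is via the uniqueness half of Definition~\ref{def:defcompat}: the compatibility relation on $\Phi^{(m)}_{(s),\geq-1}$ is the \emph{unique} symmetric irreflexive relation satisfying the two axioms there (this is exactly what Fomin and Reading prove, as recalled right after the definition). So it suffices to verify that the restriction to $\Phi^{(m)}_{(s),\geq-1}$ of the ambient relation $\mathrel{\|}$ satisfies both axioms \emph{with respect to $W_{(s)}$}. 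The second axiom is immediate: for $\alpha \in -(\Delta \setminus \{\rho\})$ and $\beta' \in \Phi^{(m)}_{(s),\geq-1}$, whether $\alpha$ appears in the simple expansion of $\beta'$ does not depend on whether we expand in $\Delta$ or in $\Delta \setminus \{\rho\}$ (the coefficient of each simple root in $\Delta \setminus \{\rho\}$ is the same, since $\beta' \in \operatorname{span}(\Delta \setminus \{\rho\})$). The first axiom — invariance under $\mathcal{R}_m$ — requires checking that the ambient rotation $\mathcal{R}_m$ restricted to $\Phi^{(m)}_{(s),\geq-1}$ agrees with the rotation $\mathcal{R}^{W_{(s)}}_m$ built from a bipartite Coxeter element of $W_{(s)}$; this follows because $c = c_+ c_-$ bipartite forces $c_{(s)} := (c_+)_{(s)} (c_-)_{(s)}$ to be a bipartite Coxeter element of $W_{(s)}$ with the induced partition, and the explicit case formulas for $\mathcal{R}_m$ (the ``otherwise'' branch is $\mathcal{R}(\alpha) = c(\alpha)$ or $-\alpha$) restrict correctly — one checks that $c$ and $c_{(s)}$ agree on $\Phi_{(s)}$ modulo the subtlety at $-\Delta_-$ versus $\Delta_-$, using $W_{(s)}$'s own longest element on its negative simple roots.

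The main obstacle is precisely this last verification that the rotation restricts: $\mathcal{R}_m$ is globally defined using the ambient bipartite Coxeter element and the ambient simple system, and one has to confirm that an orbit of $\mathcal{R}_m$ that starts inside $\Phi^{(m)}_{(s),\geq-1}$ stays inside it and is traced out by $\mathcal{R}^{W_{(s)}}_m$ — in particular that the ``wrap-around'' behaviour at negative simple roots (where $\mathcal{R}$ sends $-\rho'$ to $\mathcal{R}(-\rho')$, not simply $c(-\rho')$) is governed by $w_\circ^{W_{(s)}}$ rather than the ambient $w_\circ$. This is essentially the content of Fomin--Reading's structural results on links of negative vertices (and is how they set up the inductive behaviour of $\Upsilon(W,m)$ in the first place), so I would either cite their discussion of faces and links in~\cite{fominreading} directly for this compatibility-with-parabolics statement, or reduce to the $m=1$ case of Brady--Watt/Tzanaki (Propositions~\ref{prop:reforder} and~\ref{prop:tzanaki}) where the relevant reflection ordering is known to restrict to $W_{(s)}$. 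Modulo that input, assembling the join structure is routine as indicated above.
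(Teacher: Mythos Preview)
The paper does not actually give a proof of this proposition: it is stated without a proof environment, prefaced by ``another property that naturally follows from the definition'', and is treated as a known result of Fomin and Reading~\cite{fominreading}. So there is no argument in the paper to compare against beyond an implicit appeal to~\cite{fominreading}.

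Your outline has the right shape, and claim~(i) together with the negative-simple-root half of claim~(ii) are correct as written. The real issue is in the middle of your claim~(ii), where you assert that ``$c$ and $c_{(s)}$ agree on $\Phi_{(s)}$ modulo the subtlety at $-\Delta_-$ versus $\Delta_-$''. This is false: the ambient Coxeter element does \emph{not} preserve the parabolic root subsystem. For a quick counterexample, take $W=A_3$ with bipartition $S_+=\{s_1,s_3\}$, $S_-=\{s_2\}$, $c=s_1s_3s_2$, and remove $s=s_2$. Then $\alpha_1\in\Phi_{(s)}$ but $c(\alpha_1)=\alpha_2+\alpha_3\notin\Phi_{(s)}$. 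Consequently the ambient rotation $\mathcal{R}_m$ does \emph{not} restrict to $\mathcal{R}_m^{W_{(s)}}$ on $\Phi_{(s),\geq -1}^{(m)}$, and your proposed verification of the first axiom cannot work as stated.

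You correctly sense this in your final paragraph and fall back on citing Fomin--Reading, which is exactly what the paper does. The honest content here is their parabolic restriction theorem (in~\cite{fominreading} this is the statement that the compatibility relation on $\Phi_{\geq -1}^{(m)}(W_I)$, viewed as a subset of $\Phi_{\geq -1}^{(m)}(W)$, coincides with the intrinsic compatibility relation for $W_I$); its proof does not go through the naive restriction of $\mathcal{R}_m$ but rather through an induction using the dihedral case and the compatibility degree. Once that result is granted, your join argument is immediate. So: drop the incorrect sketch about $c$ restricting, and cite the Fomin--Reading restriction result directly, as the paper does.
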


By the above result, it is natural to extend the definition of $\Upsilon(W,m)$ in the reducible case, by declaring that it is the join of the simplicial complexes $\Upsilon(W',m)$ where $W'$ runs through the irreducible factors of $W$. \big(In particular, this agrees with the fact that $\Cat^{(m)}(W)$ and $\Cat_+^{(m)}(W)$ are multiplicative over the irreducible factors of $W$\big).

The results of Brady and Watt and their extension by Tzanaki make clear that the cluster complex is related with minimal factorizations of the Coxeter element. In particular, to each face we can associate a noncrossing partition by taking the product of the corresponding reflections in an appropriate order (which is unique). This map has been used a lot in the literature (see for example~\cite{athanasiadistzanaki1}), but let us describe it explicitly.

\begin{Definition}
 %\label{def_facetoTheta0}
 Let $f = \{ \rho_1 \succ \dots \succ \rho_k \} \in \Upsilon(W,m)$. We define
 \[
 \prod f := t_{\rho_1} \cdots t_{\rho_k} \in \NC(W,c).
 \]
\end{Definition}

To see that this is well-defined, first note that from Proposition~\ref{prop:tzanaki} we have $t_{\rho_1} \cdots t_{\rho_k} \allowbreak = c$ if~$k=n$ (i.e., $f$ is a facet). In general, since a face is a subset of a facet we get that $t_{\rho_1} \cdots t_{\rho_k}$ is a~subword of a minimal reflection factorization of $c$. It follows that $\prod f \in \NC(W,c)$.

\begin{Proposition} \label{lemm:upsilonfusscat}
 For $w\in \NC(W,c)$, we have
 \[
 \# \Big\{ f \in \Upsilon^+(W,m) \colon \prod f = w \Big\}
 =
 \Cat^{(m)}_+\big(W_{\Fix(w)}\big).
 \]
\end{Proposition}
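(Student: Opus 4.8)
The plan is to count faces $f \in \Upsilon^+(W,m)$ with $\prod f = w$ by analyzing the structure forced by the reflection ordering $\prec$. Since $f = \{\rho_1 \succ \cdots \succ \rho_k\}$ is a face of $\Upsilon^+(W,m)$, every $\rho_j$ is a positive colored root, and by Tzanaki's characterization (Proposition~\ref{prop:tzanaki}) $f$ is a subset of a facet, so $t_{\rho_1} \cdots t_{\rho_k}$ read in decreasing order is a subword of a minimal factorization of $c$; in particular its product $w$ has reflection length $k$ and the factorization $w = t_{\rho_1} \cdots t_{\rho_k}$ is the canonical factorization of $w$ as a Coxeter element of $W_{\Fix(w)}$ (Proposition~\ref{prop:wcox}), meaning the underlying (uncolored) reflections $t_{\rho_1},\dots,t_{\rho_k}$ are exactly the simple generators of $W_{\Fix(w)}$ — and there is a unique way to order them so that the product equals $w$. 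So the \emph{uncolored} data is rigid: the only freedom in choosing $f$ is the assignment of colors $i_1,\dots,i_k \in [m]$ to the $k$ positive roots $\alpha_1,\dots,\alpha_k$ underlying $w$.

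First I would make precise which color assignments $(\alpha_1^{i_1},\dots,\alpha_k^{i_k})$ actually yield a face, i.e., a pairwise-compatible set. The compatibility of two colored positive roots $\alpha^i$ and $\beta^j$ depends both on the geometry of $\alpha,\beta$ and on the colors $i,j$ in a way controlled by the reflection ordering: in the positive part one should extract, from Definition~\ref{def:defcompat} together with the explicit concatenated ordering of $\Phi^{(m)}_{\geq -1}$, a criterion saying that $\alpha^i$ and $\beta^j$ are compatible iff the uncolored pair $t_\alpha, t_\beta$ "can coexist in a minimal factorization of $c$" \emph{and} the colors are compatible with the relative $\prec$-order. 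Because all the $t_{\alpha_\ell}$ here are already simultaneously part of one minimal factorization of $c$ (they generate $W_{\Fix(w)}$ as a standard parabolic Coxeter system), the uncolored obstruction never fires; so the count reduces to counting color tuples $(i_1,\dots,i_k)$ subject to purely combinatorial constraints coming from the order $\prec$ within a single $\mathcal R$-orbit structure.

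Next, I would identify those surviving color constraints with the combinatorial model for $\Cat^{(m)}_+$ of the parabolic subgroup $W_{\Fix(w)}$. The cleanest route is induction on rank using the link structure (the Proposition just before this statement: the link of $-\rho$ in $\Upsilon(W,m)$ is a join of $\Upsilon(W_i,m)$), or a direct bijection. Concretely: fix $w$, set $W' := W_{\Fix(w)}$, and restrict attention to $\Upsilon^+(W',m)$. One shows (i) every face $f$ of $\Upsilon^+(W,m)$ with $\prod f = w$ has all its vertices lying in the sub-root-system $\Phi' \subset \Phi$ of $W'$ — because the underlying reflections must be the simple generators of $W'$, their positive roots lie in $\Phi'_+$ — so $f$ is naturally a facet (top face, since $|f| = k = \operatorname{rank}(W')$) of $\Upsilon^+(W',m)$; and conversely (ii) every facet $f'$ of $\Upsilon^+(W',m)$ gives, via the inclusion $\Phi'_{\geq -1}^{(m)} \hookrightarrow \Phi_{\geq -1}^{(m)}$ (compatible with the reflection orderings, by Proposition~\ref{prop:tzanaki} applied inside $W'$ with Coxeter element $c' := \prod f'$... but we need $c'$ to equal the specific Coxeter element $w$), a face $f$ of $\Upsilon^+(W,m)$ with $\prod f = w$. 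The number of positive facets of $\Upsilon^+(W',m)$ is $\Cat^{(m)}_+(W')$ by Fomin–Reading, giving the claimed formula. For the reducible $W'$ this is consistent with multiplicativity of $\Cat^{(m)}_+$ and the join definition of $\Upsilon^+$.

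\textbf{Main obstacle.} The delicate point is part (ii) of the bijection: matching up the reflection ordering on $\Phi^{(m)}_{\geq -1}$ restricted to the sub-system of $W_{\Fix(w)}$ with the intrinsic reflection ordering of $W_{\Fix(w)}$ relative to the \emph{correct} Coxeter element $w$ (not an arbitrary one). Brady–Watt/Tzanaki's result is stated for the standard bipartite Coxeter element, whereas $w$ is an arbitrary noncrossing partition viewed as a Coxeter element of its parabolic; one must invoke that $\Upsilon^+(W_{\Fix(w)}, m)$ is well-defined up to the choice of Coxeter element (all such complexes are isomorphic), and check that the face $f$ with $\prod f = w$ indeed sits inside $\Upsilon^+(W,m)$ — i.e. that pairwise compatibility in the ambient complex restricts correctly. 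Equivalently one can bypass this by the purely order-theoretic color-counting argument sketched above (counting admissible color tuples directly and recognizing the generating function as $\Cat^{(m)}_+(W_{\Fix(w)})$ via a known bijection between $\Upsilon^+$ facets and colored chains), which I expect to be the more robust path.
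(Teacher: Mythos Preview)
Your central rigidity claim is false, and it sinks the ``only freedom is colors'' strategy. You assert that for any $f=\{\rho_1\succ\cdots\succ\rho_k\}\in\Upsilon^+(W,m)$ with $\prod f=w$, the factorization $w=t_{\rho_1}\cdots t_{\rho_k}$ must be the \emph{canonical} one, i.e.\ the $t_{\rho_j}$ are the simple generators of $W_{\Fix(w)}$. This does not follow from Proposition~\ref{prop:wcox}: that proposition says $w$ \emph{has} a canonical factorization, not that every minimal reflection factorization of $w$ is canonical. Already for $m=1$ and $w=c$ the claim fails: positive facets of $\Upsilon^+(W)$ give $\Cat_+(W)$ distinct minimal reflection factorizations of $c$, only one of which is $s_1\cdots s_n$. (Concretely in type $A_2$: with $c=(1\,2\,3)$ one positive facet gives $(1\,2)(2\,3)$, the other gives $(1\,3)(1\,2)$; the reflection $(1\,3)$ is not simple.) So the uncolored data is \emph{not} rigid, and the purely combinatorial color-tuple count cannot yield the answer.

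Your alternative route --- a bijection with facets of $\Upsilon^+(W_{\Fix(w)},m)$ --- is the right idea and is exactly what the paper does, but your argument for step~(i) again leans on the false rigidity claim. The correct reason the roots lie in $\Phi'$ is simply that $t_{\rho_j}\leq w$ in absolute order, hence $t_{\rho_j}\in W_{\Fix(w)}$. What remains is to check that the compatibility relation restricts correctly from $W$ to $W_{\Fix(w)}$, and here you correctly spot the obstacle (the induced Coxeter element $w$ need not be bipartite). The paper handles this in two ways: either invoke the Stump--Thomas--Williams definition of $\Upsilon^+$ for an arbitrary standard Coxeter element, or first prove the $m=1$ case using a characterization of compatibility on positive roots that is manifestly parabolic-stable (\cite[Proposition~6.3]{bianejosuat}), and then reduce general $m$ to $m=1$ by splitting $f$ according to color into $f_1,\dots,f_m\in\Upsilon^+(W)$ and summing over length-additive factorizations $w=w_1\cdots w_m$. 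Your proposal does not supply either ingredient.
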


\begin{proof}
 Essentially, it is possible to identify those $f \in \Upsilon^+(W,m)$ such that $\prod f = w$ with facets of $\Upsilon^+\big(W_{\Fix(w)},m\big)$. However, some care is needed to do that. Although $w$ is a Coxeter element of $W_{\Fix(w)}$ by Proposition~\ref{prop:wcox}, it might not be bipartite Coxeter element. We thus need to use the generalized cluster complex associated to any standard Coxeter element, following the definition of Stump, Thomas, Williams~\cite{stumpthomaswilliams}.

 An alternative path is to first get the result for $m=1$. Indeed,~\cite[Proposition~6.3]{bianejosuat} gives a characterization of the compatibility relation on (uncolored) positive roots which is clearly stable by restriction to parabolic subgroups, allowing the identification mentioned above. Now, we have
 \begin{equation*} %\label{iddd}
 \# \Big\{ f \in \Upsilon^+(W,m) \colon \prod f = w \Big\}
 =
 \sum_{w = w_1 \cdots w_m }
 \prod_{i=1}^m
 \# \Big\{ f \in \Upsilon^+(W) \colon \prod f = w_i \Big\},
 \end{equation*}
 where we sum over length-additive factorizations with $m$ factors. Indeed, this can be proved bijectively: to $f \in \Upsilon^+(W,m)$ we associate $f_1,\dots,f_m \in \Upsilon^+(W)$ where $f_i$ contains roots in $f$ of color $i$. Using the case $m=1$ of the proposition, we get
 \[
 \# \Big\{ f \in \Upsilon^+(W,m) \colon \prod f = w \Big\}
 =
 \sum_{w = w_1 \cdots w_m }
 \prod_{i=1}^m \Cat_+\big(W_{\Fix(w_i)}\big).
 \]
 The result follows from the identity
 \[
 \Cat^{(m)}_+\big(W_{\Fix(w)}\big)
 =
 \sum_{w = w_1 \cdots w_m } \prod_{i=1}^m \Cat_+\big(W_{\Fix(w_i)}\big),
 \]
 where we sum over length-additive factorizations with $m$ factors. This identity can be proved as follows: since it is invariant under conjugating $w$, we can assume $w$ is a bipartite Coxeter element of $W_{\Fix(w)}$, and the result comes from double counting of facets in $\Upsilon(W_{\Fix(w)},m)$ as above.
\end{proof}

\subsection{The map to orbits of flats}

The {\it Kreweras complement} is an anti-automorphism of $\NC(W,c)$ defined by $K(w) := c w^{-1}$. We use here a variant adapted to the bipartite Coxeter element.

\begin{Lemma} %\label{lemm:bipartitekreweras}
 The map $w\mapsto c_+ w c_-$ is an involutive anti-automorphism of $\NC(W,c)$.
\end{Lemma}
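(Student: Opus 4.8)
The statement asserts that $\phi\colon w\mapsto c_+ w c_-$ is an involutive anti-automorphism of $\NC(W,c)$. The plan is to verify each of the three claims — that $\phi$ is involutive, that it preserves $\NC(W,c)$, and that it reverses the absolute order — by reducing everything to the already-known properties of the classical Kreweras complement $K(w) = c w^{-1}$, which is an anti-automorphism of $\NC(W,c)$.

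\emph{Involutivity.} First I would compute $\phi^2$. We have $\phi(\phi(w)) = c_+ (c_+ w c_-) c_-$, and using $c_+^2 = e$ and $c_-^2 = e$ (since $c_\pm$ is a product of pairwise commuting involutions, hence an involution itself), this gives $\phi^2(w) = w$. So $\phi$ is involutive as a map on $W$; it only remains to check it restricts to $\NC(W,c)$, and once that is known, involutivity there is automatic.

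\emph{Stability of $\NC(W,c)$ and the order-reversing property.} The key observation is to factor $\phi$ through $K$. Write $\phi(w) = c_+ w c_- = c_+ w c_+^{-1} \cdot c_+ c_- = (c_+ w c_+^{-1})\, c = c\, (c^{-1} c_+ w c_+^{-1} c)$. Here conjugation by $c_+$ is an automorphism of $W$ that fixes $c$? — no, it does not fix $c$ in general, so I need to be more careful. Instead the cleaner route: observe that $\phi(w) = c_+ w c_- = c_+ w c_-$, and compare with $K$: since $K(w) = cw^{-1} = c_+ c_- w^{-1}$, we get $\phi(w) = c_+ \cdot (c_- w^{-1} \cdot w) \cdot c_- $... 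Let me instead use that conjugation by $c_+$ sends $c = c_+ c_-$ to $c_+ c_+ c_- c_+ = c_- c_+ = c^{-1}$-conjugate... The robust approach is: let $\psi$ denote conjugation by $c_+$. Then $\psi(c) = c_+ c_+ c_- c_+^{-1} = c_- c_+$, which is the Coxeter element for the opposite ordering of simple reflections; it is known that $\NC(W, c_-c_+) = \NC(W,c)$ as sets (both equal the interval below elements $\le_{\mathrm{abs}}$ either Coxeter element, and in fact $c_-c_+ = c_+^{-1} c\, c_+$ is conjugate to $c$, but one needs that noncrossing partition lattices for different standard Coxeter elements obtained by cyclic rotation of the simple system coincide — this is classical). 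Granting $\NC(W, c_- c_+) = \NC(W,c)$, the map $\psi$ carries $\NC(W,c)$ isomorphically onto $\NC(W, c_-c_+) = \NC(W,c)$, an automorphism of the absolute order. Then $\phi(w) = \psi(w)\cdot c_+ c_- = \psi(w)\, c$; comparing with $K(\psi(w)^{-1})^{-1}$ or directly noting $\phi(w) = \psi(w) c$ while the Kreweras-type map $w \mapsto wc$... I would instead write $\phi = \iota \circ K \circ \psi^{-1}$ or a similar composition of known (anti-)automorphisms — concretely, $\phi(w) = c_+ w c_-$ and $K^{-1}(K(\psi^{-1}(w)))$ machinery — and conclude $\phi$ is a composition of an automorphism and an anti-automorphism of $(\NC(W,c), \le)$, hence an anti-automorphism, and it preserves $\NC(W,c)$ as a set because each factor does.

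\emph{Main obstacle.} The genuinely delicate point is the bookkeeping identifying $\phi$ as a composition of the classical Kreweras complement with an order automorphism: one must correctly track which standard Coxeter element appears after conjugating by $c_+$ and invoke the (standard but slightly technical) fact that the noncrossing partition lattice is unchanged when the simple system is "rotated" to pass a part of the Coxeter element from one side to the other — equivalently that $c_+ \NC(W,c) c_+ = \NC(W, c_-c_+)$ and this coincides with $\NC(W,c)$. Once this identification is in place, the anti-automorphism property and the preservation of $\NC(W,c)$ follow formally from the corresponding properties of $K$, and involutivity is the elementary computation above. I would present the argument by first recording $c_\pm^2 = e$, then establishing $\phi(w)\in\NC(W,c)$ via the conjugation-by-$c_+$ identity, then deriving order-reversal from that of $K$, and finally noting involutivity closes the proof.
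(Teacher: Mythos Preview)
The paper does not actually prove this lemma; it simply refers the reader to \cite[Proposition~3.5]{bianejosuat}, so there is no in-paper argument to compare against.

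Your outline is correct, and the involutivity computation is fine. The place where you hesitate can be tightened considerably once you notice that $c_- c_+ = (c_+ c_-)^{-1} = c^{-1}$, since $c_\pm$ are involutions. Thus conjugation by $c_+$ sends $c$ to $c^{-1}$, and the ``rotation'' fact you appeal to is nothing more than: inversion $w\mapsto w^{-1}$ is an order isomorphism $\NC(W,c)\to\NC(W,c^{-1})$, and conjugation by any group element is an order isomorphism between noncrossing partition lattices for conjugate Coxeter elements. With that in hand, a clean factorization is
\[
\phi(w) \;=\; c_+\, K(w)^{-1}\, c_+^{-1},
\]
since $K(w)^{-1} = wc^{-1} = w c_- c_+$ and $c_+ (w c_- c_+) c_+ = c_+ w c_-$. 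Hence $\phi$ is the composite of $K$ (an anti-automorphism of $\NC(W,c)$), inversion (an isomorphism $\NC(W,c)\to\NC(W,c^{-1})$), and conjugation by $c_+$ (an isomorphism $\NC(W,c^{-1})\to\NC(W,c)$). This removes the guesswork from your middle paragraph and dissolves the ``main obstacle'' you flag: no separate cyclic-rotation lemma for standard Coxeter elements is needed beyond these elementary observations.
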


See, for example,~\cite[Proposition~3.5]{bianejosuat}.

\begin{Definition}
 \label{def_facetoTheta}
 For $f \in \Upsilon(W,m)$, we define
 \[
 \underline{f} := c_+ \Big( \prod f \Big) c_-,
 \]
 which is in $\NC(W,c)$ by the previous lemma. The natural map to orbits of flats is
 \begin{align*}
 \Upsilon(W,m) & \to L(W)/W, \\
 f &\mapsto \text{ the orbit of } \Fix(\underline{f}).
 \end{align*}
 Accordingly, we define for any $X\in\Theta$
 \begin{align*}
 &\Upsilon(W, X, m)
:=
 \big\{
 f\in \Upsilon(W, m)
 \colon \Fix(\underline{f}) \sim X
 \big\}, \\
 &\Upsilon^+(W, X, m)
:=
 \big\{
 f\in \Upsilon^+(W, m)
 \colon \Fix(\underline{f}) \sim X
 \big\},
 \end{align*}
 and their cardinalities
 \begin{align*}
 &\gamma(W, X, m )
:=
 \# \Upsilon(W, X, m), \\
 &\gamma^+(W, X, m )
:=
 \# \Upsilon^+(W, X, m).
 \end{align*}
\end{Definition}

\begin{Remark}
 We could actually take the usual Kreweras complement $K(w)$ instead of its bipartite variant. Indeed, $c w^{-1}$ and $c_+ w c_-$ are conjugate (it easily follows from the fact that each element is conjugate to its inverse, see~\cite[Corollary~3.2.14]{geckpfeiffer}). We found the bipartite Kreweras complement convenient to do some calculations, but of course everything could be done with the usual one.
\end{Remark}

Let $(f_i)_{-1\leq i \leq n-1}$ denote the $f$-vector of $\Upsilon(W,m)$, i.e., $f_i$ is the number of $i$-dimensional faces in the complex. Similarly, $(f^+_i)_{-1\leq i \leq n-1}$ denote the $f$-vector of $\Upsilon^+(W,m)$. In terms of $\gamma$ and~$\gamma^+$, we thus have:
\begin{align} \label{eq:formula_fk}
 f_{k-1} = \sum_{X\in \Theta, \; \dim(X)=k} \gamma(W, X, m)
 \qquad \text{and} \qquad
 f^+_{k-1} = \sum_{X\in \Theta, \; \dim(X)=k} \gamma^+(W, X, m).
\end{align}

In the next sections, we give the explicit formulas for the quantities $\gamma$ and $\gamma^+$. Another important property is that the natural map $\Upsilon(W,m) \to L(W)/W$ in the previous definition is invariant under the rotation $\mathcal{R}_m$ (see Proposition~\ref{prop:invariance}).

Observe that $\gamma(W, X, m )$ and $ \gamma^+(W, X, m )$ are polynomial in $m$. Indeed, the quantity in Proposition~\ref{lemm:upsilonfusscat} is polynomial (from the definition of Fu{\ss}--Catalan numbers), and by summing over a finite set of $w$ we get $\gamma(W, X, m )$ or $ \gamma^+(W, X, m )$.

\begin{Remark} %\label{rem:links}
 As a motivation for the previous definition, let us give the following statement. If $f$ is a face of $\Upsilon(W,m)$, its {\it link} is the simplicial complex
 \[
 \operatorname{Link}(f)
 :=
 \big\{ f' \in \Upsilon(W,m) \colon f\cap f' = \varnothing, \; f\cup f' \in \Upsilon(W,m)
 \big\}.
 \]
 Then $\operatorname{Link}(f)$ is isomorphic to $\Upsilon(W_{\bar f},m)$. The proof can be sketched as follows. By invariance under the rotation $\mathcal{R}_m$ (using Proposition~\ref{prop:invariance} below), we can assume that $f$ contains a negative simple root $-\alpha$. Then, we can identify $\operatorname{Link}(f)$ with the link of $f\backslash\{-\alpha\}$ in $\Upsilon(W_I,m)$ (where $I = S\backslash\{t_{\alpha}\}$), and we use induction on the rank of $W$.
\end{Remark}

For example, consider type $A_{n-1}$, where a face $f \in \Upsilon( \mathfrak{S}_n ,m)$ is identified with a dissection of a convex $(mn+2)$-gon. The link of $f$ is a join of generalized cluster complexes, where each inner $(mk+2)$-gon in the dissection contributes to a factor $\Upsilon( \mathfrak{S}_k ,m)$. Accordingly, the orbit of $\Fix(\underline{f})$ is an integer partition, where each inner $(mk+2)$-gon in the dissection contributes to a~part~$k$.

\section{Combinatorial reciprocities}
\label{sec:reciprocities}

In this section, we prove combinatorial reciprocities between the quantities $\kappa$, $\kappa^+$ on one side and $\gamma^+$, $\gamma$ on the other side (Theorem~\ref{reciprocity} below). We deduce the formulas for $\gamma^+$, $\gamma$ in terms of characteristic polynomials (or Orlik--Solomon exponents) in Corollary~\ref{formulas_gammagammaplus}, as a consequence of the combinatorial reciprocities together with the formulas for $\kappa$ and $\kappa^+$ in Theorems~\ref{theo_park} and~\ref{theo_fpark}.

\begin{Theorem} \label{reciprocity}
 We have
 \begin{align}
 \label{eq:reciprocity1}
 &(-1)^k \kappa(W, X, -m)
=
 \gamma^+(W, X, m), \\
 \label{eq:reciprocity2}
 &(-1)^k \kappa^+(W, X, -m)
=
 \gamma(W, X, m).
 \end{align}
\end{Theorem}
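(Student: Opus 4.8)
The plan is to prove the two reciprocities in parallel, exploiting the polynomiality in $m$ of all four quantities $\kappa, \kappa^+, \gamma, \gamma^+$ (noted in the excerpt: $\gamma, \gamma^+$ are polynomial from Proposition~\ref{lemm:upsilonfusscat}, and $\kappa, \kappa^+$ are polynomial from the Fu{\ss}--Catalan formulas). Since a polynomial identity can be checked at all positive integers, it suffices to establish \eqref{eq:reciprocity1} and \eqref{eq:reciprocity2} as identities of polynomials in $m$; concretely one writes each of $\kappa(W,X,m)$, $\gamma^+(W,X,m)$, etc. as an explicit polynomial and compares after $m \leftrightarrow -m$.

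The key structural input is Proposition~\ref{lemm:upsilonfusscat}, which decomposes $\gamma^+$ as a sum over $w \in \NC(W,c)$ with $\Fix(\underline f) \sim X$ of $\Cat^{(m)}_+(W_{\Fix(w)})$. First I would rewrite $\gamma^+(W,X,m) = \sum_{w} \Cat^{(m)}_+(W_{\Fix(w)})$, where $w$ ranges over those noncrossing partitions with $c_+ (\prod f) c_- $-image... more precisely over $w$ with $\underline{f}$ constraints — so I should instead index by $v := \underline f = c_+ w' c_-$ and note $v \mapsto w'$ is a bijection on $\NC(W,c)$, turning the sum into one over $v \in \NC(W,c)$ with $\Fix(v) \sim X$, weighted by $\Cat^{(m)}_+(W_{\Fix(c_- v c_+)})$. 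The crucial point is that $c_- v c_+$ and the Kreweras complement $c v^{-1}$ are conjugate (as remarked after Definition~\ref{def_facetoTheta}), so $W_{\Fix(c_- v c_+)}$ and $W_{\Fix(v^{-1}c)}$ are conjugate parabolic subgroups and hence isomorphic; thus $\gamma^+(W,X,m) = \sum_{v \in \mathcal X} \Cat^{(m)}_+(W_{\Fix(v^{-1}c)})$. On the other side, the combinatorial rewriting of $\kappa$ given in the excerpt (the computer-verification paragraph) reads $\kappa(W,X,m) = \sum_{v \in \mathcal X} \Cat^{(m-1)}(W_{\Fix(v^{-1}c)})$. So both $\gamma^+(W,X,m)$ and $\kappa(W,X,-m)$ are sums over the same index set $v \in \mathcal X$, and the whole theorem reduces to a single Fu{\ss}--Catalan reciprocity applied inside each parabolic subgroup $W' = W_{\Fix(v^{-1}c)}$, namely that $(-1)^{\mathrm{rk}(W')}\Cat^{(-m)}(W') = \Cat^{(m)}_+(W')$ and $(-1)^{\mathrm{rk}(W')}\Cat^{(-m-1)}(W') = \Cat^{(m)}(W')$ — or the appropriate shifted variant matching the $(m-1)$ versus $m$ index. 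I would verify this reciprocity directly from the product formula $\Cat^{(m)}(W') = \frac{1}{|W'|}\prod (mh'+e_i+1)$ and $\Cat^{(m)}_+(W') = \frac{1}{|W'|}\prod(mh'+e_i-1)$, using the palindromy of exponents $e_i + e_{n-i+1} = h'$: substituting $m \mapsto -m$ in the first product and pairing factors $(-mh'+e_i+1) = -(mh' - e_i - 1) = -(mh' + e_{\sigma(i)} - h' +h' -1)$... the cleaner bookkeeping is $-mh'+e_i+1 = -(mh' + (h'-e_i) - 1) = -(mh'+e_{n+1-i}-1)$, which exactly reindexes to the positive Fu{\ss}--Catalan product up to the sign $(-1)^{n}$. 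One must also account for the rank: $(-1)^k$ in the theorem with $k = \mathrm{rk}(W_X) = n - \dim(X)$, and since $\mathrm{rk}(W_{\Fix(v^{-1}c)}) = n - \ell(v^{-1}c) = \ell(v) = \mathrm{rk}(W_{\Fix v}) = k$ for $v \in \mathcal X$, the signs are uniform across the sum and factor out cleanly. The second reciprocity \eqref{eq:reciprocity2} is entirely analogous, swapping the roles: $\gamma(W,X,m) = \sum_{v\in\mathcal X}\Cat^{(m)}(W_{\Fix(v^{-1}c)})$ (the $m=1$... rather all-$m$ version of Proposition~\ref{lemm:upsilonfusscat} for the non-positive part, which one states or proves the same way) matched against $\kappa^+(W,X,-m)$, and the index shift between $\kappa^+$ and $\kappa$ (one fewer free factor, one full-support factor) must be tracked carefully.

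The main obstacle will be the bookkeeping of the index shifts — reconciling the ``$m-1$ free factors'' appearing in the rewritten $\kappa(W,X,m) = \sum_v \Cat^{(m-1)}(W_{\Fix(v^{-1}c)})$ with the ``$m$ factors'' in $\gamma^+$, so that after the substitution $m \mapsto -m$ the two polynomials genuinely coincide rather than merely up to a shift. I expect one needs the refined reciprocity $(-1)^{\mathrm{rk}(W')}\Cat^{(-m)}(W') = \Cat^{(m)}_+(W')$ to interact correctly with $\Cat^{(m-1)}$ versus $\Cat^{(m)}$; it may be cleanest to first establish, for any (possibly reducible) $W'$ of rank $d$, the two polynomial identities $(-1)^d \Cat^{(-m)}(W') = \Cat_+^{(m)}(W')$ and $(-1)^d \Cat^{(-m)}_+(W') = \Cat^{(m)}(W')$ as a standalone lemma (reducing to the irreducible case by multiplicativity and to the product formula plus exponent palindromy there), and then feed it into the two sum-over-$\mathcal X$ expressions. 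A secondary technical point is to make sure the ``non-positive'' analog of Proposition~\ref{lemm:upsilonfusscat} — that $\#\{f \in \Upsilon(W,m) : \prod f = w\} = \Cat^{(m)}(W_{\Fix w})$ — is available; if not stated, it follows by the same double-counting/restriction argument sketched in the proof of Proposition~\ref{lemm:upsilonfusscat}, now counting all facets of $\Upsilon(W_{\Fix w},m)$ rather than positive ones, together with the bijection decomposing by colors.
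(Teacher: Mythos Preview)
Your argument for~\eqref{eq:reciprocity1} is essentially the paper's proof: rewrite $\kappa(W,\mathcal{X},m)=\sum_{v\in\mathcal{X}}\Cat^{(m-1)}(W_{\Fix(v^{-1}c)})$, rewrite $\gamma^+(W,\mathcal{X},m)=\sum_{v\in\mathcal{X}}\Cat^{(m)}_+(W_{\Fix(v^{-1}c)})$ via Proposition~\ref{lemm:upsilonfusscat} and the conjugacy of $c_+vc_-$ with $v^{-1}c$, and feed in the Fu{\ss}--Catalan reciprocity $\Cat^{(-m)}(W')=(-1)^{\operatorname{rk}(W')}\Cat^{(m-1)}_+(W')$ term by term. (A minor slip: in the paper $k=\dim(X)$, not $\operatorname{rk}(W_X)$; and $\operatorname{rk}(W_{\Fix(v^{-1}c)})=\ell(v^{-1}c)=n-\ell(v)=\dim(X)$, so the sign is uniform and equals $(-1)^k$ as needed.)

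The gap is in~\eqref{eq:reciprocity2}. The ``non-positive analog'' of Proposition~\ref{lemm:upsilonfusscat} you would need, namely
\[
\#\Big\{f\in\Upsilon(W,m)\colon \prod f=w\Big\}=\Cat^{(m)}\big(W_{\Fix(w)}\big),
\]
is \emph{false}. Already in type $A_2$ with $m=1$: the non-simple reflection $t_{\alpha_1+\alpha_2}$ has exactly one face (the vertex $\alpha_1+\alpha_2$) mapping to it under $\prod$, whereas $\Cat^{(1)}(A_1)=2$. The point is that negative simple roots are available only for simple reflections, so different $w$ in the same conjugacy class can have different preimage sizes under $\prod$ once non-positive faces are allowed. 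Dually, there is no formula $\kappa^+(W,\mathcal{X},m)=\sum_{v\in\mathcal{X}}\Cat^{(m-1)}_+(W_{\Fix(v^{-1}c)})$ either: the full-support condition on $w_m$ is a condition in $W$, not in the parabolic $W_{\Fix(v^{-1}c)}$, and does not translate into a positive Fu{\ss}--Catalan count there.

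The paper circumvents this by \emph{not} proving~\eqref{eq:reciprocity2} directly. Instead it uses the inclusion-exclusion relations of Proposition~\ref{prop:inclexcl}: one has $\kappa^+(W,\mathcal{X},m)=\sum_{I\subset S}(-1)^{n-\#I}\kappa(W_I,\mathcal{X}\cap W_I,m)$ and $\gamma(W,\mathcal{X},m)=\sum_{I\subset S}\gamma^+(W_I,\mathcal{X}\cap W_I,m)$. Substituting $m\mapsto -m$ in the first, applying the already-proven~\eqref{eq:reciprocity1} inside each parabolic $W_I$, and comparing with the second yields~\eqref{eq:reciprocity2}. So the two reciprocities are not parallel computations; the second is bootstrapped from the first via parabolic induction.
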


Before proving this, we need to state inclusion-exclusion formulas that relate $\kappa(W, X, m)$ to~$\kappa^+(W, X, m)$ on one side, and $\gamma(W, X, m)$ to $\gamma^+(W, X,m)$ on the other side. To do this, it is convenient to extend the definitions of these quantities. Using the bijection from Lemma~\ref{lemm:bij_xi}, we write $\kappa(W, \mathcal{X} , m )$ in place of $\kappa(W, X , m )$ and similarly for $\kappa^+$, $\gamma$ and $\gamma^+$. Finally, we extend this definition in an additive way: if $\mathcal{Y} \subset W$ is a disjoint union of parabolic conjugacy classes, say $\mathcal{Y} = \biguplus_{i=1}^j \mathcal{X}_i$, we write
\[
 \kappa(W, \mathcal{Y} , m )
 =
 \sum_{i=1}^j \kappa(W, \mathcal{X}_i , m )
\]
and similarly for $\kappa^+$, $\gamma$, and $\gamma^+$. In particular, for each standard parabolic subgroup $W_I \subset W$ and $\mathcal{X}$ a parabolic conjugacy class of $W$, the intersection $\mathcal{X} \cap W_I$ is such a union.

Now, we can state:

\begin{Proposition} \label{prop:inclexcl}
We have
\begin{align}
 %\label{IE1}
 &\kappa(W, \mathcal{X} , m )
=
 \sum_{I \subset S}
 \kappa^+(W_I, \mathcal{X} \cap W_I , m ), \nonumber\\
 \label{IE2}
 &\kappa^+(W, \mathcal{X} , m )
=
 \sum_{I \subset S} (-1)^{n-\#I}
 \kappa(W_I, \mathcal{X} \cap W_I , m ),
\end{align}
and
\begin{align}
 \label{IE3}
 &\gamma( W, \mathcal{X} , m )
=
 \sum_{I \subset S}
 \gamma^+(W_I, \mathcal{X} \cap W_I , m ),\\
 %\label{IE4}
 &\gamma^+(W, \mathcal{X} , m )
=
 \sum_{I \subset S} (-1)^{n-\#I}
 \gamma(W_I, \mathcal{X} \cap W_I , m ). \nonumber
\end{align}
\end{Proposition}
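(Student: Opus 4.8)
The four identities split into two pairs. The pair for $\kappa$ and $\kappa^+$ I would obtain by reading Proposition~\ref{lemm:inclexcl} off coefficientwise in the basis $(\bPhi_X)_{X\in\Theta}$ of $R(W)$: expanding $\fpark_{W_I,m}=\sum_{X'}\kappa^+(W_I,X',m)\bPhi_{X'}$ over a transversal of $L(W_I)/W_I$ and using $\ind_{W_I}^W\bPhi_{X'}=\bPhi_{X'}$ (transitivity of induction, as in the proof of Proposition~\ref{lemm:inclexcl}), the coefficient of $\bPhi_X$ in $\ind_{W_I}^W(\fpark_{W_I,m})$ is $\sum_{X'\sim X}\kappa^+(W_I,X',m)$, which equals $\kappa^+(W_I,\mathcal{X}\cap W_I,m)$ by the additive extension and Lemma~\ref{lemm:bij_xi} applied to $W_I$; summing over $I\subset S$ and comparing with the coefficient $\kappa(W,\mathcal{X},m)$ of $\bPhi_X$ in $\park_{W,m}$ gives the first identity of Proposition~\ref{prop:inclexcl}, and \eqref{IE2} follows the same way from the second identity of Proposition~\ref{lemm:inclexcl} (or by Möbius inversion over $2^S$). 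A purely combinatorial variant is also available, partitioning a chain $w_1\le\cdots\le w_m$ in $\NC(W,c)$ according to $I:=\supp(w_m)$ and using the standard identification $\NC(W_I,c_I)=\NC(W,c)\cap W_I$ (valid for $c_I\sqsubset c$).

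For the pair involving $\gamma$ and $\gamma^+$, the plan is to prove~\eqref{IE3} combinatorially and then deduce the fourth identity by Möbius inversion over the Boolean lattice $2^S$: applying~\eqref{IE3} to each standard parabolic gives $\gamma(W_K,\mathcal{X}\cap W_K,m)=\sum_{I\subset K}\gamma^+(W_I,\mathcal{X}\cap W_I,m)$, which inverts to $\gamma^+(W_K,\mathcal{X}\cap W_K,m)=\sum_{I\subset K}(-1)^{\#K-\#I}\gamma(W_I,\mathcal{X}\cap W_I,m)$, and $K=S$ is the asserted formula. To prove~\eqref{IE3}, I would partition $\Upsilon(W,m)$ according to the set $A\subset S$ with $f\cap(-\Delta)=\{-\rho_s:s\in A\}$, where $\rho_s$ is the simple root with $t_{\rho_s}=s$; put $J:=S\setminus A$ and $f':=f\setminus(-\Delta)$. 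Iterating the join decomposition of links of negative simple roots in $\Upsilon(W,m)$ (together with the restriction-stability of the compatibility relation on positive roots from~\cite{bianejosuat}) shows that $f\mapsto f'$ is a bijection from $\{f\in\Upsilon(W,m):f\cap(-\Delta)=\{-\rho_s:s\in A\}\}$ onto the positive part $\Upsilon^+(W_J,m)$ of the generalized cluster complex of $W_J$, taken with respect to the bipartite Coxeter element $c_J:=\prod_{s\in J}s$.

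The crux is that this bijection respects the map to orbits of flats, namely $\underline{f}=\underline{f'}$ with the right-hand side computed inside $W_J$. By the reflection ordering on $\Phi^{(m)}_{\geq-1}$ (Proposition~\ref{prop:reforder} and its $m$-colored refinement), the roots of $-\Delta_+$ are $\prec$-largest and those of $-\Delta_-$ are $\prec$-smallest, so the decreasing product defining $\prod f$ factors as $\prod f=a_+\cdot(\prod f')\cdot a_-$ with $a_\pm:=\prod_{s\in A\cap S_\pm}s$. Since the reflections in $S_+$ (resp.\ $S_-$) pairwise commute, $c_+a_+=\prod_{s\in J\cap S_+}s=(c_J)_+$ and $a_-c_-=\prod_{s\in J\cap S_-}s=(c_J)_-$, so
\[
 \underline{f}=c_+\big(\prod f\big)c_-=(c_J)_+\big(\prod f'\big)(c_J)_-=\underline{f'}\in\NC(W_J,c_J).
\]
Hence $\Fix(\underline{f})\sim X$ iff $\underline{f'}\in\mathcal{X}\cap W_J$, and summing the resulting block count $\gamma^+(W_J,\mathcal{X}\cap W_J,m)$ over all $A\subset S$ yields~\eqref{IE3}.

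The step I expect to be the main obstacle is the careful verification that $f'$ is a bona fide face of $\Upsilon^+(W_J,m)$ and that $\prod f'$ — the product of its reflections in decreasing $\prec$-order inside $W$ — coincides with the analogous product computed intrinsically in $W_J$ with its own reflection ordering attached to $c_J$; equivalently, that the reflection ordering of $W_J$ is induced from that of $W$ on the positive colored roots of $W_J$. A secondary point, needed so that the additive extension of $\gamma^+$ applies, is that $\mathcal{X}\cap W_J$ is a disjoint union of parabolic conjugacy classes of $W_J$, which follows from Lemma~\ref{lemm:bij_xi} for $W_J$ and the fact that a $W_J$-conjugacy class meeting $\NC(W_J,c_J)$ is parabolic. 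Once these compatibilities are in hand, both~\eqref{IE3} and the ensuing Möbius inversions are routine.
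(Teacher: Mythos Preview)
Your proposal is correct and follows essentially the same route as the paper. For the $\gamma$/$\gamma^+$ pair your argument is identical to the paper's: partition faces by their negative part, set $J=S\setminus A$ (the paper's $I$), use the bijection $f\mapsto f\cap\Phi^{(m)}_+$ onto $\Upsilon^+(W_J,m)$, and verify $\underline{f}=\underline{f'}$ by exactly the commuting-involutions computation you give; the paper also leaves the compatibility of the parabolic reflection ordering implicit, so your flagged ``main obstacle'' is the same point the paper glosses over. For the $\kappa$/$\kappa^+$ pair the paper uses your ``combinatorial variant'' directly (partition chains by $I=\supp(w_m)$), whereas your primary suggestion---reading Proposition~\ref{lemm:inclexcl} coefficientwise in the basis $(\bPhi_X)$---is a legitimate alternative that trades the one-line chain-partition for an appeal to transitivity of induction; both work and the inclusion--exclusion step is the same.
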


\begin{proof}
The number $\kappa^+(W_I, \mathcal{X} \cap W_I , m )$ counts chains $w_1 \leq \dots \leq w_m$ in $\NC(W_I,c_I)$ where $w_1 \in \mathcal{X} \cap W_I $ and $w_m$ has full support in $W_I$. By summing over $I$, we get the first equation. The second can be deduced, essentially via the same proof as in the classical inclusion-exclusion principle.

Let's fix $I\subset S$. Then the map
\[
 f \mapsto f^+ := f \cap \Phi^{(m)}_+
\]
is a bijection from faces $f\in \Upsilon(W,m)$ such that $f\cap (-\Delta) = \{ -\delta_i \colon i\in S\backslash I \}$ to $\Upsilon^+(W_I,m)$. To check the relation between $\underline{f}$ and $\underline{f^+}$, note that with $f$ as above we have
\[
 \prod f
 =
 \bigg( \prod_{i\in S^+ \backslash I } s_i \bigg)
 \Big( \prod f^+ \Big)
 \bigg( \prod_{i\in S^- \backslash I } s_i \bigg),
\]
so that
\[
 \underline{f}
 =
 \bigg( \prod_{i\in S^+ \cap I } s_i \bigg)
 \Big( \prod f^+ \Big)
 \bigg( \prod_{i\in S^- \cap I } s_i \bigg) = \underline{f^+}.
\]
The last equality comes from the fact that $\underline{f^+}$ is defined with respect to the bipartite Coxeter element of $W_I$. It follows that the condition $\underline{f} \in \mathcal{X}$ is equivalent to $\underline{f^+} \in \mathcal{X}\cap W_I$. By summing over $I\subset S$, we get~\eqref{IE3}. The inverse relation follows, via inclusion-exclusion as before.
\end{proof}

\begin{proof}[Proof of Theorem~\ref{reciprocity}]
Recall that $\kappa(W, \mathcal{X}, m)$ counts chains $w_1 \leq \dots \leq w_m$ in $\NC(W,c)$ where $w_1 \in \mathcal{X} $. If $w_1$ is fixed, the other elements $w_2,\dots,w_m$ can be mapped to $w_1^{-1} w_2, \dots ,\allowbreak w_1^{-1} w_m$ and are thus in bijection with $(m-1)$-element chains in $\NC(W,c)$ where the top element is below $w_1^{-1}c$. The number of such tuples $(w_2,\dots,w_m)$ is thus given by the Fu{\ss}--Catalan number $\Cat^{(m-1)}\big( W_{\Fix(w_1^{-1}c)} \big)$. We get
\begin{equation} \label{eq:kappacat}
 \kappa(W, \mathcal{X}, m)
 =
 \sum_{w \in \mathcal{X} } \Cat^{(m-1)}\big( W_{\Fix(w^{-1}c)} \big).
\end{equation}
Note that we have a combinatorial reciprocity for Fu{\ss}--Catalan numbers
\begin{align*} %\label{eq:cat_reciprocity}
 \Cat^{(-m)}(W) = (-1)^n \Cat^{(m-1)}_+(W).
\end{align*}
This relation follows from the two formulas in terms of the exponents given in the introduction, and the fact that the (increasingly sorted) exponents satisfy $e_i = h-e_{n+1-i}$. Using this reciprocity, equation~\eqref{eq:kappacat} gives
\[
 (-1)^k \kappa(W,\mathcal{X},-m)
 =
 \sum_{w \in \mathcal{X} } (-1)^k \Cat^{(-m-1)}\big( W_{\Fix(w^{-1}c)} \big)
 =
 \sum_{w \in \mathcal{X} } \Cat_+^{(m)}\big( W_{\Fix(w^{-1}c)} \big).
\]
Since $w^{-1}c$ is conjugate to $c_+ w c_-$, we also have
\[
 (-1)^k \kappa(W,\mathcal{X},-m)
 =
 \sum_{w \in \mathcal{X} } \Cat_+^{(m)}\big( W_{\Fix(c_+ w c_-)} \big).
\]
By Proposition~\ref{lemm:upsilonfusscat}, each term $\Cat_+^{(m)}\big( W_{\Fix(c_+ w c_-)} \big)$ is the number of positive faces in $f \in \Upsilon^+(W,m)$ such that $\prod f = c_+ w c_-$. This is also the number of positive faces $f \in \Upsilon^+(W,m)$ such that $\underline{f} = w$. So the sum is $\gamma^+(W, \mathcal{X}, m)$ by definition, and we have proved~\eqref{eq:reciprocity1}.

Then, by substitution $m\leftarrow -m$ in~\eqref{IE2} we get
\begin{align} \label{eq:kappasumkappa}
 \kappa^+( W, \mathcal{X} , -m )
 =
 \sum_{I \subset S} (-1)^{n-\#I}
 \kappa( W_I, \mathcal{X} \cap W_I , -m ).
\end{align}
Using~\eqref{eq:reciprocity1} that we have just proved, we get
\[
 \kappa( W_I , \mathcal{X} \cap W_I, -m )
 =
 (-1)^{\#I - (n-k) } \gamma^+( W_I , \mathcal{X} \cap W_I , m ).
\]
To check the sign, note that in Theorem~\ref{reciprocity} the integer $k = \dim(X)$ is $n-\ell(w)$ for some $w\in\mathcal{X}$. Here, the elements of $\mathcal{X} \cap W_I$ have reflection length $n-k$ in $W_I$ (it is easily seen to be the same as their reflection length in $W$). In $W_I$, which has rank $\#I$, this sign is thus given by the difference $\#I - (n-k)$. Via the previous equation,~\eqref{eq:kappasumkappa} becomes
\[
 \kappa^+(W, \mathcal{X}, -m )
 =
 (-1)^k \sum_{I \subset S}
 \gamma^+(W_I, \mathcal{X}\cap W_I , m ).
\]
From~\eqref{IE3}, the right-hand side of the previous equation is $(-1)^k \gamma(W, \mathcal{X}, m)$. We have thus proved~\eqref{eq:reciprocity2}.
\end{proof}

\begin{Corollary} \label{formulas_gammagammaplus}
 We have
 \begin{align}
 &\gamma(W,X,m)
 =
 (-1)^{\dim(X)} \frac{p_X(-mh-1)}{[N(W_X):W_X]},\label{eq:formulagamma}\\
 &\gamma^+(W,X,m)
 =
 (-1)^{\dim(X)} \frac{p_X(-mh+1)}{[N(W_X):W_X]}. \nonumber
 \end{align}
\end{Corollary}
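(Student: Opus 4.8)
The plan is to read off both formulas directly from the combinatorial reciprocities of Theorem~\ref{reciprocity} together with the explicit evaluations of $\kappa$ and $\kappa^+$ provided by Theorems~\ref{theo_park} and~\ref{theo_fpark}. The only subtlety is that the latter two theorems are stated for \emph{positive integer} values of $m$, whereas the reciprocities involve $\kappa(W,X,-m)$ and $\kappa^+(W,X,-m)$; so I would first promote the relevant formulas to identities of polynomials in $m$.

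Concretely, I would argue as follows. As already noted after Definition~\ref{def_facetoTheta}, the functions $m\mapsto\gamma(W,X,m)$ and $m\mapsto\gamma^+(W,X,m)$ are polynomials; the same reasoning applied to~\eqref{eq:kappacat} (together with the product formula defining the Fu{\ss}--Catalan numbers) shows that $m\mapsto\kappa(W,X,m)$ and $m\mapsto\kappa^+(W,X,m)$ are polynomials as well, and of course so is $m\mapsto p_X(mh\pm1)/[N(W_X):W_X]$. Since Theorems~\ref{theo_park} and~\ref{theo_fpark} assert
\[
 \kappa(W,X,m)=\frac{p_X(mh+1)}{[N(W_X):W_X]},\qquad
 \kappa^+(W,X,m)=\frac{p_X(mh-1)}{[N(W_X):W_X]}
\]
for all positive integers $m$, and two polynomials agreeing at infinitely many points coincide, these are identities of polynomials in $m$. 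Substituting $m\leftarrow-m$ then gives
\[
 \kappa(W,X,-m)=\frac{p_X(-mh+1)}{[N(W_X):W_X]},\qquad
 \kappa^+(W,X,-m)=\frac{p_X(-mh-1)}{[N(W_X):W_X]}.
\]

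Finally, writing $k=\dim(X)$ and feeding these into the reciprocities $\gamma^+(W,X,m)=(-1)^k\kappa(W,X,-m)$ and $\gamma(W,X,m)=(-1)^k\kappa^+(W,X,-m)$ of Theorem~\ref{reciprocity} yields precisely the two asserted formulas. I do not anticipate a genuine obstacle in the corollary itself: all the substance has been absorbed into Theorem~\ref{reciprocity} (which rests on Proposition~\ref{lemm:upsilonfusscat} and the Fu{\ss}--Catalan reciprocity $\Cat^{(-m)}(W)=(-1)^n\Cat^{(m-1)}_+(W)$) and into the parking-space Theorems~\ref{theo_park} and~\ref{theo_fpark}. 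The one point that genuinely requires care is the polynomiality step above, since it is what licenses evaluating the parking-space formulas at negative $m$; everything after that is bookkeeping with signs.
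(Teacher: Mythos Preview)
Your proposal is correct and is exactly the route the paper takes: its proof of the corollary consists of a single sentence invoking Theorem~\ref{reciprocity} together with the formulas~\eqref{eq:formula_kappa} and~\eqref{eq:formula_kappaplus}. The polynomiality in $m$ that you spell out is handled in the paper as well (see the remark after Definition~\ref{def_facetoTheta} and the substitutions $m\leftarrow -m$ used in the proof of Theorem~\ref{reciprocity}), so your extra care there is appropriate but not new.
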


\begin{proof}
 Using the combinatorial reciprocities in Theorem~\ref{reciprocity}, this follows from the corresponding formulas for $\kappa$ and $\kappa^+$ in equations~\eqref{eq:formula_kappa} and~\eqref{eq:formula_kappaplus}.
\end{proof}

\section[Bijections between faces of the generalized cluster complex and chains of noncrossing partitions]{Bijections between faces of the generalized cluster complex \\ and chains of noncrossing partitions}
\label{sec:bij}

We give another proof of the formulas in Corollary~\ref{formulas_gammagammaplus}, via a bijection which is of independent interest. This is a bijection between faces $f \in \Upsilon(W,X,m)$ and chains of noncrossing partitions of the form $
w_0 \sqsubset w_1 \leq \dots \leq w_m$ such that $\Fix(w_0) \sim X$, and it will give a proof of Proposition~\ref{prop:bijgammachi}.

\begin{Remark}
The combinatorial reciprocities in the previous section gave connections between~$\kappa$ and~$\gamma^+$ on one side (equation~\eqref{eq:reciprocity1}), and between $\kappa^+$ and $\gamma$ on the other side (equation~\eqref{eq:reciprocity2}). Here the bijection will give connections between $\kappa$ and $\gamma$ on one side (equation~\eqref{EQ:gammakappa}), and between $\kappa^+ $ and $\gamma^+$ on the other side (equation~\eqref{EQ:gammakappa2}).
\end{Remark}

\begin{Proposition} \label{prop:bijgammachi}
We have
\begin{align}
 &\label{EQ:gammakappa}
 \gamma(W,X,m)
=
 \sum_{Y \in \Theta} \bN_{X,Y} \kappa(W,Y,m), \\
 &\label{EQ:gammakappa2}
 \gamma^+(W,X,m)
=
 \sum_{Y \in \Theta} \bN_{X,Y} \kappa^+(W,Y,m).
\end{align}
\end{Proposition}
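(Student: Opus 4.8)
The plan is to deduce both identities from a single bijective statement, and then to construct the bijection. The bijective statement is the one announced just before the proposition: there should be a bijection
\[
 \Upsilon(W,m) \;\longrightarrow\; \big\{ (w_0,w_1,\dots,w_m)\colon w_0 \sqsubset w_1 \leq \dots \leq w_m \text{ in } \NC(W,c) \big\},
 \qquad f \longmapsto (w_0,\dots,w_m),
\]
with $w_0=\underline{f}$ (so that $f\in\Upsilon(W,X,m)$ iff $\Fix(w_0)\sim X$) and with $f$ a positive face iff $\supp(w_m)=S$. Granting such a bijection, \eqref{EQ:gammakappa} follows by fibering the target set over the conjugacy type of $w_1$: if $\Fix(w_1)\sim Y$, then the number of $w_0\sqsubset w_1$ with $\Fix(w_0)\sim X$ equals $\bN_{X,Y}$ by the very definition of $\bN$ (and this count does not depend on the chosen $w_1$, as shown), while summing over all $w_1$ of type $Y$ the number of completions $w_1\leq w_2\leq\dots\leq w_m$ gives $\kappa(W,Y,m)$. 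Imposing in addition $\supp(w_m)=S$ replaces $\kappa$ by $\kappa^+$ and yields \eqref{EQ:gammakappa2}.

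For the construction I would set $w_0:=\underline{f}$ and build the rest of the chain in stages. First peel off the negative simple roots exactly as in the proof of Proposition~\ref{prop:inclexcl}: with $I:=S\setminus\{t_\alpha\colon\alpha\in f\cap(-\Delta)\}$, the positive part $f^+:=f\cap\Phi^{(m)}_+$ is a face of $\Upsilon^+(W_I,m)$ and $\underline{f}=\underline{f^+}$ computed for the bipartite Coxeter element $c_I$ of $W_I$; this reduces the construction to the positive case over a standard parabolic. There, by Proposition~\ref{lemm:upsilonfusscat} (and the colour-splitting used in its proof) the positive faces $g\in\Upsilon^+(W_I,m)$ with $\prod g=w':=\prod f^+$ are in bijection with $m$-element multichains $\pi_1\leq\dots\leq\pi_m$ in the parabolic noncrossing-partition lattice $\NC\big((W_I)_{\Fix(w')},w'\big)$ whose top $\pi_m$ has full support there. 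Since $w'$ is the bipartite Kreweras complement of $w_0$ inside $W_I$, the parabolic $(W_I)_{\Fix(w')}$ is the one complementary to $W_{\Fix(w_0)}$.

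It then remains to assemble $w_0$, the set of removed simple reflections, and such a multichain into a chain $w_1\leq\dots\leq w_m$ in the full lattice $\NC(W,c)$ with $w_0\sqsubset w_1$, reversibly. This is where the two orders $\sqsubset$ and $\ll$ of \cite{bianejosuat} enter: using Propositions~\ref{prop:order_uvw} and~\ref{lemm:bool}, the removed simple reflections should account for the extra Boolean degrees of freedom beyond $w_0$, while the interval structure below $w'$ transports the multichain $\pi_1\leq\dots\leq\pi_m$, through a Kreweras-type isomorphism, onto $w_1\leq\dots\leq w_m$, in such a way that $w_0\sqsubset w_1$ holds automatically. One then checks that $\supp(w_m)=S$ exactly when $f$ has no negative simple root, i.e.\ when $f$ is positive, and exhibits the inverse map (rebuilding $f$ from $w_0$, the removed reflections, and the transported multichain).

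The main obstacle is precisely this last assembly: making explicit how the negative simple roots of $f$ together with the bipartite Kreweras complement upgrade the absolute order into the relation $w_0\sqsubset w_1$, and verifying that the resulting correspondence is a well-defined bijection compatible with the colour decomposition. The reduction of the first paragraph, and the checks on orbit type and on the full-support condition, are then routine. (A non-bijective shortcut to \eqref{EQ:gammakappa}--\eqref{EQ:gammakappa2} is available from Theorem~\ref{reciprocity} together with the matrix identities of Section~\ref{parkingproof2}, but it is avoided here, since the point of this section is to give an independent, bijective proof of Corollary~\ref{formulas_gammagammaplus}.)
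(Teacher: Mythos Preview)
Your overall plan---a bijection between faces $f\in\Upsilon(W,m)$ and chains $w_0\sqsubset w_1\leq\cdots\leq w_m$ in $\NC(W,c)$ preserving the orbit type of $\underline f$ and $w_0$, and restricting to positive faces exactly when $\supp(w_m)=S$---is precisely what the paper does, and your deduction of \eqref{EQ:gammakappa}--\eqref{EQ:gammakappa2} from such a bijection by fibering over the conjugacy type of $w_1$ is correct and is the paper's argument as well.

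Where you diverge is in the construction itself. The paper does \emph{not} go through Proposition~\ref{lemm:upsilonfusscat} and a parabolic reduction followed by an ``assembly'' step; instead it introduces a dedicated tool, Lemma~\ref{lemm:intervalsfaces}: for $u\leq w$ in $\NC(W,c)$, the elements $v$ with $u\sqsubset v\ll w$ are in bijection with positive faces $g\in\Upsilon^+(W)$ satisfying $\prod g=u^{-1}w$. The bijection is then built \emph{starting from the chain side}: given $w_0\sqsubset w_1\leq\cdots\leq w_m$, one uses Proposition~\ref{prop:order_uvw} to refine it uniquely to $w_0\sqsubset w_1\ll w'_1\sqsubset w_2\ll\cdots\ll w'_m\sqsubset c$; Lemma~\ref{lemm:intervalsfaces} then converts each triple $w'_{i-1}\sqsubset w_i\ll w'_i$ into a (uncoloured) positive face $f_i$ with $\prod f_i=(w'_{i-1})^{-1}w'_i$, which receives colour $i$; the negative vertices of $f$ are the $-\delta_j$ with $s_j\notin\supp(w'_m)$. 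This sidesteps your ``main obstacle'' entirely: there is no separate assembly, because the refinement $w'_0\sqsubset\cdots\sqsubset w'_m\sqsubset c$ already lives in $\NC(W,c)$ and the colour slices are read off directly.

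Two smaller points. First, the paper's bijection does not achieve $w_0=\underline f$; one only gets that $\underline f$ is a conjugate of $w_0^{-1}$ (hence $\Fix(\underline f)\sim\Fix(w_0)$), which is all that is needed. Second, your route through Proposition~\ref{lemm:upsilonfusscat} implicitly requires a bijection between positive facets of $\Upsilon^+(W',m)$ and full-support $m$-chains in $\NC(W')$; the paper's Lemma~\ref{lemm:intervalsfaces} is exactly what supplies such a bijection (and in fact more), so your approach, once completed, would likely end up invoking the same ingredient.
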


Note that these two identities are related to each other: one is the consequence of the other, using the combinatorial reciprocities in~\eqref{eq:reciprocity1} and~\eqref{eq:reciprocity2}, together with the relation $\bN^{-1} = \bD \bN \bD$. Here we prove both identities: the bijection proving the first one, suitably restricted, also proves the second one.

One of our main tools is the following (see also Lemma~\ref{prop:order_uvw} for a related result).

\begin{Lemma} \label{lemm:intervalsfaces}
For any $u,w \in\NC(W,c)$ such that $u\leq w$, the elements $v\in\NC(W,c)$ such that $u\sqsubset v \ll w$ are in bijection with faces $f \in \Upsilon^+(W)$ such that $\prod f = u^{-1} w$.
\end{Lemma}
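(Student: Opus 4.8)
The plan is to establish the bijection by two successive reductions, arriving finally at a bijection between the positive facets of a cluster complex and the full-support noncrossing partitions.

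\emph{Step 1: reduction to $w=c$.} Both sets in the statement involve only noncrossing partitions $\leq w$ and faces whose product lies below $w$, and by Proposition~\ref{prop:wcox} the element $w$ is a standard Coxeter element of the parabolic subgroup $W_{\Fix(w)}$. I would therefore first pass to $W_{\Fix(w)}$, working with the Stump--Thomas--Williams version of $\Upsilon^+$ attached to an arbitrary standard Coxeter element, exactly as in the proof of Proposition~\ref{lemm:upsilonfusscat} (this detour is forced because $w$ need not be a \emph{bipartite} Coxeter element of $W_{\Fix(w)}$). After this reduction $w=c$; note that then the condition $v\ll w$ simply reads $v\in\NC'(W,c)$, while $u^{-1}w=u^{-1}c$ is a Kreweras complement of $u$.

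\emph{Step 2: untwisting by $u$.} The next step is to show that left division by $u$ restricts to a bijection
\[
 \{v\in\NC'(W,c)\colon u\sqsubset v\}\ \xrightarrow{\ \sim\ }\ \{v'\in\NC(W,c)\colon v'\ll u^{-1}c\},\qquad v\longmapsto u^{-1}v.
\]
Injectivity is automatic. For well-definedness and surjectivity I would argue from the definitions: $u\sqsubset v$ implies $u\leq v$, hence $v=u\cdot\big(u^{-1}v\big)$ is a length-additive factorization and $u^{-1}v\leq u^{-1}c$; the real point is that the pair of conditions ``$u\sqsubset v$ and $v$ has full support in $W$'' is equivalent to ``$u^{-1}v$ has full support in $W_{\Fix(u^{-1}c)}$''. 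The tools here are the characterization of the cover relations of $\sqsubset$ and $\ll$ in terms of the Bruhat order, the additivity $\supp(x)=\supp(a)\cup\supp\big(a^{-1}x\big)$ along length-additive factorizations of elements of $\NC(W,c)$, and the uniqueness in Proposition~\ref{prop:order_uvw} together with the Boolean structure of Proposition~\ref{lemm:bool}. Granting this, the statement reduces to the case $u=e$, namely: the set $\NC'(W,c)$ is in bijection with $\big\{f\in\Upsilon^+(W)\colon \prod f=c\big\}$.

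\emph{Step 3: the case $u=e$, and the main obstacle.} By purity and Proposition~\ref{prop:tzanaki}, a facet of $\Upsilon^+(W)$, read in decreasing reflection order $\rho_1\succ\cdots\succ\rho_n$, is precisely a length-additive factorization $t_{\rho_1}\cdots t_{\rho_n}=c$ into positive reflections; hence $\big\{f\in\Upsilon^+(W)\colon\prod f=c\big\}$ is exactly the facet set of $\Upsilon^+(W)$, of cardinality $\Cat^{(1)}_+(W)=\#\NC'(W,c)$ by Proposition~\ref{lemm:upsilonfusscat}. To upgrade this count to a bijection I would send a full-support noncrossing partition $v$ to the positive facet obtained by extending (a suitable ordering of) its canonical factorization, relying on the Brady--Watt/Tzanaki description and on the stability of the (positive) compatibility relation under restriction to parabolic subgroups, \cite[Proposition~6.3]{bianejosuat} (this is the same ingredient that drives the proof of Proposition~\ref{lemm:upsilonfusscat}). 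I expect Step~2 to be the hardest part: checking that the two refined orders $\sqsubset$ and $\ll$ interact correctly with left division by $u$ is a delicate piece of bookkeeping in the combinatorics of \cite{bianejosuat}, where support and length must both be tracked carefully; once that is in place, Step~3 should follow cleanly from the already-established dictionary between faces of $\Upsilon^+$ and factorizations of Coxeter elements.
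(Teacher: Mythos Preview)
Your route differs from the paper's, and Step~2 does not go through as stated.  The map $v\mapsto u^{-1}v$ does \emph{not} carry $\{v:u\sqsubset v\ll c\}$ into $\{v':v'\ll u^{-1}c\}$.  Take $W=\mathfrak{S}_4$, $c=s_1s_2s_3=(1\,2\,3\,4)$ and $u=(1\,3)$.  Then $u^{-1}c=(1\,2)(3\,4)$, whose parabolic closure is $\langle s_1,s_3\rangle$, so $\{v':v'\ll u^{-1}c\}=\{(1\,2)(3\,4)\}$.  On the other side, the canonical factorizations show that the only full-support $v$ with $(1\,3)\sqsubset v$ is $v=(1\,3\,4)=(1\,3)(3\,4)$.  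But $u^{-1}v=(1\,3)(1\,3\,4)=(3\,4)$, which has support $\{s_3\}$ in $\langle s_1,s_3\rangle$ and hence does \emph{not} satisfy $(3\,4)\ll(1\,2)(3\,4)$.  So while the two sets happen to have the same cardinality (as the lemma predicts), left division by $u$ is not the bijection, and your equivalence ``$u\sqsubset v$ and $\supp(v)=S$'' $\Leftrightarrow$ ``$u^{-1}v$ has full support in $W_{\Fix(u^{-1}c)}$'' is false.  The canonical factorization of $v$ is intrinsic to $W_{\Fix(v)}$ and does not transform simply under $v\mapsto u^{-1}v$; the tools you list (Bruhat characterization of covers, support additivity, Propositions~\ref{prop:order_uvw}--\ref{lemm:bool}) do not supply the missing link.

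The paper avoids any such reduction: it constructs the bijection directly from the face.  Citing \cite[Lemmas~8.7 and~8.8]{bianejosuat}, given $f=\{\alpha_1,\dots,\alpha_k\}\in\Upsilon^+(W)$ with $\prod f=u^{-1}w$ one can reorder the reflections $t_i=t_{\alpha_i}$ (by swapping commuting pairs) so that the chain
\[
 u\;\sqsubset\; ut_1\;\sqsubset\;\cdots\;\sqsubset\; ut_1\cdots t_j\;\ll\; ut_1\cdots t_{j+1}\;\ll\;\cdots\;\ll\; ut_1\cdots t_k=w
\]
first climbs in $\sqsubset$, then in $\ll$; the switch point $v:=ut_1\cdots t_j$ is the image of $f$.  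The inverse takes the canonical factorizations of $u^{-1}v$ and of $v^{-1}w$ and conjugates their factors to produce the reflections of $f$.  This works uniformly in $u$, so no ``untwisting'' step is needed.
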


\begin{proof}
 This follows from the results in~\cite{bianejosuat} (though in this reference we only deal with the case where $u$ is the minimal element, this slight generalization is proved similarly). More explicitly, the construction is as follows.

 Start from $f=\{\alpha_1,\dots, \alpha_k\}$ as above, and let $t_i = t_{\alpha_i}$. By~\cite[Lemmas~8.7 and 8.8]{bianejosuat}, we can reindex the elements of $f$ (switching pairs of orthogonal reflections) so that $u \sqsubset ut_1 \sqsubset \null \cdots \null \sqsubset ut_1 \cdots t_j \ll u t_1 \cdots t_{j+1} \ll \null\cdots\null \ll ut_1 \cdots t_k = w$. Then the bijection sends $f$ to $ut_1 \cdots t_j$.

 In the other direction, write $u^{-1} v = t_1 \cdots t_k$ where the factors are the simple reflections of $W_{\Fix(u^{-1}v)}$, and similarly $v^{-1} w = u_1 \cdots u_j$ where the factors are the simple reflections of $W_{\Fix(v^{-1}w)}$. Then the reflections $t_\alpha$ for $\alpha\in f$ are
 \begin{itemize}\itemsep=0pt
 \item $t_k \cdots t_{i+1} t_i t_{i+1} \cdots t_k$ for $1\leq i \leq k$,
 \item $u_1 \cdots u_{i-1} u_i u_{i-1} \cdots u_1$ for $1\leq i \leq j$.
 \end{itemize}
See~\cite[Section~8]{bianejosuat} for details.
\end{proof}

\begin{proof}[Proof of Proposition~\ref{prop:bijgammachi}]
We define the bijection announced at the beginning of this section, between faces $f \in \Upsilon(W,X,m)$ and chains of noncrossing partitions of the form $
w_0 \sqsubset w_1 \leq \dots \leq w_m$ such that $\Fix(w_0) \sim X$.

Let us start from a chain in $\NC(W,c)$ as above. Using Proposition~\ref{prop:order_uvw}, a chain $w_0 \sqsubset w_1 \leq \dots \leq w_m \leq c $ can be completed in a unique way as a chain
\[
 w_0 \sqsubset w_1 \ll w'_1 \sqsubset w_2 \ll w'_2 \sqsubset \dots w_{m-1} \ll w'_{m-1} \sqsubset w_m \ll w'_m \sqsubset c.
\]
Let us denote $w'_0 := w_0$ for convenience. Each element $w_i$ (with $1\leq i \leq m$) is such that $w'_{i-1} \sqsubset w_i \ll w'_i$. Via Lemma~\ref{lemm:intervalsfaces}, it corresponds bijectively to a face $f_i \in \Upsilon^+(W)$ such that
\begin{equation} \label{eq:prod_ra}
 \prod f_i = (w'_{i-1})^{-1} w'_i.
\end{equation}
Define $f\in \Upsilon(W,m)$ as follows:
\begin{itemize}\itemsep=0pt
 \item $f$ contains the colored positive roots $\alpha^i$ for $\alpha\in f_i$,
 \item $f$ contains the negative roots $-\delta_i$ for $i \in S\backslash \supp(w'_m)$.
\end{itemize}
To check that this is indeed in $\Upsilon(W,m)$, first note that from~\eqref{eq:prod_ra} we get
\begin{equation} \label{prod_ra2}
 \prod f^+ = (w'_{0})^{-1} w'_m
\end{equation}
with the notation $f^+ = f \cap \Phi^{(m)}_+$ as before. Indeed, the order is such that (reading this product from left to right) we first read roots with color $1$, then roots with color $2$, etc. By Proposition~\ref{prop:tzanaki}, this shows that $f^+ \in \Upsilon^+(W,m)$. Then, each element $-\delta_i \in f $ is such that $s_i \notin \supp(w_m)$. Consequently, $s_i \notin \supp(t)$ for each $t\in T$ such that $t\leq w_m$. Thus this $-\delta_i$ is compatible with the roots in $f^+$, and it follows $f\in \Upsilon(W,m)$.

To compute $\underline{f}$, let's keep the notation as in the proof of Proposition~\ref{prop:inclexcl} so that $f \cap (-\Delta) = \{ -\delta_i \colon i\in S\backslash I \}$ for some $I\subset S$. From~\eqref{prod_ra2}, we get
\[
 \prod f =
 \bigg( \prod_{i\in S^+ \backslash I} \bigg )
 (w'_{0})^{-1} w'_m
 \bigg( \prod_{i\in S^- \backslash I} \bigg)
\]
so that
\[
 \underline{f}
 =
 \bigg( \prod_{i\in S^+ \cap I} \bigg )
 (w'_{0})^{-1} w'_m
 \bigg( \prod_{i\in S^- \cap I} \bigg ).
\]
By definition, we have
\[
 w'_m =
 \bigg( \prod_{i\in S^+\cap I} \bigg)
 \bigg( \prod_{i\in S^-\cap I} \bigg).
\]
Indeed, the condition $w'_m \sqsubset c$ implies that a reduced factorization of $w'_m$ can be extracted from the canonical one of $c = c_+ c_-$, and the definition of $f$ implies that the elements that appear are those indexed by $I$. From the previous two equations, we get
\[
 \underline{f} =
 \bigg( \prod_{i\in S^+\cap I} \bigg)
 (w'_{0})^{-1}
 \bigg( \prod_{i\in S^+\cap I} \bigg).
\]
We thus have $f\in \Upsilon(W,X,m)$, since $\Fix(\underline{f}) \sim \Fix( (w'_0)^{-1} ) \sim \Fix( w'_0 ) \sim X$.

Describing the inverse bijection is straightforward. Starting from $f\in \Upsilon(W,m)$, define
\begin{itemize}\itemsep=0pt
 \item $w'_m \sqsubset c$ is the product of the reflections $t_\alpha$ for $\alpha \in (-\Delta) \backslash f$,
 \item $ (w'_{i-1})^{-1} w'_i$ is the product of $t_{\alpha^i}$ over positive roots $\alpha^i$ (of color $i$) in $f$.
\end{itemize}
From $w'_{i-1}$, $w'_i$, and the positive roots of color $i$ in $f$, we use the inverse bijection from Lemma~\ref{lemm:intervalsfaces} to get $w_i$ such that $w'_{i-1} \sqsubset w_i \ll w'_i$. We omit details about checking that the two maps are indeed inverse bijections.

We thus have proved~\eqref{EQ:gammakappa}. Finally, observe that when we restrict this bijection to chains such that $w_m \ll c$, we get $w'_m = c$ (keeping the same notation), and the corresponding faces $f \in \Upsilon(W,\mathcal{X},m)$ contain no vertex in $-\Delta$. So the bijection proves~\eqref{EQ:gammakappa2} as well.
\end{proof}

In the case where $X$ is the minimal flat, our bijection specializes into a bijection between facets of $\Upsilon(W,m)$ and chains $w_1\leq\dots \leq w_m$. Such a bijection was first given via representation theory by Buan, Reiten, and Thomas~\cite{buanreitenthomas}. A more combinatorial one and various related bijections were given by Stump, Thomas, and Williams~\cite{stumpthomaswilliams}.

To end this section, we use the previous bijection to get the proof of Corollary~\ref{formulas_gammagammaplus}.

\begin{proof}[Proof of Corollary~\ref{formulas_gammagammaplus}]
From Lemma~\ref{lemm:solomon2} and $\bPsi_{-1} = (-1)^n \sign$, we have
\[
(-1)^n \bD \bPsi_{-t} = \bN \bPsi_{t}.
\]
By taking the coefficients via Proposition~\ref{prop:psik_expansion} and evaluating at $t=mh+1$ and $t=mh-1$, this respectively gives
\begin{align*} %\label{eq:kreweras_reciprocity}
 &(-1)^{\dim(X)} \frac{p_X(-mh-1)}{[N(W_X):W_X]}
=
 \sum_{Y \in \Theta} \bN_{X,Y}
 \frac{p_Y(mh+1)}{[N(W_Y):W_Y]}, \\
 &(-1)^{\dim(X)} \frac{p_X(-mh+1)}{[N(W_X):W_X]}
=
 \sum_{Y \in \Theta} \bN_{X,Y}
 \frac{p_Y(mh-1)}{[N(W_Y):W_Y]}.
\end{align*}
Using the formulas for $\kappa$ and $\kappa^+$ obtained in~\eqref{eq:formula_kappa} and~\eqref{eq:formula_kappaplus}, we see that the right-hand sides of the previous two equations give, respectively, the right-hand sides in~\eqref{EQ:gammakappa} and~\eqref{EQ:gammakappa2}.

By identifying the left-hand sides of the previous two equations with those of~\eqref{EQ:gammakappa} and~\eqref{EQ:gammakappa2}, we immediately obtain the formulas for $\gamma$ and $\gamma^+$ in Corollary~\ref{formulas_gammagammaplus}.
\end{proof}

\section[Numerology of f- and h-vectors]{Numerology of $\boldsymbol{f}$- and $\boldsymbol{h}$-vectors}
\label{sec:fhvectors}

Recall that the integers $f_k$ are the entries of the $f$-vector of $\Upsilon(W,m)$. First, let us explain how our formulas for $\gamma$ completely explain some partial factorizations of $f_k$ as a polynomial in~$m$, obtained by Fomin and Reading~\cite[Section~8]{fominreading}. They introduced the notion of {\it level} of an exponent $e_i$, by the rule that $mh+1+e_i$ is a factor of $f_k$ (as a polynomial in $m$) iff $e_i$ has level at most $k$.

By examining the tables in \cite[Appendix~C]{orlikterao}, we note that the Orlik--Solomon exponents $b_i^X$ of $X\in \Theta$ (with $\dim(X)=k)$ often look like $e_1,\dots,e_k$, namely the $k$ smallest exponents of $W$ (in fact, this happens exactly for the so-called {\it coincidental types}~\cite[Section~3.1.5]{williams_thesis}:~$A_n$,~$B_n$,~$I_2(m)$, and~$H_3$, see \cite[Section~3.3]{miller_foulkes}). Now, consider the formula for $f_{k-1}$ in~\eqref{eq:formula_fk} as a sum of polynomials in $m$. If some integer $b$ is an Orlik--Solomon exponent for all $X \in \Theta$ with $\dim(X) = k$, each term contains a factor $(mh+1+b)$ so that their sum $f_{k-1}$ also has this factor. Also, we can observe that there is an equivalence between the two statements:
\begin{itemize}\itemsep=0pt
 \item The integer $b$ is an Orlik--Solomon exponent for all $X\in\Theta$ with $\dim(X) = k$.
 \item The integer $b$ is an Orlik--Solomon exponent for all $X\in\Theta$ with $\dim(X) \geq k$.
\end{itemize}
(It is easily checked on a case-by-case basis.) Note that it might {\it a priori} happen that $f_k$ contains a factor $mh+1-b$ even though some of the terms in the sum of~\eqref{eq:formula_fk} don't contain it. This actually never happen. The upshot of this discussion is:

\begin{Proposition}
An exponent $e_i$ of $W$ has level $j$ in the sense of Fomin and Reading~{\rm \cite{fominreading}} if and only if $e_i$ is an Orlik--Solomon exponent of all $X\in \Theta$ with $\dim(X) \geq j$.
\end{Proposition}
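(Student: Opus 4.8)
The plan is to read the level of an exponent directly off the explicit formula for the $f$-vector and match it with the Orlik--Solomon data. First I would combine \eqref{eq:formula_fk} with formula \eqref{eq:formulagamma} of Corollary~\ref{formulas_gammagammaplus}: since $p_X(t)=\prod_{s=1}^{\dim(X)}\bigl(t-b_s^X\bigr)$, we have $p_X(-mh-1)=(-1)^{\dim(X)}\prod_{s=1}^{\dim(X)}\bigl(mh+1+b_s^X\bigr)$, the sign in \eqref{eq:formulagamma} cancels, and hence
\[
 f_{k-1}=\sum_{\substack{X\in\Theta\\ \dim(X)=k}}\frac{1}{[N(W_X):W_X]}\prod_{s=1}^{k}\bigl(mh+1+b_s^X\bigr).
\]
So $f_{k-1}$ is a positive-rational combination of products of the linear polynomials $mh+1+b$, where each $b$ runs over the Orlik--Solomon exponents of a flat of dimension $k$.

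The implication ``$e_i$ is an Orlik--Solomon exponent of every $X\in\Theta$ with $\dim(X)\ge j$ $\Rightarrow$ $e_i$ has level at most $j$'' is then immediate: taking $k=j$, the hypothesis makes $e_i$ an Orlik--Solomon exponent of every $X$ with $\dim(X)=j$, so $mh+1+e_i$ divides each summand of $f_{j-1}$, hence divides $f_{j-1}$, which by the definition of level says exactly that $e_i$ has level at most $j$.

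For the converse I would run the same computation in reverse. Substituting the value of $m$ for which $mh+1=-e_i$ into the display above gives
\[
 f_{k-1}\big|_{mh+1=-e_i}=(-1)^{k}\sum_{\substack{X\in\Theta\\ \dim(X)=k}}\frac{p_X(e_i)}{[N(W_X):W_X]},
\]
and $mh+1+e_i$ divides $f_{k-1}$ precisely when this quantity vanishes. The remaining point is to show that such a vanishing forces $p_X(e_i)=0$ for every single $X$ with $\dim(X)=k$ --- equivalently, since all denominators are positive, that the numbers $p_X(e_i)=\prod_{s}\bigl(e_i-b_s^X\bigr)$ all have the same sign, so that no accidental cancellation in the sum can occur. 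This yields that $e_i$ has level at most $k$ only if $e_i$ is an Orlik--Solomon exponent of all $X$ with $\dim(X)=k$; combining with the (type-by-type) observation from the discussion that for a fixed integer $b$ the conditions ``$b$ is an Orlik--Solomon exponent of all $X$ with $\dim(X)=k$'' and ``$\dots$ with $\dim(X)\ge k$'' are equivalent, we get, for every $k$, that $e_i$ has level at most $k$ if and only if $e_i$ is an Orlik--Solomon exponent of all $X$ with $\dim(X)\ge k$. Since this property only gets easier to satisfy as $k$ grows, it holds exactly for $k$ at least a single threshold, and that threshold equals the level --- which is the assertion of the proposition.

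I expect the genuine obstacle to be precisely the non-cancellation step, i.e.\ the fact flagged in the discussion as ``this actually never happens''. I see no case-free argument for it: the sign of $p_X(e_i)$ is $(-1)^{\#\{s\,:\,b_s^X>e_i\}}$, so the claim amounts to the parity of $\#\{s\,:\,b_s^X>e_i\}$ being constant as $X$ ranges over the orbits of flats of a fixed dimension, and I would verify this --- together with the equivalence between the ``$\dim(X)=k$'' and ``$\dim(X)\ge k$'' formulations used above --- by going through the Orlik--Solomon exponents in the finite-type classification, e.g.\ from the tables of \cite{orlikterao}. Everything else is purely formal, using only \eqref{eq:formula_fk} and \eqref{eq:formulagamma}.
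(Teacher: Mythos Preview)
Your proposal is correct and follows essentially the same route as the paper: the proposition is stated there as ``the upshot'' of the preceding discussion, which does exactly what you do---express $f_{k-1}$ via \eqref{eq:formula_fk} and \eqref{eq:formulagamma}, observe the easy divisibility direction, and defer both the ``no accidental cancellation'' claim and the equivalence between the $\dim(X)=k$ and $\dim(X)\ge k$ formulations to a case-by-case check against the tables in~\cite{orlikterao}. Your write-up is in fact more explicit than the paper's, in that you isolate the non-cancellation step as a parity condition on $\#\{s:b_s^X>e_i\}$ over flats of a fixed dimension; the paper simply asserts ``this actually never happens'' without spelling out what must be verified.
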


For example in type $F_4$, the exponents $1$, $5$, $7$, $11$ have respective levels $1$, $3$, $3$ and $4$ (see~\cite[Table~1]{fominreading}). The parabolic conjugacy class of type $B_2$ has Orlik--Solomon exponents $1$ and $3$, which differ from the two smallest exponents $1$ and $5$ (and this is the sole discrepancy, see~\cite[Table~C.9]{orlikterao}). This Orlik--Solomon exponent $3$ (rather than $5$) explains that the exponent $5$ has level $3$ (rather than $2$).

Now, consider the $h$-vector $(h_i)_{0\leq i \leq n}$ of $\Upsilon(W,m)$ and $\big(h^+_i\big)_{0\leq i \leq n}$ of $\Upsilon^+(W,m)$, defined in terms of the $f$-vector via the polynomial relation
\begin{align} \label{eq:fh-transform}
 \sum_{i=0}^n f_{i-1} z^{n-i}
 =
 \sum_{i=0}^n h_i (z+1)^{n-i},
 \qquad
 \sum_{i=0}^n f^+_{i-1} z^{n-i}
 =
 \sum_{i=0}^n h^+_i (z+1)^{n-i}.
\end{align}
These $h$-vectors have been obtained by Athanasiadis and Tzanaki~\cite{athanasiadistzanaki2,athanasiadistzanaki1} and coincide with the Fu{\ss}--Narayana numbers of Armstrong~\cite[Chapter~5]{armstrong}. These results can be stated as follows:

\begin{Proposition}[Athanasiadis and Tzanaki~\cite{athanasiadistzanaki2,athanasiadistzanaki1}] \label{prop:hvectors}
Recalling the definition of Narayana numbers in~\eqref{def:narayana} and~\eqref{def:narayanap}, for $0\leq k\leq n$ we have
\[
 h_{n-k}= \operatorname{Nar}^{(m)}(W,k), \qquad
 h^+_{n-k}= \operatorname{Nar}_+^{(m)}(W,k).
\]
\end{Proposition}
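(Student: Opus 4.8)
The plan is to derive both identities from the description of the $f$-vector in terms of the $\gamma$'s, together with the $f$-$h$ transform~\eqref{eq:fh-transform}. The point is that the combinatorial matrix $\bN$, once summed over flats of a fixed dimension, reproduces exactly the binomial kernel of the $f$-$h$ transform, so no input beyond Proposition~\ref{prop:bijgammachi} and Proposition~\ref{lemm:bool} is needed; in particular this exhibits Proposition~\ref{prop:hvectors} as a consequence of the machinery of the previous sections (ultimately of Theorems~\ref{theo_park}--\ref{theo_fpark}).

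First I would combine~\eqref{eq:formula_fk} with~\eqref{EQ:gammakappa} to write
\[
 f_{k-1}
 =
 \sum_{X\in\Theta,\; \dim(X)=k} \gamma(W,X,m)
 =
 \sum_{Y\in\Theta} \Bigg( \sum_{X\in\Theta,\; \dim(X)=k} \bN_{X,Y} \Bigg)\, \kappa(W,Y,m).
\]
Then I evaluate the inner sum. Fixing $w\in\NC(W,c)$ with $\Fix(w)\sim Y$, it counts the $v\sqsubset w$ with $\dim\Fix(v)=k$, that is, with $\ell(v)=n-k$. By Proposition~\ref{lemm:bool} the set $\{v\in\NC(W,c)\colon v\sqsubset w\}$ is a Boolean lattice of order $2^{\ell(w)}$ on which $\sqsubset$ agrees with the absolute order; since the absolute order is graded by reflection length and the minimal element here is the identity, the rank function of this Boolean lattice is $v\mapsto\ell(v)$, so its number of rank-$(n-k)$ elements is $\binom{\ell(w)}{n-k}=\binom{n-\dim(Y)}{n-k}$. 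Substituting this, grouping the $Y$'s according to $\dim(Y)=n-\ell$, and using~\eqref{eq:nar_krew}, I obtain
\[
 f_{k-1}
 =
 \sum_{Y\in\Theta}\binom{n-\dim(Y)}{n-k}\kappa(W,Y,m)
 =
 \sum_{i=0}^n \binom{n-i}{n-k}\operatorname{Nar}^{(m)}(W,n-i).
\]

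On the other hand, reading off the coefficient of $z^{n-k}$ in~\eqref{eq:fh-transform} gives $f_{k-1}=\sum_{i=0}^n\binom{n-i}{n-k}h_i$. Subtracting, $\sum_{i=0}^n\binom{n-i}{n-k}\big(h_i-\operatorname{Nar}^{(m)}(W,n-i)\big)=0$ for all $0\le k\le n$; the matrix $\big(\binom{n-i}{n-k}\big)_{0\le k,i\le n}$ is lower unitriangular (its entry vanishes unless $i\le k$, and equals $1$ when $i=k$), hence invertible, so $h_i=\operatorname{Nar}^{(m)}(W,n-i)$ for all $i$, i.e.\ $h_{n-k}=\operatorname{Nar}^{(m)}(W,k)$. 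The positive statement follows by the identical computation, with~\eqref{eq:formula_fk} and~\eqref{EQ:gammakappa} replaced by their ``$+$'' versions, the relation $\operatorname{Nar}_+^{(m)}(W,k)=\sum_{X\in\Theta,\,\dim(X)=n-k}\kappa^+(W,X,m)$ in place of~\eqref{eq:nar_krew}, and the second relation in~\eqref{eq:fh-transform}.

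The only step that is not pure bookkeeping is the evaluation $\sum_{\dim(X)=k}\bN_{X,Y}=\binom{n-\dim(Y)}{n-k}$, which rests on the Boolean-lattice structure of $\{v\colon v\sqsubset w\}$ in Proposition~\ref{lemm:bool} and on its rank function being reflection length; I expect this already-available fact to be the crux, the rest being linear algebra over identities proved earlier. An equivalent route that sidesteps $\bN$ is to invoke the bijection of Proposition~\ref{prop:bijgammachi} directly: the faces of dimension $k-1$ correspond to chains $w_0\sqsubset w_1\le\dots\le w_m$ with $\ell(w_0)=n-k$, and for a fixed chain $w_1\le\dots\le w_m$ the number of admissible $w_0\sqsubset w_1$ is again $\binom{\ell(w_1)}{n-k}$ by the same Boolean-lattice count; this is perhaps the more transparent presentation.
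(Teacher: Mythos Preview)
Your proposal is correct and essentially matches the paper's argument. The paper's proof is precisely your ``equivalent route'' at the end: it uses the bijection of Proposition~\ref{prop:bijgammachi} to write $\sum_k f_{k-1} z^{n-k} = \sum_{w_0\sqsubset w_1\le\cdots\le w_m} z^{\ell(w_0)}$, collapses the sum over $w_0$ via the Boolean lattice (Proposition~\ref{lemm:bool}) and the binomial theorem to get $\sum_{w_1\le\cdots\le w_m} (1+z)^{\ell(w_1)}$, and reads off $h_{n-k}$ by comparing with~\eqref{eq:fh-transform}. Your main route through $\bN$ and the inversion of the binomial matrix is the same computation unpacked coefficient by coefficient; indeed the paper carries out exactly your sum $\sum_{\dim(X)=k}\bN_{X,Y}=\binom{n-\dim Y}{n-k}$ in the paragraph \emph{preceding} the proposition (there phrased as showing that~\eqref{EQ:gammakappa} refines the $f$-$h$ transform), so there is no new content in either direction.
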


Let us explain how the formulas in equations~\eqref{EQ:gammakappa} and~\eqref{EQ:gammakappa2} can be seen as a refinement of the $f$- to $h$-vector transformation. By summing~\eqref{EQ:gammakappa} over $X\in\Theta$ of a given dimension $k$, the left-hand side gives $f_{k-1}$ (see equation~\eqref{eq:formula_fk}). The sum in the right-hand side is
\begin{align*}
 \sum_{Y\in \Theta} \bigg( \sum_{\substack{ X\in \Theta, \\ \dim(X)=k}} \bN_{X,Y} \bigg) \kappa(W,Y,m)
 =
 \sum_{Y\in \Theta} \binom{n-\dim Y}{n-k} \kappa(W,Y,m)
 =
 \sum_{j=0}^k
 \binom{n-j}{n-k}
 h_j.
\end{align*}
Indeed, the sum between parentheses gives the binomial coefficient (this is clear from the combinatorial definition), then we use equation~\eqref{eq:nar_krew} and the previous proposition. The relation we obtain is the first in~\eqref{eq:fh-transform} (the second one is obtained similarly).

Let us also briefly show how the bijection from Section~\ref{sec:bij} gives the formulas above for $h_k$ and~$h^+_k$.

\begin{proof}[Proof of Proposition~\ref{prop:hvectors}]
Via the bijection from Section~\ref{sec:bij}, faces of $\Upsilon(W,m)$ of dimension $k-1$ are in bijection with chains $w_0 \sqsubset w_1 \leq \dots \leq w_m$ in $\NC(W,c)$ such that $\ell(w_0)=n-k$. So, we get
\[
 \sum_{k=0}^n f_{k-1} z^{n-k}
 =
 \sum_{ w_0 \sqsubset w_1 \leq \dots \leq w_m }
 z^{\ell(w_0)}.
\]
Since the order ideal containing elements below $w_1$ for $\sqsubset$ is a boolean lattice, the binomial theorem gives:
\[
 \sum_{k=0}^n f_{k-1} z^{n-k}
 =
 \sum_{ w_1 \leq \dots \leq w_m }
 (1+z)^{\ell(w_1)}.
\]
By comparing with~\eqref{eq:fh-transform}, we immediately get the combinatorial formula for $h_k$. The one for $h^+_k$ is obtained similarly.
\end{proof}

It is well known that a shelling of a simplicial complex can be used to find its $h$-vector. We refer to~\cite{wachs}. Explicitly, suppose we have a shelling of $\Upsilon(W,m)$, i.e., an indexing $F_1,F_2,\dots$ of its facets such that for each $j$ the (geometric) intersection
\begin{align} \label{def:shelling}
 \Bigg( \bigcup_{i=1}^{j-1} F_i \Bigg) \cap F_j
\end{align}
is purely $1$-codimensional in $F_j$. Then $h_k$ is the number of indices $j$ such that the intersection in~\eqref{def:shelling} is the union of $k$ facets of $F_j$. It would be very interesting to show that our equation~\eqref{EQ:gammakappa} can also be obtained via such a shelling of $\Upsilon(W,m)$, and similarly equation~\eqref{EQ:gammakappa2} from a shelling of $\Upsilon^+(W,m)$. A natural candidate for this is the shelling given by Stump, Thomas and Williams~\cite{stumpthomaswilliams}.

From a different perspective, Reiner, Shepler, and Sommers~\cite{reinersheplersommers} have constructed invariant-theoretic $q$-analogs of the Kirkman and Narayana numbers for the coincidental types. They have proven a $q$-version of the transformation between $f$- and $h$-vectors for these $q$-Kirkman and $q$-Narayana numbers. However, that works mostly in the level of formula manipulations. It is an interesting open problem to extend this, perhaps by finding a more conceptual construction, to all real reflection groups.

\section{A recursion for the refined enumeration of faces}
\label{sec:recfaces}

We give a recursion satisfied by the numbers $\gamma(W , X , m)$, in the spirit of Fomin and Reading's recursion for the $f$-vector of $\Upsilon(W,m)$. Our main tool is also the rotation $\mathcal{R}_m$, and the method relies on the following key result.

\begin{Proposition} \label{prop:invariance}
 For each face $f \in \Upsilon(W,m)$, $\underline{f}$ and $\underline{\mathcal{R}_m(f)}$ are conjugate in $W$. In other words, $\Upsilon(W,X,m)$ is stable under the action of $\mathcal{R}_m$.
\end{Proposition}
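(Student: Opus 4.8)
The plan is to track how $\mathcal{R}_m$ modifies both the reflections $t_\rho$ (for $\rho\in f$) and the reflection ordering $\prec$, and to show that the net effect on $\underline f=c_+\big(\prod f\big)c_-$ is a conjugation. The action of $\mathcal{R}_m$ on reflections is simple to record: a positive root $\alpha^i$ with $i<m$ is sent to $\alpha^{i+1}$ with the same reflection $t_\alpha$; a positive root $\alpha^m$ is sent to $\mathcal{R}(\alpha)^1$, whose reflection is $t_\alpha$ if $\alpha\in\Delta_-$ and $c\,t_\alpha\,c^{-1}$ otherwise; and a negative simple root $-\beta$ is sent to $\mathcal{R}(-\beta)^1$, whose reflection is $t_\beta$ if $\beta\in\Delta_+$ and $c_+t_\beta c_+=c\,t_\beta\,c^{-1}$ if $\beta\in\Delta_-$. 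A key and easy observation is that incrementing all colours by one preserves the relative $\prec$-order among the positive vertices of $f$ (this comparison depends only on the underlying index in the reflection ordering and on the colour, and both sides are shifted equally). Hence, if $f$ has no negative simple vertex and no colour-$m$ positive vertex, then $\mathcal{R}_m$ merely increments colours and leaves $\prod f$, hence $\underline f$, literally unchanged; so one may assume from now on that $f$ meets $-\Delta$ or contains a colour-$m$ positive root.

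\textbf{Main computation.} Sorting $\prod f$ with respect to $\prec$ and using the block structure of the indexing of $\Phi^{(m)}_{\geq-1}$, one obtains a normal form $\prod f=B\,P_1P_2\cdots P_m\,A$, where $B$ (resp.\ $A$) is a commuting sub-product of $c_+$ (resp.\ $c_-$) recording the negative simple vertices of $f$ lying in $-\Delta_+$ (resp.\ $-\Delta_-$), and $P_i$ is the colour-$i$ positive part of $f$ taken in the order induced by $\prec$. Absorbing $B$ and $A$ into $c_\pm$ gives $\underline f=\overline B\,P_1\cdots P_m\,\overline A$ with $\overline B,\overline A$ the complementary sub-products. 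Running $\mathcal{R}_m$ through this description and using the reflection formulas above, one gets a similar normal form for $\underline{\mathcal{R}_m(f)}$: the colour slices are relabelled $P_i\mapsto$ colour $i+1$ for $i<m$; the colour-$m$ slice together with all negative simple vertices re-enter at colour $1$ after the prescribed conjugations by $c$ (or as new negative simple vertices, in the boundary sub-case below); and the sub-products of $c_\pm$ are reshuffled accordingly. The resulting identity exhibits $\underline{\mathcal{R}_m(f)}$ as a cyclic rotation of the factors of $\underline f$ together with some conjugations by $c=c_+c_-$, $c_+$, $c_-$; since a cyclic rotation of a product is conjugate to the product and the $c$-conjugations are absorbed using $c_\pm^2=1$, we conclude $\underline{\mathcal{R}_m(f)}\sim\underline f$, which is exactly the claimed stability of $\Upsilon(W,X,m)$. (On a facet this degenerates to the trivial $\underline f=1=\underline{\mathcal{R}_m(f)}$, using Proposition~\ref{prop:tzanaki}.)

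\textbf{Main obstacle, and an alternative.} The delicate part is the order bookkeeping in the second step: one must verify carefully that colour-increment really preserves $\prec$ on the positive part, that the two pieces $-\Delta_+$ and $-\Delta_-$ of negative simples — which sit at opposite ends of the reflection ordering — are handled consistently with the action of $\mathcal{R}$ on them, and one must isolate the boundary sub-case where a colour-$m$ positive root $\alpha^m$ has $\mathcal{R}(\alpha)=c(\alpha)\in-\Delta_+$, so that $\mathcal{R}_m(\alpha^m)$ is a negative simple root rather than a colour-$1$ positive root and lands in a different block. A route that sidesteps part of this is to first prove the case $m=1$ directly from the Brady--Watt reflection-ordering description of faces and facets, and then lift to general $m$ via the colour-splitting $f\mapsto(f_1,\dots,f_m)$ used in the proof of Proposition~\ref{lemm:upsilonfusscat}, under which $\mathcal{R}_m$ becomes a cyclic shift of the $f_i$ with a single $\mathcal{R}$-twist in one coordinate; it then suffices to combine the $m=1$ statement with the elementary interaction of one $\mathcal{R}$-twist with the map $\underline{\,\cdot\,}$.
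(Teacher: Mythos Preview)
Your approach is essentially the same as the paper's: both decompose $\prod f$ into blocks $w_+\,w_1\cdots w_m\,w_-$ (your $B\,P_1\cdots P_m\,A$) according to the sign/colour structure of the reflection ordering, track how $\mathcal{R}_m$ permutes and conjugates these blocks, and conclude that $\underline f$ and $\underline{\mathcal{R}_m(f)}$ are conjugate. The paper is simply more explicit where you sketch: it refines the colour-$m$ block as $w_m=w_{m,3}\,w_{m,2}\,w_{m,1}$ (your ``boundary sub-case'' is exactly their $w_{m,2}$, the roots $\alpha$ with $c(\alpha)\in-\Delta_+$), writes down the transformed blocks $w'_\pm,w'_i$ directly, and produces the explicit conjugator $c_-(w_{m,2}w_{m,1}w_-)c_-$ relating $\underline{f'}$ to $\underline f$, rather than appealing to ``cyclic rotation plus conjugations''. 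Your alternative reduction to $m=1$ via colour-splitting is not pursued in the paper.
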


\begin{proof}
 Let $f = \{ \rho_1 \succ \dots \succ \rho_k \}$, so that $\prod f = t_{\rho_1} \cdots t_{\rho_k} \in\NC(W,c)$ and $\underline{f} = c_+ (\prod f) c_-$. There is a factorization $\prod f = w_+ w_1 \cdots w_m w_-$ obtained as follows: $w_+$ (respectively, $w_i$, $w_-$) is obtained from $t_{\rho_1} \cdots t_{\rho_k}$ by keeping the factors $t_{\rho_j}$ such that $\rho_j\in -\Delta_+$ (respectively, such that $\rho_j$ is a positive root with color $i$, such that $\rho_j\in -\Delta_-$). This is possible because the definition of these factors agrees with the total order $\prec$ on $\Phi^{(m)}_{\geq-1}$: the elements in $-\Delta_+$ come first in $\rho_1,\rho_2,\dots$, etc. Using similar notations, the noncrossing partition associated to the face $f' = \mathcal{R}_m(f)$ is denoted $ \prod f' = w'_+ w'_1 \cdots w'_m w'_- $.

 To find what are the factors of $\prod f'$, we need to refine the factorization of $\prod f$. Note that from Proposition~\ref{prop:reforder}, we get
\[
 \{ \alpha_{nh/2-n+s+1} , \dots , \alpha_{nh/2} \} = c^{-1}(-\Delta_+).
\]
We write
\[
 w_m = w_{m,3} w_{m,2} w_{m,1},
\]
where $w_{m,3}$ contains the factors $t_{\alpha^m}$ where $\alpha \in \Delta_-$, $w_{m,2}$ contains the factors $t_{\alpha^m}$ where $\alpha \in c^{-1}(-\Delta_+)$, and $w_{m,1}$ contains the other factors $t_{\alpha^m}$. As above, this is possible because this definition agrees with the ordering on $\Phi^{(m)}_{\geq-1}$.

By examining the action of $\mathcal{R}_m$, we can check
\begin{align*}
 &w'_+ = c w_{m,2} c^{-1}, \\
 &w'_1= \big(c w_{m,1} c^{-1} \big) \big(c w_- c^{-1}\big) w_+, \\
 &w'_i= w_{i-1} \text{ for } 2\leq i \leq m,\\
 &w'_-= w_{m,3}.
\end{align*}
For example, $w'_- = w_{m,3}$ comes from the case $\mathcal{R}_m(\alpha^m) = -\alpha$ if $\alpha \in \Delta_-$.
Gathering the factors gives
\begin{align*}
 \prod f' &= \big( c w_{m,2} c^{-1} \big) \big( c w_{m,1} c^{-1} \cdot c w_- c^{-1} \cdot w_+ \big) w_1 \cdots w_{m-1} ( w_{m,3} ) \\
 &= \big( c w_{m,2} w_{m,1} w_- c^{-1} \cdot w_+ ) w_1 \cdots w_{m-1} w_{m,3} \\
 &= \big( c w_{m,2} w_{m,1} w_- c^{-1} \big)
 \Big( \prod f \Big)
 (w_{m,2} w_{m,1} w_-)^{-1}.
\end{align*}
Using $c=c_+c_-$ and the definition of $\underline{f}$, it follows
\[
 \underline{f'}
 =
 c_- (w_{m,2} w_{m,1} w_-) c_-
 \underline{f}
 c_- (w_{m,2} w_{m,1} w_-)^{-1} c_-.
\]
So $\underline{f'}$ and $\underline{f}$ are conjugate.
\end{proof}

{\samepage
\begin{Proposition} %\label{prop:clus_reduc}
 If $W = W_1 \times W_2$ and $X = X_1 \times X_2 $, we have
\begin{align} \label{eq:gammareducible}
 \gamma(W, X, m )
 =
 \gamma(W_1, X_1, m ) \cdot \gamma(W_2, X_2, m ).
\end{align}
\end{Proposition}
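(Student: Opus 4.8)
The plan is to exploit the definition of $\Upsilon(W,m)$ in the reducible case (Section~\ref{sec:defclus}): it is the \emph{join} of the generalized cluster complexes of the irreducible factors, so that $\Upsilon(W_1\times W_2,m)=\Upsilon(W_1,m)\star\Upsilon(W_2,m)$. Hence every face $f\in\Upsilon(W,m)$ decomposes uniquely as a disjoint union $f=f_1\uplus f_2$ with $f_i\in\Upsilon(W_i,m)$, and I would show that this decomposition is compatible both with the map $f\mapsto\Fix(\underline f)$ and with the notion of $W$-orbit of a flat.

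First I would record how $\underline f$ decomposes. Write $c=c_1c_2$ with $c_i$ the bipartite Coxeter element of $W_i$, so that $c_\pm=c_{1,\pm}c_{2,\pm}$. Since reflections coming from $W_1$ commute with those coming from $W_2$ (each $W_i$ acts trivially on the geometric representation of $W_{3-i}$), $\prod f$ equals the product $w_1w_2$ where $w_i:=\prod f_i\in W_i$; hence
\[
 \underline f=c_+\,w_1w_2\,c_-=\bigl(c_{1,+}\,w_1\,c_{1,-}\bigr)\bigl(c_{2,+}\,w_2\,c_{2,-}\bigr)=\underline{f_1}\,\underline{f_2}.
\]
Under the identifications $V=V_1\oplus V_2$ and $L(W)\simeq L(W_1)\times L(W_2)$, this reads $\Fix(\underline f)=\Fix(\underline{f_1})\times\Fix(\underline{f_2})$.

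Next I would observe that the $W$-action on $L(W)$ is the product of the $W_i$-actions on the $L(W_i)$, so two flats $Z_1\times Z_2$ and $X_1\times X_2$ lie in the same $W$-orbit if and only if $Z_i\sim X_i$ in $L(W_i)$ for $i=1,2$. Combined with the previous display, $\Fix(\underline f)\sim X=X_1\times X_2$ holds if and only if $\Fix(\underline{f_i})\sim X_i$ for $i=1,2$, i.e.\ $f_i\in\Upsilon(W_i,X_i,m)$. Therefore $f\mapsto(f_1,f_2)$ restricts to a bijection between $\Upsilon(W,X,m)$ and $\Upsilon(W_1,X_1,m)\times\Upsilon(W_2,X_2,m)$, and taking cardinalities gives~\eqref{eq:gammareducible}.

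I do not expect a genuine obstacle; the only point that needs a little care is the claim $\prod f=w_1w_2$, i.e.\ that the total order $\prec$ underlying the definition of $\prod f$ restricts, block by block, to the orders defining $\prod f_1$ and $\prod f_2$. This is immediate since roots in different irreducible factors give commuting reflections, so any interleaving of the two blocks produces the same element of $W$. If one prefers to avoid $\prec$ in the reducible setting altogether, the statement also follows by combining the combinatorial reciprocities of Theorem~\ref{reciprocity} with the multiplicativity of $\kappa$ and $\kappa^+$ over direct products --- itself a consequence of $\NC(W_1\times W_2,c_1c_2)\simeq\NC(W_1,c_1)\times\NC(W_2,c_2)$, equation~\eqref{eq:kappacat}, and the multiplicativity of the Fu{\ss}--Catalan numbers recalled in Section~\ref{sec:background}.
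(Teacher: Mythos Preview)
Your argument is correct and is exactly the paper's approach, just written out in full: the paper's proof is the single sentence ``This follows from $\Upsilon(W, X, m)$ being the join of $\Upsilon(W_1, X_1, m)$ and $\Upsilon(W_2, X_2, m)$,'' and you have supplied the verification that the join decomposition of $\Upsilon(W,m)$ really does refine to a join decomposition of $\Upsilon(W,X,m)$, via $\underline{f}=\underline{f_1}\,\underline{f_2}$. Your remark that commuting reflections from the two factors make the choice of interleaving in $\prod f$ irrelevant is exactly the right way to dispose of the only subtlety.
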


\begin{proof}
This follows from $\Upsilon(W, X, m)$ being the join of $\Upsilon(W_1, X_1, m)$ and $\Upsilon(W_2, X_2, m)$.
\end{proof}}

To state the next proposition, we extend the definition of $\gamma(W, X, m)$ in the same way as we did for $\kappa$ and $\kappa^+$ in Section~\ref{sec:reciprocities}. We use parabolic conjugacy classes rather than flats, and if $\mathcal{Y} = \biguplus_{i=1}^j \mathcal{X}_i$ is a disjoint union of parabolic conjugacy classes, we write
\begin{align} \label{eq:gamma_uplus}
 \gamma(W, \mathcal{Y}, m)
 =
 \sum_{i=1}^j \gamma(W, \mathcal{X}_i, m).
\end{align}

\begin{Proposition} %\label{recfaces}
Assume that $W$ is irreducible and let $h$ be its Coxeter number. We have
\begin{equation}\label{eqrecfaces}
 \gamma(W , \mathcal{X}, m) = \frac{mh+2}{2k} \sum_{s\in S} \gamma( W_{(s)} , \mathcal{X}\cap W_{(s)} , m ).
\end{equation}
\end{Proposition}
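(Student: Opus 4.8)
The plan is to establish \eqref{eqrecfaces} by double-counting the incidence set
\[
 \mathcal{F}:=\big\{(f,v)\colon f\in\Upsilon(W,\mathcal{X},m),\ v\in f\cap(-\Delta)\big\}
\]
between a face of $\Upsilon(W,\mathcal{X},m)$ and a negative simple root occurring as one of its vertices. Write $k=\dim(X)$. One first records that every face of $\Upsilon(W,\mathcal{X},m)$ has exactly $k$ vertices --- this is the dimension bookkeeping behind the bijection of Section~\ref{sec:bij}, and can also be seen directly since $\ell(\prod f)=\#f$ while $w\mapsto c_+wc_-$ reverses the rank of the graded lattice $\NC(W,c)$, so $\ell(\underline f)=n-\#f$ and $\underline f\in\mathcal{X}$ forces $\#f=k$. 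In particular \eqref{eqrecfaces} is only asserted for $k\ge1$, as an identity of polynomials in $m$.

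\textit{The parabolic side.} Fix $s\in S$ and set $v=-\delta_s$. Partition the faces $f\in\Upsilon(W,\mathcal{X},m)$ with $-\delta_s\in f$ according to the value $f\cap(-\Delta)=\{-\delta_i\colon i\in S\setminus I\}$, which forces $s\notin I$, i.e.\ $I\subseteq S\setminus\{s\}$. By the bijection $f\mapsto f^+:=f\cap\Phi^{(m)}_+$ and the identity $\underline f=\underline{f^+}$ from the proof of Proposition~\ref{prop:inclexcl}, the class indexed by $I$ is in bijection with $\{f^+\in\Upsilon^+(W_I,m)\colon\underline{f^+}\in\mathcal{X}\cap W_I\}$, of cardinality $\gamma^+(W_I,\mathcal{X}\cap W_I,m)$. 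Hence $\#\{f\in\Upsilon(W,\mathcal{X},m)\colon-\delta_s\in f\}=\sum_{I\subseteq S\setminus\{s\}}\gamma^+(W_I,\mathcal{X}\cap W_I,m)$, which by \eqref{IE3} applied to the parabolic subgroup $W_{(s)}=W_{S\setminus\{s\}}$ (with its bipartite Coxeter element, and using $(\mathcal{X}\cap W_{(s)})\cap W_I=\mathcal{X}\cap W_I$ for $I\subseteq S\setminus\{s\}$) equals $\gamma(W_{(s)},\mathcal{X}\cap W_{(s)},m)$. Summing over $s$ yields $\#\mathcal{F}=\sum_{s\in S}\gamma(W_{(s)},\mathcal{X}\cap W_{(s)},m)$.

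\textit{The rotation side.} The cyclic group $G=\langle\mathcal{R}_m\rangle$ acts on the flags $\{(f,v)\colon f\in\Upsilon(W,m),\,v\in f\}$, and by Proposition~\ref{prop:invariance} the subset $\mathcal{G}:=\{(f,v)\colon f\in\Upsilon(W,\mathcal{X},m),\,v\in f\}$ is $G$-stable. I will show that in each $G$-orbit $\mathcal{O}'\subseteq\mathcal{G}$ exactly a fraction $\tfrac{2}{mh+2}$ of the flags have their vertex in $-\Delta$. For $\mathcal{O}'=G\cdot(f_0,v_0)$, a stabilizer computation in the cyclic group $G$ gives $|\mathcal{O}'|=\operatorname{lcm}(P,q)$ with $P=|G\cdot f_0|$ and $q=|G\cdot v_0|$, and makes the projection $\mathcal{O}'\to G\cdot v_0$ uniformly $\tfrac{\operatorname{lcm}(P,q)}{q}$-to-one. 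By Proposition~\ref{prop:mrotationorbits} one has $\big(q,\ \#(G\cdot v_0\cap(-\Delta))\big)\in\big\{(\tfrac{mh+2}{2},1),\,(mh+2,2)\big\}$, so $\#(G\cdot v_0\cap(-\Delta))=\tfrac{2q}{mh+2}$ in both cases; pulling back along the uniform covering gives $\#\{(f,v)\in\mathcal{O}'\colon v\in-\Delta\}=\tfrac{2q}{mh+2}\cdot\tfrac{\operatorname{lcm}(P,q)}{q}=\tfrac{2}{mh+2}|\mathcal{O}'|$. Summing over all $G$-orbits in $\mathcal{G}$, and using that each face of $\Upsilon(W,\mathcal{X},m)$ has exactly $k$ vertices so that $\#\mathcal{G}=k\,\gamma(W,\mathcal{X},m)$, this gives $\#\mathcal{F}=\tfrac{2k}{mh+2}\,\gamma(W,\mathcal{X},m)$.

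Equating the two evaluations of $\#\mathcal{F}$ and dividing by $\tfrac{2k}{mh+2}$ (legitimate since $k\ge1$) gives \eqref{eqrecfaces}. I expect the main obstacle to be the orbit bookkeeping on the rotation side: one needs the $\tfrac{2}{mh+2}$-density of negative simple roots to be the \emph{same} for both possible sizes of an $\mathcal{R}_m$-orbit of vertices (exactly what Proposition~\ref{prop:mrotationorbits} provides), and one must transport this density from vertex-orbits to flag-orbits along a \emph{uniform} covering map, for which the cyclicity of $G$ is the essential point. The rest --- the identity $\underline f=\underline{f^+}$, the compatibility of $\mathcal{X}\cap W_{(s)}$ with restriction to a parabolic, and the dimension matching --- is routine and is already packaged into Proposition~\ref{prop:inclexcl}, \eqref{IE3}, and Section~\ref{sec:bij}.
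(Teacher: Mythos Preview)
Your argument is correct and follows essentially the same double-counting strategy as the paper: count pointed faces $(f,v)$ with $f\in\Upsilon(W,\mathcal{X},m)$, once via the link at a negative simple root and once via the $\mathcal{R}_m$-action together with Proposition~\ref{prop:mrotationorbits}. The execution differs in two places. On the parabolic side, the paper simply asserts $\#\{f:-\delta_s\in f\}=\gamma(W_{(s)},\mathcal{X}\cap W_{(s)},m)$ by the link isomorphism, whereas you rederive this identity from the decomposition in the proof of Proposition~\ref{prop:inclexcl} and then reassemble via~\eqref{IE3}; both reach the same endpoint. On the rotation side, the paper avoids your $\operatorname{lcm}$ computation by observing directly that $\#\{f:\rho\in f\}$ is constant on each $\mathcal{R}_m$-orbit of vertices (since $\mathcal{R}_m$ gives a bijection), then multiplying by the orbit size; your flag-orbit argument is a correct but more elaborate packaging of the same constancy.
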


\begin{proof}
Following the idea used in~\cite{fominreading} to count facets of $\Upsilon(W,m)$, we consider {\it pointed faces} and let
\[
 \Upsilon^\bullet(W, X, m)
 :=
 \big\{
 (f,\rho) \colon f\in \Upsilon(W, X, m),\;
 \rho \in f
 \big\}.
\]
The result comes from double counting. First, we clearly have
\[
 \# \Upsilon^\bullet(W, \mathcal{X}, m)
 =
 k \cdot \# \Upsilon(W, \mathcal{X}, m)
 =
 k\cdot \gamma(W, \mathcal{X}, m),
\]
as each face $f\in \Upsilon(W, \mathcal{X}, m)$ contains $k$ vertices.

Second, we consider the action of $\mathcal{R}_m$ on $\Upsilon^\bullet(W, \mathcal{X}, m)$ by $\mathcal{R}_m\big( (f,\rho) \big) = \big(\mathcal{R}_m(f),\mathcal{R}_m(\rho)\big)$. Let $\omega \subset \Phi_{\geq -1}^{(m)}$ be an orbit for the action of $\mathcal{R}_m$. We will show that
\begin{align} \label{equation_to_prove}
 \#\big\{ (f,\rho) \in \Upsilon^\bullet(W, \mathcal{X}, m) \colon \rho \in \omega \big\}
 =
 \frac{mh+2}{2}
 \sum_{ \alpha \in (-\Delta) \cap \omega }\gamma( W_{(t_\alpha)} , \mathcal{X}\cap W_{(t_\alpha)}, m ).
\end{align}
By summing over the orbits $\omega$, it will follow that $\# \Upsilon^\bullet(W, \mathcal{X}, m)$ is the right hand side of~\eqref{eqrecfaces} multiplied by $k$ (since the union of the sets $(-\Delta) \cap \omega$ is $-\Delta$). So it remains only to prove~\eqref{equation_to_prove}.

First observe that the cardinality in the left-hand side of~\eqref{equation_to_prove} actually doesn't depend on~$\rho$. Indeed, the rotation $\mathcal{R}_m$ immediately gives a bijection between the set associated to $\rho$ and that associated to $\mathcal{R}_m(\rho)$.

Consider the first case of Proposition~\ref{prop:mrotationorbits}. So, $\#\omega =\frac{mh+2}{2}$ and $\omega$ contains one element of~$-\Delta$, that we denote by $-\delta_i$. By the previous observation, we get
\begin{align*}
 \#\big\{ (f,\rho) \in \Upsilon^\bullet(W, \mathcal{X}, m) \colon \rho \in \omega \big\}
 =
 \frac{mh+2}{2}
 \#\big\{ f \in \Upsilon(W, \mathcal{X}, m) \colon {-}\delta_i \in f \big\}.
\end{align*}
The cardinality in the right-hand side is also $\gamma\big(W_{(s_i)}, \mathcal{X} \cap W_{(s_i)} ,m \big)$. We thus get the term indexed by $s_i$ in~\eqref{eqrecfaces} (up to the factor $k$).

Now, consider the second case of Proposition~\ref{prop:mrotationorbits}. So, $\#\omega =mh+2$ and $\omega$ contains two elements of $-\Delta$, that we denote by $-\delta_i$ and $-\delta_j$. In this case, we get
\begin{align*}
 \#\big\{ (f,\rho) \in \Upsilon^\bullet(W, \mathcal{X}, m) \colon \rho \in \omega \big\}
 =
 (mh+2) \cdot
 \#\big\{ f \in \Upsilon(W, \mathcal{X}, m) \colon {-}\delta_i \in f \big\}.
\end{align*}
Here, we get the two terms indexed by $s_i$ and $s_j$ in~\eqref{eqrecfaces} (up to the factor $k$). Indeed these two terms are equal (since $s_i$ and $s_j$ are conjugate), and combine to give one of the two terms multiplied by~$mh+2$.
\end{proof}

To finish this section, note that equations~\eqref{eq:gammareducible},~\eqref{eq:gamma_uplus}, and~\eqref{eqrecfaces} give an inductive procedure to compute $\gamma(W, X, m)$ as follows:
\begin{itemize}\itemsep=0pt
 \item if $\mathcal{X}$ is the conjugacy class of the Coxeter element, we have $\gamma(W, \mathcal{X},m) = 1$ (initial case),
 \item if $W$ is reducible, use~\eqref{eq:gammareducible},
 \item if $W$ is irreducible, use~\eqref{eqrecfaces}, then use~\eqref{eq:gamma_uplus} on each summand (in particular $\gamma\big(W_{(s)} , \mathcal{X}\cap W_{(s)}, m \big) = 0$ if $\mathcal{X}\cap W_{(s)}=\varnothing$),
 \item repeat the previous two steps, until each term can be treated by the initial case.
\end{itemize}

Also, it is interesting to make explicit what is the intersection $\mathcal{X}\cap W_{(s)}$ in the various cases of the finite type classification. The common situation is that $\mathcal{X}\cap W_{(s)}$ is itself a single conjugacy class of $W_{(s)}$. This comes from the fact that there is often a unique conjugacy class $\mathcal{X}$ of $W_{(s)}$ of a given Coxeter type (see tables in~\cite[Appendix~C]{orliksolomon}), as all elements in $\mathcal{X} \cap W_{(s)}$ have the same Coxeter type. The exceptions are given explicitly as follows.

First, let $W$ be of type $D_n$, where $n>4$ is odd. Assume that $\mathcal{X}$ contains elements of type~$A_{n-2}$, and $W_{(s)}$ is the standard parabolic subgroup of type $D_{n-1}$. There is a unique order~2 automorphism of the diagram of type $D_{n-1}$ (unless $n=5$, we leave details to the reader), and it defines an outer automorphism of $W_{(s)}$. Then $\mathcal{X}\cap W_{(s)}$ is the union of two conjugacy classes, which are image of each other under this outer automorphism.

In the case of $E_8$, there is a unique conjugacy class of type $A_5$, but the intersection with the parabolic subgroup of type $E_7$ is the union of two conjugacy classes. These are given the five-element subsets of the Dynkin diagram of $E_7$ as follows:
\[
 \begin{tikzpicture}[scale=0.7]
 \draw[fill=black] (0,0) circle (0.1cm);
 \draw[fill=black] (1,0) circle (0.1cm);
 \draw[fill=black] (2,0) circle (0.1cm);
 \draw[fill=black] (3,0) circle (0.1cm);
 \draw[fill=black] (4,0) circle (0.1cm);
 \draw[fill=black] (5,0) circle (0.1cm);
 \draw[fill=black] (2,1) circle (0.1cm);
 \draw (0,0) -- (1,0);
 \draw (1,0) -- (2,0);
 \draw (2,0) -- (3,0);
 \draw (3,0) -- (4,0);
 \draw (4,0) -- (5,0);
 \draw (2,0) -- (2,1);
 \draw (2,1) circle (0.2cm);
 \draw (2,0) circle (0.2cm);
 \draw (5,0) circle (0.2cm);
 \draw (4,0) circle (0.2cm);
 \draw (3,0) circle (0.2cm);
 \end{tikzpicture} \; ,\qquad
 \begin{tikzpicture}[scale=0.7]
 \draw[fill=black] (0,0) circle (0.1cm);
 \draw[fill=black] (1,0) circle (0.1cm);
 \draw[fill=black] (2,0) circle (0.1cm);
 \draw[fill=black] (3,0) circle (0.1cm);
 \draw[fill=black] (4,0) circle (0.1cm);
 \draw[fill=black] (5,0) circle (0.1cm);
 \draw[fill=black] (2,1) circle (0.1cm);
 \draw (0,0) -- (1,0);
 \draw (1,0) -- (2,0);
 \draw (2,0) -- (3,0);
 \draw (3,0) -- (4,0);
 \draw (4,0) -- (5,0);
 \draw (2,0) -- (2,1);
 \draw (1,0) circle (0.2cm);
 \draw (2,0) circle (0.2cm);
 \draw (5,0) circle (0.2cm);
 \draw (4,0) circle (0.2cm);
 \draw (3,0) circle (0.2cm);
 \end{tikzpicture} \; .
\]
The same phenomenon appears when $\mathcal{X}$ has type $A_1 \times A_3$, and then the two conjugacy classes in $E_7$ are given by
\[
 \begin{tikzpicture}[scale=0.7]
 \draw[fill=black] (0,0) circle (0.1cm);
 \draw[fill=black] (1,0) circle (0.1cm);
 \draw[fill=black] (2,0) circle (0.1cm);
 \draw[fill=black] (3,0) circle (0.1cm);
 \draw[fill=black] (4,0) circle (0.1cm);
 \draw[fill=black] (5,0) circle (0.1cm);
 \draw[fill=black] (2,1) circle (0.1cm);
 \draw (0,0) -- (1,0);
 \draw (1,0) -- (2,0);
 \draw (2,0) -- (3,0);
 \draw (3,0) -- (4,0);
 \draw (4,0) -- (5,0);
 \draw (2,0) -- (2,1);
 \draw (2,1) circle (0.2cm);
 \draw (5,0) circle (0.2cm);
 \draw (4,0) circle (0.2cm);
 \draw (3,0) circle (0.2cm);
 \end{tikzpicture} \; ,\qquad
 \begin{tikzpicture}[scale=0.7]
 \draw[fill=black] (0,0) circle (0.1cm);
 \draw[fill=black] (1,0) circle (0.1cm);
 \draw[fill=black] (2,0) circle (0.1cm);
 \draw[fill=black] (3,0) circle (0.1cm);
 \draw[fill=black] (4,0) circle (0.1cm);
 \draw[fill=black] (5,0) circle (0.1cm);
 \draw[fill=black] (2,1) circle (0.1cm);
 \draw (0,0) -- (1,0);
 \draw (1,0) -- (2,0);
 \draw (2,0) -- (3,0);
 \draw (3,0) -- (4,0);
 \draw (4,0) -- (5,0);
 \draw (2,0) -- (2,1);
 \draw (1,0) circle (0.2cm);
 \draw (5,0) circle (0.2cm);
 \draw (4,0) circle (0.2cm);
 \draw (3,0) circle (0.2cm);
 \end{tikzpicture} \; .
\]
Finally, this also occurs when $\mathcal{X}$ has type $A_1^3$, then the two conjugacy classes in $E_7$ are given by
\[
 \begin{tikzpicture}[scale=0.7]
 \draw[fill=black] (0,0) circle (0.1cm);
 \draw[fill=black] (1,0) circle (0.1cm);
 \draw[fill=black] (2,0) circle (0.1cm);
 \draw[fill=black] (3,0) circle (0.1cm);
 \draw[fill=black] (4,0) circle (0.1cm);
 \draw[fill=black] (5,0) circle (0.1cm);
 \draw[fill=black] (2,1) circle (0.1cm);
 \draw (0,0) -- (1,0);
 \draw (1,0) -- (2,0);
 \draw (2,0) -- (3,0);
 \draw (3,0) -- (4,0);
 \draw (4,0) -- (5,0);
 \draw (2,0) -- (2,1);
 \draw (2,1) circle (0.2cm);
 \draw (5,0) circle (0.2cm);
 \draw (3,0) circle (0.2cm);
 \end{tikzpicture} \; ,\qquad
 \begin{tikzpicture}[scale=0.7]
 \draw[fill=black] (0,0) circle (0.1cm);
 \draw[fill=black] (1,0) circle (0.1cm);
 \draw[fill=black] (2,0) circle (0.1cm);
 \draw[fill=black] (3,0) circle (0.1cm);
 \draw[fill=black] (4,0) circle (0.1cm);
 \draw[fill=black] (5,0) circle (0.1cm);
 \draw[fill=black] (2,1) circle (0.1cm);
 \draw (0,0) -- (1,0);
 \draw (1,0) -- (2,0);
 \draw (2,0) -- (3,0);
 \draw (3,0) -- (4,0);
 \draw (4,0) -- (5,0);
 \draw (2,0) -- (2,1);
 \draw (1,0) circle (0.2cm);
 \draw (5,0) circle (0.2cm);
 \draw (3,0) circle (0.2cm);
 \end{tikzpicture} \; .
\]

\section{The recursion for counting factorizations}
\label{sec:factor}

We consider a class of minimal factorizations of the Coxeter element and its refined enumeration via a $q$-statistic defined in terms of the orders $\sqsubset$ and $\ll$. This kind of refined enumeration was first considered in~\cite{josuatverges}, and reinterpreted in~\cite{bianejosuat}. In these references, we deal with reflection factorizations of the Coxeter element.

Here we consider more general factorizations where the first factor is in a given parabolic conjugacy class $\mathcal{X}$. They were considered by the first author in~\cite{douvropoulos}. As for the refined enumeration, the obtained polynomial $\phi(W,\mathcal{X},q)$ is shown to satisfy a recursion which is equivalent to the one obtained in Section~\ref{sec:recfaces} for $\gamma(W, X, m)$. Consequently, we get a formula for $\phi(W,\mathcal{X},q)$ in terms of the Orlik--Solomon exponents from the formula for $\gamma$ in Corollary~\ref{formulas_gammagammaplus}.

\begin{Definition}
Let $\mathcal{X}$ be a parabolic conjugacy class, and $k$ such that the elements of $\mathcal{X}$ have reflection length $n-k$. We define $\mathfrak{F}(W,\mathcal{X},c)$ as the set of length-additive factorizations $c=wt_1\cdots t_k$ where $w\in\mathcal{X}$ and $t_1,\dots,t_k \in T$. We define a statistic $\nil$ on this set by
\[
 \nil( w t_1 \cdots t_k )
 :=
 \# \big\{ i \in \{1,\dots,k\} \colon wt_1\cdots t_{i-1} \ll wt_1\cdots t_i \big\},
\]
and the associated generating function:
\[
 \phi(W,\mathcal{X},q)
 :=
 \sum_{ w t_1 \cdots t_k \in \mathfrak{F}(W , \mathcal{X} , c)}
 q^{\nil(wt_1\cdots t_k)}.
\]
\end{Definition}

Let us comment on this definition. First, note that the factorizations above are {\it length-additive}, i.e., $\ell(c) = \ell(w) + \ell(t_1) +\dots + \ell(t_k)$. Also, it is not {\it a priori} obvious from the definition that the generating function doesn't depend on the Coxeter element. This property will be apparent from the recursion given below (in particular it is part of the induction hypothesis). We anticipate this, omitting dependence on $c$ in the notation.

Since the factorizations are minimal, the two elements $wt_1\cdots t_{i-1}$ and $wt_1\cdots t_i$ in the definition of the $\nil$ statistic form a cover relation in $\NC(W,c)$. As these two elements differ by multiplying with a reflection, they are comparable in the {\it Bruhat order}, denoted $\leq_B$. We have
\begin{align*}
& wt_1\cdots t_{i-1} \ll wt_1\cdots t_i
 \quad\Longleftrightarrow\quad
 wt_1\cdots t_{i-1} \geq_B wt_1\cdots t_i, \\
& wt_1\cdots t_{i-1} \sqsubset wt_1\cdots t_i
\quad\Longleftrightarrow\quad
 wt_1\cdots t_{i-1} \leq_B wt_1\cdots t_i.
\end{align*}
Though it is interesting to make the connection with the order $\ll$ used through this work, here it will be adequate to think in terms of the Bruhat order and write
\[
 \nil( w t_1 \cdots t_k )
 =
 \# \big\{ i \in \{1,\dots,k\} \colon wt_1\cdots t_{i-1} \geq_B wt_1\cdots t_i \big\}.
\]

Let's recall some facts about {\it inversions}, which are closely connected to the Bruhat order. Recall from Section~\ref{sec:defclus} that $\Phi_+$ (respectively, $\Phi_-$) is the set of positive roots (respectively, negative roots). We call
\begin{itemize}\itemsep=0pt
 \item $\alpha\in\Phi_+$ an {\it inversion} of $w\in W$ if $w(\alpha)\in\Phi_-$,
 \item $t\in T$ a {\it right inversion} of $w\in W$ if $wt$ has smaller Coxeter length than $w$.
\end{itemize}
These two notions are related: $\alpha\in \Phi_+$ is an inversion of $w$ if and only if $t_\alpha$ is a right inversion of $w$.

\begin{Lemma} \label{inv_rem1}
There is a bijection between right inversions of $c$ and $S$. It sends $t$ to the unique $s\in S$ such that $ct \in W_{(s)}$.
\end{Lemma}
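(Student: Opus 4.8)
The plan is to make everything explicit through the reduced word $c = s_1 \cdots s_n$. Since a product of pairwise distinct simple reflections is a reduced expression, $c$ has Coxeter length $n$, hence exactly $n$ right inversions. By the strong exchange condition, the right inversions of $c = s_1 \cdots s_n$ are precisely the $n$ reflections
\[
 t_j := (s_{j+1}\cdots s_n)^{-1}\, s_j\, (s_{j+1}\cdots s_n), \qquad 1 \le j \le n,
\]
and a direct cancellation gives $c\, t_j = s_1 \cdots s_{j-1} s_{j+1} \cdots s_n$, i.e.\ the word $c$ with the letter $s_j$ erased. First I would record this, noting in particular that the $t_j$ are pairwise distinct because the elements $c\,t_j$ have pairwise distinct supports $S \setminus \{s_j\}$ (so they are indeed $n$ different right inversions, hence \emph{all} of them).

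Next I would identify, for each $j$, the simple reflection attached to $t_j$. The element $c\,t_j = s_1 \cdots s_{j-1} s_{j+1}\cdots s_n$ is a product in which every element of $S \setminus \{s_j\}$ occurs exactly once; hence it lies in the standard parabolic subgroup $W_{(s_j)}$ generated by $S \setminus \{s_j\}$, and is in fact a standard Coxeter element of $W_{(s_j)}$, so $\ell(c\,t_j) = n-1$. For uniqueness: if $k \ne j$, then $W_{(s_j)} \cap W_{(s_k)} = W_{S \setminus \{s_j,s_k\}}$ has rank $n-2$, so it contains no element of reflection length $n-1$; therefore $c\,t_j \notin W_{(s_k)}$, and $s_j$ is the unique simple reflection $s$ with $c\,t_j \in W_{(s)}$. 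Consequently the rule $t \mapsto s$ of the statement sends $t_j$ to $s_j$, and since $j \mapsto t_j$ enumerates the right inversions of $c$ while $j \mapsto s_j$ enumerates $S$, this rule is a bijection.

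I do not anticipate a genuine obstacle; the only two points needing a word of justification are (i) that $s_1 \cdots s_n$ is a reduced word — a standard fact, provable by induction on $n$ using that a simple root $\alpha_k$ with $s_k \notin W_I$ is never an inversion of an element of $W_I$ — and (ii) the elementary verification that $c\,t_j$ equals $c$ with $s_j$ deleted, which then makes it a Coxeter element of $W_{(s_j)}$ and pins down its reflection length as $n-1$. Everything else is bookkeeping with standard parabolic subgroups.
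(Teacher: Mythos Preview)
Your proof is correct and follows essentially the same approach as the paper: both arguments use the reduced expression $c=s_1\cdots s_n$, list the $n$ right inversions $t_j$ via the standard formula, and observe that $ct_j$ is the product $s_1\cdots \widehat{s_j}\cdots s_n$, which is a Coxeter element of $W_{(s_j)}$. The paper's version is terser (it calls the inversion description ``a well-known fact'' and says ``the result follows straightforwardly''), whereas you spell out the uniqueness of $s_j$ via a rank/reflection-length argument; this extra care is fine but not a different method.
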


\begin{proof}
A reduced expression of $c$ is $c=s_1\cdots s_n$. By a well-known fact, it follows that $c$ has $n$ right inversions $t_1,\dots t_n$ explicitly given by
\[
 c t_i = (s_1 \cdots s_{i-1}) \cdot (s_{i+1} \cdots s_n).
\]
The result follows straightforwardly.
\end{proof}

\begin{Lemma} \label{lemm:orbits_roots}
 Assume that $W$ is irreducible, and let $h$ be its Coxeter number. Each orbit for the action of $c$ on $\Phi$ has cardinality $h$ and contains exactly one inversion of $c$.
\end{Lemma}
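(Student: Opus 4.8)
The plan is to establish both assertions at once by a short counting argument, the two external inputs being the classical facts that $|\Phi| = nh$ and that the order of $c$ is $h$ (equivalently $\Fix(c) = \{0\}$, since for irreducible $W$ the eigenvalues of $c$ are $\mathrm{e}^{2\pi\sqrt{-1}\,e_j/h}$ with $1 \le e_j \le h-1$; see~\cite{humphreys}). First I would record the key normalization: for any orbit $O \subset \Phi$ of the cyclic group $\langle c\rangle$, the vector $\sum_{\gamma \in O} \gamma$ is fixed by $c$ (as $c$ permutes $O$), hence is $0$. Since a nonempty set of roots all of the same sign has a strictly positive (resp.\ negative) expansion in the simple roots, and in particular a nonzero sum, this forces every $\langle c\rangle$-orbit to meet both $\Phi_+$ and $\Phi_-$.

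Next I would use this to locate inversions. Traversing an orbit $O$ cyclically as $\gamma, c\gamma, c^2\gamma, \dots$, the fact that both signs occur means there is at least one place where a positive root is sent by $c$ to a negative root; by definition such a root is an inversion of $c$. Thus each orbit contains at least one inversion of $c$, and more precisely the inversions of $c$ contained in $O$ are exactly the positive roots $\gamma \in O$ with $c(\gamma) \in \Phi_-$. Since distinct orbits are disjoint, the number $r$ of $\langle c\rangle$-orbits on $\Phi$ is at most the total number of inversions of $c$, which is $\#\{\alpha \in \Phi_+ \colon c(\alpha) \in \Phi_-\} = \ell_S(c)$ (the Coxeter length of $c$), and this equals $n$ because $c = s_1 \cdots s_n$ is a reduced word. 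Hence $r \le n$.

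Finally I would close the count. Each orbit has size dividing $h = \mathrm{ord}(c)$, so $|O| \le h$ for every orbit, while $\sum_{O} |O| = |\Phi| = nh$. Combining, $nh = \sum_O |O| \le rh \le nh$, so equality holds throughout: $r = n$ and $|O| = h$ for every orbit. Moreover the $n$ inversions of $c$ are then distributed, at least one per orbit, among exactly $n$ disjoint orbits, so each orbit contains exactly one inversion. This is the statement of the lemma. I do not expect any genuine obstacle here: the only points requiring care are the two classical inputs (which I would simply cite) and the elementary remark that a same-sign set of roots cannot sum to zero.
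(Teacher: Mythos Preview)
Your proof is correct and follows essentially the same strategy as the paper's: both use that $\Fix(c)=\{0\}$ to conclude $\sum_{\alpha\in O}\alpha=0$ for each orbit $O$, deduce that every orbit contains at least one inversion, and finish with a pigeonhole/counting argument using that $c$ has exactly $n$ inversions. The only minor difference is that the paper cites Steinberg's indexing of $\Phi$ to get $|O|=h$ up front, whereas you derive $|O|=h$ from the double inequality $nh=\sum_O|O|\le rh\le nh$; your version is slightly more self-contained but otherwise the argument is the same.
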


\begin{proof}
The fact that the orbits have cardinality $h$ follows from Steinberg's indexing of the root system (that we mentioned in Section~\ref{sec:defclus}, in relation with the reflection ordering). Though this concerns the bipartite Coxeter element, other standard Coxeter elements are conjugate to it and the result follows.

Next, we show that each orbit $\omega \subset \Phi$ contains at least one inversion of $c$. Since $c$ has $n$ inversions by the previous lemma, it will follow that each of the $n$ orbits contains exactly one inversion by the pigeonhole principle. So, let $\omega \subset \Phi$ be an orbit. Since $h$ is the order of $c$, we have
\[
 0 = c^h - I = (c-I)\Bigg(\sum_{i=0}^{h-1} c^i\Bigg).
\]
As $1$ is not an eigenvalue of $c$ (see~\cite[Section~3.19]{humphreys} for details), $c-I$ is invertible. Therefore,
\[
 \sum_{i=0}^{h-1} c^i = 0.
\]
By evaluating the linear operators on both sides of this equation at some $\alpha\in\omega$, we get ${\sum_{\alpha\in\omega}\alpha = 0}$. So $\omega$ contains both positive roots and negative roots. We deduce that $\omega$ contains at least a~positive root whose image by $c$ is negative, i.e., an inversion of $c$.
\end{proof}

The next proposition is similar to Proposition~\ref{prop:mrotationorbits} (and Steinberg's result described afterwards). But here, the statement holds for any standard Coxeter element and not just the bipartite Coxeter element.

\begin{Proposition}\label{propcconj}
Assume that $W$ is irreducible, and let $h$ be its Coxeter number. Then each orbit $\omega \subset T$ for the action of $c$ by conjugation satisfies
\begin{itemize}\itemsep=0pt
 \item either $\#\omega = h/2$ and it contains one right inversion of $c$,
 \item or $\#\omega = h$ and it contains two right inversions of $c$.
\end{itemize}
\end{Proposition}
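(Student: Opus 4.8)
The plan is to transport the statement to the action of $c$ on the root system $\Phi$, where Lemma~\ref{lemm:orbits_roots} already describes the orbits, and then descend along the natural two-to-one map $\alpha\mapsto t_\alpha$. First I would record that $c\, t_\alpha\, c^{-1}=t_{c(\alpha)}$, so this map is $\langle c\rangle$-equivariant and induces a $\langle c\rangle$-equivariant bijection between the set of pairs $\{\alpha,-\alpha\}\subset\Phi$ and $T$; consequently $c$-orbits on $T$ are in bijection with $c$-orbits on $\Phi/\{\pm1\}$.

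Next, fix a $c$-orbit $\omega\subset T$ and let $\Omega\subset\Phi$ be the preimage of the corresponding orbit of pairs. Since negation commutes with the action of $c$, the set $-\Omega$ is again a union of $c$-orbits, and if $\Omega\cap(-\Omega)\neq\varnothing$ then necessarily $-\Omega=\Omega$. Hence exactly one of two situations occurs: either $\Omega$ is a single $c$-orbit satisfying $\Omega=-\Omega$, or $\Omega=\Omega_1\sqcup\Omega_2$ is a disjoint union of two $c$-orbits with $\Omega_2=-\Omega_1$. By Lemma~\ref{lemm:orbits_roots} every $c$-orbit on $\Phi$ has cardinality $h$, and the map $\Omega\to\omega$ is two-to-one; so in the first case $\Omega$ has $h$ elements and $\#\omega=h/2$ (in particular $h$ is even here), while in the second case $\Omega$ has $2h$ elements and $\#\omega=h$.

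It then remains to count right inversions of $c$ inside $\omega$. Recall that for $\alpha\in\Phi_+$ the reflection $t_\alpha$ is a right inversion of $c$ if and only if $\alpha$ is an inversion of $c$, and that by Lemma~\ref{lemm:orbits_roots} each $c$-orbit on $\Phi$ contains exactly one inversion of $c$. In the first case the unique inversion $\alpha$ of $\Omega$ gives the right inversion $t_\alpha\in\omega$, and it is the only one: for any $\beta$ with $\{\beta,-\beta\}\subset\Omega$, the positive member of $\{\beta,-\beta\}$ is an inversion of $c$ precisely when it equals $\alpha$. In the second case the unique inversions $\alpha_1\in\Omega_1$ and $\alpha_2\in\Omega_2$ are distinct positive roots (they lie in disjoint sets, and being both positive one has $\alpha_1\neq-\alpha_2$), so $t_{\alpha_1}$ and $t_{\alpha_2}$ are two distinct right inversions in $\omega$; conversely any right inversion in $\omega$ corresponds to a positive root of $\Omega_1\cup\Omega_2$ that is an inversion of $c$, hence is $\alpha_1$ or $\alpha_2$. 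This gives exactly the two alternatives in the proposition.

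There is no serious obstacle here; the only thing requiring care is the sign bookkeeping in the last two steps — cleanly separating the ``$\Omega=-\Omega$'' and ``$\Omega_1\sqcup(-\Omega_1)$'' cases, and checking that ``exactly one inversion per orbit upstairs'' produces exactly one, respectively two, right inversions downstairs. As a consistency check one can sum over all orbits: if there are $a$ orbits of size $h/2$ and $b$ of size $h$, then $a\cdot\tfrac{h}{2}+b\cdot h=\#T=\tfrac{nh}{2}$ and the right-inversion count gives $a+2b=n$ (Lemma~\ref{inv_rem1}), and the two equations agree.
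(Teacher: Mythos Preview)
Your proposal is correct and follows essentially the same route as the paper: both arguments lift to $\Phi$ via the $2$-to-$1$ $\langle c\rangle$-equivariant map $\alpha\mapsto t_\alpha$, split according to whether the preimage of $\omega$ is one $c$-orbit or two, and then read off the right-inversion count from Lemma~\ref{lemm:orbits_roots}. One cosmetic remark: the sentence ``if $\Omega\cap(-\Omega)\neq\varnothing$ then $-\Omega=\Omega$'' is vacuous as written, since $\Omega$ is a union of fibers $\{\pm\alpha\}$ and hence always satisfies $-\Omega=\Omega$; the dichotomy you want is for a single $c$-orbit $O\subset\Omega$ (either $O=-O$ or $O\cap(-O)=\varnothing$), which is how you actually use it in the next line.
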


\begin{proof}
The action of $c$ by conjugation on the set $T$ of reflections is related to the action of $c$ on roots by
\begin{equation*}
 t_{ c(\alpha) } = c \cdot t_\alpha \cdot c^{-1}.
\end{equation*}
So, this essentially follows from Lemma~\ref{lemm:orbits_roots}. Let $\omega \subset T$ be an orbit. Note that $R^{-1}(\omega) \subset \Phi$ is stable under the action of $c$ and under the map $-I$.

First suppose that $R^{-1}(\omega)$ is a single orbit under the action of $c$. Since the map $R$ is 2 to 1, $\omega$ has cardinality $h/2$. By Lemma~\ref{lemm:orbits_roots}, $R^{-1}(\omega)$ contains $1$ inversion of $c$, and it follows that $\omega$ contains one right inversion of $c$.

In the second case, suppose that $R^{-1}(\omega)$ is the union of two orbits. Since the map $R$ is 2 to~1, $\omega$ has cardinality $h$. By Lemma~\ref{lemm:orbits_roots}, $R^{-1}(\omega)$ contains $2$ inversion of $c$, and it follows that~$\omega$ contains two right inversion of $c$.
\end{proof}

To state the next result, we first need to slightly extend the definition of $\phi$. This is completely similar to what we did for $\gamma$ and $\kappa$. So, if $\mathcal{Y} = \biguplus_{i=1}^j \mathcal{X}_i$ is a disjoint union of parabolic conjugacy classes, we define
\begin{equation} \label{union_sum}
 \phi( W, \mathcal{Y} , q ) := \sum_{i=1}^j \phi( W, \mathcal{X}_i , q ).
\end{equation}
As before, this situation occurs when we consider the intersection of a parabolic conjugacy class with a parabolic subgroup.

\begin{Lemma} \label{lemm:induc}
 Let $t \in T$. There is an integer $i$ such that $c^i t c^{-i}$ is a right inversion of $c$, and this corresponds to an element $s\in S$ via the bijection in Lemma~$\ref{inv_rem1}$. With these notations, we have
 \begin{align} \label{lemma_sumt_k}
 \sum_{ \substack{ wt_1 \cdots t_k \in \mathfrak{F}(W,\mathcal{X},c), \\[1mm] t_k=t } }
 \!\! q^{\nil(wt_1 \cdots t_k )}
 =
 \begin{cases}
 \phi\big( W_{(s)}, \mathcal{X} \cap W_{(s)} , q \big) &\text{if $t$ is a right inversion of $c$}, \\
 q \phi\big( W_{(s)}, \mathcal{X} \cap W_{(s)} , q \big) &\text{otherwise}.
 \end{cases}
 \end{align}
\end{Lemma}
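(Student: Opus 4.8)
The plan is to relate factorizations of $c$ with fixed last factor $t_k = t$ to factorizations of the Coxeter element $ct$ of the parabolic subgroup $W_{(s)}$, and then to track carefully how the $\nil$ statistic behaves under this correspondence. First I would observe that a length-additive factorization $c = wt_1\cdots t_k$ with $t_k = t$ is equivalent to a length-additive factorization $ct = wt_1\cdots t_{k-1}$ of the element $ct$. Since $ct \lessdot c$ is a cover relation (the factorization is minimal), $ct$ is a noncrossing partition of reflection length $n-1$, hence a standard Coxeter element of the parabolic subgroup $W_{\Fix(ct)}$ by Proposition~\ref{prop:wcox}; when $t$ is a right inversion of $c$, Lemma~\ref{inv_rem1} identifies this parabolic subgroup with $W_{(s)}$ for the associated $s\in S$. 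When $t$ is not a right inversion, one has $c \leq_B ct$ rather than $ct \leq_B c$, but $ct$ is still a Coxeter element of a parabolic subgroup conjugate to $W_{(s)}$ (via Steinberg-type orbit considerations, Proposition~\ref{propcconj}), and the right-hand side is defined using~\eqref{union_sum} so that only the conjugacy class of $W_{(s)}\cap\mathcal X$ matters. So in either case the factorizations $ct = wt_1\cdots t_{k-1}$ with $w\in\mathcal X$ are exactly the elements of $\mathfrak F(W_{(s)}, \mathcal X\cap W_{(s)}, ct)$ — here I use that the reflection length of elements of $\mathcal X\cap W_{(s)}$ inside $W_{(s)}$ equals their reflection length inside $W$, which is $n-k$, so the required number $k-1$ of reflection factors is correct.

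Next I would compare the statistics. Writing $u_i := wt_1\cdots t_i$ for $0\le i\le k$, the statistic $\nil(wt_1\cdots t_k)$ counts the indices $i\in\{1,\dots,k\}$ with $u_{i-1} \geq_B u_i$, equivalently $u_{i-1}\lldot u_i$. The truncated factorization $ct = u_{k-1}$, viewed inside $W_{(s)}$, has $\nil$ statistic equal to the number of such indices among $i\in\{1,\dots,k-1\}$; here one must check that the relation $u_{i-1}\ll u_i$ (equivalently $u_{i-1}\geq_B u_i$) is detected the same way whether we work in $W$ or in $W_{(s)}$. The Bruhat-order characterization in the displayed equivalences after the definition of $\nil$ makes this transparent: the Bruhat order on $W_{(s)}$ is the restriction of that on $W$, and the cover relation $u_{i-1}\lessdot u_i$ in absolute order inside $W_{(s)}$ is the same as inside $W$ since both elements lie below $ct$. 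Therefore
\[
 \nil(wt_1\cdots t_k) = \nil_{W_{(s)}}(wt_1\cdots t_{k-1}) + [\, u_{k-1}\geq_B u_k\,],
\]
where the last term is $1$ if the step $u_{k-1} = ct \lessdot c = u_k$ satisfies $ct\geq_B c$, and $0$ otherwise. By the Bruhat characterization of $\ll$ versus $\sqsubset$, this last indicator is $1$ precisely when $ct \gg c$ in the sense $c\ll$ ... more concretely: $ct\lessdot c$ with $ct\geq_B c$ means $t$ is \emph{not} a right inversion of $c$ (a right inversion $t$ is exactly one with $ct <_B c$, i.e.\ $\ell_{\mathrm{Cox}}(ct)<\ell_{\mathrm{Cox}}(c)$), so the extra factor of $q$ appears exactly in the second case of the statement.

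Putting these together, summing $q^{\nil(wt_1\cdots t_k)}$ over all $wt_1\cdots t_k\in\mathfrak F(W,\mathcal X,c)$ with $t_k=t$ equals $q^{\varepsilon}$ times the sum of $q^{\nil_{W_{(s)}}(wt_1\cdots t_{k-1})}$ over $\mathfrak F(W_{(s)},\mathcal X\cap W_{(s)},ct)$, where $\varepsilon=0$ if $t$ is a right inversion of $c$ and $\varepsilon=1$ otherwise; the latter sum is $\phi(W_{(s)},\mathcal X\cap W_{(s)},q)$ by definition — using, as noted after the definition of $\phi$, that this generating function does not depend on the chosen Coxeter element $ct$ of $W_{(s)}$. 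The main obstacle I anticipate is the bookkeeping in the non-inversion case: one must verify that $ct$ is genuinely a Coxeter element of a parabolic subgroup conjugate to $W_{(s)}$ (not merely of some parabolic of the right rank) so that the right-hand side, defined via the conjugacy class $\mathcal X\cap W_{(s)}$, is the correct object; this is where Lemma~\ref{inv_rem1}, Proposition~\ref{propcconj}, and the conjugation action $t_{c(\alpha)} = c\,t_\alpha\,c^{-1}$ are needed, perhaps by first conjugating $t$ to a right inversion $c^i t c^{-i}$ as in the statement and invoking invariance of $\phi$ under conjugation of the Coxeter element together with invariance of the whole setup under the automorphism $x\mapsto c^i x c^{-i}$.
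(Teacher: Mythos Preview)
Your proposal is correct and follows essentially the same approach as the paper: split into the two cases according to whether $t$ is a right inversion of $c$, identify factorizations $c=wt_1\cdots t_k$ with $t_k=t$ with factorizations $ct=wt_1\cdots t_{k-1}$ inside the parabolic $W_{\Fix(ct)}$, observe that the last step contributes $0$ or $1$ to $\nil$ exactly according to the case, and in the non-inversion case pass from $W_{\Fix(ct)}$ to the conjugate $W_{(s)}$ using invariance of $\phi$. The paper's writeup is terser about the Bruhat-order compatibility between $W$ and $W_{(s)}$ that you flag, but otherwise the arguments coincide.
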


\begin{proof}
Suppose first that $t$ is a right inversion of $c$, so that $ct \in W_{(s)}$. It follows that $wt_1\cdots t_{k-1} \leq_B w t_1 \cdots t_k$ and this pair of elements doesn't contribute to the $\nil$ statistic. Since $ct$ is $s_1\cdots s_n$ with $s$ omitted (see the proof of Lemma~\ref{inv_rem1}), it is a standard Coxeter element of $W_{(s)}$. To each factorization $ c = w t_1 \cdots t_{k}$ as in the left-hand side of~\eqref{lemma_sumt_k}, we associate $ct = w t_1 \cdots t_{k-1}$ which is again a minimal factorization. We thus immediately obtain the generating function $\phi( W_{(s)}, \mathcal{X} \cap W_{(s)} , q )$.

Consider the second case. We now have $wt_1\cdots t_{k-1} \geq_B w t_1 \cdots t_k$, so that this pair of elements contribute by 1 to the $\nil$ statistic, giving the factor $q$. As in the previous case, the sum can be viewed as a sum over factorizations of $ct$. We thus get $q \phi( W_{\Fix(ct)}, \mathcal{X} \cap W_{\Fix(ct)} , q )$, but we have%
\[
 \phi\big( W_{\Fix(ct)}, \mathcal{X} \cap W_{\Fix(ct)} , q \big)
 =
 \phi\big( W_{(s)}, \mathcal{X} \cap W_{(s)} , q \big)
\]
since $W_{\Fix(ct)}$ is conjugate to $W_{(s)}$ in $W$.
\end{proof}

\begin{Proposition} %\label{recfactors}
Assume that $W$ is irreducible, and let $h$ be its Coxeter number. We have
\begin{equation} \label{eqrecfactors}
 \phi(W, \mathcal{X}, q) = \frac{2+q(h-2)}{2} \sum_{s\in S} \phi\big( W_{(s)} , \mathcal{X}\cap W_{(s)}, q \big).
\end{equation}
\end{Proposition}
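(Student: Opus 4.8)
The plan is to peel off the last factor $t_k$ from each factorization in $\mathfrak{F}(W,\mathcal{X},c)$, feed it to Lemma~\ref{lemm:induc}, and then reorganize the resulting sum according to the orbits of the conjugation action of $c$ on $T$; this mirrors the proof of the recursion~\eqref{eqrecfaces} for $\gamma$. I will assume $k\geq 1$ (for $k=0$ the set $\mathfrak{F}(W,\mathcal{X},c)$ consists only of the trivial factorization $c=w$, so $\phi=1$, which is the base case of the recursion rather than an instance of~\eqref{eqrecfactors}). First I would write
\[
 \phi(W,\mathcal{X},q)
 =
 \sum_{t\in T}\ \sum_{\substack{wt_1\cdots t_k\in\mathfrak{F}(W,\mathcal{X},c)\\ t_k=t}} q^{\nil(wt_1\cdots t_k)},
\]
and apply Lemma~\ref{lemm:induc} to identify the inner sum with $\varepsilon_t\cdot\phi\bigl(W_{\Fix(ct)},\mathcal{X}\cap W_{\Fix(ct)},q\bigr)$, where $\varepsilon_t=1$ when $t$ is a right inversion of $c$ and $\varepsilon_t=q$ otherwise; here I use that the standard maximal parabolic $W_{(s)}$ appearing in that lemma is conjugate to $W_{\Fix(ct)}$, as its proof makes clear.

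Next I would group the reflections by the $c$-conjugation orbit $\omega\subseteq T$ containing them. The key observation is that $t\mapsto\phi\bigl(W_{\Fix(ct)},\mathcal{X}\cap W_{\Fix(ct)},q\bigr)$ is constant on $\omega$: if $t'=ctc^{-1}$ then $ct'=c(ct)c^{-1}$, hence $W_{\Fix(ct')}=c\,W_{\Fix(ct)}\,c^{-1}$, and since $\mathcal{X}$ is a single conjugacy class and $\phi$ is invariant under $W$-conjugation, this value does not change; call it $\psi_\omega$. By Proposition~\ref{propcconj}, each $\omega$ has either $\#\omega=h/2$ and contains exactly one right inversion of $c$, or $\#\omega=h$ and contains exactly two; denoting this number $r_\omega\in\{1,2\}$, in both cases one computes
\[
 \sum_{t\in\omega}\varepsilon_t
 =
 r_\omega+(\#\omega-r_\omega)\,q
 =
 r_\omega\cdot\frac{2+q(h-2)}{2}.
\]
Summing over all orbits then yields $\phi(W,\mathcal{X},q)=\frac{2+q(h-2)}{2}\sum_\omega r_\omega\,\psi_\omega$.

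Finally I would identify $\sum_\omega r_\omega\,\psi_\omega$ with $\sum_{s\in S}\phi\bigl(W_{(s)},\mathcal{X}\cap W_{(s)},q\bigr)$. By Lemma~\ref{inv_rem1} the right inversions of $c$ are in bijection with $S$, an inversion $\tau$ corresponding to the unique $s_\tau\in S$ with $c\tau\in W_{(s_\tau)}$; for such $\tau$ the element $c\tau$ is a standard Coxeter element of $W_{(s_\tau)}$, so $W_{\Fix(c\tau)}=W_{(s_\tau)}$ and $\psi_\omega=\phi\bigl(W_{(s_\tau)},\mathcal{X}\cap W_{(s_\tau)},q\bigr)$ whenever $\tau\in\omega$. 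Since the orbits partition $T$ and $\omega$ contains exactly $r_\omega$ right inversions, in the sum $\sum_\omega r_\omega\,\psi_\omega$ the contribution $\psi_\omega$ is counted once per right inversion lying in $\omega$; hence the sum equals $\sum_{s\in S}\phi\bigl(W_{(s)},\mathcal{X}\cap W_{(s)},q\bigr)$, which is~\eqref{eqrecfactors}. (When $\omega$ carries two right inversions $\tau',\tau''$, the parabolics $W_{(s_{\tau'})}$ and $W_{(s_{\tau''})}$ are automatically conjugate, in fact via a power of $c$, so nothing is ambiguous; this incidentally reproves the conjugacy used in the proof of~\eqref{eqrecfaces}.)

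I do not expect a genuinely hard step: the argument is case-free and runs parallel to the proof of~\eqref{eqrecfaces}, with Proposition~\ref{propcconj} and Lemma~\ref{inv_rem1} playing the roles of Proposition~\ref{prop:mrotationorbits} and the count of negative simple roots in an $\mathcal{R}_m$-orbit. The one place needing care is the bookkeeping --- checking that the scalar $\frac{2+q(h-2)}{2}$ factors out of orbits of both sizes (which works precisely because the short orbits contain one right inversion and the long ones two, so $r_\omega$ scales correctly), and that collapsing the orbit sum via Lemma~\ref{inv_rem1} counts each simple reflection of $W$ exactly once.
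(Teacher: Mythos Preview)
Your proof is correct and follows essentially the same approach as the paper: both peel off the last reflection $t_k$, invoke Lemma~\ref{lemm:induc} for the inner sum, and then group terms according to the $c$-conjugation orbits on $T$ via Proposition~\ref{propcconj}, observing that the constant $\frac{2+q(h-2)}{2}$ factors out uniformly because short and long orbits carry one or two right inversions respectively. The only cosmetic difference is that you package the computation with the notation $\varepsilon_t$, $r_\omega$, $\psi_\omega$, whereas the paper treats the two orbit sizes as separate explicit cases before summing.
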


\begin{proof}
We partition the set $\mathfrak{F}(W,\mathcal{X},c)$ into subsets $\mathfrak{F}_\omega$, where $\omega$ are the orbits for the action of $c$ on $T$,
\[
 \mathfrak{F}_\omega
 :=
 \{ wt_1\cdots t_k \in \mathfrak{F}(W,c,\mathcal{X}) \colon t_k \in \omega \},
\]
so that $\phi(W, \mathcal{X}, q)$ is obtained by summing the generating functions of the sets $\mathfrak{F}_\omega$.

We first consider the first case of Proposition~\ref{propcconj}, so that $\#\omega = \frac{h}{2}$ and $\omega$ contains a right inversion $t \in T$ of $c$. Let $s\in S$ such that $ct \in W_{(s)}$. Using Lemma~\ref{lemm:induc}, we get
\begin{align} \label{eq:omega_sum1}
 \sum_{ w t_1 \cdots t_k \in \mathfrak{F}_\omega } q^{\nil(wt_1 \cdots t_k)}
 =
 \big(1+q\big(\tfrac{h}{2}-1\big)\big) \phi\big( W_{(s)}, \mathcal{X} \cap W_{(s)} , q \big).
\end{align}

Now, we consider the other case and assume that $\#\omega = h$. By Proposition~\ref{propcconj}, this orbit contains two right inversions of $c$, that we denote $t,t' \in T$. Also, let $s,s'\in S$ such that $ct \in W_{(s)}$ and $ct' \in W_{(s')}$. Using Lemma~\ref{lemm:induc}, we get
\begin{align*}
 \sum_{ w t_1 \cdots t_k \in \mathfrak{F}_\omega } q^{\nil(wt_1 \cdots t_k)}
 =
 (2+q(h-2)) \phi\big( W_{(s)} , \mathcal{X}\cap W_{(s)}, q \big),
\end{align*}
which can also be written
\begin{align} \label{eq:omega_sum2}
 \sum_{ w t_1 \cdots t_k \in \omega }\!\! q^{\nil(wt_1 \cdots t_k)}
 \!=\!
 \big(1+q\big(\tfrac{h}{2}-1\big)\big)
 \big( \phi\big(W_{(s)} , \mathcal{X}\cap W_{(s)}, q \big)
 + \phi\big( W_{(s')} , \mathcal{X}\cap W_{(s')}, q \big)
 \big).
\end{align}
By summing~\eqref{eq:omega_sum1} and~\eqref{eq:omega_sum2}, we get the result. The fact that each $s\in S$ appears exactly once in the sum follows from Lemma~\ref{inv_rem1}.
\end{proof}

As for the reducible case, we have the following statement.

\begin{Proposition}
Suppose $W=W_1\times W_2$. For $i\in\{1,2\}$, let $\mathcal{X}_i$ be a parabolic conjugacy class in $W_i$. Let $k_i$ such that the elements of $\mathcal{X}_i$ have reflection length $n_i-k_i$ where $n_i$ is the rank of~$W_i$. Then
\begin{equation} \label{prop_redcase}
 \phi(W_1\times W_2, \mathcal{X}_1\times\mathcal{X}_2, q ) = \binom{k_1+k_2}{k_1}
 \phi(W_1,\mathcal{X}_1,q)
 \phi(W_2,\mathcal{X}_2,q).
\end{equation}
\end{Proposition}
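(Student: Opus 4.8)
The plan is to reduce a length-additive factorization of the Coxeter element of $W_1\times W_2$ to a pair of such factorizations in the two factors, together with a shuffle, and then to show that the statistic $\nil$ is additive along this decomposition. Since $S=S_1\uplus S_2$, any standard Coxeter element of $W_1\times W_2$ has the form $c=c_1c_2$ with $c_i$ a standard Coxeter element of $W_i$, and $c_1,c_2$ commute; by the (inductively assumed) independence of $\phi(W_i,\mathcal{X}_i,q)$ on $c_i$, it suffices to prove the identity for such a $c$. Throughout I will use the standard facts that in $W_1\times W_2$ one has $T=T_1\uplus T_2$ (identifying $T_i$ with the reflections acting nontrivially only on the $i$-th factor), that reflection length and Coxeter length are both additive over the two factors, that $\NC(W,c)=\NC(W_1,c_1)\times\NC(W_2,c_2)$ as posets under the absolute order, and that the Bruhat order on $W_1\times W_2$ is the product order.

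First I would set up the bijection. Given $wt_1\cdots t_k\in\mathfrak{F}(W,\mathcal{X}_1\times\mathcal{X}_2,c)$, write $w=(w_1,w_2)$ with $w_i\in\mathcal{X}_i$; since $\mathcal{X}_i$ has reflection length $n_i-k_i$ we get $k=k_1+k_2$, and $w^{-1}c=(w_1^{-1}c_1,\,w_2^{-1}c_2)$ is length-additive, with $w_i\le c_i$ in $W_i$. Each $t_j$ lies in exactly one $T_i$; keeping their order, the $t_j$ lying in $T_1$ multiply to a minimal factorization $w_1^{-1}c_1=u_1\cdots u_{k_1}$ and those in $T_2$ to a minimal factorization $w_2^{-1}c_2=v_1\cdots v_{k_2}$ (that the resulting subwords are minimal follows from additivity of reflection length and $k_1+k_2=k$), so we obtain $\pi:=w_1u_1\cdots u_{k_1}\in\mathfrak{F}(W_1,\mathcal{X}_1,c_1)$, $\pi':=w_2v_1\cdots v_{k_2}\in\mathfrak{F}(W_2,\mathcal{X}_2,c_2)$, and the shuffle $\sigma$ recording which of the positions $1,\dots,k$ carry a $T_1$-reflection. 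Conversely, any triple $(\pi,\pi',\sigma)$ reassembles into an element of $\mathfrak{F}(W,\mathcal{X}_1\times\mathcal{X}_2,c)$ -- this is the usual description of minimal factorizations of $(a,b)$ in a direct product as interleavings of minimal factorizations of $a$ and of $b$ -- giving a bijection in which $\sigma$ ranges over the $\binom{k_1+k_2}{k_1}$ possible shuffles.

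Next I would check that $\nil$ is constant along each shuffle fibre and additive, $\nil(wt_1\cdots t_k)=\nil(\pi)+\nil(\pi')$. Recall that consecutive partial products of a minimal factorization form a cover in $\NC(W,c)$ and are therefore comparable in the Bruhat order. At a step $j$ with $t_j\in T_1$, say $t_j$ is the $\ell$-th reflection drawn from $T_1$, the second components of $wt_1\cdots t_{j-1}$ and of $wt_1\cdots t_j$ coincide, so by the product description of the Bruhat order these two elements compare exactly as their first components do; hence $wt_1\cdots t_{j-1}\geq_B wt_1\cdots t_j$ iff $w_1u_1\cdots u_{\ell-1}\geq_B w_1u_1\cdots u_\ell$, which is precisely the condition that step $\ell$ contributes to $\nil(\pi)$. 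Summing over all steps (and symmetrically for $t_j\in T_2$) yields the additivity, and in particular shows $\nil$ is independent of $\sigma$. Combining the bijection with this additivity, the generating function factors:
\[
 \phi(W_1\times W_2,\mathcal{X}_1\times\mathcal{X}_2,q)
 =
 \binom{k_1+k_2}{k_1}
 \Bigg(\sum_{\pi\in\mathfrak{F}(W_1,\mathcal{X}_1,c_1)}q^{\nil(\pi)}\Bigg)
 \Bigg(\sum_{\pi'\in\mathfrak{F}(W_2,\mathcal{X}_2,c_2)}q^{\nil(\pi')}\Bigg),
\]
which is exactly $\binom{k_1+k_2}{k_1}\phi(W_1,\mathcal{X}_1,q)\phi(W_2,\mathcal{X}_2,q)$. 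The only step that requires genuine care is the additivity of $\nil$: it rests on Coxeter length and Bruhat order behaving multiplicatively over direct factors, and on matching each partial product $wt_1\cdots t_j$ with the corresponding partial products of $\pi$ and $\pi'$; the remainder is the routine combinatorics of shuffling minimal factorizations in a reducible reflection group, already implicit in the multiplicativity of the Fu{\ss}--Catalan numbers.
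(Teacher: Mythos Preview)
Your proof is correct and follows essentially the same approach as the paper: project onto the two factors to get a $\binom{k_1+k_2}{k_1}$-to-$1$ map $\mathfrak{F}(W_1\times W_2,\mathcal{X}_1\times\mathcal{X}_2,c)\to\mathfrak{F}(W_1,\mathcal{X}_1,c_1)\times\mathfrak{F}(W_2,\mathcal{X}_2,c_2)$, and observe that the $\nil$ statistic is additive because the Bruhat order on a direct product is the product order. You spell out the additivity of $\nil$ in more detail than the paper does, but the underlying argument is identical.
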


\begin{proof}
The Coxeter element is denoted $c=(c_1,c_2)$. For $i \in \{1,2\}$, the projection $W_1\times W_2 \to W_i$ on each factor gives a map $\mathfrak{F}(W_1\times W_2, (c_1,c_2), \mathcal{X}_1 \times \mathcal{X}_2 ) \to \mathfrak{F}(W_i, c_i, \mathcal{X}_i )$ (ignoring the factors~$t_j$ that are not reflections $W_i$, that give the unit of $W_i$). Together, these two maps give one map
\[
 \mathfrak{F}(W_1\times W_2, \mathcal{X}_1 \times \mathcal{X}_2, (c_1,c_2) )
 \to
 \mathfrak{F}(W_1, \mathcal{X}_1, c_1 ) \times \mathfrak{F}(W_2, \mathcal{X}_2, c_2 ).
\]
It is easily seen that this map is $\binom{k_1+k_2}{k_1}$-to-$1$. Indeed, we recover the initial factorization $w t_1 \cdots t_{k_1+k_2}$ if we know which $t_i$ correspond to a reflection of $W_1$ (or $W_2$), and this gives a~choice of $k_1$ indices among $k_1+k_2$.

It is also easily seen that the $q$-statistic is preserved via the above map, as the Bruhat order on $W_1\times W_2$ identifies with the product of the two Bruhat orders.
\end{proof}

Gathering the previous propositions, we have an inductive way to compute $\phi(W, \mathcal{X}, q)$:
\begin{itemize}\itemsep=0pt
 \item if $\mathcal{X}$ is the conjugacy class of the Coxeter element, we have $\phi(W, \mathcal{X}, q) = 1$ (initial case),
 \item if $W$ is reducible, use~\eqref{prop_redcase},
 \item if $W$ is irreducible, use~\eqref{eqrecfactors}, then use~\eqref{union_sum} on each term,
 \item repeat the previous two steps until each term can be treated via the initial case.
\end{itemize}

\begin{Theorem}
Let $\mathcal{X}$ be a parabolic conjugacy class, and $k$ such that the elements of $\mathcal{X}$ have reflection length $n-k$. We have
 \begin{align} \label{eq:phigamma}
 \phi(W,\mathcal{X},q) = k! (1-q)^k \gamma\big( W , \mathcal{X} , \tfrac{q}{1-q} \big).
 \end{align}
\end{Theorem}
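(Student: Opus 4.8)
The plan is to prove the identity by induction on the rank $n$ of $W$, showing that the right‑hand side
\[
 \psi(W,\mathcal{X},q):=k!\,(1-q)^{k}\,\gamma\bigl(W,\mathcal{X},\tfrac{q}{1-q}\bigr)
\]
obeys the very same initial condition and recursions as $\phi(W,\mathcal{X},q)$; since those determine $\phi$ uniquely (this is exactly the inductive procedure recalled just before the theorem), it follows that $\psi=\phi$. We extend $\psi$ additively over disjoint unions of parabolic conjugacy classes of a common reflection length, consistently with \eqref{eq:gamma_uplus} and \eqref{union_sum}. First note that $\psi$ really is a polynomial in $q$: by Corollary~\ref{formulas_gammagammaplus} the function $\gamma(W,\mathcal{X},m)$ is a polynomial of degree $k=\dim(X)$ in $m$, so after the substitution $m=\frac{q}{1-q}$ and multiplication by $(1-q)^{k}$ one gets a polynomial of degree $\le k$. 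The key algebraic observation is that under $m=\frac{q}{1-q}$ one has $mh+2=\frac{2+q(h-2)}{1-q}$, hence $\frac{mh+2}{2k}=\frac{2+q(h-2)}{2k(1-q)}$.

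Next I would run through the three cases of the inductive procedure. If $\mathcal{X}$ is the Coxeter conjugacy class then $k=0$ and $\gamma(W,\mathcal{X},m)=1$, so $\psi(W,\mathcal{X},q)=1=\phi(W,\mathcal{X},q)$. If $W=W_{1}\times W_{2}$ and $\mathcal{X}=\mathcal{X}_{1}\times\mathcal{X}_{2}$ with the elements of $\mathcal{X}_{i}$ of reflection length $n_{i}-k_{i}$, then those of $\mathcal{X}$ have reflection length $n-(k_{1}+k_{2})$, so $k=k_{1}+k_{2}$; plugging the multiplicativity \eqref{eq:gammareducible} of $\gamma$ into the definition of $\psi$ and rearranging the factorials turns $\psi(W,\mathcal{X},q)$ into $\binom{k_{1}+k_{2}}{k_{1}}\psi(W_{1},\mathcal{X}_{1},q)\,\psi(W_{2},\mathcal{X}_{2},q)$, which is exactly the reducible recursion \eqref{prop_redcase} for $\phi$; the induction hypothesis on the factors finishes this case. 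Finally, if $W$ is irreducible and $\mathcal{X}$ is not the Coxeter class, then $k\ge 1$: each $W_{(s)}$ ($s\in S$) has rank $n-1$, and the elements of $\mathcal{X}\cap W_{(s)}$ keep reflection length $n-k$ inside $W_{(s)}$ (reflection length in a parabolic subgroup agrees with that in $W$, as used in the proof of Theorem~\ref{reciprocity}), hence length‑deficiency $k-1$ there. Applying the $\gamma$‑recursion \eqref{eqrecfaces}, then the induction hypothesis on the rank‑$(n-1)$ groups $W_{(s)}$ (termwise over the pieces of $\mathcal{X}\cap W_{(s)}$), and finally the identity $\frac{mh+2}{2k}=\frac{2+q(h-2)}{2k(1-q)}$ together with $k!\,(1-q)^{k}/\bigl(k\,(1-q)\,(k-1)!\,(1-q)^{k-1}\bigr)=1$, one obtains
\[
 \psi(W,\mathcal{X},q)=\frac{2+q(h-2)}{2}\sum_{s\in S}\phi\bigl(W_{(s)},\mathcal{X}\cap W_{(s)},q\bigr),
\]
which is precisely the recursion \eqref{eqrecfactors} for $\phi$. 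This completes the induction.

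The argument is entirely bookkeeping once the two recursions \eqref{eqrecfaces} and \eqref{eqrecfactors} (and their reducible counterparts) are in hand — there is no new idea to supply. The only point that needs care, and which I expect to be the main obstacle, is keeping precise track of how the integer $k$ changes under the two reduction steps (it drops by one when passing from $W$ to $W_{(s)}$, and is additive under direct products), and checking that the normalizing factor $k!\,(1-q)^{k}$ is exactly what converts the rational constant $\frac{mh+2}{2k}$ of the face recursion into the polynomial constant $\frac{2+q(h-2)}{2}$ of the factorization recursion, and the multiplicativity of $\gamma$ into the binomial coefficient $\binom{k_{1}+k_{2}}{k_{1}}$ appearing in \eqref{prop_redcase}.
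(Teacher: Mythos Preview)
Your proposal is correct and is exactly the approach of the paper: the paper's proof consists of the single remark that, ``up to rescaling of the polynomials, the induction to compute $\phi$ is equivalent to the one that $\gamma$ satisfies,'' and you have simply (and carefully) spelled out that rescaling and the matching of the three cases (initial, reducible, irreducible). The only superfluous step is invoking Corollary~\ref{formulas_gammagammaplus} to check that $\psi$ is a polynomial in $q$ --- polynomiality of $\gamma$ in $m$ is already noted right after Definition~\ref{def_facetoTheta}, and in any case the recursions themselves guarantee it.
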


\begin{proof}
Up to rescaling of the polynomials, the induction to compute $\phi$ is equivalent to the one that we show $\gamma$ satisfies (see Section~\ref{sec:recfaces}). This shows the equality by induction.
\end{proof}

From the formula for $\gamma$ in terms of a characteristic polynomial, we immediately obtain the following formula for $\phi$. It is conveniently stated in terms of the Orlik--Solomon exponents.

\begin{Corollary}
Let $\mathcal{X}$ be a parabolic conjugacy class, let $X = \Fix(w)$ for some $w\in\mathcal{X}$, $k=\dim(X)$, and recall that $b_1^X,\dots, b_k^X$ are the Orlik--Solomon exponents of $X$. We have
\begin{align*}
 \phi(W,\mathcal{X},q)
 =
 \frac{k!}{[ N(W_X) : W_X ]} \prod_{i=1}^k \big( b_i^{X} + 1 + q\big( h - b_i^{X} - 1\big) \big).
\end{align*}
\end{Corollary}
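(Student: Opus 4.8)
The plan is simply to combine the rescaling identity~\eqref{eq:phigamma} with the closed formula for $\gamma$ from Corollary~\ref{formulas_gammagammaplus}, and then carry out an elementary algebraic simplification. Write $X=\Fix(w)$ and $k=\dim(X)$ as in the statement. Recall that $\gamma(W,\mathcal{X},m)$ is polynomial in $m$, so the identity $\gamma(W,X,m)=(-1)^{k}\,p_X(-mh-1)/[N(W_X):W_X]$ of~\eqref{eq:formulagamma} is a polynomial identity in $m$ and therefore remains valid under the substitution $m\mapsto q/(1-q)$, read as a substitution of rational functions. Plugging it into~\eqref{eq:phigamma} yields
\[
\phi(W,\mathcal{X},q)
=
k!\,(1-q)^k\,(-1)^k\,\frac{p_X\!\bigl(-\tfrac{qh}{1-q}-1\bigr)}{[N(W_X):W_X]}.
\]

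Next I would insert the factorization $p_X(t)=\prod_{i=1}^k\bigl(t-b_i^X\bigr)$ from Proposition~\ref{prop:psik_expansion} and distribute the factor $(1-q)^k$ over the $k$ factors of the product. For each $i$,
\[
(1-q)\Bigl(-\tfrac{qh}{1-q}-1-b_i^X\Bigr)
=
-qh-(1-q)\bigl(1+b_i^X\bigr)
=
-\bigl(b_i^X+1+q(h-b_i^X-1)\bigr),
\]
so that $(1-q)^k p_X\!\bigl(-\tfrac{qh}{1-q}-1\bigr)=(-1)^k\prod_{i=1}^k\bigl(b_i^X+1+q(h-b_i^X-1)\bigr)$. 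Substituting this back, the two factors of $(-1)^k$ cancel and one obtains precisely the asserted formula $\phi(W,\mathcal{X},q)=\tfrac{k!}{[N(W_X):W_X]}\prod_{i=1}^k\bigl(b_i^X+1+q(h-b_i^X-1)\bigr)$.

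There is no genuine obstacle here: the substance has already been established in~\eqref{eq:phigamma} and Corollary~\ref{formulas_gammagammaplus}, and what remains is the bookkeeping of signs and the clearing of the denominator $1-q$. The one point worth a sentence of justification is that $\phi(W,\mathcal{X},q)$ is honestly a polynomial in $q$ while the intermediate expression involves $q/(1-q)$; this is automatic because $\gamma(W,\mathcal{X},m)$ has degree at most $k$ in $m$, so $(1-q)^k\gamma\bigl(W,\mathcal{X},\tfrac{q}{1-q}\bigr)$ is a polynomial, in agreement with the left-hand side of~\eqref{eq:phigamma}.
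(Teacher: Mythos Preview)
Your proof is correct and follows exactly the approach indicated in the paper, namely combining~\eqref{eq:phigamma} with the formula~\eqref{eq:formulagamma} for $\gamma$; you have simply spelled out the algebraic simplification that the paper leaves implicit.
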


\begin{proof}
This follows from~\eqref{eq:phigamma}, together with the formula for $\gamma$ in~\eqref{eq:formulagamma}.
\end{proof}

In particular, the case $q=1$ gives
\begin{align*}
 \phi(W,\mathcal{X},1)
 =
 \frac{k! h^k}{[ N(W_X) : W_X ]}.
\end{align*}
This was obtained by the first author~\cite[Theorem~99]{douvropoulos} by geometric methods (using the Lyashko--Looijenga morphism). On the other side, the case where $X$ is the maximal element of $L(W)$ corresponds to the $q$-enumeration of minimal reflection factorization of~$c$.  The second author in~\cite{josuatverges} (see also~\cite[Section~4.6]{bianejosuat}) showed that
\begin{align*}
 \phi(W,\{0\},1)
 =
 \frac{n!}{|W|} \prod_{i=1}^n (d_i + q(h-d_i))
\end{align*}
(where we use the {\it degree} $d_i=e_i+1$ rather the exponent $e_i$). This is a one-parameter refinement of the Deligne--Looijenga number
\[
 \frac{n! h^n}{|W|}.
\]

\section{Further comments and questions}

Via Proposition~\ref{prop:psik_expansion}, the formulas we obtained in Corollary~\ref{formulas_gammagammaplus} imply
\begin{align*} %\label{euler_char1}
 &\sum_{X\in \Theta} (-1)^{\dim(X)} \gamma(W,X,m) \bPhi_X
=
 \bPsi_{-mh-1} = (-1)^n \sign \otimes \bPsi_{mh+1}, \\
 %\label{euler_char2}
 &\sum_{X\in \Theta} (-1)^{\dim(X)} \gamma^+(W,X,m) \bPhi_X
=
 \bPsi_{-mh+1} = (-1)^n \sign \otimes \bPsi_{mh-1}.
\end{align*}
This suggests a representation theoretic interpretation, which is the goal of our next paper~\cite{douvropoulosjosuatverges2}. There, we introduce {\it cluster parking functions} and their positive analog. They form a simplicial complex endowed with an action of $W$. The left-hand sides of the previous two equations correspond to their (equivariant) Euler characteristic. The main goal of the paper is to show that these complexes have the homotopy of (pure) wedge of spheres, so that the unique nonzero homology group have the right-hand sides above as characters (up to the sign $(-1)^n$).

In Corollary~\ref{formulas_gammagammaplus}, we give a refinement of the $f$-vector of the generalized cluster complex $\Upsilon(W,m)$, that looks \emph{almost} like a flag $f$-vector for it (as in \cite[Chapter~III, Section~4]{stanley}; almost because it is indexed by \emph{classes} of subsets of $[n]$). This is surprising because $\Upsilon(W,m)$ is not a~balanced complex. On the other side, the Coxeter complex of $W$ is balanced. It is natural to ask: is there a connection between these objects that can explain the appearance of this almost flag $f$-vector? Is this part of a more general theory for non-balanced complexes? We thank Vic Reiner for these questions.

In Proposition~\ref{prop:invariance}, we showed that $\Upsilon(W,X,m)$ is stable under the action of $\mathcal{R}_m$. It would be interesting to find a cyclic sieving phenomenon for this action (see~\cite{reinersommers} for similar and possibly related results).

\subsection*{Acknowledgements}

This project started in Paris when both authors first moved to IRIF and discovered they share a love for Coxeter--Catalan combinatorics. We thank Fr\'ed\'eric Chapoton for suggesting us to investigate the generating function $\phi$ (defined in Section~\ref{sec:factor}), which was a motivation for the whole project. We also thank Philippe Biane for our fruitful discussion throughout. Eventually, we thank the reviewers for their numerous suggestions that helped improving this article.

\pdfbookmark[1]{References}{ref}
\LastPageEnding

\end{document}